\newtheorem{theorem}{Theorem}[section]
\newtheorem{corollary}[theorem] {Corollary}
\newtheorem{definition}[theorem]{Definition}
\newtheorem{lemma} [theorem]{Lemma}
\newtheorem{proposition}[theorem]{Proposition}
\newtheorem{remark}[theorem]{Remark}
\title{This is the title}
\begin{document}

\begin{center}
{\bf{\large}EXTENSION OF FRAMES AND BASES - II}\\
K. MAHESH KRISHNA  AND P. SAM JOHNSON \\
Department of Mathematical and Computational Sciences\\ 
National Institute of Technology Karnataka (NITK), Surathkal\\
Mangaluru 575 025, India  \\
Emails: kmaheshak@gmail.com, kmaheshakma16f02@nitk.edu.in, \\
       nitksam@gmail.com, sam@nitk.edu.in \\
       
Date: \today
\end{center}

\hrule
\vspace{0.5cm}
%--------------------------------------
\textbf{Abstract}:  Operator-valued frame ($G$-frame), as a generalization of frame is introduced by Kaftal, Larson, and Zhang  in \textit{Trans. Amer. Math. Soc.}, 361(12):6349-6385,   2009 and by Sun in \textit{J. Math. Anal. Appl.}, 322(1):437-452, 2006.
It has been further extended in the paper arXiv:1810.01629 [math.OA] 3 October 2018, so as to have a rich theory on operator-valued frames for Hilbert spaces as well as for Banach spaces.  The continuous version has been studied in this paper when the indexing set is a measure space. We study duality, similarity, orthogonality and stability of this extension. Several characterizations are given including a notable characterization when the measure space is a locally compact group. Variation formula, dimension formula and trace formula are derived when the Hilbert space is finite dimensional.

\textbf{Keywords}: Frames, weak integrals, continuous operator-valued frames,  unitary representations, locally compact groups,  perturbation.

\textbf{Mathematics Subject Classification (2010)}: Primary 42C15, 47A13, 47B65, 46G10; Secondary  46E40, 28B05, 22D10.
\tableofcontents
\section{Introduction}\label{INTRODUCTION}
Let $ \mathcal{H}$,  $ \mathcal{H}_0 $ be Hilbert spaces,  $ \mathcal{B}(\mathcal{H}, \mathcal{H}_0)$ be the Banach space of all bounded linear operators from $ \mathcal{H}$ to $ \mathcal{H}_0 $ and $ \mathcal{B}(\mathcal{H})\coloneqq \mathcal{B}(\mathcal{H}, \mathcal{H})$. Letter $ \mathbb{J}$ denotes an   indexing set and $\mathbb{K}$ denotes the   field of scalars ($ \mathbb{R}$ or $ \mathbb{C}$). 

\begin{definition}\cite{DUFFIN1, OLE1}\label{OLE}
A collection $ \{x_j\}_{j \in \mathbb{J}}$ in  a Hilbert space $ \mathcal{H}$ is said to be a (discrete) 
\begin{enumerate}[\upshape(i)]
\item   frame for $\mathcal{H}$ if there exist $ a, b >0$ such that
\begin{equation*}
a\|h\|^2\leq\sum_{j \in \mathbb{J}}|\langle h, x_j \rangle|^2 \leq b\|h\|^2  ,\quad \forall h \in \mathcal{H}.
\end{equation*}
\item   Bessel sequence for $\mathcal{H}$ if there exists $  b >0$ such that

\begin{equation*}
\sum_{j \in \mathbb{J}}|\langle h, x_j \rangle|^2 \leq b\|h\|^2  ,\quad \forall h \in \mathcal{H}.
\end{equation*}
\end{enumerate}
\end{definition}

 We refer \cite{OLE1, YOUNG, HEIL, CASAZZAKUTYNIOK, HANKORNELSONLARSONWEBER, HANLARSON, CASAZZAART, CERDA, FEICHTINGERLUEF, DAISUN, BALANEQUI, FRANKPAULSENTIBALLI} for more details on frames (and a well studied class of frames) and Bessel sequences in Hilbert spaces.
 Most  general version of  Definition \ref{OLE} is
 \begin{definition}\cite{KAFTALLARSONZHANG1, SUN1}\label{KAFTAL}
 A collection  $ \{A_j\}_{j \in \mathbb{J}} $  in $ \mathcal{B}(\mathcal{H}, \mathcal{H}_0)$ is said to be 	an operator-valued
 \begin{enumerate}[\upshape(i)]
 \item  frame in $ \mathcal{B}(\mathcal{H}, \mathcal{H}_0)$ if the series $ \sum_{j\in \mathbb{J}} A_j^*A_j$  converges in the strong-operator topology on $ \mathcal{B}(\mathcal{H})$ to a  bounded positive invertible operator.
 \item  Bessel sequence in $ \mathcal{B}(\mathcal{H}, \mathcal{H}_0)$ if the series $ \sum_{j\in \mathbb{J}} A_j^*A_j$  converges in the strong-operator topology on $ \mathcal{B}(\mathcal{H})$ to a  bounded  positive operator.
 \end{enumerate}	
\end{definition}
 We refer \cite{KAFTALLARSONZHANG1, SUN1, HANLIMENG, MENG, SUNSTAB} for more details on operator-valued frames and Bessel sequences in Hilbert spaces.
 In \cite{MAHESHKRISHNASAMJOHNSON} we defined the following two definitions.
 \begin{definition}\cite{MAHESHKRISHNASAMJOHNSON}
Let $ \{\tau_j\}_{j\in \mathbb{J}}$ be a set of vectors in  a Hilbert space $\mathcal{H}$. A set of vectors   $ \{x_j\}_{j\in \mathbb{J}}$  in   $\mathcal{H}$ is said to be a
 \begin{enumerate}[\upshape(i)]
 \item  frame  with respect to   (w.r.t.)   $ \{\tau_j\}_{j\in \mathbb{J}}$  if there are  $  c,  d  >0$ such that
 \begin{enumerate}[\upshape(a)]
 \item the map  $\mathcal{H} \ni  h \mapsto \sum_{j\in\mathbb{J}}\langle h,  x_j\rangle\tau_j \in \mathcal{H} $ is a well-defined  bounded positive  invertible operator.
 \item $  \sum_{j \in \mathbb{J}}|\langle h,x_j\rangle|^2  \leq c\|h\|^2 , \forall h \in \mathcal{H}; 
 \sum_{j \in \mathbb{J}}|\langle h,\tau_j\rangle|^2 \leq d\|h\|^2 , \forall h \in \mathcal{H}.$
 \end{enumerate}
 \item   Bessel sequence    w.r.t.  $ \{\tau_j\}_{j\in \mathbb{J}}$   if there are  $  c,  d  >0$ such that
 \begin{enumerate}[\upshape(a)]
 \item the map  $\mathcal{H} \ni  h \mapsto \sum_{j\in\mathbb{J}}\langle h,  x_j\rangle\tau_j \in \mathcal{H} $ is a well-defined  bounded positive  operator.
 \item $  \sum_{j \in \mathbb{J}}|\langle h,x_j\rangle|^2  \leq c\|h\|^2 , \forall h \in \mathcal{H}; 
 \sum_{j \in \mathbb{J}}|\langle h,\tau_j\rangle|^2 \leq d\|h\|^2 , \forall h \in \mathcal{H}.$
 \end{enumerate}
 \end{enumerate}
 \end{definition}
 \begin{definition}\cite{MAHESHKRISHNASAMJOHNSON}\label{DEFOVF}
 Define 
 $ L_j : \mathcal{H}_0 \ni h \mapsto e_j\otimes h \in  \ell^2(\mathbb{J}) \otimes \mathcal{H}_0$,  where $\{e_j\}_{j \in \mathbb{J}} $ is  the standard orthonormal basis for $\ell^2(\mathbb{J})$,  for each $ j \in \mathbb{J}$. A collection $ \{A_j\}_{j \in \mathbb{J}} $  in $ \mathcal{B}(\mathcal{H}, \mathcal{H}_0)$ is said to be an operator-valued  	
 \begin{enumerate}[\upshape(i)]
\item frame  in $ \mathcal{B}(\mathcal{H}, \mathcal{H}_0) $  with respect to a collection  $ \{\Psi_j\}_{j \in \mathbb{J}}  $ in $ \mathcal{B}(\mathcal{H}, \mathcal{H}_0) $ if 
 \begin{enumerate}[\upshape(a)]
 \item the series $ \sum_{j\in \mathbb{J}} \Psi_j^*A_j$  converges in the strong-operator topology  on $ \mathcal{B}(\mathcal{H})$ to a  bounded positive invertible operator,
 \item both $ \sum_{j\in \mathbb{J}} L_jA_j$, $ \sum_{j\in \mathbb{J}} L_j\Psi_j$ converge in the strong-operator topology on $ \mathcal{B}(\mathcal{H},\ell^2(\mathbb{J}) \otimes \mathcal{H}_0 )$ to  bounded operators.
 \end{enumerate}
 \item Bessel sequence  in $ \mathcal{B}(\mathcal{H}, \mathcal{H}_0) $  with respect to a collection  $ \{\Psi_j\}_{j \in \mathbb{J}}  $ in $ \mathcal{B}(\mathcal{H}, \mathcal{H}_0) $ if 
 \begin{enumerate}[\upshape(a)]
 \item the series $ \sum_{j\in \mathbb{J}} \Psi_j^*A_j$  converges in the strong-operator topology  on $ \mathcal{B}(\mathcal{H})$ to a  bounded positive  operator,
 \item both $ \sum_{j\in \mathbb{J}} L_jA_j$, $ \sum_{j\in \mathbb{J}} L_j\Psi_j$ converge in the strong-operator topology on $ \mathcal{B}(\mathcal{H},\ell^2(\mathbb{J}) \otimes \mathcal{H}_0 )$ to  bounded operators.
 \end{enumerate}
 \end{enumerate}
 \end{definition}
 
  All of our vector-valued integrals are in the weak-sense (i.e., they are Gelfand-Pettis integral and we refer \cite{TALAGRAND, MUSIAL, HILDEBRANDT, MUSIAL1, PETTIS, MASANI, EDGAR, MASANINIEMI} for more details). $ \Omega$ denotes a measure space with positive measure $\mu$.

Continuous frame, as a generalization of frames was introduced independently by Ali, Antoine, Gazeau \cite{ALI1}  and Kaiser \cite{KAISER1}.
\begin{definition}\cite{ALI1, KAISER1} \label{SEQUENTIALSINGLEKAISERALI}
A set of vectors   $ \{x_\alpha\}_{\alpha\in \Omega}$  in  $\mathcal{H}$ is said to be a continuous frame    for $\mathcal{H}$ if 
\begin{enumerate}[\upshape(i)]
\item for each  $h \in \mathcal{H}$,  the  map   $\Omega \ni \alpha \mapsto\langle  h, x_\alpha\rangle \in \mathbb{K}$ is measurable,
\item there exist $a,b>0$ such that 

\begin{align*}
a\|h\|^2\leq \int_{\Omega}|\langle h, x_\alpha\rangle|^2 \,d\mu(\alpha)\leq  b \|h\|^2,\quad \forall h \in \mathcal{H}.
\end{align*}
\end{enumerate}	
\end{definition}
We refer \cite{KAISER1, ALI1, GABARDOHANADV, GROHS, FORNASIERRAUHUT, RAHIMINAJATIDEHGHAN, IVERSON1, IVERSON2} for more details on continuous frames. We also refer \cite{FORNASIERRAUHUT, FREEMANSPEEGLE, ALIBOOK} for connections between continuous frames and discrete frames.

\section{Extension of continuous operator-valued frames}\label{MK}
In order to set continuous version of  Definition \ref{DEFOVF},  we want existence of certain operators, for which  we use the following definition.
\begin{definition}\cite{ABDOLLAHPOURFAROUGHI, HANLARSONLIULIU} \label{FIRSTDEFINITION}
Let $ \Omega$ be a measure space with positive measure $\mu$.  A collection $ \{A_\alpha\}_{\alpha \in \Omega} $  in $ \mathcal{B}(\mathcal{H}, \mathcal{H}_0)$ is said to be   \textit{continuous operator-valued Bessel}	if 
\begin{enumerate}[\upshape(i)]
\item for each  $h \in \mathcal{H}$, the map  $\Omega \ni \alpha \mapsto A_\alpha h\in \mathcal{H}_0$ is measurable,
\item there exists $  b >0$ such that
\begin{equation}\label{BESSELCONDITION}
\int_{\Omega}\|A_\alpha h\|^2\,d\mu(\alpha) \leq b\|h\|^2  ,\quad \forall h \in \mathcal{H}.
\end{equation}
\end{enumerate}
\end{definition}
Let $ \{A_\alpha\}_{\alpha \in \Omega} $ and  $ \{\Psi_\alpha\}_{\alpha \in \Omega} $  in $ \mathcal{B}(\mathcal{H}, \mathcal{H}_0)$  be  continuous operator-valued Bessel with bounds $b$ and $d$, respectively. Continuity of norm and polarization identity reveal that the map $\Omega \ni \alpha \mapsto \langle A_\alpha h, \Psi_\alpha g\rangle \in \mathbb{K}$ is measurable, for each fixed $h,g \in \mathcal{H}$. Cauchy-Schwarz inequality and Inequality (\ref{BESSELCONDITION}) now tell that this map is in $\mathcal{L}^2(\Omega,\mathbb{K})$, explicitly,

\begin{align*}
\left|\int_{\Omega}\langle A_\alpha h, \Psi_\alpha g \rangle\,d\mu(\alpha)  \right|&\leq \int_{\Omega}|\langle A_\alpha h, \Psi_\alpha g \rangle|\,d\mu(\alpha)\leq \int_{\Omega}\| A_\alpha h\|\|\Psi_\alpha g \|\,d\mu(\alpha)\\
&\leq \left(\int_{\Omega}\|A_\alpha h\|^2\,d\mu(\alpha)\right)^\frac{1}{2}\left(\int_{\Omega}\|\Psi_\alpha g\|^2\,d\mu(\alpha)\right)^\frac{1}{2}\leq \sqrt{bd} \|h\|\|g\|.
\end{align*}
Previous inequalities also show that for each fixed $h \in \mathcal{H}$,   the map 

\begin{align*}
\zeta_h:\mathcal{H} \ni g \mapsto \int_{\Omega}\langle A_\alpha h, \Psi_\alpha g \rangle\,d\mu(\alpha) \in \mathbb{K}
\end{align*}
is a conjugate-linear bounded functional with $\|\zeta_h\|_{\text{op}}\leq \sqrt{bd}\|h\|$ (where $\|\cdot\|_{\text{op}}$ denotes the operator-norm). Let $\int_{\Omega}\Psi_\alpha^*A_\alpha h\,d\mu(\alpha)$ be that unique element (which comes from Riesz representation theorem)  of $\mathcal{H}$ such that 
\begin{align*}
 \int_{\Omega}\langle A_\alpha h, \Psi_\alpha g \rangle\,d\mu(\alpha)&=\zeta_h g=\left\langle \int_{\Omega}\Psi_\alpha^*A_\alpha h\,d\mu(\alpha), g\right\rangle, ~ \forall g \in \mathcal{H} \text{ and }\\
  \|\zeta_h\|_{\text{op}}&=\left\|\int_{\Omega}\Psi_\alpha^*A_\alpha h\,d\mu(\alpha)\right\|.
\end{align*}
By varying $h \in \mathcal{H}$, we get the map

\begin{align*}
S_{A,\Psi}:\mathcal{H} \ni h \mapsto S_{A,\Psi}h\coloneqq \int_{\Omega}\Psi_\alpha^*A_\alpha h\,d\mu(\alpha)\in \mathbb{K}.
\end{align*}
Above map is a bounded linear operator,  $S_{A,\Psi}^*=S_{\Psi,A}$ and $S_{A,A}$ is positive. Indeed,

\begin{align*}
 \|S_{A,\Psi}\|&=\sup_{h\in \mathcal{H}, \|h\|\leq 1}\|S_{A,\Psi}h\|=\sup_{h\in \mathcal{H}, \|h\|\leq 1}\left\|\int_{\Omega}\Psi_\alpha^*A_\alpha h\,d\mu(\alpha)\right\|\\
 &=\sup_{h\in \mathcal{H}, \|h\|\leq 1}\|\zeta_h\|_{\text{op}}\leq \sup_{h\in \mathcal{H}, \|h\|\leq 1}\sqrt{bd}\|h\|=\sqrt{bd}.
\end{align*}
and 

\begin{align*}
\langle S_{A,\Psi}h, g\rangle&=\int_{\Omega}\langle A_\alpha h, \Psi_\alpha g \rangle\,d\mu(\alpha)=\int_{\Omega}\overline{\langle  \Psi_\alpha g, A_\alpha h \rangle}\,d\mu(\alpha)\\
&=\overline{\int_{\Omega}\langle  \Psi_\alpha g, A_\alpha h \rangle\,d\mu(\alpha)}=\overline{\langle S_{\Psi, A}g, h\rangle}=\langle h, S_{\Psi, A}g\rangle,\quad \forall h,g \in \mathcal{H},\\
\langle S_{A,A}h, h\rangle&=\int_{\Omega}\|A_\alpha h\|^2 \,d\mu(\alpha)\geq 0,\quad \forall h \in \mathcal{H}.
\end{align*}
We further note that Inequality (\ref{BESSELCONDITION}) gives that 

\begin{equation*}
\theta_A:\mathcal{H} \ni h \mapsto \theta_Ah \in \mathcal{L}^2(\Omega, \mathcal{H}_0) , \quad\theta_Ah: \Omega \ni \alpha \mapsto A_\alpha h \in \mathcal{H}_0
\end{equation*}
 is a well-defined bounded linear operator whose adjoint is 
\begin{equation*}
\theta_A^*:\mathcal{L}^2(\Omega, \mathcal{H}_0)\ni f \mapsto \int_{\Omega}A_\alpha^*f(\alpha) \,d\mu(\alpha)\in \mathcal{H},
\end{equation*}
where the integral  is in the weak-sense. In fact, 

\begin{align*}
\|\theta_Ah\|^2=\int_{\Omega}\|\theta_Ah(\alpha)\|^2\,d\mu(\alpha)=\int_{\Omega}\|A_\alpha h\|^2\,d\mu(\alpha)\leq b \|h\|^2,\quad \forall h \in \mathcal{H}
\end{align*}
and 

\begin{align*}
\langle \theta_Ah, f\rangle&=\int_{\Omega} \langle\theta_Ah(\alpha), f(\alpha)\rangle\,d\mu(\alpha)=\int_{\Omega} \langle A_\alpha h, f(\alpha)\rangle\,d\mu(\alpha)\\
&=\int_{\Omega} \langle h,  A_\alpha^* f(\alpha)\rangle\,d\mu(\alpha)=\langle h, \theta_A^* f\rangle,\quad \forall h\in \mathcal{H}, \forall f \in \mathcal{L}^2(\Omega, \mathcal{H}_0).
\end{align*}
We next observe that Condition (ii) in Definition \ref{FIRSTDEFINITION} holds if and only if the map $\theta_A$ is a well-defined bounded linear operator. With this knowledge we are ready to define the continuous version of Definition \ref{DEFOVF}.
\begin{definition}\label{1}
  A collection $ \{A_\alpha\}_{\alpha \in \Omega} $  in $ \mathcal{B}(\mathcal{H}, \mathcal{H}_0)$ is said to be a \textit{ continuous operator-valued frame} (in short,  continuous (ovf)) in $ \mathcal{B}(\mathcal{H}, \mathcal{H}_0) $  with respect to a collection  $ \{\Psi_\alpha\}_{\alpha \in \Omega}  $ in $ \mathcal{B}(\mathcal{H}, \mathcal{H}_0) $ if 
\begin{enumerate}[\upshape(i)]
\item for each  $h \in \mathcal{H}$, both  maps   $\Omega \ni \alpha \mapsto A_\alpha h\in \mathcal{H}_0$ and $\Omega \ni\alpha \mapsto \Psi_\alpha h\in \mathcal{H}_0$ are measurable,
\item the map (we call as frame operator) $S_{A,\Psi} : \mathcal{H} \ni h \mapsto \int_{\Omega}\Psi_\alpha^*A_\alpha h  \,d\mu(\alpha)\in \mathcal{H}$ (the integral is in the weak-sense) is a well-defined bounded positive invertible operator,
\item  both maps (we call as analysis operator and its adjoint as synthesis operator) $ \theta_A:\mathcal{H} \ni h \mapsto \theta_Ah \in \mathcal{L}^2(\Omega, \mathcal{H}_0) $, $\theta_Ah: \Omega \ni \alpha \mapsto A_\alpha h \in \mathcal{H}_0$ and  $\theta_\Psi:\mathcal{H} \ni h \mapsto \theta_\Psi h \in \mathcal{L}^2(\Omega, \mathcal{H}_0) $, $\theta_\Psi h: \Omega \ni \alpha \mapsto \Psi_\alpha h \in \mathcal{H}_0 $ are well-defined bounded linear operators.
\end{enumerate}
 We note that $\theta_A^*:\mathcal{L}^2(\Omega, \mathcal{H}_0)\ni f \mapsto \int_{\Omega}A_\alpha^*f(\alpha) \,d\mu(\alpha)\in \mathcal{H} $, $\theta_\Psi^*:\mathcal{L}^2(\Omega, \mathcal{H}_0)\ni f \mapsto \int_{\Omega}\Psi_\alpha^*f(\alpha) \,d\mu(\alpha)\in \mathcal{H}  $ (both integrals are in the weak-sense). Notions of frame bounds, Parseval frame are similar to the same in Definition 2.1 in \cite{MAHESHKRISHNASAMJOHNSON}.

 Whenever $ \{A_\alpha \}_{\alpha \in \Omega}$  is a continuous operator-valued frame w.r.t. $ \{\Psi_\alpha \}_{\alpha \in \Omega}$ we write $(\{A_\alpha\}_{\alpha \in \Omega}, \{\Psi_\alpha\}_{\alpha \in \Omega}) $ is continuous (ovf).
 
For fixed  $ \Omega$, $\mathcal{H}, \mathcal{H}_0 $ and $ \{\Psi_\alpha \}_{\alpha \in \Omega}$, the set of all continuous  operator-valued frames in $ \mathcal{B}(\mathcal{H}, \mathcal{H}_0) $ with respect to collection  $ \{\Psi_\alpha \}_{\alpha \in \Omega}$ is denoted by $ \mathscr{F}_\Psi.$
\end{definition}
\begin{remark}
Fundamental difference of continuous frames with discrete one is that we are not allowed to use orthonormal bases (indexed by $\Omega$).
\end{remark}
 If the condition \text{\upshape(ii)} in Definition \ref{1} is replaced by ``the map  $S_{A,\Psi} : \mathcal{H} \ni h \mapsto \int_{\Omega}\Psi_\alpha^*A_\alpha h  \,d\mu(\alpha)\in \mathcal{H}$ is a well-defined bounded positive  operator (not necessarily invertible)", then we say $ \{A_\alpha\}_{\alpha\in \Omega}$   w.r.t. $ \{\Psi_\alpha\}_{\alpha\in\Omega}$ is Bessel.

We note that (ii) in Definition \ref{1} implies that  there are real $ a,b >0$ such that for all $h \in \mathcal{H}$,

\begin{align*}
a\|h\|^2\leq \langle S_{A,\Psi}h, h\rangle =\left\langle\int_\Omega\Psi^*_\alpha A_\alpha h \,d\mu(\alpha), h\right \rangle   = \int_\Omega\langle \Psi^*_\alpha A_\alpha h, h\rangle  \,d\mu(\alpha) =\int_\Omega\langle A_\alpha h, \Psi_\alpha h\rangle  \,d\mu(\alpha)\leq b\|h\|^2,
\end{align*}
and  (iii) implies there exist $c,d>0$ such that for all $h \in \mathcal{H}$,

\begin{align*}
\|\theta_Ah\|^2=\langle\theta_Ah,\theta_Ah \rangle=\int_\Omega\langle \theta_Ah(\alpha),\theta_Ah(\alpha) \rangle \,d \mu(\alpha) =\int_\Omega\langle A_\alpha h,A_\alpha h\rangle  \,d\mu(\alpha)=\int_\Omega\| A_\alpha h\|^2 \, d\mu(\alpha)\leq c\|h\|^2;\\
\|\theta_\Psi h\|^2=\langle\theta_\Psi h,\theta_\Psi h \rangle=\int_\Omega\langle \theta_\Psi h(\alpha),\theta_\Psi h(\alpha) \rangle  \,d \mu(\alpha) =\int_\Omega\langle \Psi _\alpha h,\Psi _\alpha h\rangle  \,d\mu(\alpha)=\int_\Omega\|\Psi_\alpha h\|^2 \,d\mu(\alpha)\leq d\|h\|^2.
\end{align*}

We note the following. 
\begin{enumerate}[(i)]
\item If  $\{A_\alpha\}_{\alpha \in \Omega} $ is a continuous  (ovf) w.r.t. $ \{\Psi_\alpha\}_{\alpha \in \Omega} $,   then $ \{\Psi_\alpha\}_{\alpha \in \Omega} $ is  a continuous   (ovf) w.r.t. $ \{A_\alpha\}_{\alpha \in \Omega} $. 
\item $\{h \in \mathcal{H}: A_\alpha h=0, \forall \alpha \in \Omega\} =\{0\}= \{h \in \mathcal{H}: \Psi_\alpha h=0, \forall \alpha \in \Omega\} $, and $\overline{\operatorname{span}}\cup_{\alpha\in \Omega} A^*_\alpha(\mathcal{H}_0)=\mathcal{H}= \overline{\operatorname{span}}\cup_{\alpha\in \Omega} \Psi^*_\alpha(\mathcal{H}_0). $
\item $ S_{A, \Psi}=  S_{\Psi, A}$. 
\item If  $ \{A_\alpha\}_{\alpha \in \Omega}, $ $ \{B_\alpha\}_{\alpha \in \Omega}  \in \mathscr{F}_\Psi $, then $\{A_\alpha+B_\alpha\}_{\alpha \in \Omega} \in \mathscr{F}_\Psi $, and $\{\alpha A_\alpha\}_{\alpha \in \Omega} \in \mathscr{F}_\Psi, \forall \alpha >0. $
\item If $(\{A_\alpha\}_{\alpha\in \Omega}, \{\Psi_\alpha\}_{\alpha\in \Omega}) $ is tight continuous (ovf) with bound $ a, $ then $ S_{A,\Psi}=aI_\mathcal{H}.$
\end{enumerate}
 
\begin{proposition}
Let $(\{A_\alpha\}_{\alpha\in \Omega}, \{\Psi_\alpha\}_{\alpha\in \Omega}) $ be  a continuous (ovf)   in $ \mathcal{B}(\mathcal{H}, \mathcal{H}_0)$  with an upper frame  bound $b$. If $\{\alpha\}$ is measurable and $\Psi_\alpha^*A_\alpha\geq 0, \forall \alpha \in \Omega ,$ then $ \mu(\{\alpha\})\|\Psi_\alpha^*A_\alpha\|\leq b, \forall \alpha \in \Omega.$
\end{proposition}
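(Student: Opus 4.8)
The plan is to use monotonicity of the weak integral defining the frame operator, together with the positivity hypothesis $\Psi_\alpha^*A_\alpha\geq 0$. Throughout, fix $\alpha\in\Omega$ and let $\beta$ denote the dummy integration variable.

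First I would record what hypothesis (ii) of Definition \ref{1} gives, exactly as already computed in the paragraph following that definition: for every $h\in\mathcal{H}$,
\[
\int_{\Omega}\langle \Psi_\beta^*A_\beta h, h\rangle\,d\mu(\beta)=\int_{\Omega}\langle A_\beta h, \Psi_\beta h\rangle\,d\mu(\beta)=\langle S_{A,\Psi}h, h\rangle\leq b\|h\|^2 .
\]
By the standing assumption $\Psi_\beta^*A_\beta\geq 0$ for every $\beta$, the integrand $\beta\mapsto\langle \Psi_\beta^*A_\beta h, h\rangle$ is a nonnegative function, and it is measurable by Condition (i) of Definition \ref{1} together with the polarization and continuity-of-norm argument used earlier in this section.

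Next, since $\{\alpha\}$ is measurable and the integrand is nonnegative, monotonicity of the integral gives, for every $h\in\mathcal{H}$,
\[
\mu(\{\alpha\})\,\langle \Psi_\alpha^*A_\alpha h, h\rangle=\int_{\{\alpha\}}\langle \Psi_\beta^*A_\beta h, h\rangle\,d\mu(\beta)\leq\int_{\Omega}\langle \Psi_\beta^*A_\beta h, h\rangle\,d\mu(\beta)\leq b\|h\|^2 ,
\]
the first equality being the integral of a constant over the set $\{\alpha\}$. Restricting to $\|h\|\leq 1$, taking the supremum over the unit ball, and using that for the positive (hence self-adjoint) operator $\Psi_\alpha^*A_\alpha$ one has $\|\Psi_\alpha^*A_\alpha\|=\sup_{\|h\|\leq 1}\langle \Psi_\alpha^*A_\alpha h, h\rangle$, we obtain $\mu(\{\alpha\})\|\Psi_\alpha^*A_\alpha\|\leq b$. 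Since $\alpha\in\Omega$ was arbitrary, this is the claim.

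I do not expect a genuine obstacle: the only points needing a word of justification are the measurability of the scalar integrand (already established before Definition \ref{1}) and the reduction of the Gelfand--Pettis integral over a finite-measure singleton to scalar multiplication (immediate by testing against vectors of $\mathcal{H}$). The degenerate case $\mu(\{\alpha\})=\infty$ forces $\Psi_\alpha^*A_\alpha=0$ via the displayed inequality, so the conclusion still holds under the usual convention $0\cdot\infty=0$.
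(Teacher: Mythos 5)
Your argument is correct and is essentially the paper's own proof: both integrate the nonnegative integrand $\beta\mapsto\langle\Psi_\beta^*A_\beta h,h\rangle$ over the singleton $\{\alpha\}$, use positivity to compare with the integral over all of $\Omega$, bound it by $b\|h\|^2$, and then take the supremum over the unit ball using $\|\Psi_\alpha^*A_\alpha\|=\sup_{\|h\|=1}\langle\Psi_\alpha^*A_\alpha h,h\rangle$ for a positive operator. Your extra remarks on measurability of the integrand and the case $\mu(\{\alpha\})=\infty$ are harmless additions not present in the paper.
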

 \begin{proof}
For each $ h \in \mathcal{H}, \alpha \in \Omega$ we get $ \mu(\{\alpha\})\langle\Psi_\alpha^*A_\alpha h,h\rangle=\int_{\{\alpha\}}\langle\Psi_\beta^*A_\beta h,h\rangle\,d\mu(\beta) \leq \int_{\{\alpha\}}\langle\Psi_\beta^*A_\beta h,h\rangle\,d\mu(\beta)+\int_{\Omega\setminus\{\alpha\}}\langle\Psi_\beta^*A_\beta h,h\rangle\,d\mu(\beta)=\int_{\Omega}\langle\Psi_\beta^*A_\beta h,h\rangle\,d\mu(\beta) \leq b \langle h,h \rangle$ and hence $ \mu(\{\alpha\})\|\Psi_\alpha^*A_\alpha\|=\mu(\{\alpha\})\sup_{h\in \mathcal{H},\|h\|=1}$ $\langle\Psi_\alpha^*A_\alpha h,h\rangle \leq b, \forall \alpha \in \Omega.$
 \end{proof}

\begin{proposition}
Let $ (\{A_\alpha\}_{\alpha\in \Omega}, \{\Psi_\alpha\}_{\alpha\in \Omega}) $  be a continuous   (ovf) in    $\mathcal{B}(\mathcal{H}, \mathcal{H}_0)$. Then the bounded left-inverses of 
\begin{enumerate}[\upshape(i)]
\item $ \theta_A$ are precisely  $S_{A,\Psi}^{-1}\theta_\Psi^*+U(I_{\mathcal{L}^2(\Omega,\mathcal{H}_0)}-\theta_AS_{A,\Psi}^{-1}\theta_\Psi^*)$, where $U \in \mathcal{B}(\mathcal{L}^2(\Omega,\mathcal{H}_0), \mathcal{H})$.
\item $ \theta_\Psi$ are precisely  $S_{A,\Psi}^{-1}\theta_A^*+V(I_{\mathcal{L}^2(\Omega,\mathcal{H}_0)}-\theta_\Psi S_{A,\Psi}^{-1}\theta_A^*)$, where $V\in \mathcal{B} (\mathcal{L}^2(\Omega,\mathcal{H}_0), \mathcal{H})$.
\end{enumerate}	
\end{proposition}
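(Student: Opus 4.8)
The plan is to reduce everything to the operator identity $S_{A,\Psi}=\theta_\Psi^*\theta_A$ together with the elementary fact that, once an operator admits one bounded left-inverse, the collection of all its bounded left-inverses is an affine set of the displayed shape. First I would record the factorization of the frame operator: using the formula for $\theta_\Psi^*$ stated in Definition \ref{1}, for every $h\in\mathcal{H}$ one has
\begin{align*}
\theta_\Psi^*\theta_A h=\int_\Omega \Psi_\alpha^*(\theta_A h)(\alpha)\,d\mu(\alpha)=\int_\Omega \Psi_\alpha^*A_\alpha h\,d\mu(\alpha)=S_{A,\Psi}h,
\end{align*}
so $\theta_\Psi^*\theta_A=S_{A,\Psi}$, and symmetrically $\theta_A^*\theta_\Psi=S_{\Psi,A}=S_{A,\Psi}$. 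Since $S_{A,\Psi}$ is bounded, positive and invertible, $S_{A,\Psi}^{-1}\in\mathcal{B}(\mathcal{H})$, hence $L_0\coloneqq S_{A,\Psi}^{-1}\theta_\Psi^*\in\mathcal{B}(\mathcal{L}^2(\Omega,\mathcal{H}_0),\mathcal{H})$ and $L_0\theta_A=S_{A,\Psi}^{-1}(\theta_\Psi^*\theta_A)=I_\mathcal{H}$; that is, $L_0$ is a bounded left-inverse of $\theta_A$.

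Next I would prove the parametrization in (i) by a double inclusion. For the ``if'' direction, given any $U\in\mathcal{B}(\mathcal{L}^2(\Omega,\mathcal{H}_0),\mathcal{H})$, set $L=L_0+U(I_{\mathcal{L}^2(\Omega,\mathcal{H}_0)}-\theta_AL_0)$; then $L$ is bounded and, using $L_0\theta_A=I_\mathcal{H}$,
\begin{align*}
L\theta_A=L_0\theta_A+U(\theta_A-\theta_AL_0\theta_A)=I_\mathcal{H}+U(\theta_A-\theta_A)=I_\mathcal{H}.
\end{align*}
For the ``only if'' direction, let $L\in\mathcal{B}(\mathcal{L}^2(\Omega,\mathcal{H}_0),\mathcal{H})$ be an arbitrary bounded left-inverse of $\theta_A$, so $L\theta_A=I_\mathcal{H}$. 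Taking $U=L$ in the formula gives
\begin{align*}
L_0+L(I_{\mathcal{L}^2(\Omega,\mathcal{H}_0)}-\theta_AL_0)=L_0+L-(L\theta_A)L_0=L_0+L-L_0=L,
\end{align*}
so $L$ has the required form. Since $L_0=S_{A,\Psi}^{-1}\theta_\Psi^*$ and $\theta_AL_0=\theta_AS_{A,\Psi}^{-1}\theta_\Psi^*$, this is exactly the expression in (i).

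Finally, (ii) follows from (i) by symmetry. By item (i) of the list preceding the proposition, $(\{\Psi_\alpha\}_{\alpha\in\Omega},\{A_\alpha\}_{\alpha\in\Omega})$ is again a continuous (ovf), with the roles of $\theta_A,\theta_\Psi$ interchanged and frame operator $S_{\Psi,A}=S_{A,\Psi}$; applying part (i) to this pair and renaming the free operator $U$ as $V$ yields the stated description of the bounded left-inverses of $\theta_\Psi$. There is no serious obstacle here; the only point requiring care is the bookkeeping of which analysis operator pairs with which synthesis operator in the factorization $S_{A,\Psi}=\theta_\Psi^*\theta_A$ (rather than $\theta_A^*\theta_\Psi$), since getting this backwards would interchange $\theta_A^*$ and $\theta_\Psi^*$ in the two parametrizations.
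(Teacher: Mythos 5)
Your proof is correct and is essentially the argument the paper intends: the paper simply defers to the discrete analogue (Proposition 2.29 of the cited reference), which is the same affine parametrization of left-inverses built from the particular left-inverse $S_{A,\Psi}^{-1}\theta_\Psi^*$ arising from the factorization $S_{A,\Psi}=\theta_\Psi^*\theta_A$. Your verification of both inclusions and the symmetry reduction for (ii) via $S_{\Psi,A}=S_{A,\Psi}$ are all sound.
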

\begin{proof}
Similar to the proof of Proposition 2.29 in \cite{MAHESHKRISHNASAMJOHNSON}.	
 \end{proof} 

\begin{proposition}\label{2.2}
For every $ \{A_\alpha\}_{\alpha \in \Omega}  \in \mathscr{F}_\Psi$,
 \begin{enumerate}[ \upshape (i)]
\item $ \theta_A^* \theta_A h= \int_{\Omega}A_\alpha^*A_\alpha h \,d\mu(\alpha)$, $\forall h \in \mathcal{H}$.
\item $ S_{A, \Psi} = \theta_\Psi^*\theta_A=\theta_A^*\theta_\Psi =S_{\Psi,A}.$
\item $(\{A_\alpha\}_{\alpha\in \Omega}, \{\Psi_\alpha\}_{\alpha\in \Omega}) $  is Parseval  if and only if $  \theta_\Psi^*\theta_A =I_\mathcal{H}.$ 
\item $(\{A_\alpha\}_{\alpha\in \Omega}, \{\Psi_\alpha\}_{\alpha\in \Omega}) $  is Parseval  if and only if $ \theta_A\theta_\Psi^* $ is idempotent.
\item $ P_{A, \Psi}\coloneqq\theta_AS_{A,\Psi}^{-1}\theta_\Psi^* $ is idempotent and $ P_{\Psi, A}=P_{A, \Psi}^*$.
\item $\theta_A $ and $ \theta_\Psi$ are injective and  their ranges are closed.
\item  $\theta_A^* $ and $ \theta_\Psi^*$ are surjective.
\end{enumerate}
\end{proposition}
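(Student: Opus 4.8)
The plan is to prove the seven items essentially in the given order, with one adjustment: items (vi) and (vii) are established before the converse direction of (iv), since that direction needs injectivity of $\theta_A$ together with surjectivity of $\theta_\Psi^*$. The single workhorse is the identity in (ii), namely $S_{A,\Psi}=\theta_\Psi^*\theta_A=\theta_A^*\theta_\Psi$, combined with the fact that $S_{A,\Psi}$ is bounded, positive, and invertible.

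For (i) I would substitute the pointwise definition $(\theta_A h)(\alpha)=A_\alpha h$ into the formula $\theta_A^*f=\int_\Omega A_\alpha^*f(\alpha)\,d\mu(\alpha)$ for the adjoint derived in the discussion preceding Definition \ref{1}; both integrals are weak integrals, so there is nothing to interchange. For (ii) I would compute, for all $h,g\in\mathcal{H}$, $\langle\theta_\Psi^*\theta_A h,g\rangle=\langle\theta_A h,\theta_\Psi g\rangle=\int_\Omega\langle A_\alpha h,\Psi_\alpha g\rangle\,d\mu(\alpha)=\langle S_{A,\Psi}h,g\rangle$, hence $\theta_\Psi^*\theta_A=S_{A,\Psi}$, and symmetrically $\theta_A^*\theta_\Psi=S_{\Psi,A}$; the equality $S_{A,\Psi}=S_{\Psi,A}$ is item (iii) of the list recorded just before this proposition. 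Item (iii) of the proposition is then immediate, since "$(\{A_\alpha\}_{\alpha\in\Omega},\{\Psi_\alpha\}_{\alpha\in\Omega})$ Parseval'' means by definition $S_{A,\Psi}=I_\mathcal{H}$, which by (ii) is exactly $\theta_\Psi^*\theta_A=I_\mathcal{H}$.

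For (vi) I would use invertibility of $S_{A,\Psi}$: from $\theta_\Psi^*\theta_A=S_{A,\Psi}$ one gets $\|h\|\le\|S_{A,\Psi}^{-1}\|\,\|\theta_\Psi^*\|\,\|\theta_A h\|$, so $\theta_A$ is bounded below, hence injective with closed range; the argument for $\theta_\Psi$ is identical using $\theta_A^*\theta_\Psi=S_{\Psi,A}$. For (vii), $\theta_A^*$ is surjective because $S_{A,\Psi}=\theta_A^*\theta_\Psi$ is surjective (being invertible), and likewise $\theta_\Psi^*$ is surjective because $S_{\Psi,A}=\theta_\Psi^*\theta_A$ is. With these facts available, (iv) goes as follows: if Parseval, then $\theta_\Psi^*\theta_A=I_\mathcal{H}$, so $(\theta_A\theta_\Psi^*)^2=\theta_A(\theta_\Psi^*\theta_A)\theta_\Psi^*=\theta_A\theta_\Psi^*$; conversely, idempotency of $\theta_A\theta_\Psi^*$ rewrites as $\theta_A(S_{A,\Psi}-I_\mathcal{H})\theta_\Psi^*=0$, and injectivity of $\theta_A$ followed by surjectivity of $\theta_\Psi^*$ forces $S_{A,\Psi}=I_\mathcal{H}$.

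Finally, for (v): $P_{A,\Psi}^2=\theta_A S_{A,\Psi}^{-1}(\theta_\Psi^*\theta_A)S_{A,\Psi}^{-1}\theta_\Psi^*=\theta_A S_{A,\Psi}^{-1}\theta_\Psi^*=P_{A,\Psi}$ by (ii), and since $S_{A,\Psi}$ is self-adjoint and equals $S_{\Psi,A}$, taking adjoints gives $P_{A,\Psi}^*=\theta_\Psi S_{A,\Psi}^{-1}\theta_A^*=\theta_\Psi S_{\Psi,A}^{-1}\theta_A^*=P_{\Psi,A}$. The whole argument is routine and parallels the discrete case in \cite{MAHESHKRISHNASAMJOHNSON}; I do not anticipate a genuine obstacle. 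The only points requiring mild care are the ordering (placing (iv)'s converse after the injectivity/surjectivity facts) and the legitimacy of manipulating the weak-integral formulas for the adjoints of $\theta_A$ and $\theta_\Psi$, which is guaranteed by Condition (ii) of Definition \ref{FIRSTDEFINITION} for both families.
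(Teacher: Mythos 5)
Your proposal is correct and follows essentially the same route as the paper: the paper proves (i) and (ii) by exactly the inner-product computations you give and then defers (iii)--(vii) to the discrete analogue (Proposition 2.30 of \cite{MAHESHKRISHNASAMJOHNSON}), whose standard arguments are precisely the ones you spell out. Your explicit care in establishing (vi) and (vii) before the converse of (iv), and your use of the lower bound $\|h\|\leq\|S_{A,\Psi}^{-1}\|\,\|\theta_\Psi^*\|\,\|\theta_Ah\|$ for closedness of the range, are sound and fill in details the paper leaves to the reference.
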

\begin{proof} Let $ h, g \in \mathcal{H}.$ We observe 
$$  \left\langle  \theta_A^* \theta_Ah, g \right\rangle= \left\langle\theta_Ah,\theta_Ag \right\rangle=\int_{\Omega}\langle\theta_Ah(\alpha),\theta_Ag(\alpha) \rangle \,d\mu(\alpha)=\int_{\Omega}\langle A_\alpha h,  A_\alpha g \rangle \,d\mu(\alpha) =\left\langle\int_{\Omega}A_\alpha^*A_\alpha h \,d\mu(\alpha), g \right\rangle ,$$ 
and 
\begin{align*}
\langle S_{A, \Psi}h, g \rangle &= \left\langle \int_{\Omega}\Psi_\alpha^*A_\alpha h \,d\mu(\alpha), g\right \rangle = \int_{\Omega}\langle \Psi_\alpha^*A_\alpha h ,g \rangle \,d\mu(\alpha)\\
&=\int_{\Omega}\langle A_\alpha h ,\Psi_\alpha g \rangle \,d\mu(\alpha) =\langle  \theta_Ah, \theta_\Psi g\rangle=\langle\theta_\Psi^*\theta_Ah ,g \rangle ,
\end{align*}
hence we get (i) and (ii). Arguments for other statements are similar to the proof of Proposition 2.30 in  \cite{MAHESHKRISHNASAMJOHNSON}.
\end{proof}

\begin{proposition}
A collection $ \{A_\alpha\}_{\alpha \in \Omega} $   in $ \mathcal{B}(\mathcal{H}, \mathcal{H}_0)$ is  a continuous  (ovf)  w.r.t. $ \{\Psi_\alpha\}_{\alpha \in \Omega}  $ in $ \mathcal{B}(\mathcal{H}, \mathcal{H}_0) $ if and only if there exist $ a, b, c , d>0$ such that 
\begin{enumerate}[\upshape(i)]
\item for each  $h \in \mathcal{H}$, both  maps   $\Omega \ni \alpha \mapsto A_\alpha h\in \mathcal{H}_0$, $\Omega \ni\alpha \mapsto \Psi_\alpha h\in \mathcal{H}_0$ are measurable,
\item $\int_{\Omega}\Psi_\alpha^*A_\alpha h\,d\mu(\alpha)=\int_{\Omega}A_\alpha^*\Psi_\alpha h\,d\mu(\alpha), \forall h \in \mathcal{H},$
\item $a\|h\|^2\leq\int_{\Omega}\langle A_\alpha h, \Psi_\alpha h\rangle \,d\mu(\alpha) \leq b\|h\|^2, \forall h \in \mathcal{H},$	
\item $\int_{\Omega}\| A_\alpha h\|^2 \,d\mu(\alpha)\leq c\|h\|^2, \forall h \in \mathcal{H}; \int_{\Omega} \|\Psi_\alpha h\|^2  \,d\mu(\alpha)\leq d\|h\|^2, \forall h \in \mathcal{H}$.
\end{enumerate}
\end{proposition}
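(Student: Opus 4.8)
The plan is to prove the equivalence by unwinding the definitions on both sides and checking that the four numbered conditions capture exactly what Definition \ref{1} demands. The forward direction is essentially a bookkeeping exercise: if $(\{A_\alpha\}, \{\Psi_\alpha\})$ is continuous (ovf), then (i) is immediate from Definition \ref{1}(i), condition (iii) is the two-sided estimate on $\langle S_{A,\Psi}h,h\rangle$ already displayed in the excerpt just after Definition \ref{1} (using positivity, boundedness and invertibility of $S_{A,\Psi}$ together with the identity $\langle S_{A,\Psi}h,h\rangle = \int_\Omega \langle A_\alpha h, \Psi_\alpha h\rangle\, d\mu(\alpha)$), and condition (iv) is precisely the pair of Bessel-type estimates for $\theta_A$ and $\theta_\Psi$ derived from Definition \ref{1}(iii). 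Condition (ii) is the self-adjointness $S_{A,\Psi} = S_{A,\Psi}^*$: since $S_{A,\Psi}$ is positive it is self-adjoint, and by Proposition \ref{2.2}(ii) (or directly from $S_{A,\Psi}^* = S_{\Psi,A}$ established in the preamble) self-adjointness is equivalent to $\int_\Omega \Psi_\alpha^* A_\alpha h\, d\mu(\alpha) = \int_\Omega A_\alpha^* \Psi_\alpha h\, d\mu(\alpha)$ for all $h$.

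For the converse, I would start from the four conditions and reconstruct the three clauses of Definition \ref{1}. Clause (i) is literally condition (i). For clause (iii): condition (iv) says exactly that $\int_\Omega \|A_\alpha h\|^2\, d\mu(\alpha) \le c\|h\|^2$ and $\int_\Omega \|\Psi_\alpha h\|^2\, d\mu(\alpha) \le d\|h\|^2$, which by the remark at the end of the discussion of Definition \ref{FIRSTDEFINITION} (``Condition (ii) in Definition \ref{FIRSTDEFINITION} holds if and only if $\theta_A$ is a well-defined bounded linear operator'') means $\theta_A$ and $\theta_\Psi$ are well-defined bounded operators; this also makes $\{A_\alpha\}$, $\{\Psi_\alpha\}$ continuous operator-valued Bessel, so the whole machinery preceding Definition \ref{1} applies and $S_{A,\Psi}$ is a well-defined bounded operator with $S_{A,\Psi}^* = S_{\Psi,A}$. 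Now condition (ii) gives $S_{A,\Psi} = S_{\Psi,A} = S_{A,\Psi}^*$, so $S_{A,\Psi}$ is self-adjoint. Condition (iii), rewritten via $\langle S_{A,\Psi}h, h\rangle = \int_\Omega \langle A_\alpha h, \Psi_\alpha h\rangle\, d\mu(\alpha)$, then reads $a\|h\|^2 \le \langle S_{A,\Psi}h,h\rangle \le b\|h\|^2$ for all $h$; the upper bound is automatic, and the lower bound on a self-adjoint operator forces $S_{A,\Psi} \ge aI_\mathcal{H} > 0$, hence positive, bounded below, and (being self-adjoint and bounded below) invertible with range all of $\mathcal{H}$. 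That establishes clause (ii), and clause (iii) was already obtained, completing the reverse implication.

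The main obstacle — really the only subtle point — is the step in the converse where one must pass from ``$S_{A,\Psi}$ is well-defined and bounded'' to asserting that it is \emph{positive}: a priori condition (iii) only controls the real part of $\langle S_{A,\Psi}h,h\rangle$ (or, over $\mathbb{C}$, only its behaviour on the diagonal), so positivity genuinely requires self-adjointness first, which is exactly why condition (ii) is present and cannot be dropped. Once self-adjointness is in hand, the spectral theorem (or the elementary fact that a self-adjoint operator with $\langle Th,h\rangle \ge a\|h\|^2$ is invertible) closes the argument. I would also take a moment to note that conditions (i) and (iv) together are precisely what guarantees, via the Cauchy--Schwarz computation reproduced in the excerpt, that the integral defining $S_{A,\Psi}$ converges in the weak sense, so there is no circularity in even writing down $S_{A,\Psi}$ before condition (iii) is invoked.
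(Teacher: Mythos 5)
Your proof is correct; the paper in fact states this proposition without proof, and your argument is precisely the intended one, obtained by unwinding Definition \ref{1} against the estimates displayed immediately after it (the identity $\langle S_{A,\Psi}h,h\rangle=\int_{\Omega}\langle A_\alpha h,\Psi_\alpha h\rangle\,d\mu(\alpha)$, the Bessel bounds for $\theta_A,\theta_\Psi$, and $S_{A,\Psi}^*=S_{\Psi,A}$). Your observation that condition (ii) is what supplies self-adjointness before the lower bound in (iii) can be upgraded to positivity and invertibility is exactly the right subtle point, and it is consistent with the paper's later remark (in the $\mathcal{H}_0=\mathbb{K}$ analogue) that the symmetry condition can be dropped over $\mathbb{C}$.
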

\begin{definition}\label{RIESZOVF}
 A continuous  (ovf)  $(\{A_\alpha \}_{\alpha\in \Omega}, \{\Psi_\alpha\}_{\alpha\in \Omega})$  in $\mathcal{B}(\mathcal{H}, \mathcal{H}_0)$ is said to be a Riesz continuous  (ovf)  if $ P_{A,\Psi}=  I_{\mathcal{L}^2(\Omega, \mathcal{H}_0)}$. 
 \end{definition}

 \begin{proposition}\label{RIESZOVFCHARACTERIZATIONPROPOSITION}
 A continuous  (ovf) $ (\{A_\alpha\}_{\Omega}, \{\Psi_\alpha\}_{\Omega}) $ in $ \mathcal{B}(\mathcal{H}, \mathcal{H}_0)$ is a Riesz continuous (ovf)  if and only if  $\theta_A(\mathcal{H})=\mathcal{L}^2(\Omega, \mathcal{H}_0)$ if and only if $\theta_\Psi(\mathcal{H})=\mathcal{L}^2(\Omega, \mathcal{H}_0).$
 \end{proposition}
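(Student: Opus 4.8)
The plan is to run a three-way equivalence through the condition on $P_{A,\Psi}$, using Proposition \ref{2.2} for everything. Write $(1)$ for ``$P_{A,\Psi}=I_{\mathcal{L}^2(\Omega,\mathcal{H}_0)}$'', $(2)$ for ``$\theta_A(\mathcal{H})=\mathcal{L}^2(\Omega,\mathcal{H}_0)$'', and $(3)$ for ``$\theta_\Psi(\mathcal{H})=\mathcal{L}^2(\Omega,\mathcal{H}_0)$''. I would first prove $(1)\Leftrightarrow(2)$ and then get $(1)\Leftrightarrow(3)$ for free from the $A\leftrightarrow\Psi$ symmetry.

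For $(1)\Rightarrow(2)$: since $P_{A,\Psi}=\theta_A S_{A,\Psi}^{-1}\theta_\Psi^*$, every element $P_{A,\Psi}f=\theta_A(S_{A,\Psi}^{-1}\theta_\Psi^* f)$ lies in $\operatorname{ran}(\theta_A)$, so $\operatorname{ran}(P_{A,\Psi})\subseteq\theta_A(\mathcal{H})$. If $P_{A,\Psi}=I$ this gives $\mathcal{L}^2(\Omega,\mathcal{H}_0)=\operatorname{ran}(P_{A,\Psi})\subseteq\theta_A(\mathcal{H})\subseteq\mathcal{L}^2(\Omega,\mathcal{H}_0)$, hence equality.

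For $(2)\Rightarrow(1)$: by Proposition \ref{2.2}(v) the operator $P_{A,\Psi}=\theta_A S_{A,\Psi}^{-1}\theta_\Psi^*$ is idempotent; by Proposition \ref{2.2}(vii) the map $\theta_\Psi^*$ is onto $\mathcal{H}$; and $S_{A,\Psi}^{-1}$ is a bijection of $\mathcal{H}$ because $S_{A,\Psi}$ is invertible by the definition of a continuous (ovf). Under hypothesis $(2)$ the map $\theta_A$ is also onto $\mathcal{L}^2(\Omega,\mathcal{H}_0)$, so $P_{A,\Psi}$ is a composition of three surjections and is therefore surjective. A surjective idempotent equals the identity: if $P^2=P$ and $f=Pg$ then $Pf=P^2g=Pg=f$. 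Hence $P_{A,\Psi}=I_{\mathcal{L}^2(\Omega,\mathcal{H}_0)}$, which is $(1)$. (Note the open mapping theorem is not needed here, precisely because $S_{A,\Psi}$ is invertible by hypothesis; one could instead argue via injectivity using Proposition \ref{2.2}(vi), but surjectivity is the cleaner route since $\theta_\Psi^*$ need not be injective.)

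Finally, $(1)\Leftrightarrow(3)$ follows by symmetry: the pair $(\{\Psi_\alpha\}_{\alpha\in\Omega},\{A_\alpha\}_{\alpha\in\Omega})$ is again a continuous (ovf) (noted just after Definition \ref{1}), and Proposition \ref{2.2}(v) gives $P_{\Psi,A}=P_{A,\Psi}^*$; therefore $P_{A,\Psi}=I$ iff $P_{A,\Psi}^*=I$ iff $P_{\Psi,A}=I$, and applying the already-established equivalence $(1)\Leftrightarrow(2)$ to the pair $(\{\Psi_\alpha\},\{A_\alpha\})$ shows $P_{\Psi,A}=I$ iff $\theta_\Psi(\mathcal{H})=\mathcal{L}^2(\Omega,\mathcal{H}_0)$. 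Combining, $(1)\Leftrightarrow(2)\Leftrightarrow(3)$, which is the assertion. The only mildly non-routine step is recognizing the ``surjective idempotent $=$ identity'' observation; everything else is bookkeeping with Proposition \ref{2.2}, so I do not anticipate a real obstacle.
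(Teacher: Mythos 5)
Your proof is correct. The paper itself gives no details here (it only points to the discrete analogue, Proposition 2.36 of the cited reference), and your argument — range of $P_{A,\Psi}=\theta_AS_{A,\Psi}^{-1}\theta_\Psi^*$ lies in $\theta_A(\mathcal{H})$ for one direction, ``surjective idempotent equals the identity'' combined with the surjectivity of $\theta_\Psi^*$ from Proposition \ref{2.2} for the other, and the swap $(\{A_\alpha\},\{\Psi_\alpha\})\leftrightarrow(\{\Psi_\alpha\},\{A_\alpha\})$ together with $P_{\Psi,A}=P_{A,\Psi}^*$ for the third equivalence — is exactly the standard route such a proof takes.
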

 \begin{proof}
 Similar to the proof of Proposition  2.36 in \cite{MAHESHKRISHNASAMJOHNSON}.
 \end{proof} 
Following is the dilation result in discrete setting (for dilation results in Hilbert spaces we refer Theorem 2.38 in \cite{MAHESHKRISHNASAMJOHNSON} and \cite{KASHINKULIKOVA, HANLARSON, CZAJA, KRIVOSHEIN}).
 \begin{theorem}\label{OPERATORDILATION}\cite{MAHESHKRISHNASAMJOHNSON}
 	Let $(\{A_j\}_{j\in \mathbb{J}},\{\Psi_j\}_{j\in \mathbb{J}} )$ be  a Parseval (ovf) in $ \mathcal{B}(\mathcal{H}, \mathcal{H}_0)$ such that $ \theta_A(\mathcal{H})=\theta_\Psi(\mathcal{H})$ and $ P_{A,\Psi}$ is projection. Then there exist a Hilbert space $ \mathcal{H}_1 $ which contains $ \mathcal{H}$ isometrically and  bounded linear operators $B_j,\Phi_j:\mathcal{H}_1\rightarrow \mathcal{H}_0, \forall j \in \mathbb{J} $ such that $(\{B_j\}_{j\in \mathbb{J}} ,\{\Phi_j\}_{j\in \mathbb{J}})$ is an orthonormal (ovf) in $ \mathcal{B}(\mathcal{H}_1, \mathcal{H}_0)$ and $B_j|_{\mathcal{H}}=  A_j,\Phi_j|_{\mathcal{H}}=\Psi_j, \forall j \in \mathbb{J}$. 
 \end{theorem}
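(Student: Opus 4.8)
The plan is to realise the given pair as a ``corner'' of the model orthonormal pair $(\{L_j^*\}_{j\in\mathbb{J}},\{L_j^*\}_{j\in\mathbb{J}})$ on $\ell^2(\mathbb{J})\otimes\mathcal{H}_0$, obtained by enlarging $\mathcal{H}$ with the orthogonal complement of the common range of the two analysis operators. First I would set $\mathcal{K}\coloneqq\theta_A(\mathcal{H})$, which is a closed subspace of $\ell^2(\mathbb{J})\otimes\mathcal{H}_0$ (analogue of Proposition \ref{2.2}(vi)), and note that the two hypotheses are in fact the same: since the pair is Parseval we have $P_{A,\Psi}=\theta_A\theta_\Psi^*$, always idempotent, with $\mathrm{ran}\,P_{A,\Psi}=\theta_A(\mathcal{H})$ and $\ker P_{A,\Psi}=\ker\theta_\Psi^*=\theta_\Psi(\mathcal{H})^\perp$; such an idempotent is self-adjoint precisely when $\theta_A(\mathcal{H})=\theta_\Psi(\mathcal{H})$, in which case $\mathcal{K}^\perp=\ker P_{A,\Psi}=\theta_\Psi(\mathcal{H})^\perp$. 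I would then define $\mathcal{H}_1\coloneqq\mathcal{H}\oplus\mathcal{K}^\perp$ (Hilbert direct sum), embed $\mathcal{H}$ isometrically as $\mathcal{H}\oplus\{0\}$, and put
$$B_j(h\oplus\xi)\coloneqq A_jh+L_j^*\xi,\qquad \Phi_j(h\oplus\xi)\coloneqq \Psi_jh+L_j^*\xi,\qquad j\in\mathbb{J}.$$
These are bounded, and by construction $B_j|_{\mathcal{H}}=A_j$, $\Phi_j|_{\mathcal{H}}=\Psi_j$.

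The substantive part is a short string of computations. Using $\sum_j L_jL_j^*=I_{\ell^2(\mathbb{J})\otimes\mathcal{H}_0}$ one gets $\theta_B(h\oplus\xi)=\theta_Ah+\xi$ and $\theta_\Phi(h\oplus\xi)=\theta_\Psi h+\xi$; since $\theta_Ah,\theta_\Psi h\in\mathcal{K}$, $\xi\in\mathcal{K}^\perp$, and $\theta_A,\theta_\Psi$ are bijections of $\mathcal{H}$ onto $\mathcal{K}$, both $\theta_B$ and $\theta_\Phi$ are bijections of $\mathcal{H}_1$ onto $\ell^2(\mathbb{J})\otimes\mathcal{H}_0$. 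The same orthogonality gives $\sum_j\|B_j(h\oplus\xi)\|^2=\|\theta_Ah+\xi\|^2=\|\theta_Ah\|^2+\|\xi\|^2\le(\|\theta_A\|^2+1)\|h\oplus\xi\|^2$ and likewise for $\Phi_j$, so $\{B_j\}$ and $\{\Phi_j\}$ are continuous operator-valued Bessel and $\theta_B,\theta_\Phi$ are well-defined bounded operators. Next, for the frame operator I would pair $\theta_B(h\oplus\xi)$ against $\theta_\Phi(g\oplus\eta)$: the cross terms $\langle\theta_Ah,\eta\rangle$ and $\langle\xi,\theta_\Psi g\rangle$ vanish by the direct-sum orthogonality, while $\langle\theta_Ah,\theta_\Psi g\rangle=\langle\theta_\Psi^*\theta_Ah,g\rangle=\langle h,g\rangle$ because the pair is Parseval, leaving $\langle S_{B,\Phi}(h\oplus\xi),g\oplus\eta\rangle=\langle h,g\rangle+\langle\xi,\eta\rangle$, i.e. $S_{B,\Phi}=I_{\mathcal{H}_1}$. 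Thus $(\{B_j\},\{\Phi_j\})$ is a Parseval continuous (ovf). Finally, since $\theta_B$ is a bijection and $\theta_\Phi^*\theta_B=S_{B,\Phi}=I_{\mathcal{H}_1}$, we get $\theta_\Phi^*=\theta_B^{-1}$, hence $P_{B,\Phi}=\theta_B S_{B,\Phi}^{-1}\theta_\Phi^*=\theta_B\theta_B^{-1}=I_{\ell^2(\mathbb{J})\otimes\mathcal{H}_0}$; equivalently $B_i\Phi_j^*=\delta_{ij}I_{\mathcal{H}_0}$ and $\sum_j\Phi_j^*B_j=I_{\mathcal{H}_1}$, which is exactly the statement that $(\{B_j\},\{\Phi_j\})$ is an orthonormal (ovf) in $\mathcal{B}(\mathcal{H}_1,\mathcal{H}_0)$.

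I do not expect a genuine obstacle; the only points requiring care are (i) checking that the enlarged families really meet the admissibility conditions of Definition \ref{DEFOVF} before invoking the analogue of Proposition \ref{2.2} (this reduces to the single identity $\|\theta_B(h\oplus\xi)\|^2=\|\theta_Ah\|^2+\|\xi\|^2$ and its $\Phi$-counterpart), and (ii) tracking where each hypothesis is used: the Parseval assumption feeds the identity $\theta_\Psi^*\theta_A=I_{\mathcal{H}}$ that produces $S_{B,\Phi}=I$, while $P_{A,\Psi}$ being a projection (equivalently $\theta_A(\mathcal{H})=\theta_\Psi(\mathcal{H})$) is precisely what makes $\mathcal{K}^\perp=\theta_\Psi(\mathcal{H})^\perp$, so that $\theta_\Phi$ — and not merely $\theta_B$ — maps onto the whole of $\ell^2(\mathbb{J})\otimes\mathcal{H}_0$. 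If one wanted a slicker presentation, the entire verification can be packaged by observing that $\theta_B$ and $\theta_\Phi$ are the unique extensions of $\theta_A,\theta_\Psi$ to $\mathcal{H}_1$ that act as the identity on $\mathcal{K}^\perp$, and that these are mutually ``bi-orthogonal'' bijections.
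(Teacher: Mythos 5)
This theorem is not proved in the present paper: it is imported verbatim from the companion paper \cite{MAHESHKRISHNASAMJOHNSON} and used as a black box (the authors even remark afterwards that they do not know a continuous analogue), so there is no in-paper proof to compare against. Judged on its own, your argument is correct and is the standard Han--Larson-type dilation adapted to the two-family operator-valued setting: your preliminary observation that, under Parsevalness, $P_{A,\Psi}=\theta_A\theta_\Psi^*$ is an idempotent with range $\theta_A(\mathcal{H})$ and kernel $\theta_\Psi(\mathcal{H})^\perp$, so that the two hypotheses coincide and single out one closed subspace $\mathcal{K}$, is exactly what makes the construction $\mathcal{H}_1=\mathcal{H}\oplus\mathcal{K}^\perp$, $B_j(h\oplus\xi)=A_jh+L_j^*\xi$, $\Phi_j(h\oplus\xi)=\Psi_jh+L_j^*\xi$ work, and the identities $\theta_B(h\oplus\xi)=\theta_Ah+\xi$, $\theta_\Phi(h\oplus\xi)=\theta_\Psi h+\xi$, $S_{B,\Phi}=I_{\mathcal{H}_1}$, $P_{B,\Phi}=I_{\ell^2(\mathbb{J})\otimes\mathcal{H}_0}$ all check out. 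The only step I would ask you to write out is the passage from the weak identity $\langle S_{B,\Phi}(h\oplus\xi),g\oplus\eta\rangle=\langle h,g\rangle+\langle\xi,\eta\rangle$ to the strong-operator convergence of $\sum_j\Phi_j^*B_j$ required by Definition \ref{DEFOVF}; this follows, e.g., by computing $\Phi_j^*y=\Psi_j^*y\oplus QL_jy$ with $Q=I-\theta_A\theta_\Psi^*$ and summing directly, but it is not automatic from the weak computation alone.
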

 
 We remark here that we  don't know any  result corresponding to Theorem \ref{OPERATORDILATION} when the indexing set is a measure space. 
 \begin{definition}
 A continuous  (ovf)   $ (\{B_\alpha\}_{\alpha\in \Omega}, \{\Phi_\alpha\}_{\alpha\in \Omega}) $  in $\mathcal{B}(\mathcal{H}, \mathcal{H}_0)$ is said to be a dual of a continuous  (ovf) $(\{A_\alpha\}_{\alpha\in \Omega}, \{\Psi_\alpha\}_{\alpha\in \Omega})$ in $\mathcal{B}(\mathcal{H}, \mathcal{H}_0)$  if $ \theta_\Phi^*\theta_A= \theta_B^*\theta_\Psi=I_{\mathcal{H}}$. The `continuous operator-valued frame' $(\{\widetilde{A}_\alpha\coloneqq A_\alpha S_{A,\Psi}^{-1}\}_{\alpha\in \Omega}, \{\widetilde{\Psi}_\alpha\coloneqq\Psi_\alpha S_{A,\Psi}^{-1}\}_{\alpha \in \Omega} )$, which is a `dual' of $( \{A_\alpha\}_{\alpha\in \Omega}, \{\Psi_\alpha\}_{\alpha\in \Omega} )$ is called as the canonical dual of $( \{A_\alpha\}_{\alpha\in \Omega}, \{\Psi_\alpha\}_{\alpha\in \Omega} )$.
 \end{definition}

 \begin{proposition}
 Let $( \{A_\alpha\}_{\alpha\in \Omega}, \{\Psi_\alpha\}_{\alpha\in \Omega} )$ be a continuous  (ovf) in $ \mathcal{B}(\mathcal{H}, \mathcal{H}_0).$  If $ h \in \mathcal{H}$ has representation  $ h=\int_{\Omega}A_\alpha^*f(\alpha) \,d \mu (\alpha)= \int_{\Omega}\Psi_\alpha^*g(\alpha) \,d \mu (\alpha), $ for some measurable  $ f,g : \Omega \rightarrow \mathcal{H}_0$, then 
 $$ \int_{\Omega}\langle f(\alpha),g(\alpha)\rangle\, d \mu (\alpha)=\int_{\Omega}\langle \widetilde{\Psi}_\alpha h,\widetilde{A}_\alpha h\rangle \,d \mu (\alpha)+\int_{\Omega}\langle f(\alpha)-\widetilde{\Psi}_\alpha h,g(\alpha)-\widetilde{A}_\alpha h\rangle \,d \mu (\alpha). $$
 \end{proposition}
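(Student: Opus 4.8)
The plan is to recast the claimed identity entirely in the Hilbert space $\mathcal{L}^2(\Omega,\mathcal{H}_0)$ by means of the analysis operators. Set $\theta_{\widetilde A}\coloneqq \theta_A S_{A,\Psi}^{-1}$ and $\theta_{\widetilde\Psi}\coloneqq \theta_\Psi S_{A,\Psi}^{-1}$; these are bounded operators from $\mathcal{H}$ into $\mathcal{L}^2(\Omega,\mathcal{H}_0)$ and, for $h\in\mathcal{H}$, one has $(\theta_{\widetilde A}h)(\alpha)=A_\alpha S_{A,\Psi}^{-1}h=\widetilde A_\alpha h$ and likewise $(\theta_{\widetilde\Psi}h)(\alpha)=\widetilde\Psi_\alpha h$. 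Assuming (as the hypotheses implicitly require) that $f,g\in\mathcal{L}^2(\Omega,\mathcal{H}_0)$, the three integrals appearing in the statement are, by definition of the inner product on $\mathcal{L}^2(\Omega,\mathcal{H}_0)$, exactly $\langle f,g\rangle$, $\langle \theta_{\widetilde\Psi}h,\theta_{\widetilde A}h\rangle$ and $\langle f-\theta_{\widetilde\Psi}h,\,g-\theta_{\widetilde A}h\rangle$. Expanding the last one by bilinearity of the inner product shows that the identity to be proved is equivalent to
\[
\langle f,\theta_{\widetilde A}h\rangle+\langle \theta_{\widetilde\Psi}h,g\rangle=2\,\langle \theta_{\widetilde\Psi}h,\theta_{\widetilde A}h\rangle .
\]

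The next step is to evaluate the three inner products separately, showing each equals $\langle S_{A,\Psi}^{-1}h,h\rangle$. Since $S_{A,\Psi}$, being positive and invertible, is self-adjoint, so is $S_{A,\Psi}^{-1}$, and $\widetilde A_\alpha^{*}=(A_\alpha S_{A,\Psi}^{-1})^{*}=S_{A,\Psi}^{-1}A_\alpha^{*}$; consequently $\theta_{\widetilde A}^{*}=S_{A,\Psi}^{-1}\theta_A^{*}$ and $\theta_{\widetilde\Psi}^{*}=S_{A,\Psi}^{-1}\theta_\Psi^{*}$. Using the form of $\theta_A^{*}$ from Definition \ref{1} and the hypothesis $h=\int_{\Omega}A_\alpha^{*}f(\alpha)\,d\mu(\alpha)$, I obtain
\[
\langle f,\theta_{\widetilde A}h\rangle=\langle \theta_{\widetilde A}^{*}f,h\rangle=\Big\langle S_{A,\Psi}^{-1}\!\!\int_{\Omega}A_\alpha^{*}f(\alpha)\,d\mu(\alpha),\,h\Big\rangle=\langle S_{A,\Psi}^{-1}h,h\rangle .
\]
Symmetrically, from $h=\int_{\Omega}\Psi_\alpha^{*}g(\alpha)\,d\mu(\alpha)$ and self-adjointness of $S_{A,\Psi}^{-1}$,
\[
\langle \theta_{\widetilde\Psi}h,g\rangle=\langle h,\theta_{\widetilde\Psi}^{*}g\rangle=\big\langle h,\,S_{A,\Psi}^{-1}h\big\rangle=\langle S_{A,\Psi}^{-1}h,h\rangle .
\]
Finally, $\theta_{\widetilde A}^{*}\theta_{\widetilde\Psi}=S_{A,\Psi}^{-1}(\theta_A^{*}\theta_\Psi)S_{A,\Psi}^{-1}=S_{A,\Psi}^{-1}S_{A,\Psi}S_{A,\Psi}^{-1}=S_{A,\Psi}^{-1}$ by Proposition \ref{2.2}(ii), so $\langle \theta_{\widetilde\Psi}h,\theta_{\widetilde A}h\rangle=\langle \theta_{\widetilde A}^{*}\theta_{\widetilde\Psi}h,h\rangle=\langle S_{A,\Psi}^{-1}h,h\rangle$. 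Substituting these three equalities into the displayed reduction makes both sides equal to $2\langle S_{A,\Psi}^{-1}h,h\rangle$, which proves the identity.

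The only point requiring care — and therefore the main (and rather mild) obstacle — is the handling of the Gelfand--Pettis integrals: one must justify pulling the bounded operator $S_{A,\Psi}^{-1}$ through the weak integral, i.e. $S_{A,\Psi}^{-1}\int_{\Omega}A_\alpha^{*}f(\alpha)\,d\mu(\alpha)=\int_{\Omega}S_{A,\Psi}^{-1}A_\alpha^{*}f(\alpha)\,d\mu(\alpha)=\int_{\Omega}\widetilde A_\alpha^{*}f(\alpha)\,d\mu(\alpha)=\theta_{\widetilde A}^{*}f$, and symmetrically for $\theta_{\widetilde\Psi}^{*}$. This is the standard fact that a bounded linear operator commutes with a weak integral (test against an arbitrary functional $\phi$ and observe that $\phi\circ S_{A,\Psi}^{-1}$ is again a bounded functional), combined with the adjoint computation $\widetilde A_\alpha^{*}=S_{A,\Psi}^{-1}A_\alpha^{*}$. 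Once this bookkeeping is recorded, everything else is the short bilinear expansion described above; it would be prudent to state at the outset that $f,g\in\mathcal{L}^2(\Omega,\mathcal{H}_0)$ so that the left-hand side is genuinely $\langle f,g\rangle$.
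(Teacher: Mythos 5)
Your proof is correct and follows essentially the same route as the paper: expand the bilinear term, show the two cross terms each equal $\langle S_{A,\Psi}^{-1}h,h\rangle$ using the two representations of $h$, and show the term $\int_{\Omega}\langle \widetilde{\Psi}_\alpha h,\widetilde{A}_\alpha h\rangle\,d\mu(\alpha)$ also equals $\langle S_{A,\Psi}^{-1}h,h\rangle$ via $\theta_A^*\theta_\Psi=S_{A,\Psi}$; the only difference is that you package the integrals as inner products in $\mathcal{L}^2(\Omega,\mathcal{H}_0)$ while the paper manipulates the weak integrals directly. Your explicit remark that $f,g$ must lie in $\mathcal{L}^2(\Omega,\mathcal{H}_0)$ (or at least that the four expanded integrals converge) is a point the paper leaves implicit, and is worth recording.
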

 \begin{proof}
 \begin{align*}
 \text{	Right side } &=\int_{\Omega}\langle \widetilde{\Psi}_\alpha h,\widetilde{A}_\alpha h\rangle \, d \mu (\alpha)+\int_{\Omega}\langle f(\alpha), g(\alpha)\rangle \,d \mu (\alpha) -\int_{\Omega}\langle f(\alpha), \widetilde{A}_\alpha h\rangle\, d \mu (\alpha)\\
 &\quad-\int_{\Omega}\langle \widetilde{\Psi}_\alpha h, g(\alpha)\rangle \, d \mu (\alpha)+\int_{\Omega}\langle \widetilde{\Psi}_\alpha h,\widetilde{A}_\alpha h\rangle \,d \mu (\alpha)\\
 &=2\int_{\Omega}\langle \widetilde{\Psi}_\alpha h,\widetilde{A}_\alpha h\rangle \,d \mu (\alpha)+ \int_{\Omega}\langle f(\alpha) , g(\alpha)\rangle \,d \mu (\alpha)-\int_{\Omega}\langle f(\alpha),A_\alpha S_{A,\Psi}^{-1}h\rangle \,d \mu (\alpha)\\
 &\quad-\int_{\Omega}\langle \Psi_\alpha S_{A,\Psi}^{-1}h, g(\alpha)\rangle \,d \mu (\alpha)\\
 &= 2\left\langle\int_{ \Omega}S_{A,\Psi}^{-1}A_\alpha^*\Psi_\alpha S_{A,\Psi}^{-1}h \,d \mu (\alpha), h \right\rangle+ \int_{\Omega}\langle f(\alpha), g(\alpha)\rangle \,d \mu (\alpha)\\
 &\quad-\left\langle \int_{\Omega}A_\alpha^*f(\alpha) \,d \mu (\alpha),S_{A,\Psi}^{-1}h\right \rangle -\left\langle S_{A,\Psi}^{-1}h , \int_{\Omega}\Psi_\alpha^*g(\alpha) \,d \mu (\alpha)\right \rangle\\
 &=2 \langle S_{A,\Psi}^{-1}h,h \rangle + \int_{ \Omega}\langle f(\alpha), g(\alpha)\rangle \,d \mu (\alpha) -\langle h, S_{A,\Psi}^{-1}h\rangle-\langle S_{A,\Psi}^{-1}h, h\rangle=\text{Left side.}
 \end{align*}
 \end{proof} 
\begin{theorem}\label{CANONICALDUALFRAMEPROPERTYOPERATORVERSION}
 Let $(\{A_\alpha \}_{\alpha \in \Omega},\{\Psi_\alpha \}_{\alpha \in \Omega})$ be a continuous  (ovf) with frame bounds $ a$ and $ b.$ Then the following statements are true.
 \begin{enumerate}[\upshape(i)]
\item The canonical dual (ovf) of the canonical dual (ovf)  of $(\{A_\alpha \}_{\alpha \in \Omega},\{\Psi_\alpha \}_{\alpha \in \Omega})$ is itself.
\item$ \frac{1}{b}, \frac{1}{a}$ are frame bounds for the canonical dual of $(\{A_\alpha \}_{\alpha \in \Omega},\{\Psi_\alpha \}_{\alpha \in \Omega})$.
\item If $ a, b $ are optimal frame bounds for $(\{A_\alpha \}_{\alpha \in \Omega},\{\Psi_\alpha \}_{\alpha \in \Omega})$, then $ \frac{1}{b}, \frac{1}{a}$ are optimal  frame bounds for its canonical dual.
\end{enumerate} 
\end{theorem}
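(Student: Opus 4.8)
The plan is to compute the frame operator of the canonical dual directly and then read off all three statements. First I would recall from Proposition~\ref{2.2} that $S_{A,\Psi}=\theta_\Psi^*\theta_A$, and that the analysis operators of the canonical dual are $\theta_{\widetilde A}=\theta_A S_{A,\Psi}^{-1}$ and $\theta_{\widetilde\Psi}=\theta_\Psi S_{A,\Psi}^{-1}$ (since $\widetilde A_\alpha=A_\alpha S_{A,\Psi}^{-1}$ and $S_{A,\Psi}^{-1}$ is self-adjoint, because $S_{A,\Psi}$ is positive hence self-adjoint). Consequently the frame operator of the canonical dual is
\begin{align*}
S_{\widetilde A,\widetilde\Psi}=\theta_{\widetilde\Psi}^*\theta_{\widetilde A}=S_{A,\Psi}^{-1}\theta_\Psi^*\theta_A S_{A,\Psi}^{-1}=S_{A,\Psi}^{-1}S_{A,\Psi}S_{A,\Psi}^{-1}=S_{A,\Psi}^{-1}.
\end{align*}
This single identity is really the heart of the matter; everything else is bookkeeping. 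One small point to verify along the way is that $(\{\widetilde A_\alpha\},\{\widetilde\Psi_\alpha\})$ is genuinely a continuous (ovf) — i.e.\ that the measurability and the two Bessel-type conditions (iii) in Definition~\ref{1} hold — but this is immediate since $\widetilde A_\alpha h=A_\alpha(S_{A,\Psi}^{-1}h)$ so $\theta_{\widetilde A}=\theta_A S_{A,\Psi}^{-1}$ is a composition of bounded operators, and $S_{\widetilde A,\widetilde\Psi}=S_{A,\Psi}^{-1}$ is bounded, positive and invertible.

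For (i): the canonical dual of $(\{\widetilde A_\alpha\},\{\widetilde\Psi_\alpha\})$ has $\alpha$-th operator $\widetilde A_\alpha S_{\widetilde A,\widetilde\Psi}^{-1}=A_\alpha S_{A,\Psi}^{-1}(S_{A,\Psi}^{-1})^{-1}=A_\alpha S_{A,\Psi}^{-1}S_{A,\Psi}=A_\alpha$, and similarly the $\widetilde\Psi$-side returns $\Psi_\alpha$; hence the double canonical dual is the original (ovf).

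For (ii): the defining inequality $a\,I_\mathcal{H}\le S_{A,\Psi}\le b\,I_\mathcal{H}$ (which holds because the frame bounds control $\langle S_{A,\Psi}h,h\rangle$, as noted after Definition~\ref{1}) inverts, by the standard operator-monotonicity of $t\mapsto t^{-1}$ on positive invertible operators, to $\tfrac1b\,I_\mathcal{H}\le S_{A,\Psi}^{-1}\le\tfrac1a\,I_\mathcal{H}$; since $S_{A,\Psi}^{-1}=S_{\widetilde A,\widetilde\Psi}$, this says exactly that $\tfrac1b,\tfrac1a$ are frame bounds for the canonical dual. For (iii): if $a,b$ are optimal, then $a=\min\operatorname{spec}(S_{A,\Psi})$ and $b=\max\operatorname{spec}(S_{A,\Psi})$; by the spectral mapping theorem $\operatorname{spec}(S_{A,\Psi}^{-1})=\{t^{-1}:t\in\operatorname{spec}(S_{A,\Psi})\}$, whose minimum is $\tfrac1b$ and maximum $\tfrac1a$, so these are the optimal bounds for the dual.

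The only mild obstacle is being careful about what "frame bounds" mean for operator-valued frames in this continuous setting: one must confirm that the bounds are equivalently the extremes of the spectrum of $S_{A,\Psi}$ (via $a\|h\|^2\le\langle S_{A,\Psi}h,h\rangle\le b\|h\|^2$), which the paper has already established in the discussion following Definition~\ref{1}. Once that dictionary is in place, (ii) and (iii) are immediate from operator monotonicity and the spectral mapping theorem respectively, and (i) is a one-line substitution. I would present the computation $S_{\widetilde A,\widetilde\Psi}=S_{A,\Psi}^{-1}$ first and then dispatch (i), (ii), (iii) in turn.
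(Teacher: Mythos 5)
Your proposal is correct and follows essentially the same route as the paper: both arguments reduce everything to the single identity $S_{\widetilde{A},\widetilde{\Psi}}=S_{A,\Psi}^{-1}$ (the paper derives it by a weak-integral computation, you by factoring through $\theta_{\widetilde{A}}=\theta_A S_{A,\Psi}^{-1}$ and $S_{A,\Psi}=\theta_\Psi^*\theta_A$, which is the same calculation in different notation), after which (i)--(iii) follow as you describe. You usefully spell out the deductions of (i)--(iii) that the paper delegates to an external reference, and your computation also confirms that the paper's sentence ``the frame operator for the canonical dual \dots is $S_{A,\Psi}$'' should read $S_{A,\Psi}^{-1}$.
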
 
\begin{proof}
We note that

\begin{align*}
 \left \langle \int_{\Omega} \widetilde{\Psi}^*_\alpha \widetilde{A}_\alpha h \,d \mu (\alpha), g\right \rangle =\int_{\Omega}\langle S_{A, \Psi}^{-1} \Psi_\alpha^*A_\alpha S_{A, \Psi}^{-1}h, g\rangle \,d \mu (\alpha)=\int_{\Omega}\langle \Psi_\alpha^*A_\alpha S_{A, \Psi}^{-1}h,S_{A, \Psi}^{-1}g \rangle \,d \mu (\alpha)\\
 = \left \langle \int_{\Omega} \Psi_\alpha^*A_\alpha( S_{A, \Psi}^{-1}h)  \,d \mu (\alpha), S_{A, \Psi}^{-1}g\right \rangle =\langle S_{A, \Psi}(S_{A, \Psi}^{-1}h), S_{A, \Psi}^{-1}g\rangle =\langle S_{A, \Psi}^{-1}h, g\rangle ,\quad \forall h, g \in \mathcal{H}.
\end{align*}
Therefore the frame operator for the canonical dual  $(\{A_\alpha S_{A,\Psi}^{-1}\}_{\alpha \in \Omega}, \{\Psi_\alpha S_{A,\Psi}^{-1}\}_{\alpha\in \Omega} )$ is $ S_{A,\Psi}$. Remainings are similar to the proof of Theorem 2.41 in \cite{MAHESHKRISHNASAMJOHNSON}.
\end{proof}
\begin{proposition}\label{DUALOVFCHARACTERIZATION}
Let  $ (\{A_\alpha\}_{\alpha\in \Omega}, \{\Psi_\alpha\}_{\alpha\in \Omega}) $ and $ (\{B_\alpha\}_{\alpha\in \Omega}, \{\Phi_\alpha\}_{\alpha\in \Omega}) $ be continuous operator-valued frames in  $\mathcal{B}(\mathcal{H}, \mathcal{H}_0)$. Then the following are equivalent.
 \begin{enumerate}[\upshape(i)]
 \item   $ (\{B_\alpha\}_{\alpha\in \Omega}, \{\Phi_\alpha\}_{\alpha\in \Omega}) $ is a dual of $ (\{A_\alpha\}_{\alpha\in \Omega}, \{\Psi_\alpha\}_{\alpha\in \Omega}) $. 
 \item $ \int_{\Omega}\Phi_\alpha^*A_\alpha h \,d \mu(\alpha)= \int_{\Omega }B_\alpha^*\Psi_\alpha h\,d \mu(\alpha)=h, \forall h \in \mathcal{H}$.
 \end{enumerate}
 \end{proposition}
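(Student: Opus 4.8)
The plan is to reduce the equivalence to the identity $S_{A,\Phi}=\theta_\Phi^*\theta_A$ (and its twin $S_{\Psi,B}=\theta_B^*\theta_\Psi$) that is already implicit in Proposition \ref{2.2}(ii), but applied to \emph{mixed} pairs of Bessel families rather than to $(\{A_\alpha\}_{\alpha\in\Omega},\{\Psi_\alpha\}_{\alpha\in\Omega})$ itself. Indeed, once these two operator identities are established, statement (i) is literally the assertion $\theta_\Phi^*\theta_A=I_{\mathcal{H}}=\theta_B^*\theta_\Psi$, and statement (ii) is the assertion that the left-hand sides of those identities equal the identity operator, so the two are the same statement.

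First I would record the standing regularity. Since $(\{A_\alpha\}_{\alpha\in\Omega},\{\Psi_\alpha\}_{\alpha\in\Omega})$ and $(\{B_\alpha\}_{\alpha\in\Omega},\{\Phi_\alpha\}_{\alpha\in\Omega})$ are continuous (ovf), conditions (i) and (iii) of Definition \ref{1} guarantee that each of the four families $\{A_\alpha\}_{\alpha\in\Omega},\{\Psi_\alpha\}_{\alpha\in\Omega},\{B_\alpha\}_{\alpha\in\Omega},\{\Phi_\alpha\}_{\alpha\in\Omega}$ is continuous operator-valued Bessel, so the analysis operators $\theta_A,\theta_\Psi,\theta_B,\theta_\Phi$ are bounded with the bounded adjoints of the stated integral form. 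Applying the construction that precedes Definition \ref{1} to the Bessel pair $(\{A_\alpha\}_{\alpha\in\Omega},\{\Phi_\alpha\}_{\alpha\in\Omega})$, and again to $(\{\Psi_\alpha\}_{\alpha\in\Omega},\{B_\alpha\}_{\alpha\in\Omega})$, one gets (via the polarization/Cauchy–Schwarz measurability argument and the Riesz representation theorem) that the weak integrals $\int_\Omega\Phi_\alpha^*A_\alpha h\,d\mu(\alpha)$ and $\int_\Omega B_\alpha^*\Psi_\alpha h\,d\mu(\alpha)$ exist and define bounded linear operators on $\mathcal{H}$.

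Next I would verify, by the same computation as in the proof of Proposition \ref{2.2}(ii), the operator identities
\begin{align*}
\int_\Omega\Phi_\alpha^*A_\alpha h\,d\mu(\alpha)=\theta_\Phi^*\theta_A h,\qquad \int_\Omega B_\alpha^*\Psi_\alpha h\,d\mu(\alpha)=\theta_B^*\theta_\Psi h,\qquad \forall\, h\in\mathcal{H}.
\end{align*}
For the first one, for all $h,g\in\mathcal{H}$ we have $\langle\theta_\Phi^*\theta_A h,g\rangle=\langle\theta_A h,\theta_\Phi g\rangle=\int_\Omega\langle A_\alpha h,\Phi_\alpha g\rangle\,d\mu(\alpha)=\int_\Omega\langle\Phi_\alpha^*A_\alpha h,g\rangle\,d\mu(\alpha)=\big\langle\int_\Omega\Phi_\alpha^*A_\alpha h\,d\mu(\alpha),g\big\rangle$, and the second identity is obtained symmetrically by interchanging the roles of $(A,\Phi)$ with $(\Psi,B)$.

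Finally, the equivalence follows at once: (i) says $\theta_\Phi^*\theta_A=I_{\mathcal{H}}$ and $\theta_B^*\theta_\Psi=I_{\mathcal{H}}$, which by the two displayed identities holds if and only if $\int_\Omega\Phi_\alpha^*A_\alpha h\,d\mu(\alpha)=h$ and $\int_\Omega B_\alpha^*\Psi_\alpha h\,d\mu(\alpha)=h$ for every $h\in\mathcal{H}$, i.e.\ statement (ii). I do not expect a genuine obstacle here; the only point needing care is that the mixed weak integrals $\int_\Omega\Phi_\alpha^*A_\alpha h\,d\mu(\alpha)$ and $\int_\Omega B_\alpha^*\Psi_\alpha h\,d\mu(\alpha)$ are legitimate Gelfand–Pettis integrals defining bounded operators, and this is disposed of once and for all by invoking the construction preceding Definition \ref{1} with $\Psi$ there replaced by $\Phi$ (respectively with $A$ replaced by $B$).
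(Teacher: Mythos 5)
Your proposal is correct and follows essentially the same route as the paper: the paper's proof is precisely the computation $\langle \theta_\Phi^*\theta_A h, g\rangle=\int_{\Omega}\langle A_\alpha h,\Phi_\alpha g\rangle\,d\mu(\alpha)=\langle \int_{\Omega}\Phi_\alpha^*A_\alpha h\,d\mu(\alpha), g\rangle$ together with its twin for $\theta_B^*\theta_\Psi$, after which the equivalence is immediate from the definition of dual. Your additional remarks on the well-definedness of the mixed weak integrals are a harmless (and reasonable) elaboration of what the paper leaves implicit.
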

 \begin{proof}
 $\langle \theta_\Phi^*\theta_Ah, g \rangle = \langle \theta_Ah, \theta_\Phi g \rangle = \int_{\Omega }\langle \theta_Ah(\alpha), \theta_\Phi g (\alpha)\rangle \,d \mu(\alpha)= \int_{\Omega }\langle A_\alpha h, \Phi_ \alpha g\rangle \,d \mu(\alpha) =\langle \int_{\Omega}\Phi_\alpha^*A_\alpha h \,d \mu(\alpha), g\rangle ,$ $\forall h,g \in  \mathcal{H}$. Similarly $\langle \theta_B^*\theta_\Psi h, g \rangle=\langle\int_{\Omega }B_\alpha^*\Psi_\alpha h\,d \mu(\alpha) , g \rangle , $ $\forall h,g \in  \mathcal{H}$.
 \end{proof}
 \begin{theorem}
  If $ (\{A_\alpha\}_{\alpha\in \Omega}, \{\Psi_\alpha\}_{\alpha\in \Omega}) $  is a Riesz continuous  (ovf)  in  $\mathcal{B}(\mathcal{H}, \mathcal{H}_0)$, then it  has unique dual. 
 \end{theorem}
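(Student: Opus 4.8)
The plan is to turn the two defining equations of a dual into operator equations involving invertible maps, and then read off that the solutions are forced. The essential input is Proposition~\ref{RIESZOVFCHARACTERIZATIONPROPOSITION} together with Proposition~\ref{2.2}.

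First I would record that, since $(\{A_\alpha\}_{\alpha\in\Omega},\{\Psi_\alpha\}_{\alpha\in\Omega})$ is a Riesz continuous (ovf), Proposition~\ref{RIESZOVFCHARACTERIZATIONPROPOSITION} gives $\theta_A(\mathcal{H})=\mathcal{L}^2(\Omega,\mathcal{H}_0)=\theta_\Psi(\mathcal{H})$. Combining this with the injectivity of $\theta_A$ and $\theta_\Psi$ from Proposition~\ref{2.2}(vi), we see that $\theta_A$ and $\theta_\Psi$ are bounded bijections of $\mathcal{H}$ onto $\mathcal{L}^2(\Omega,\mathcal{H}_0)$, hence invertible with bounded inverses by the bounded inverse theorem. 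It follows that $\theta_A^*$ and $\theta_\Psi^*$ are invertible as well, with $(\theta_A^*)^{-1}=(\theta_A^{-1})^*$ and $(\theta_\Psi^*)^{-1}=(\theta_\Psi^{-1})^*$. Existence of a dual is not in question: the canonical dual $(\{\widetilde{A}_\alpha\}_{\alpha\in\Omega},\{\widetilde{\Psi}_\alpha\}_{\alpha\in\Omega})$ works, since $\theta_{\widetilde{\Psi}}^*\theta_A=S_{A,\Psi}^{-1}\theta_\Psi^*\theta_A=S_{A,\Psi}^{-1}S_{A,\Psi}=I_{\mathcal{H}}$ by Proposition~\ref{2.2}(ii), and symmetrically $\theta_{\widetilde{A}}^*\theta_\Psi=I_{\mathcal{H}}$.

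For uniqueness, let $(\{B_\alpha\}_{\alpha\in\Omega},\{\Phi_\alpha\}_{\alpha\in\Omega})$ be any dual, so that $\theta_\Phi^*\theta_A=\theta_B^*\theta_\Psi=I_{\mathcal{H}}$. Multiplying the first equation on the right by $\theta_A^{-1}$ forces $\theta_\Phi^*=\theta_A^{-1}$, hence $\theta_\Phi=(\theta_A^{-1})^*=(\theta_A^*)^{-1}$, an operator determined solely by $\{A_\alpha\}_{\alpha\in\Omega}$; similarly $\theta_B=(\theta_\Psi^*)^{-1}$ is determined by $\{\Psi_\alpha\}_{\alpha\in\Omega}$. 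Since $(\theta_\Phi h)(\alpha)=\Phi_\alpha h$ and $(\theta_B h)(\alpha)=B_\alpha h$ for every $h\in\mathcal{H}$, the operators $\theta_\Phi$ and $\theta_B$ determine the families $\{\Phi_\alpha\}_{\alpha\in\Omega}$ and $\{B_\alpha\}_{\alpha\in\Omega}$ (running $h$ over a countable dense set and using boundedness of each $\Phi_\alpha$, $B_\alpha$). Therefore any dual coincides with the canonical dual, proving uniqueness.

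The only delicate point — and the one I would flag explicitly — is the last step: passing from equality of the analysis operators $\theta_\Phi$ to pointwise determination of the family $\{\Phi_\alpha\}_{\alpha\in\Omega}$ is a measure-theoretic statement and holds only up to modification of the $\Phi_\alpha$ on a $\mu$-null set, which is the intended meaning of ``unique dual'' here. Everything else is routine, resting on the fact that in the Riesz case the analysis operators are genuine isomorphisms.
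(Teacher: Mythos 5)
Your proof is correct and follows essentially the same route as the paper's: both reduce uniqueness to the fact that the Riesz condition makes the relevant analysis/synthesis operators cancellable, the paper via $P_{A,\Psi}=I_{\mathcal{L}^2(\Omega,\mathcal{H}_0)}$ applied to the difference of two duals, and you via the equivalent surjectivity statement of Proposition~\ref{RIESZOVFCHARACTERIZATIONPROPOSITION}, which lets you solve $\theta_\Phi^*\theta_A=I_\mathcal{H}$ outright and identify the dual explicitly. The measure-theoretic caveat you flag about recovering $\{\Phi_\alpha\}_{\alpha\in\Omega}$ from $\theta_\Phi$ only up to $\mu$-null sets applies equally to the paper's final step, which silently asserts the pointwise equality $B_\alpha h=\theta_Bh(\alpha)=\theta_Ch(\alpha)=C_\alpha h$ for every $\alpha\in\Omega$.
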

 \begin{proof}
 Let  $ (\{B_\alpha\}_{\alpha\in \Omega}, \{\Phi_\alpha\}_{\alpha\in \Omega})$ and  $ (\{C_\alpha\}_{\alpha\in \Omega}, \{\Xi_\alpha\}_{\alpha\in \Omega})$ be  continuous operator-valued frames such that  both are duals  of  $ (\{A_\alpha\}_{\alpha\in \Omega}, \{\Psi_\alpha\}_{\alpha\in \Omega}) $. Then $ \theta_\Psi^*\theta_B=I_\mathcal{H}=\theta_A^*\theta_\Phi=\theta_\Psi^*\theta_C=\theta_A^*\theta_\Xi$ $\Rightarrow $ $\theta_\Psi^*(\theta_B-\theta_C)=0=\theta_A^*(\theta_\Phi-\theta_\Xi)$ $\Rightarrow $ $ \theta_AS_{A,\Psi}^{-1}\theta_\Psi^*(\theta_B-\theta_C)=P_{A,\Psi}(\theta_B-\theta_C)=I_\mathcal{H}(\theta_B-\theta_C)=0=\theta_\Psi S_{\Psi, A}^{-1}\theta_A^*(\theta_\Phi-\theta_\Xi)=P_{\Psi,A}(\theta_\Phi-\theta_\Xi)=I_\mathcal{H}(\theta_\Phi-\theta_\Xi)$ $\Rightarrow $ $\theta_B=\theta_C, \theta_\Phi=\theta_\Xi$ $\Rightarrow $ $B_\alpha h=\theta_Bh(\alpha)=\theta_Ch(\alpha)=C_\alpha h, \Phi_\alpha h=\theta_\Phi h(\alpha)=\theta_\Xi h(\alpha)=\Xi_\alpha h$, $\forall h \in \mathcal{H}, \forall \alpha \in \Omega$.
 \end{proof}
 \begin{proposition}
Let $(\{A_\alpha\}_{\alpha\in \Omega}, \{\Psi_\alpha\}_{\alpha\in \Omega}) $  be a continuous  (ovf) in    $\mathcal{B}(\mathcal{H}, \mathcal{H}_0)$. If  $ (\{B_\alpha\}_{\alpha\in \Omega}, \{\Phi_\alpha\}_{\alpha\in \Omega})$ is dual of $(\{A_\alpha\}_{\alpha\in \Omega}, \{\Psi_\alpha\}_{\alpha\in \Omega}) $, then there exist continuous  Bessel  $ \{C_\alpha\}_{\alpha\in \Omega}$ and $ \{\Xi_\alpha\}_{\alpha\in \Omega} $ (w.r.t. themselves) in  $\mathcal{B}(\mathcal{H}, \mathcal{H}_0)$ such that $ B_\alpha=A_\alpha S_{A,\Psi}^{-1}+C_\alpha, \Phi_\alpha=\Psi_\alpha S_{A,\Psi}^{-1}+\Xi_\alpha,\forall \alpha \in \Omega$,  and $\theta_C(\mathcal{H})\perp \theta_\Psi(\mathcal{H}),\theta_\Xi(\mathcal{H})\perp \theta_A(\mathcal{H})$. Converse holds if $\theta_\Xi^*\theta_C \geq 0$.
 \end{proposition}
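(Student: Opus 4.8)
The plan is to prove both implications by setting $\widetilde A_\alpha\coloneqq A_\alpha S_{A,\Psi}^{-1}$, $\widetilde\Psi_\alpha\coloneqq\Psi_\alpha S_{A,\Psi}^{-1}$ (the canonical dual), defining $C_\alpha\coloneqq B_\alpha-\widetilde A_\alpha$ and $\Xi_\alpha\coloneqq\Phi_\alpha-\widetilde\Psi_\alpha$, and translating every assertion into a statement about analysis operators, exactly as in the discrete arguments of \cite{MAHESHKRISHNASAMJOHNSON}. The two ingredients I will reuse throughout are Proposition \ref{2.2}(ii), which gives $\theta_\Psi^*\theta_A=\theta_A^*\theta_\Psi=S_{A,\Psi}$, and the defining relations of a dual, $\theta_\Phi^*\theta_A=\theta_B^*\theta_\Psi=I_\mathcal{H}$, whose adjoints read $\theta_A^*\theta_\Phi=\theta_\Psi^*\theta_B=I_\mathcal{H}$.

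For the forward direction, first note that $\alpha\mapsto C_\alpha h$ and $\alpha\mapsto\Xi_\alpha h$ are measurable, being differences of measurable maps, and that $\theta_C=\theta_B-\theta_AS_{A,\Psi}^{-1}$ and $\theta_\Xi=\theta_\Phi-\theta_\Psi S_{A,\Psi}^{-1}$ are bounded linear operators; hence $S_{C,C}=\theta_C^*\theta_C\geq0$ and $S_{\Xi,\Xi}=\theta_\Xi^*\theta_\Xi\geq0$ are bounded, so $\{C_\alpha\}_{\alpha\in\Omega}$ and $\{\Xi_\alpha\}_{\alpha\in\Omega}$ are continuous Bessel w.r.t. themselves, and $B_\alpha=\widetilde A_\alpha+C_\alpha$, $\Phi_\alpha=\widetilde\Psi_\alpha+\Xi_\alpha$ hold by construction. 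For the orthogonality I compute $\theta_\Psi^*\theta_C=\theta_\Psi^*\theta_B-\theta_\Psi^*\theta_AS_{A,\Psi}^{-1}=I_\mathcal{H}-S_{A,\Psi}S_{A,\Psi}^{-1}=0$, so $\theta_C(\mathcal{H})\perp\theta_\Psi(\mathcal{H})$, and symmetrically $\theta_A^*\theta_\Xi=\theta_A^*\theta_\Phi-\theta_A^*\theta_\Psi S_{A,\Psi}^{-1}=I_\mathcal{H}-S_{A,\Psi}S_{A,\Psi}^{-1}=0$, so $\theta_\Xi(\mathcal{H})\perp\theta_A(\mathcal{H})$.

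For the converse, assume the stated decomposition with $\{C_\alpha\}$, $\{\Xi_\alpha\}$ continuous Bessel w.r.t. themselves, $\theta_\Psi^*\theta_C=0=\theta_A^*\theta_\Xi$, and $\theta_\Xi^*\theta_C\geq0$. Then $\theta_B=\theta_AS_{A,\Psi}^{-1}+\theta_C$ and $\theta_\Phi=\theta_\Psi S_{A,\Psi}^{-1}+\theta_\Xi$ are bounded and the maps $\alpha\mapsto B_\alpha h$, $\alpha\mapsto\Phi_\alpha h$ are measurable; in particular $\{B_\alpha\}$ and $\{\Phi_\alpha\}$ are each continuous operator-valued Bessel, so $S_{B,\Phi}=\theta_\Phi^*\theta_B$ and $S_{\Phi,B}=\theta_B^*\theta_\Phi$ are defined. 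Expanding and using $\theta_\Psi^*\theta_C=0$, $\theta_\Xi^*\theta_A=(\theta_A^*\theta_\Xi)^*=0$, $\theta_C^*\theta_\Psi=(\theta_\Psi^*\theta_C)^*=0$, and $\theta_\Psi^*\theta_A=\theta_A^*\theta_\Psi=S_{A,\Psi}$, one gets $S_{B,\Phi}=S_{A,\Psi}^{-1}+\theta_\Xi^*\theta_C$ and $S_{\Phi,B}=S_{A,\Psi}^{-1}+(\theta_\Xi^*\theta_C)^*$; since $\theta_\Xi^*\theta_C\geq0$ is self-adjoint these agree. Thus $S_{B,\Phi}$ is bounded, self-adjoint, positive, and bounded below by $S_{A,\Psi}^{-1}\geq\|S_{A,\Psi}\|^{-1}I_\mathcal{H}$, hence invertible, so $(\{B_\alpha\}_{\alpha\in\Omega},\{\Phi_\alpha\}_{\alpha\in\Omega})$ is a continuous (ovf). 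Finally $\theta_\Phi^*\theta_A=S_{A,\Psi}^{-1}\theta_\Psi^*\theta_A+\theta_\Xi^*\theta_A=I_\mathcal{H}+0=I_\mathcal{H}$ and $\theta_B^*\theta_\Psi=S_{A,\Psi}^{-1}\theta_A^*\theta_\Psi+\theta_C^*\theta_\Psi=I_\mathcal{H}+0=I_\mathcal{H}$, so it is a dual of $(\{A_\alpha\}_{\alpha\in\Omega},\{\Psi_\alpha\}_{\alpha\in\Omega})$.

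The only step that is not pure bookkeeping is the role of the hypothesis $\theta_\Xi^*\theta_C\geq0$ in the converse: a frame operator must be self-adjoint, whereas a priori $S_{B,\Phi}=S_{A,\Psi}^{-1}+\theta_\Xi^*\theta_C$ and $S_{\Phi,B}=S_{A,\Psi}^{-1}+(\theta_\Xi^*\theta_C)^*$ need not coincide, and even when they do the sum need not be invertible; positivity of $\theta_\Xi^*\theta_C$ furnishes both the self-adjointness and the lower bound forcing invertibility. Everything else is a routine transcription, via Gelfand--Pettis integrals, of the corresponding discrete statement in \cite{MAHESHKRISHNASAMJOHNSON}.
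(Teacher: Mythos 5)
Your proof is correct and follows essentially the same route the paper intends (it defers to the discrete Proposition 2.44 of \cite{MAHESHKRISHNASAMJOHNSON}, whose argument is exactly this translation into analysis operators): you take $C_\alpha=B_\alpha-A_\alpha S_{A,\Psi}^{-1}$, $\Xi_\alpha=\Phi_\alpha-\Psi_\alpha S_{A,\Psi}^{-1}$, use $\theta_\Psi^*\theta_B=\theta_A^*\theta_\Phi=I_\mathcal{H}$ and $\theta_\Psi^*\theta_A=S_{A,\Psi}$ to get $\theta_\Psi^*\theta_C=\theta_A^*\theta_\Xi=0$, and in the converse expand $S_{B,\Phi}=S_{A,\Psi}^{-1}+\theta_\Xi^*\theta_C$, with the positivity hypothesis supplying self-adjointness and the lower bound $S_{A,\Psi}^{-1}\geq\|S_{A,\Psi}\|^{-1}I_\mathcal{H}$ that forces invertibility. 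Your closing remark correctly isolates the one non-mechanical point, namely why $\theta_\Xi^*\theta_C\geq 0$ is needed; no gaps.
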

 \begin{proof}
 Similar to the proof of Proposition 2.44 in \cite{MAHESHKRISHNASAMJOHNSON}.
 \end{proof}
 \begin{definition}
 A continuous  (ovf)  $ (\{B_\alpha\}_{\alpha\in \Omega}, \{\Phi_\alpha\}_{\alpha\in \Omega}) $ in $\mathcal{B}(\mathcal{H}, \mathcal{H}_0)$ is said to be orthogonal to a continuous  (ovf)  $ (\{A_\alpha\}_{\alpha\in \Omega}, \{\Psi_\alpha\}_{\alpha\in \Omega}) $ in $\mathcal{B}(\mathcal{H}, \mathcal{H}_0)$ if $ \theta_\Phi^*\theta_A= \theta_B^*\theta_\Psi=0.$
 \end{definition}
 
 \begin{proposition}\label{ORTHOGONALOVFCHARACTERIZATION}
 Let  $ (\{A_\alpha\}_{\alpha\in \Omega}, \{\Psi_\alpha\}_{\alpha\in \Omega}) $ and $ (\{B_\alpha\}_{\alpha\in \Omega}, \{\Phi_\alpha\}_{\alpha\in \Omega}) $ be continuous  operator-valued frames in  $\mathcal{B}(\mathcal{H}, \mathcal{H}_0)$. Then the following are equivalent.
 \begin{enumerate}[\upshape(i)]
 \item $ (\{B_\alpha\}_{\alpha\in \Omega}, \{\Phi_\alpha\}_{\alpha\in \Omega}) $ is orthogonal to  $ (\{A_\alpha\}_{\alpha\in \Omega}, \{\Psi_\alpha\}_{\alpha\in \Omega}) $. 
 \item  $ \int_{\Omega}\Phi_\alpha^*A_\alpha h \,d \mu(\alpha)= \int_{\Omega }B_\alpha^*\Psi_\alpha h\,d \mu(\alpha)=0, \forall h \in \mathcal{H}$.
 \end{enumerate}
\end{proposition}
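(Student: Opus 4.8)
The plan is to reduce the operator identities in (i) to scalar identities by pairing against arbitrary vectors, exactly as in the proof of Proposition \ref{DUALOVFCHARACTERIZATION}. First I would fix $h, g \in \mathcal{H}$ and expand $\langle \theta_\Phi^* \theta_A h, g\rangle = \langle \theta_A h, \theta_\Phi g\rangle$; then unfold the inner product of $\mathcal{L}^2(\Omega, \mathcal{H}_0)$ as $\int_{\Omega} \langle \theta_A h(\alpha), \theta_\Phi g(\alpha)\rangle \, d\mu(\alpha) = \int_{\Omega} \langle A_\alpha h, \Phi_\alpha g\rangle \, d\mu(\alpha)$; and finally recognise the right-hand side as $\left\langle \int_{\Omega} \Phi_\alpha^* A_\alpha h \, d\mu(\alpha), g\right\rangle$ by the defining Riesz-representation property of the weak integral $\int_{\Omega} \Phi_\alpha^* A_\alpha h \, d\mu(\alpha)$ constructed in Section \ref{MK}. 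Running the same computation with the two frames interchanged gives $\langle \theta_B^* \theta_\Psi h, g\rangle = \left\langle \int_{\Omega} B_\alpha^* \Psi_\alpha h \, d\mu(\alpha), g\right\rangle$ for all $h, g \in \mathcal{H}$.

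Given these two identities the equivalence is immediate: a bounded operator on $\mathcal{H}$ vanishes precisely when all of its matrix coefficients $\langle \cdot\, h, g\rangle$ vanish, so $\theta_\Phi^* \theta_A = 0$ if and only if $\int_{\Omega} \Phi_\alpha^* A_\alpha h \, d\mu(\alpha) = 0$ for every $h \in \mathcal{H}$, and likewise $\theta_B^* \theta_\Psi = 0$ if and only if $\int_{\Omega} B_\alpha^* \Psi_\alpha h \, d\mu(\alpha) = 0$ for every $h$. Conjoining the two statements yields (i) $\Leftrightarrow$ (ii).

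I do not expect a genuine obstacle. The only delicate points are the measurability and integrability that make $\int_{\Omega} \langle A_\alpha h, \Phi_\alpha g\rangle \, d\mu(\alpha)$ and the weak integral $\int_{\Omega} \Phi_\alpha^* A_\alpha h \, d\mu(\alpha)$ well defined, but both are already in hand because $\{A_\alpha\}_{\alpha\in\Omega}$, $\{B_\alpha\}_{\alpha\in\Omega}$, $\{\Psi_\alpha\}_{\alpha\in\Omega}$, $\{\Phi_\alpha\}_{\alpha\in\Omega}$ are continuous operator-valued frames, hence in particular continuous operator-valued Bessel --- which is exactly the hypothesis under which the quantity $\int_{\Omega} \Psi_\alpha^* A_\alpha h \, d\mu(\alpha)$ and the relation $S_{A,\Psi} = \theta_\Psi^*\theta_A$ of Proposition \ref{2.2}(ii) were obtained. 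In the writeup I would therefore simply invoke that earlier discussion rather than re-derive the Cauchy--Schwarz estimates. In short, the argument is the one-line pairing computation of Proposition \ref{DUALOVFCHARACTERIZATION}, with $I_\mathcal{H}$ replaced by $0$.
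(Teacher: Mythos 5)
Your proposal is correct and is exactly the argument the paper intends: the paper states Proposition \ref{ORTHOGONALOVFCHARACTERIZATION} without a written proof precisely because it follows verbatim from the pairing computation $\langle \theta_\Phi^*\theta_A h, g\rangle = \langle \int_{\Omega}\Phi_\alpha^*A_\alpha h\,d\mu(\alpha), g\rangle$ (and its counterpart for $\theta_B^*\theta_\Psi$) already carried out in the proof of Proposition \ref{DUALOVFCHARACTERIZATION}, with $I_\mathcal{H}$ replaced by $0$. No gaps.
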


\begin{proposition}
Two orthogonal continuous operator-valued frames  have common dual continuous (ovf).	
\end{proposition}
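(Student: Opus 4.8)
The plan is to build an explicit common dual by superposing the canonical duals of the two frames. Put $S_1 \coloneqq S_{A,\Psi}$ and $S_2 \coloneqq S_{B,\Phi}$ for the two frame operators, which are bounded, positive and invertible, and set
\[
C_\alpha \coloneqq A_\alpha S_1^{-1} + B_\alpha S_2^{-1}, \qquad \Xi_\alpha \coloneqq \Psi_\alpha S_1^{-1} + \Phi_\alpha S_2^{-1}, \qquad \alpha \in \Omega .
\]
I will show that $(\{C_\alpha\}_{\alpha\in\Omega}, \{\Xi_\alpha\}_{\alpha\in\Omega})$ is a continuous (ovf) and is simultaneously a dual of $(\{A_\alpha\}_{\alpha\in\Omega}, \{\Psi_\alpha\}_{\alpha\in\Omega})$ and of $(\{B_\alpha\}_{\alpha\in\Omega}, \{\Phi_\alpha\}_{\alpha\in\Omega})$.

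First I would verify Definition \ref{1} for this pair. Measurability of $\alpha \mapsto C_\alpha h$ and $\alpha \mapsto \Xi_\alpha h$ is immediate, each being a sum of maps from the hypotheses evaluated at $S_1^{-1}h$ and $S_2^{-1}h$. At the level of the analysis operators one reads off $\theta_C = \theta_A S_1^{-1} + \theta_B S_2^{-1}$ and $\theta_\Xi = \theta_\Psi S_1^{-1} + \theta_\Phi S_2^{-1}$, which are bounded as sums of compositions of bounded operators, so condition (iii) holds and, in particular, the families $\{C_\alpha\}$, $\{\Xi_\alpha\}$ are continuous operator-valued Bessel; hence, by the identity $S_{(\cdot),(\cdot)} = \theta_{(\cdot)}^*\theta_{(\cdot)}$ valid for any Bessel pair (established in the paragraph preceding Definition \ref{1}), the frame operator of the new pair is $S_{C,\Xi} = \theta_\Xi^*\theta_C$.

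The computational core is to expand $\theta_\Xi^*\theta_C$ and the four products $\theta_\Xi^*\theta_A$, $\theta_C^*\theta_\Psi$, $\theta_\Xi^*\theta_B$, $\theta_C^*\theta_\Phi$, using $\theta_\Xi^* = S_1^{-1}\theta_\Psi^* + S_2^{-1}\theta_\Phi^*$ and $\theta_C^* = S_1^{-1}\theta_A^* + S_2^{-1}\theta_B^*$. Here I would feed in Proposition \ref{2.2}(ii) for each given frame, namely $\theta_\Psi^*\theta_A = \theta_A^*\theta_\Psi = S_1$ and $\theta_\Phi^*\theta_B = \theta_B^*\theta_\Phi = S_2$, together with the orthogonality hypothesis $\theta_\Phi^*\theta_A = \theta_B^*\theta_\Psi = 0$ and its adjoint $\theta_A^*\theta_\Phi = \theta_\Psi^*\theta_B = 0$. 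All cross terms vanish, leaving $S_{C,\Xi} = S_1^{-1} + S_2^{-1}$, together with $\theta_\Xi^*\theta_A = \theta_C^*\theta_\Psi = S_1^{-1}S_1 = I_\mathcal{H}$ and $\theta_\Xi^*\theta_B = \theta_C^*\theta_\Phi = S_2^{-1}S_2 = I_\mathcal{H}$. The first of these shows $S_{C,\Xi}$ is a bounded positive invertible operator, so condition (ii) holds and $(\{C_\alpha\}, \{\Xi_\alpha\})$ is a continuous (ovf); the remaining four identities say exactly that it is a dual of both given frames, which is the assertion.

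I do not expect a genuine obstacle. The one point calling for a little care is the verification that $S_1^{-1} + S_2^{-1}$ is positive and invertible — needed so that the constructed pair is actually a continuous (ovf) and not merely Bessel; this follows since each summand is a positive invertible operator (indeed $S_i^{-1} \ge b_i^{-1} I_\mathcal{H}$ for an upper frame bound $b_i$ of the respective frame), so the sum is bounded below by a positive multiple of the identity. Beyond that the only thing to watch is the bookkeeping: which of the eight products $\theta_{(\cdot)}^*\theta_{(\cdot)}$ equals some $S_i$, which vanishes by orthogonality, and which collapses to $I_\mathcal{H}$.
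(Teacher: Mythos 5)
Your construction is exactly the one the paper intends: the common dual $C_\alpha = A_\alpha S_{A,\Psi}^{-1}+B_\alpha S_{B,\Phi}^{-1}$, $\Xi_\alpha = \Psi_\alpha S_{A,\Psi}^{-1}+\Phi_\alpha S_{B,\Phi}^{-1}$ is precisely what appears in the referenced discrete proof and in the paper's own explicit argument for the weak-continuous analogue in Section \ref{FURTHEREXTENSION}, where the same cancellation yields $S_{C,\Xi}=S_{A,\Psi}^{-1}+S_{B,\Phi}^{-1}$. The proof is correct and takes essentially the same route as the paper.
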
 
\begin{proof}
Similar to the proof of Proposition 2.48 in \cite{MAHESHKRISHNASAMJOHNSON}.
\end{proof}
 \begin{proposition}
Let $ (\{A_\alpha\}_{\alpha\in \Omega}, \{\Psi_\alpha\}_{\alpha\in \Omega}) $ and $ (\{B_\alpha\}_{\alpha\in \Omega}, \{\Phi_\alpha\}_{\alpha\in \Omega}) $ be  two Parseval continuous  operator-valued frames in   $\mathcal{B}(\mathcal{H}, \mathcal{H}_0)$ which are  orthogonal. If $C,D,E,F \in \mathcal{B}(\mathcal{H})$ are such that $ C^*E+D^*F=I_\mathcal{H}$, then  $ (\{A_\alpha C+B_\alpha D\}_{\alpha\in \Omega}, \{\Psi_\alpha E+\Phi_\alpha F\}_{\alpha\in \Omega}) $ is a  Parseval continuous  (ovf) in  $\mathcal{B}(\mathcal{H}, \mathcal{H}_0)$. In particular,  if scalars $ c,d,e,f$ satisfy $\bar{c}e+\bar{d}f =1$, then $ (\{cA_\alpha+dB_\alpha\}_{\alpha\in \Omega}, \{e\Psi_\alpha+f\Phi_\alpha\}_{\alpha\in \Omega}) $ is  a  Parseval continuous  (ovf).
\end{proposition}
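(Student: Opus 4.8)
The plan is to reduce everything to the analysis operators. Write $\theta_A,\theta_\Psi,\theta_B,\theta_\Phi$ for the analysis operators of the four given families. The starting point is the identity
\begin{align*}
\theta_{AC+BD}=\theta_A C+\theta_B D,\qquad \theta_{\Psi E+\Phi F}=\theta_\Psi E+\theta_\Phi F,
\end{align*}
which holds because, for $h\in\mathcal{H}$, the map $\alpha\mapsto (A_\alpha C+B_\alpha D)h=A_\alpha(Ch)+B_\alpha(Dh)$ is precisely $\theta_A(Ch)+\theta_B(Dh)$, and similarly on the $\Psi,\Phi$ side. Being sums of compositions of bounded operators, the right-hand sides are bounded linear maps $\mathcal{H}\to\mathcal{L}^2(\Omega,\mathcal{H}_0)$, so the two new families are continuous operator-valued Bessel; also a sum of measurable $\mathcal{H}_0$-valued maps is measurable, so condition \text{\upshape(i)} of Definition \ref{1} holds.

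Since both new families are continuous operator-valued Bessel, the frame operator $S_{AC+BD,\,\Psi E+\Phi F}$ is well defined and, by the computation used for Proposition \ref{2.2}(ii) (which needs only the Bessel property), it equals $(\theta_\Psi E+\theta_\Phi F)^*(\theta_A C+\theta_B D)$. Expanding the adjoint and multiplying out gives
\begin{align*}
S_{AC+BD,\,\Psi E+\Phi F}=E^*\theta_\Psi^*\theta_A C+E^*\theta_\Psi^*\theta_B D+F^*\theta_\Phi^*\theta_A C+F^*\theta_\Phi^*\theta_B D.
\end{align*}
Now I would plug in the hypotheses: $(\{A_\alpha\}_{\alpha\in\Omega},\{\Psi_\alpha\}_{\alpha\in\Omega})$ and $(\{B_\alpha\}_{\alpha\in\Omega},\{\Phi_\alpha\}_{\alpha\in\Omega})$ being Parseval gives $\theta_\Psi^*\theta_A=I_\mathcal{H}=\theta_\Phi^*\theta_B$ by Proposition \ref{2.2}(iii), while orthogonality gives $\theta_\Phi^*\theta_A=0$ directly and $\theta_\Psi^*\theta_B=0$ upon taking adjoints in $\theta_B^*\theta_\Psi=0$. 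Hence the four terms collapse to $E^*C+F^*D=(C^*E+D^*F)^*=I_\mathcal{H}$. Thus the frame operator of the new pair is $I_\mathcal{H}$, which is bounded, positive and invertible, so the pair is a continuous (ovf), and since its frame operator is the identity it is Parseval.

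For the ``in particular'' assertion I would take $C=cI_\mathcal{H}$, $D=dI_\mathcal{H}$, $E=eI_\mathcal{H}$, $F=fI_\mathcal{H}$, so that $A_\alpha C=cA_\alpha$, $B_\alpha D=dB_\alpha$, and so on, and $C^*E+D^*F=(\bar c e+\bar d f)I_\mathcal{H}=I_\mathcal{H}$ exactly when $\bar c e+\bar d f=1$; the claim is then a special case of the first part. I do not expect a genuine obstacle here. The only points that require care are that one must first certify the new pair is a continuous (ovf) before quoting the Parseval characterization of Proposition \ref{2.2}(iii), and that the identity $\theta_\Psi^*\theta_B=0$ has to be obtained by adjoining the orthogonality relation $\theta_B^*\theta_\Psi=0$ rather than read off verbatim from the definition.
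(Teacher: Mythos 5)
Your proof is correct and follows essentially the same route as the paper: establish $\theta_{AC+BD}=\theta_AC+\theta_BD$ and $\theta_{\Psi E+\Phi F}=\theta_\Psi E+\theta_\Phi F$, then expand $S_{AC+BD,\,\Psi E+\Phi F}=(\theta_\Psi E+\theta_\Phi F)^*(\theta_AC+\theta_BD)$ and collapse it to $E^*C+F^*D=I_\mathcal{H}$ using Parsevalness and orthogonality. The paper only writes out the analysis-operator identity and delegates the remaining expansion to the discrete-case argument, which is exactly what you have carried out in full.
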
   
\begin{proof}
For all $h \in \mathcal{H}$ and $\alpha \in \Omega$  we see $ \theta_{AC+BD} h (\alpha) = (A_\alpha C+B_\alpha D)h=A_\alpha (Ch)+B_\alpha (Dh)=\theta_{A} (Ch) (\alpha)+\theta_{B} (Dh) (\alpha)=(\theta_{A} (Ch) +\theta_{B} (Dh)) (\alpha)=(\theta_{A} C +\theta_{B} D) h (\alpha)$ $\Rightarrow$ $\theta_{AC+BD}=\theta_AC+\theta_BD$. Similarly $ \theta_{\Psi E+\Phi F}=\theta_\Psi E+\theta_\Phi F $. Other arguments are similar to that in the proof of Proposition 2.49 in \cite{MAHESHKRISHNASAMJOHNSON}.
\end{proof}
\begin{definition}
Two continuous operator-valued frames $(\{A_\alpha\}_{\alpha\in \Omega},\{\Psi_\alpha\}_{\alpha\in \Omega} )$  and $ (\{B_\alpha\}_{\alpha\in \Omega}, \{\Phi_\alpha\}_{\alpha\in \Omega} )$   in $ \mathcal{B}(\mathcal{H}, \mathcal{H}_0)$  are called 
disjoint if $(\{A_\alpha\oplus B_\alpha\}_{\alpha \in \Omega},\{\Psi_\alpha\oplus \Phi_\alpha\}_{\alpha \in \Omega})$ is continuous  (ovf) in $ \mathcal{B}(\mathcal{H}\oplus \mathcal{H}, \mathcal{H}_0).$   
\end{definition}
\begin{proposition}
If $(\{A_\alpha\}_{\alpha\in \Omega},\{\Psi_\alpha\}_{\alpha\in \Omega} )$  and $ (\{B_\alpha\}_{\alpha\in \Omega}, \{\Phi_\alpha\}_{\alpha\in \Omega} )$  are  orthogonal continuous operator-valued frames  in $ \mathcal{B}(\mathcal{H}, \mathcal{H}_0)$, then  they  are disjoint. Further, if both $(\{A_\alpha\}_{\alpha\in \Omega},\{\Psi_\alpha\}_{\alpha\in \Omega} )$  and $ (\{B_\alpha\}_{\alpha\in \Omega}$, $ \{\Phi_\alpha\}_{\alpha\in \Omega} )$ are  Parseval, then $(\{A_\alpha\oplus B_\alpha\}_{\alpha \in \Omega},\{\Psi_\alpha\oplus \Phi_\alpha\}_{\alpha \in \Omega})$ is Parseval.
\end{proposition}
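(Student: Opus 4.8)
The plan is to express the analysis and frame operators of the direct-sum system in terms of those of the two given frames and then to use orthogonality to force the frame operator to be block-diagonal. First I would set up the building blocks: for $h_1\oplus h_2\in\mathcal{H}\oplus\mathcal{H}$ one has $(A_\alpha\oplus B_\alpha)(h_1\oplus h_2)=A_\alpha h_1+B_\alpha h_2$, so $\alpha\mapsto(A_\alpha\oplus B_\alpha)(h_1\oplus h_2)$ is measurable (a sum of two measurable $\mathcal{H}_0$-valued maps), and from $\|A_\alpha h_1+B_\alpha h_2\|^2\le 2\|A_\alpha h_1\|^2+2\|B_\alpha h_2\|^2$ together with the Bessel bounds of $\{A_\alpha\}_{\alpha\in\Omega}$ and $\{B_\alpha\}_{\alpha\in\Omega}$ one obtains that $\theta_{A\oplus B}$ is a well-defined bounded operator $\mathcal{H}\oplus\mathcal{H}\to\mathcal{L}^2(\Omega,\mathcal{H}_0)$ acting by $\theta_{A\oplus B}(h_1\oplus h_2)=\theta_A h_1+\theta_B h_2$, that is, as the row operator $[\,\theta_A\ \theta_B\,]$; the identical statements hold for $\{\Psi_\alpha\oplus\Phi_\alpha\}_{\alpha\in\Omega}$. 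This settles conditions (i) and (iii) of Definition \ref{1} and shows in particular that the two direct-sum families are continuous operator-valued Bessel, so that their frame operator $S_{A\oplus B,\Psi\oplus\Phi}$ is defined by the construction preceding Definition \ref{1}.

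Next I would compute that frame operator. Since for any pair of continuous operator-valued Bessel families one has $\langle S_{X,Y}h,g\rangle=\langle\theta_X h,\theta_Y g\rangle$ (the computation preceding Definition \ref{1}), the adjoint of $\theta_{\Psi\oplus\Phi}=[\,\theta_\Psi\ \theta_\Phi\,]$ is the column operator $f\mapsto(\theta_\Psi^*f)\oplus(\theta_\Phi^*f)$, and therefore
\[
S_{A\oplus B,\Psi\oplus\Phi}=\theta_{\Psi\oplus\Phi}^*\,\theta_{A\oplus B}=\begin{pmatrix}\theta_\Psi^*\theta_A & \theta_\Psi^*\theta_B\\[2pt]\theta_\Phi^*\theta_A & \theta_\Phi^*\theta_B\end{pmatrix}
\]
as an operator on $\mathcal{H}\oplus\mathcal{H}$. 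Orthogonality of the two frames says precisely $\theta_\Phi^*\theta_A=\theta_B^*\theta_\Psi=0$, hence also $\theta_\Psi^*\theta_B=(\theta_B^*\theta_\Psi)^*=0$, so the off-diagonal entries vanish and $S_{A\oplus B,\Psi\oplus\Phi}=S_{A,\Psi}\oplus S_{B,\Phi}$.

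Finally, since $S_{A,\Psi}$ and $S_{B,\Phi}$ are each bounded, positive and invertible, so is their direct sum on $\mathcal{H}\oplus\mathcal{H}$ (take the smaller of the two lower frame bounds and the larger of the two upper frame bounds; its inverse is $S_{A,\Psi}^{-1}\oplus S_{B,\Phi}^{-1}$), which is condition (ii) of Definition \ref{1}; hence $(\{A_\alpha\oplus B_\alpha\}_{\alpha\in\Omega},\{\Psi_\alpha\oplus\Phi_\alpha\}_{\alpha\in\Omega})$ is a continuous (ovf), i.e.\ the two frames are disjoint. If in addition both frames are Parseval, then $S_{A,\Psi}=I_\mathcal{H}=S_{B,\Phi}$ by Proposition \ref{2.2}, so $S_{A\oplus B,\Psi\oplus\Phi}=I_\mathcal{H}\oplus I_\mathcal{H}=I_{\mathcal{H}\oplus\mathcal{H}}$ and the direct sum is Parseval. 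I do not expect a real obstacle here; the only step needing a little care is the bookkeeping that turns the direct-sum frame operator into the $2\times2$ operator matrix and identifies its off-diagonal entries with $\theta_\Psi^*\theta_B$ and $\theta_\Phi^*\theta_A$ — one must observe that although the mixed pairs $(\{A_\alpha\}_{\alpha\in\Omega},\{\Phi_\alpha\}_{\alpha\in\Omega})$ and $(\{B_\alpha\}_{\alpha\in\Omega},\{\Psi_\alpha\}_{\alpha\in\Omega})$ need not be frames, they are Bessel, which is all the construction before Definition \ref{1} requires.
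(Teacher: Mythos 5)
Your proposal is correct and follows essentially the same route as the paper: compute $\theta_{A\oplus B}(h_1\oplus h_2)=\theta_A h_1+\theta_B h_2$ and $\theta_{\Psi\oplus\Phi}^*f=\theta_\Psi^*f\oplus\theta_\Phi^*f$, compose them, and use orthogonality to kill the cross terms, yielding $S_{A\oplus B,\Psi\oplus\Phi}=S_{A,\Psi}\oplus S_{B,\Phi}$. Your version merely spells out a few points the paper leaves implicit (measurability, the Bessel bound for the direct sum, and the Parseval conclusion).
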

\begin{proof}
For all $h\oplus g \in \mathcal{H}\oplus\mathcal{H} $, $\theta_{A\oplus B}(h\oplus g)(\alpha)=(A_\alpha\oplus B_\alpha)(h\oplus g)=A_\alpha h+ B_\alpha g=\theta_Ah(\alpha)+\theta_Bg(\alpha)=(\theta_Ah+\theta_Bg)(\alpha)$ and for all $f \in \mathcal{L}^2(\Omega, \mathcal{H}_0)$, $\langle \theta_{\Psi\oplus \Phi}^*f, h\oplus g\rangle =\langle f,\theta_{\Psi\oplus \Phi}(h\oplus g)\rangle=\langle \theta_\Psi^*f, h \rangle+\langle\theta_\Phi^*f,  g \rangle=\langle \theta_\Psi^*f\oplus\theta_\Phi^*f, h\oplus g\rangle .$ Thus  $S_{A\oplus B,\Psi\oplus\Phi}(h\oplus g)=\theta^*_{\Psi\oplus\Phi}\theta_{A\oplus B}(h\oplus g)=\theta^*_{\Psi\oplus\Phi}(\theta_Ah+\theta_Bg)=\theta_\Psi^*(\theta_Ah+\theta_Bg)\oplus\theta_\Phi^*(\theta_Ah+\theta_Bg)=(S_{A ,\Psi}+0)\oplus (0+S_{ B,\Phi})=S_{A ,\Psi}\oplus S_{ B,\Phi}$, which is bounded positive invertible with $S_{A\oplus B,\Psi\oplus\Phi}^{-1}=S_{A ,\Psi}^{-1}\oplus S_{ B,\Phi}^{-1}$. 
\end{proof}

\section{Characterizations of the extension}\label{CHARACTERIZATIONSOF THE EXTENSION}

\begin{theorem}\label{OPERATORCHARACTERIZATIONHILBERT2}
Let $\{A_\alpha\}_{\alpha\in\Omega},\{\Psi_\alpha\}_{\alpha\in\Omega}$ be in $ \mathcal{B}(\mathcal{H}, \mathcal{H}_0)$ such that 
 for each  $h \in \mathcal{H}$, both  maps   $\Omega \ni \alpha \mapsto A_\alpha h\in \mathcal{H}_0$, $\Omega \ni\alpha \mapsto \Psi_\alpha h\in \mathcal{H}_0$ are measurable. Then  $ (\{A_\alpha\}_{\alpha\in \Omega}, \{\Psi_\alpha\}_{\alpha\in \Omega})$  is a continuous (ovf) with bounds  $a $ and  $ b$ (resp. continuous Bessel with bound $ b$)
\begin{enumerate}[\upshape(i)]
\item   if and only if 
$$U:\mathcal{L}^2(\Omega, \mathcal{H}_0) \ni f\mapsto\int_{\Omega}A_\alpha^*f(\alpha)\,d \mu(\alpha) \in \mathcal{H}, ~\text{and} ~ V:\mathcal{L}^2(\Omega, \mathcal{H}_0) \ni g\mapsto \int_{\Omega}\Psi_\alpha^*g(\alpha)\,d \mu(\alpha) \in \mathcal{H} $$ 
are well-defined, $ U,V \in \mathcal{B}(\mathcal{L}^2(\Omega, \mathcal{H}_0),\mathcal{H})$  such that  $ aI_\mathcal{H}\leq VU^*\leq bI_\mathcal{H}$ (resp. $ 0\leq VU^*\leq bI_\mathcal{H}$).
\item    if and only if 
$$U:\mathcal{L}^2(\Omega, \mathcal{H}_0) \ni f\mapsto\int_{\Omega}A_\alpha^*f(\alpha)\,d \mu(\alpha) \in \mathcal{H}, ~\text{and} ~ S: \mathcal{H} \ni x\mapsto Sx \in \mathcal{L}^2(\Omega, \mathcal{H}_0), Sx: \Omega \ni \alpha \mapsto \Psi_\alpha x \in \mathcal{H}_0 $$ 
are well-defined, $ U \in \mathcal{B}(\mathcal{L}^2(\Omega, \mathcal{H}_0),\mathcal{H})$, $ S \in \mathcal{B}(\mathcal{H}, \mathcal{L}^2(\Omega, \mathcal{H}_0))$ such that  $ aI_\mathcal{H}\leq S^*U^*\leq bI_\mathcal{H}$ (resp. $ 0\leq S^*U^*\leq bI_\mathcal{H}$).
\item  if and only if  
$$R:   \mathcal{H} \ni h\mapsto  Rh\in  \mathcal{L}^2(\Omega, \mathcal{H}_0), Rh: \Omega \ni \alpha \mapsto A_\alpha h \in \mathcal{H}_0, ~\text{and} ~ V: \mathcal{L}^2(\Omega, \mathcal{H}_0) \ni g\mapsto \int_{\Omega}\Psi_\alpha^*g(\alpha)\,d \mu(\alpha)\in \mathcal{H} $$ 
are well-defined, $ R \in \mathcal{B}(\mathcal{H}, \mathcal{L}^2(\Omega, \mathcal{H}_0))$, $ V \in \mathcal{B}(\mathcal{L}^2(\Omega, \mathcal{H}_0),\mathcal{H})$ such that  $ aI_\mathcal{H}\leq VR\leq bI_\mathcal{H}$ (resp. $ 0\leq VR\leq bI_\mathcal{H}$).
\item  if and only if  $$ \mathcal{H} \ni h\mapsto  Rh\in  \mathcal{L}^2(\Omega, \mathcal{H}_0), Rh: \Omega \ni \alpha \mapsto A_\alpha h \in \mathcal{H}_0, ~\text{and} ~ S: \mathcal{H} \ni x\mapsto Sx \in \mathcal{L}^2(\Omega, \mathcal{H}_0), Sx: \Omega \ni \alpha \mapsto \Psi_\alpha x \in \mathcal{H}_0 $$
are well-defined, $ R,S \in \mathcal{B}(\mathcal{H},\mathcal{L}^2(\Omega, \mathcal{H}_0))$  such that  $ aI_\mathcal{H}\leq S^*R\leq bI_\mathcal{H}$ (resp. $ 0\leq S^*R\leq bI_\mathcal{H}$). 
\end{enumerate}

\end{theorem}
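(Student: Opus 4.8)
The plan is to reduce everything to the already-established connection between Definition \ref{1} and the operators $\theta_A,\theta_\Psi$ and their adjoints, and then to translate each of the four equivalences into a purely operator-theoretic identity. Recall from the discussion preceding Definition \ref{1} that condition (iii) of that definition holds if and only if $\theta_A$ and $\theta_\Psi$ are well-defined bounded operators, in which case $\theta_A^*f=\int_\Omega A_\alpha^*f(\alpha)\,d\mu(\alpha)$ and $\theta_\Psi^*g=\int_\Omega\Psi_\alpha^*g(\alpha)\,d\mu(\alpha)$ (weak integrals), and that $S_{A,\Psi}=\theta_\Psi^*\theta_A=\theta_A^*\theta_\Psi$ by Proposition \ref{2.2}(ii). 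So in the notation of the theorem, $U=\theta_A^*$, $V=\theta_\Psi^*$, $R=\theta_A$, $S=\theta_\Psi$.

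First I would prove (iv), which is the cleanest: given that $R,S$ are well-defined bounded operators, I claim $R=\theta_A$ and $S=\theta_\Psi$ by construction (they are literally the maps $h\mapsto(\alpha\mapsto A_\alpha h)$, $x\mapsto(\alpha\mapsto\Psi_\alpha x)$), so the measurability hypothesis plus boundedness is exactly condition (iii) of Definition \ref{1}. Then $S^*R=\theta_\Psi^*\theta_A=S_{A,\Psi}$, and the operator inequality $aI_\mathcal{H}\le S_{A,\Psi}\le bI_\mathcal{H}$ is equivalent (for a positive operator) to condition (ii) of Definition \ref{1} with frame bounds $a,b$ — note that positivity of $S_{A,\Psi}$ is automatic from Proposition \ref{2.2}(ii) since $\langle S_{A,\Psi}h,h\rangle=\langle\theta_Ah,\theta_\Psi h\rangle$ need not be manifestly positive, so I should instead observe that $aI\le S^*R$ forces $S^*R$ positive, hence the bounds give both (ii) and the Bessel estimates in (iii) via $\|\theta_Ah\|^2=\langle\theta_A^*\theta_Ah,h\rangle$ — wait, that is $\theta_A^*\theta_A$, not $S^*R$; so for the Bessel bounds $c,d$ one uses $\|R\|,\|S\|<\infty$ directly, and the constants $c=\|R\|^2$, $d=\|S\|^2$ work. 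For the Bessel version replace $aI\le S^*R$ by $0\le S^*R$. The converse direction just unwinds the same identifications.

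Next, (i), (ii), (iii) follow from (iv) by taking adjoints. For (iii): $V=\theta_\Psi^*$ is well-defined and bounded iff $\theta_\Psi$ is (a bounded operator and its adjoint are simultaneously defined), and then $VR=\theta_\Psi^*\theta_A=S^*R$, so (iii) is literally (iv) with $V$ in place of $S^*$; the well-definedness of $V$ as the stated weak integral is exactly the formula for $\theta_\Psi^*$ recorded in Definition \ref{1}. Symmetrically (ii) is (iv) with $U=\theta_A^*$ in place of $R^*$, using $S^*U^*=\theta_\Psi^*\theta_A^{**}=\theta_\Psi^*\theta_A$. And (i) combines both substitutions: $VU^*=\theta_\Psi^*\theta_A^{**}=\theta_\Psi^*\theta_A=S_{A,\Psi}$. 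In each case one must check that "$U$ (resp.\ $V$) well-defined and bounded" is equivalent to "$\theta_A$ (resp.\ $\theta_\Psi$) well-defined and bounded": one direction is the adjoint formula in Definition \ref{1}; the other direction is the observation that if the weak integral $\int_\Omega A_\alpha^*f(\alpha)\,d\mu(\alpha)$ defines a bounded operator $U$ on $\mathcal{L}^2(\Omega,\mathcal{H}_0)$, then $\langle U f,h\rangle=\int_\Omega\langle f(\alpha),A_\alpha h\rangle\,d\mu(\alpha)$ shows $\alpha\mapsto A_\alpha h$ is weakly $\mathcal{L}^2$, hence (with the given measurability) in $\mathcal{L}^2(\Omega,\mathcal{H}_0)$, and $U^*=\theta_A$.

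The main obstacle is this last equivalence — extracting boundedness of $\theta_A$ from boundedness of the synthesis-type integral operator $U$. It is the one place where the weak (Gelfand–Pettis) nature of the integral must be handled with care: one needs that $f\mapsto\int_\Omega\langle f(\alpha),A_\alpha h\rangle\,d\mu(\alpha)$ being a bounded functional on $\mathcal{L}^2(\Omega,\mathcal{H}_0)$ for each $h$, uniformly, yields a genuine element $\theta_A h\in\mathcal{L}^2(\Omega,\mathcal{H}_0)$ rather than merely a weak-$*$ object; here the measurability hypothesis on $\alpha\mapsto A_\alpha h$ is essential, together with the fact that $\mathcal{L}^2(\Omega,\mathcal{H}_0)$ is its own dual for $\mathcal{H}_0$ a Hilbert space, so that a weakly-$\mathcal{L}^2$ measurable function is norm-$\mathcal{L}^2$. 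Everything else is bookkeeping with adjoints and the identity $S_{A,\Psi}=\theta_\Psi^*\theta_A$. I would therefore isolate this equivalence as the first lemma of the proof and then dispatch (i)–(iv) uniformly.
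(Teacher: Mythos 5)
Your proposal is correct and follows essentially the same route as the paper: the paper's entire proof is the identification $U=\theta_A^*$, $V=\theta_\Psi^*$, $R=\theta_A$, $S=\theta_\Psi$ together with $S_{A,\Psi}=\theta_\Psi^*\theta_A$, exactly as you set up. You are in fact more careful than the paper at the one nontrivial point (recovering boundedness of $\theta_A$ from boundedness of the synthesis operator $U$ in the converse direction), which the paper passes over with the bare assertion $\theta_A=U^*$.
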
 
\begin{proof}
We argue only for (i), in frame situation.  $(\Rightarrow)$ Now $U=\theta_A^*$, $V=\theta_\Psi^*$ and $VU^*=\theta_\Psi^*\theta_A=S_{A,\Psi}$.

$(\Leftarrow)$ Now $\theta_A=U^*$, $\theta_\Psi=V^*$ and $S_{A,\Psi}=\theta_\Psi^*\theta_A=VU^*$.
\end{proof}

Let $ \{A_\alpha\}_{\alpha\in\Omega}, \{\Psi_\alpha\}_{\alpha\in\Omega}$ be in $ \mathcal{B}(\mathcal{H},\mathcal{H}_0).$ For each fixed $\alpha \in \Omega$, suppose $ \{e_{\alpha,\beta}\}_{\beta\in\Omega_\alpha}$ is an   orthonormal basis for $ \mathcal{H}_0.$ From Riesz representation theorem,  we get  unique $u_{\alpha,\beta},v_{\alpha,\beta}  \in \mathcal{H}$ such that  $ \langle A_\alpha h, e_{\alpha,\beta}\rangle=\langle h, u_{\alpha,\beta}\rangle, \langle \Psi_\alpha h, e_{\alpha,\beta}\rangle=\langle h, v_{\alpha,\beta}\rangle,\forall h \in \mathcal{H}, \forall \beta \in \Omega_\alpha , \forall \alpha \in \Omega $. Now $A_\alpha h=\sum_{\beta \in \Omega_\alpha}\langle  A_\alpha h,e_{\alpha ,\beta}\rangle e_{\alpha ,\beta}=\sum_{\beta \in\Omega_\alpha}\langle  h,u_{\alpha ,\beta}\rangle e_{\alpha ,\beta}$, $ \Psi_\alpha h= \sum_{\beta \in\Omega_\alpha}\langle  h,v_{\alpha ,\beta}\rangle e_{\alpha ,\beta}, \forall h \in \mathcal{H},\forall \alpha \in \Omega.$ We next  find the adjoints of $ A_\alpha$'s and $\Psi_\alpha $'s in terms of $\{u_{\alpha,\beta}\}_{\beta\in \Omega_\alpha} $ and $\{v_{\alpha,\beta}\}_{\beta\in \Omega_\alpha} $. For all $h \in \mathcal{H}$, $ \langle h, A_\alpha^*y\rangle=\langle A_\alpha h, y\rangle =\sum_{\beta\in \Omega_\alpha}\langle  h,u_{\alpha,\beta}\rangle \langle e_{\alpha,\beta}, y \rangle$ $=\langle h , \sum_{\beta\in \Omega_\alpha} \langle y,e_{\alpha,\beta} \rangle u_{\alpha,\beta}\rangle,
\langle h, \Psi_\alpha ^*y\rangle=\langle \Psi_\alpha h, y\rangle =\sum_{\beta\in \Omega_\alpha}\langle  h,v_{\alpha,\beta}\rangle \langle e_{\alpha,\beta},y  \rangle=\langle h , \sum_{\beta\in \Omega_\alpha} \langle y,e_{\alpha,\beta} \rangle v_{\alpha,\beta}\rangle ,\forall y\in \mathcal{H}_0.$ Therefore $ A_\alpha^*y=\sum_{\beta\in \Omega_\alpha} \langle y,e_{\alpha,\beta} \rangle u_{\alpha,\beta}$, $ \Psi_\alpha^*z=\sum_{\beta\in \Omega_\alpha} \langle z,e_{\alpha,\beta} \rangle v_{\alpha,\beta},  \forall y,z \in \mathcal{H}_0, \forall  \alpha\in \Omega.$ Evaluation of these at $e_{\alpha,\beta_0}$ gives $ u_{\alpha,\beta_0}=A_\alpha^*e_{\alpha,\beta_0}, v_{\alpha,\beta_0}=\Psi_\alpha^*e_{\alpha,\beta_0}, \forall \beta_0 \in  \Omega_\alpha, \alpha\in \Omega.$ 
\begin{theorem}\label{SEQUENTIAL CHARACTERIZATION}
Let $ \{A_\alpha\}_{\alpha\in\Omega}, \{\Psi_\alpha\}_{\alpha\in\Omega}$ be in $ \mathcal{B}(\mathcal{H},\mathcal{H}_0).$ Suppose $ \{e_{\alpha, \beta}\}_{\beta\in\Omega_\alpha }$ is an   orthonormal basis for $ \mathcal{H}_0,$ for each $\alpha \in \Omega.$ Let  $ u_{\alpha,\beta}=A_\alpha^*e_{\alpha,\beta}, v_{\alpha,\beta}=\Psi_\alpha^*e_{\alpha,\beta}, \forall \beta \in  \Omega_\alpha, \forall \alpha\in \Omega.$ Then $(\{A_\alpha\}_{\alpha\in\Omega}, \{\Psi_\alpha\}_{\alpha\in\Omega})$ is  a continuous 
\begin{enumerate}[\upshape(i)]
\item    (ovf) in $ \mathcal{B}(\mathcal{H},\mathcal{H}_0)$  with bounds $a $ and $ b$  if and only if  for each  $h \in \mathcal{H}$, both  maps   $\Omega \ni \alpha \mapsto  \sum_{\beta \in\Omega_\alpha}\langle  h,u_{\alpha ,\beta}\rangle e_{\alpha ,\beta}\in \mathcal{H}_0$, $\Omega \ni\alpha \mapsto \sum_{\beta \in\Omega_\alpha}\langle  h,v_{\alpha ,\beta}\rangle e_{\alpha ,\beta}\in \mathcal{H}_0$ are measurable  and there exist $ c,d >0$ such that the map 
$$ T: \mathcal{H} \ni h \mapsto\int_{\Omega}\sum_{\beta \in \Omega_\alpha}\langle h, u_{\alpha,\beta}\rangle v_{\alpha,\beta} \,d\mu(\alpha)\in  \mathcal{H} $$
is a well-defined bounded positive invertible operator such that $ a\|h\|^2 \leq \langle Th,h \rangle \leq b\|h\|^2, \forall h \in \mathcal{H} $, and 
$$  \int_{\Omega}\sum_{\beta \in \Omega_\alpha}|\langle h, u_{\alpha,\beta}\rangle |^2 \,d\mu(\alpha)\leq c\|h\|^2 ,~ \forall h \in \mathcal{H}; \quad \int_{\Omega}\sum_{\beta \in \Omega_\alpha} |\langle h, v_{\alpha,\beta}\rangle |^2\,d\mu(\alpha)\leq d\|h\|^2 ,~ \forall h \in \mathcal{H}.$$
\item  Bessel  in $ \mathcal{B}(\mathcal{H},\mathcal{H}_0)$  with bound  $ b$  if and only if for each  $h \in \mathcal{H}$, both  maps   $\Omega \ni \alpha \mapsto  \sum_{\beta \in\Omega_\alpha}\langle  h,u_{\alpha ,\beta}\rangle e_{\alpha ,\beta}$ $\in \mathcal{H}_0$, $\Omega \ni\alpha \mapsto \sum_{\beta \in\Omega_\alpha}\langle  h,v_{\alpha ,\beta}\rangle e_{\alpha ,\beta}\in \mathcal{H}_0$ are measurable  and there exist $ c,d >0$ such that the map 
$$ T: \mathcal{H} \ni h \mapsto\int_{\Omega}\sum_{\beta \in\Omega_\alpha}\langle h, u_{\alpha,\beta}\rangle v_{\alpha,\beta} \,d\mu(\alpha)\in  \mathcal{H} $$
is a well-defined bounded positive  operator such that $ 0 \leq \langle Th,h \rangle \leq b\|h\|^2, \forall h \in \mathcal{H} $, and 
$$  \int_{\Omega}\sum_{\beta \in\Omega_\alpha}|\langle h, u_{\alpha,\beta}\rangle |^2\,d\mu(\alpha)\leq c\|h\|^2 ,~ \forall h \in \mathcal{H}; \quad  \int_{\Omega}\sum_{\beta \in\Omega_\alpha} |\langle h, v_{\alpha,\beta}\rangle |^2\,d\mu(\alpha)\leq d\|h\|^2 ,~ \forall h \in \mathcal{H}.$$ 
\item   (ovf)  in $ \mathcal{B}(\mathcal{H},\mathcal{H}_0)$  with bounds $a $ and $ b$  if and only if for each  $h \in \mathcal{H}$, both  maps   $\Omega \ni \alpha \mapsto  \sum_{\beta \in\Omega_\alpha}\langle  h,u_{\alpha ,\beta}\rangle e_{\alpha ,\beta}\in \mathcal{H}_0$, $\Omega \ni\alpha \mapsto \sum_{\beta \in\Omega_\alpha}\langle  h,v_{\alpha ,\beta}\rangle e_{\alpha ,\beta}\in \mathcal{H}_0$ are measurable  and there exist $ c,d, r >0$ such that 
 $$\left \|\int_{\Omega}\sum_{\beta \in\Omega_\alpha}\langle h, u_{\alpha,\beta}\rangle v_{\alpha,\beta}\,d\mu(\alpha)\right\|\leq r\|h\|,~\forall h \in \mathcal{H}   ;$$ 
 $$\int_{\Omega}\sum_{\beta \in\Omega_\alpha}\langle h, u_{\alpha,\beta}\rangle v_{\alpha,\beta} \,d\mu(\alpha)=\int_{\Omega}\sum_{\beta \in\Omega_\alpha}\langle h, v_{\alpha,\beta}\rangle u_{\alpha,\beta}\,d\mu(\alpha) ,~\forall h \in \mathcal{H} ;$$
 $$a\|h\|^2\leq \int_{\Omega}\sum_{\beta \in\Omega_\alpha}\langle h, u_{\alpha,\beta}\rangle \langle  v_{\alpha,\beta} , h\rangle \,d\mu(\alpha) \leq b\|h\|^2 ,~ \forall h \in \mathcal{H}, ~\text{and} $$
 $$  \int_{\Omega}\sum_{\beta \in\Omega_\alpha}|\langle h, u_{\alpha,\beta}\rangle |^2 \,d\mu(\alpha)\leq c\|h\|^2 ,~ \forall h \in \mathcal{H}; \quad \int_{\Omega}\sum_{\beta \in\Omega_\alpha} |\langle h, v_{\alpha,\beta}\rangle |^2\,d\mu(\alpha)\leq d\|h\|^2 ,~ \forall h \in \mathcal{H}.$$
\item Bessel  in $ \mathcal{B}(\mathcal{H},\mathcal{H}_0)$  with bound  $ b$ if and only  if for each  $h \in \mathcal{H}$, both  maps   $\Omega \ni \alpha \mapsto  \sum_{\beta \in\Omega_\alpha}\langle  h,u_{\alpha ,\beta}\rangle e_{\alpha ,\beta}$ $\in \mathcal{H}_0$, $\Omega \ni\alpha \mapsto \sum_{\beta \in\Omega_\alpha}\langle  h,v_{\alpha ,\beta}\rangle e_{\alpha ,\beta}$ $\in \mathcal{H}_0$ are measurable  and there exist $ c,d, r >0$ such that 
$$\left \|\int_{\Omega}\sum_{\beta \in\Omega_\alpha}\langle h, u_{\alpha,\beta}\rangle v_{\alpha,\beta}\,d\mu(\alpha)\right\|\leq r\|h\|,~\forall h \in \mathcal{H} ;$$
$$ \int_{\Omega}\sum_{\beta \in\Omega_\alpha}\langle h, u_{\alpha,\beta}\rangle v_{\alpha,\beta}\,d\mu(\alpha) =\int_{\Omega}\sum_{\beta \in\Omega_\alpha}\langle h, v_{\alpha,\beta}\rangle u_{\alpha,\beta}\,d\mu(\alpha) ,~\forall h \in \mathcal{H} ;$$
 $$ 0 \leq \int_{\Omega}\sum_{\beta \in\Omega_\alpha}\langle h, u_{\alpha,\beta}\rangle \langle  v_{\alpha,\beta} , h\rangle\,d\mu(\alpha) \leq b\|h\|^2 ,~ \forall h \in \mathcal{H}, ~\text{and} $$
 $$  \int_{\Omega}\sum_{\beta \in\Omega_\alpha}|\langle h, u_{\alpha,\beta}\rangle |^2 \,d\mu(\alpha)\leq c\|h\|^2 ,~ \forall h \in \mathcal{H}; \quad  \int_{\Omega}\sum_{\beta \in\Omega_\alpha} |\langle h, v_{\alpha,\beta}\rangle|^2\,d\mu(\alpha)\leq d\|h\|^2 ,~ \forall h \in \mathcal{H}.$$ 
\end{enumerate}
\end{theorem}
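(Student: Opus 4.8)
The plan is to read all four parts as restatements of results already obtained — Definition \ref{1} for (i), its Bessel version for (ii), and the four-condition Proposition immediately preceding Definition \ref{RIESZOVF} for (iii) and (iv) — once every operator-theoretic quantity attached to a fixed index $\alpha$ has been rewritten in terms of the vectors $u_{\alpha,\beta}$ and $v_{\alpha,\beta}$. So the first step is to collect the pointwise (in $\alpha$) identities. From the computation carried out just before the theorem we already have $A_\alpha h=\sum_{\beta\in\Omega_\alpha}\langle h,u_{\alpha,\beta}\rangle e_{\alpha,\beta}$ and $\Psi_\alpha h=\sum_{\beta\in\Omega_\alpha}\langle h,v_{\alpha,\beta}\rangle e_{\alpha,\beta}$. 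Applying Parseval's identity in $\mathcal{H}_0$ with respect to the orthonormal basis $\{e_{\alpha,\beta}\}_{\beta\in\Omega_\alpha}$ gives $\|A_\alpha h\|^2=\sum_{\beta\in\Omega_\alpha}|\langle h,u_{\alpha,\beta}\rangle|^2$ and likewise $\|\Psi_\alpha h\|^2=\sum_{\beta\in\Omega_\alpha}|\langle h,v_{\alpha,\beta}\rangle|^2$. Pulling the bounded operator $\Psi_\alpha^*$ through the (norm-convergent) series yields $\Psi_\alpha^*A_\alpha h=\sum_{\beta\in\Omega_\alpha}\langle h,u_{\alpha,\beta}\rangle v_{\alpha,\beta}$, and symmetrically $A_\alpha^*\Psi_\alpha h=\sum_{\beta\in\Omega_\alpha}\langle h,v_{\alpha,\beta}\rangle u_{\alpha,\beta}$, whence $\langle A_\alpha h,\Psi_\alpha h\rangle=\langle\Psi_\alpha^*A_\alpha h,h\rangle=\sum_{\beta\in\Omega_\alpha}\langle h,u_{\alpha,\beta}\rangle\langle v_{\alpha,\beta},h\rangle$.

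For part (i) I then match the clauses of Definition \ref{1} one by one. The measurability clause asks exactly that $\alpha\mapsto A_\alpha h$ and $\alpha\mapsto\Psi_\alpha h$ be measurable, and by the first identities these are precisely the maps $\alpha\mapsto\sum_{\beta}\langle h,u_{\alpha,\beta}\rangle e_{\alpha,\beta}$ and $\alpha\mapsto\sum_{\beta}\langle h,v_{\alpha,\beta}\rangle e_{\alpha,\beta}$ named in the statement. The frame operator is $S_{A,\Psi}h=\int_{\Omega}\Psi_\alpha^*A_\alpha h\,d\mu(\alpha)=\int_{\Omega}\sum_{\beta}\langle h,u_{\alpha,\beta}\rangle v_{\alpha,\beta}\,d\mu(\alpha)=Th$, so ``$S_{A,\Psi}$ is a well-defined bounded positive invertible operator with $a\|h\|^2\le\langle S_{A,\Psi}h,h\rangle\le b\|h\|^2$'' is literally the same assertion about $T$. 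Finally, by the discussion following Definition \ref{FIRSTDEFINITION}, $\theta_A$ and $\theta_\Psi$ are well-defined bounded operators iff $\int_{\Omega}\|A_\alpha h\|^2\,d\mu(\alpha)\le c\|h\|^2$ and $\int_{\Omega}\|\Psi_\alpha h\|^2\,d\mu(\alpha)\le d\|h\|^2$; by the Parseval identities these are the two displayed $c,d$-inequalities. This proves (i), and (ii) is the identical argument with ``positive invertible'' and the lower bound $a\|h\|^2\le$ weakened to ``positive'' and $0\le$.

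For parts (iii) and (iv) I invoke instead the four-condition Proposition preceding Definition \ref{RIESZOVF}. Its condition $\int_{\Omega}\Psi_\alpha^*A_\alpha h\,d\mu(\alpha)=\int_{\Omega}A_\alpha^*\Psi_\alpha h\,d\mu(\alpha)$ becomes, via the identities of Step 1, the equality of the two weak integrals $\int_{\Omega}\sum_{\beta}\langle h,u_{\alpha,\beta}\rangle v_{\alpha,\beta}\,d\mu(\alpha)$ and $\int_{\Omega}\sum_{\beta}\langle h,v_{\alpha,\beta}\rangle u_{\alpha,\beta}\,d\mu(\alpha)$; its sandwich condition on $\int_{\Omega}\langle A_\alpha h,\Psi_\alpha h\rangle\,d\mu(\alpha)$ becomes the sandwich on $\int_{\Omega}\sum_{\beta}\langle h,u_{\alpha,\beta}\rangle\langle v_{\alpha,\beta},h\rangle\,d\mu(\alpha)$; and its last condition becomes the $c,d$-inequalities as before. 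The point to check is that the weak integral $\int_{\Omega}\sum_{\beta}\langle h,u_{\alpha,\beta}\rangle v_{\alpha,\beta}\,d\mu(\alpha)$ is meaningful: measurability of $\alpha\mapsto\langle\sum_{\beta}\langle h,u_{\alpha,\beta}\rangle v_{\alpha,\beta},g\rangle=\langle A_\alpha h,\Psi_\alpha g\rangle$ is the polarization argument given right after Definition \ref{FIRSTDEFINITION}, and its defining a bounded operator is recorded as the extra estimate $\|\int_{\Omega}\sum_{\beta}\langle h,u_{\alpha,\beta}\rangle v_{\alpha,\beta}\,d\mu(\alpha)\|\le r\|h\|$ in the statement (this is in fact already forced by the $c,d$-bounds, since that integral equals $S_{A,\Psi}h=\theta_\Psi^*\theta_Ah$). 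The Bessel case (iv) is word-for-word the same with the lower frame bound dropped. I expect no real obstacle; the only mildly delicate moves are the interchange of $\Psi_\alpha^*$ with the infinite series in Step 1, which rests on boundedness of $\Psi_\alpha^*$, and bookkeeping of which weak integrals are a priori defined, which rests on the Bessel-type estimates.
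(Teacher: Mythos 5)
Your proposal is correct and follows essentially the same route as the paper: establish the pointwise identities $\Psi_\alpha^*A_\alpha h=\sum_{\beta\in\Omega_\alpha}\langle h,u_{\alpha,\beta}\rangle v_{\alpha,\beta}$, $\langle A_\alpha h,\Psi_\alpha h\rangle=\sum_{\beta\in\Omega_\alpha}\langle h,u_{\alpha,\beta}\rangle\langle v_{\alpha,\beta},h\rangle$ and $\|A_\alpha h\|^2=\sum_{\beta\in\Omega_\alpha}|\langle h,u_{\alpha,\beta}\rangle|^2$ (with the $\Psi$ analogues), then read (i)--(ii) off Definition \ref{1} and (iii)--(iv) off the four-condition proposition. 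Your added remark that the $r$-bound is redundant given the $c,d$-bounds is a correct observation the paper does not make, but it does not change the argument.
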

\begin{proof}
\begin{enumerate}[\upshape(i)]
\item For all $ h\in \mathcal{H},$

\begin{align*}
\int_{\Omega}\sum\limits_{\beta \in\Omega_\alpha}\langle h, u_{\alpha,\beta}\rangle  v_{\alpha,\beta}\,d\mu(\alpha)&= \int_{\Omega}\sum\limits_{\beta \in\Omega_\alpha}\langle A_\alpha h, e_{\alpha,\beta}\rangle  \Psi_\alpha^*e_{\alpha,\beta} \,d\mu(\alpha)\\
&=\int_{\Omega}\Psi_\alpha^*\left(\sum\limits_{\beta \in\Omega_\alpha}\langle A_\alpha h, e_{\alpha,\beta}\rangle  e_{\alpha,\beta}\right) \,d\mu(\alpha)=\int_{\Omega}\Psi_\alpha^*A_\alpha h\,d\mu(\alpha),
\end{align*}

\begin{align*}
\int_{\Omega}\langle A_\alpha h, \Psi_\alpha h\rangle\,d\mu(\alpha) &=\int_{\Omega} \left \langle\sum\limits_{\beta \in\Omega_\alpha}\langle h, A_\alpha^*e_{\alpha,\beta} \rangle e_{\alpha,\beta}, \sum\limits_{\gamma \in\Omega_\alpha} \langle h, \Psi_\alpha^*e_{\alpha,\gamma}\rangle e_{\alpha,\gamma} \right\rangle\,d\mu(\alpha)\\
&=\int_{\Omega}\left \langle\sum\limits_{\beta \in\Omega_\alpha}\langle h, u_{\alpha,\beta}\rangle e_{\alpha,\beta}, \sum\limits_{\gamma \in \Omega_\alpha} \langle h, v_{\alpha,\gamma}\rangle e_{\alpha,\gamma} \right\rangle\,d\mu(\alpha)\\
&=\int_{\Omega}\sum\limits_{\beta \in\Omega_\alpha}\langle h, u_{\alpha,\beta}\rangle \langle v_{\alpha,\beta}, h\rangle\,d\mu(\alpha), 
\end{align*}

\begin{align*}
\left\|\theta_Ah\right\|^2&=\int_{\Omega}\|A_\alpha h\|^2\,d\mu(\alpha)=\int_{\Omega}\sum\limits_{\beta \in\Omega_\alpha}|\langle h, u_{\alpha,\beta}\rangle|^2\,d\mu(\alpha); \\
\left\|\theta_\Psi h\right\|^2&=\int_{\Omega}\sum\limits_{\beta \in\Omega_\alpha}|\langle h, v_{\alpha,\beta}\rangle|^2\,d\mu(\alpha).
\end{align*}
 \item Similar to (i).
 \item $\mathcal{H} \ni h \mapsto \int_{\Omega}\Psi_\alpha^*A_\alpha h  \,d\mu(\alpha)\in \mathcal{H}$   exists and is bounded  positive  invertible if and only if  there exist $ c,d, r  >0$ such that 
 $  \|\int_{\Omega}\sum_{\beta\in \Omega_\alpha}\langle h, u_{\alpha,\beta}\rangle v_{\alpha,\beta}\,d\mu(\alpha)\|\leq r\|h\|,\forall h \in \mathcal{H}$, $\int_{\Omega}\sum_{\beta\in \Omega_\alpha}\langle h, u_{\alpha,\beta}\rangle v_{\alpha,\beta}\,d\mu(\alpha) =\int_{\Omega}\sum_{\beta\in \Omega_\alpha}\langle h, v_{\alpha,\beta}\rangle u_{\alpha,\beta}\,d\mu(\alpha) ,\forall h \in \mathcal{H}$ and $
 a\|h\|^2\leq \int_{\Omega}\sum_{\beta\in\Omega_\alpha}\langle h, u_{\alpha,\beta}\rangle \langle  v_{\alpha,\beta} , h\rangle \,d\mu(\alpha) \leq b\|h\|^2 , \forall h \in \mathcal{H} $. Also, $ \theta_A:\mathcal{H} \ni h \mapsto \theta_Ah \in \mathcal{L}^2(\Omega, \mathcal{H}_0) $, $\theta_Ah: \Omega \ni \alpha \mapsto A_\alpha h \in \mathcal{H}_0$, (resp. $\theta_\Psi:\mathcal{H} \ni h \mapsto \theta_\Psi h \in \mathcal{L}^2(\Omega, \mathcal{H}_0) $, $\theta_\Psi h: \Omega \ni \alpha \mapsto \Psi_\alpha h \in \mathcal{H}_0 $) exists and is bounded if and only if there exists $ c>0$ (resp. $ d>0$) such that $\int_{\Omega}\sum_{\beta\in \Omega_\alpha}|\langle h, u_{\alpha,\beta}\rangle |^2 \,d\mu(\alpha)\leq c\|h\|^2 , \forall h \in \mathcal{H} $ (resp. $ \int_{\Omega}\sum_{\beta\in \Omega_\alpha}|\langle h, v_{\alpha,\beta}\rangle |^2\,d\mu(\alpha) \leq d\|h\|^2 , \forall h \in \mathcal{H}$).
 \item Similar to (iii).
\end{enumerate}
\end{proof}

\textbf{Similarity}\label{SIMILARITYCOMPOSITIONANDTENSORPRODUCT}
\begin{definition}
 A continuous  (ovf)  $(\{B_\alpha\}_{\alpha\in \Omega},  \{\Phi_\alpha\}_{\alpha\in \Omega})$  in $ \mathcal{B}(\mathcal{H}, \mathcal{H}_0)$    is said to be right-similar  to a continuous  (ovf)  $(\{A_\alpha\}_{\alpha\in \Omega},   \{\Psi_\alpha\}_{\alpha\in \Omega})$ in $ \mathcal{B}(\mathcal{H}, \mathcal{H}_0)$  if there exist invertible  $ R_{A,B}, R_{\Psi, \Phi} \in \mathcal{B}(\mathcal{H})$   such that $B_\alpha=A_\alpha R_{A,B} , \Phi_\alpha=\Psi_\alpha R_{\Psi, \Phi} $, $\forall \alpha \in \Omega$.
\end{definition}

\begin{proposition}\label{RIGHTSIMILARITYPROPOSITIONOPERATORVERSION}
Let $ \{A_\alpha\}_{\alpha\in \Omega}\in \mathscr{F}_\Psi$  with frame bounds $a, b,$  let $R_{A,B}, R_{\Psi, \Phi} \in \mathcal{B}(\mathcal{H})$ be positive, invertible, commute with each other, commute with $ S_{A, \Psi}$, and let $B_\alpha=A_\alpha R_{A,B} , \Phi_\alpha=\Psi_\alpha R_{\Psi, \Phi},  \forall \alpha \in \Omega.$ Then 
\begin{enumerate}[\upshape(i)]
\item $ \{B_\alpha\}_{\alpha\in \Omega}\in \mathscr{F}_\Phi$ and $ \frac{a}{\|R_{A,B}^{-1}\|\|R_{\Psi,\Phi}^{-1}\|}\leq S_{B, \Phi} \leq b\|R_{A,B}R_{\Psi,\Phi}\|.$ Assuming that $ (\{A_\alpha\}_{\alpha\in \Omega},\{\Psi_\alpha\}_{\alpha\in \Omega})$ is Parseval, then $(\{B_\alpha\}_{\alpha\in \Omega},  \{\Phi_\alpha\}_{\alpha\in \Omega})$ is Parseval  if and only if   $ R_{\Psi, \Phi}R_{A,B}=I_\mathcal{H}.$  
\item $ \theta_B=\theta_A R_{A,B}, \theta_\Phi=\theta_\Psi R_{\Psi,\Phi}, S_{B,\Phi}=R_{\Psi,\Phi}S_{A, \Psi}R_{A,B},  P_{B,\Phi}=P_{A, \Psi}.$
\end{enumerate}
\end{proposition}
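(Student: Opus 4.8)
The plan is to reduce everything to the operator identities in part (ii) and a functional-calculus argument. First I would check that $\{B_\alpha\}_{\alpha\in\Omega}$ and $\{\Phi_\alpha\}_{\alpha\in\Omega}$ are continuous operator-valued Bessel in the sense of Definition \ref{FIRSTDEFINITION}: for each $h\in\mathcal{H}$ the map $\alpha\mapsto B_\alpha h=A_\alpha(R_{A,B}h)$ is measurable because $\alpha\mapsto A_\alpha(\cdot)$ is, and $\int_\Omega\|B_\alpha h\|^2\,d\mu(\alpha)=\int_\Omega\|A_\alpha(R_{A,B}h)\|^2\,d\mu(\alpha)\leq\|\theta_A\|^2\|R_{A,B}\|^2\|h\|^2$, with the symmetric statement for $\Phi_\alpha$. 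Hence $\theta_B$, $\theta_\Phi$ and the weak integral defining $S_{B,\Phi}$ all make sense. From the pointwise equality $\theta_Bh(\alpha)=B_\alpha h=A_\alpha(R_{A,B}h)=\theta_A(R_{A,B}h)(\alpha)$, valid for every $h$ and $\alpha$, I get $\theta_B=\theta_AR_{A,B}$, and similarly $\theta_\Phi=\theta_\Psi R_{\Psi,\Phi}$; both are bounded, being composites of bounded operators. This already gives two of the four identities in (ii).

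Next I would compute the frame operator. As in the computation preceding Definition \ref{1} and in the proof of Proposition \ref{2.2}(ii), any Bessel pair satisfies $S_{B,\Phi}=\theta_\Phi^*\theta_B$; substituting the two identities above and using that $R_{\Psi,\Phi}$ is positive, hence self-adjoint, gives $S_{B,\Phi}=R_{\Psi,\Phi}^*S_{A,\Psi}R_{A,B}=R_{\Psi,\Phi}S_{A,\Psi}R_{A,B}$, the third identity in (ii). The delicate point — and the main obstacle — is to show this operator is positive invertible with the stated bounds, since a priori $R_{\Psi,\Phi}S_{A,\Psi}R_{A,B}$ need not even be self-adjoint; this is exactly where the commutativity hypotheses enter. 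Since $R_{A,B}$ and $R_{\Psi,\Phi}$ are commuting positive invertibles, their product $R_{A,B}R_{\Psi,\Phi}$ is positive invertible, and I set $T\coloneqq(R_{A,B}R_{\Psi,\Phi})^{1/2}$, a positive invertible operator which, by the continuous functional calculus, commutes with $R_{A,B}$, $R_{\Psi,\Phi}$ and $S_{A,\Psi}$. Commutativity then rearranges $S_{B,\Phi}=R_{\Psi,\Phi}S_{A,\Psi}R_{A,B}=S_{A,\Psi}T^2=TS_{A,\Psi}T$, so that $\langle S_{B,\Phi}h,h\rangle=\langle S_{A,\Psi}Th,Th\rangle$ for all $h$. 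From $aI_\mathcal{H}\leq S_{A,\Psi}\leq bI_\mathcal{H}$ I obtain $\langle S_{B,\Phi}h,h\rangle\leq b\|Th\|^2\leq b\|T\|^2\|h\|^2=b\|R_{A,B}R_{\Psi,\Phi}\|\,\|h\|^2$ and $\langle S_{B,\Phi}h,h\rangle\geq a\|Th\|^2\geq \frac{a}{\|T^{-1}\|^2}\|h\|^2=\frac{a}{\|R_{A,B}^{-1}R_{\Psi,\Phi}^{-1}\|}\|h\|^2\geq\frac{a}{\|R_{A,B}^{-1}\|\,\|R_{\Psi,\Phi}^{-1}\|}\|h\|^2$; together with invertibility of $S_{B,\Phi}$ as a product of three invertible operators, this shows $\{B_\alpha\}_{\alpha\in\Omega}\in\mathscr{F}_\Phi$ with the asserted frame bounds.

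For the Parseval clause, if $(\{A_\alpha\}_{\alpha\in\Omega},\{\Psi_\alpha\}_{\alpha\in\Omega})$ is Parseval then $S_{A,\Psi}=I_\mathcal{H}$, so $S_{B,\Phi}=R_{\Psi,\Phi}R_{A,B}$; hence $(\{B_\alpha\}_{\alpha\in\Omega},\{\Phi_\alpha\}_{\alpha\in\Omega})$ is Parseval if and only if $R_{\Psi,\Phi}R_{A,B}=I_\mathcal{H}$ (equivalently $R_{A,B}R_{\Psi,\Phi}=I_\mathcal{H}$, by commutativity). Finally, for the last identity in (ii) I would substitute the three established formulas into $P_{B,\Phi}=\theta_BS_{B,\Phi}^{-1}\theta_\Phi^*$, using $\theta_\Phi^*=R_{\Psi,\Phi}\theta_\Psi^*$ and $(R_{\Psi,\Phi}S_{A,\Psi}R_{A,B})^{-1}=R_{A,B}^{-1}S_{A,\Psi}^{-1}R_{\Psi,\Phi}^{-1}$, so that $P_{B,\Phi}=\theta_AR_{A,B}R_{A,B}^{-1}S_{A,\Psi}^{-1}R_{\Psi,\Phi}^{-1}R_{\Psi,\Phi}\theta_\Psi^*=\theta_AS_{A,\Psi}^{-1}\theta_\Psi^*=P_{A,\Psi}$. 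Every step other than the positivity-and-bounds argument of the second paragraph is routine manipulation of these operator identities.
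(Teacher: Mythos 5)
Your proposal is correct, and at its core it establishes the same key identity as the paper, namely $S_{B,\Phi}=R_{\Psi,\Phi}S_{A,\Psi}R_{A,B}$; the paper does this by a direct inner-product computation $\langle R_{\Psi,\Phi}S_{A,\Psi}R_{A,B}h,g\rangle=\int_\Omega\langle A_\alpha R_{A,B}h,\Psi_\alpha R_{\Psi,\Phi}g\rangle\,d\mu(\alpha)=\langle S_{B,\Phi}h,g\rangle$, whereas you route it through the factorizations $\theta_B=\theta_AR_{A,B}$, $\theta_\Phi=\theta_\Psi R_{\Psi,\Phi}$ and $S_{B,\Phi}=\theta_\Phi^*\theta_B$ (which is the content of Lemma \ref{SIM} in the paper). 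The genuine added value of your write-up is that the paper's proof stops at this identity and leaves the positivity, invertibility, and the frame-bound estimates entirely implicit, while you supply them: the symmetrization $S_{B,\Phi}=TS_{A,\Psi}T$ with $T=(R_{A,B}R_{\Psi,\Phi})^{1/2}$ is exactly the right device, since it exhibits $S_{B,\Phi}$ as manifestly positive (a priori $R_{\Psi,\Phi}S_{A,\Psi}R_{A,B}$ is only a product of positives and need not be self-adjoint without the commutativity hypotheses) and yields $a\|Th\|^2\leq\langle S_{B,\Phi}h,h\rangle\leq b\|Th\|^2$, from which the stated bounds $\frac{a}{\|R_{A,B}^{-1}\|\|R_{\Psi,\Phi}^{-1}\|}$ and $b\|R_{A,B}R_{\Psi,\Phi}\|$ follow via $\|T\|^2=\|R_{A,B}R_{\Psi,\Phi}\|$ and $\|T^{-1}\|^2=\|R_{A,B}^{-1}R_{\Psi,\Phi}^{-1}\|\leq\|R_{A,B}^{-1}\|\|R_{\Psi,\Phi}^{-1}\|$. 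Your Bessel-property check, the Parseval equivalence, and the cancellation argument for $P_{B,\Phi}=P_{A,\Psi}$ are all routine and correct. In short: same skeleton, but your version makes explicit the one step that actually uses all of the commutativity and positivity hypotheses.
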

\begin{proof}
For all $h, g \in \mathcal{H}$,

\begin{align*}
&\langle R_{\Psi,\Phi}S_{A, \Psi}R_{A,B}h, g\rangle =\langle S_{A, \Psi}R_{A,B}h, R_{\Psi,\Phi}^*g\rangle=\langle S_{A, \Psi}R_{A,B}h, R_{\Psi,\Phi}g\rangle\\
&=\int_{\Omega}\langle  A_\alpha R_{A,B}h, \Psi_\alpha R_{\Psi,\Phi}g\rangle\,d\mu(\alpha)
=\int_{\Omega}\langle  (\Psi_\alpha R_{\Psi,\Phi})^*A_\alpha R_{A,B}h, g\rangle\,d\mu(\alpha)=\langle  S_{B,\Phi}h, g\rangle.
\end{align*}
 \end{proof}
\begin{lemma}\label{SIM}
 Let $ \{A_\alpha\}_{\alpha\in \Omega}\in \mathscr{F}_\Psi,$ $ \{B_\alpha\}_{\alpha\in \Omega}\in \mathscr{F}_\Phi$ and   $B_\alpha=A_\alpha R_{A,B} ,\Phi_\alpha=\Psi_\alpha R_{\Psi, \Phi},  \forall \alpha \in \Omega$, for some invertible $ R_{A,B} ,R_{\Psi, \Phi} \in \mathcal{B}(\mathcal{H}).$ Then 
  $ \theta_B=\theta_A R_{A,B}, \theta_\Phi=\theta_\Psi R_{\Psi,\Phi}, S_{B,\Phi}=R_{\Psi,\Phi}^*S_{A, \Psi}R_{A,B},  P_{B,\Phi}=P_{A, \Psi}.$ Assuming that $ (\{A_\alpha\}_{\alpha\in \Omega},\{\Psi_\alpha\}_{\alpha\in \Omega})$ is Parseval, then $(\{B_\alpha\}_{\alpha\in \Omega},  \{\Phi_\alpha\}_{\alpha\in \Omega})$ is Parseval  if and only if   $ R_{\Psi, \Phi}^*R_{A,B}=I_\mathcal{H}.$ 
 \end{lemma}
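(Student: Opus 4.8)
The plan is to verify each of the asserted identities by pairing against arbitrary vectors $h,g \in \mathcal{H}$ and using the defining relations $B_\alpha = A_\alpha R_{A,B}$, $\Phi_\alpha = \Psi_\alpha R_{\Psi,\Phi}$ together with the characterizations of $\theta$, $S$, $P$ already recorded in Section \ref{MK} and Proposition \ref{2.2}. First I would establish $\theta_B = \theta_A R_{A,B}$: for fixed $h$ and almost every $\alpha$, one has $\theta_B h(\alpha) = B_\alpha h = A_\alpha R_{A,B} h = \theta_A(R_{A,B}h)(\alpha) = (\theta_A R_{A,B})h(\alpha)$, so the two $\mathcal{L}^2(\Omega,\mathcal{H}_0)$-valued maps agree; the identity $\theta_\Phi = \theta_\Psi R_{\Psi,\Phi}$ is identical with $\Psi,\Phi,R_{\Psi,\Phi}$ in place of $A,B,R_{A,B}$. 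This mirrors the computation already done in the proof of the disjointness proposition, so it is routine.

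Next I would derive the frame operator identity. Using Proposition \ref{2.2}(ii), $S_{B,\Phi} = \theta_\Phi^* \theta_B = (\theta_\Psi R_{\Psi,\Phi})^* (\theta_A R_{A,B}) = R_{\Psi,\Phi}^* \theta_\Psi^* \theta_A R_{A,B} = R_{\Psi,\Phi}^* S_{A,\Psi} R_{A,B}$, where I only need that adjoints reverse order of a bounded composition and that $\theta_\Psi^*\theta_A = S_{A,\Psi}$. Alternatively, and perhaps cleaner to present alongside Proposition \ref{RIGHTSIMILARITYPROPOSITIONOPERATORVERSION}, I would pair $\langle R_{\Psi,\Phi}^* S_{A,\Psi} R_{A,B} h, g\rangle = \langle S_{A,\Psi} R_{A,B} h, R_{\Psi,\Phi} g\rangle = \int_\Omega \langle A_\alpha R_{A,B} h, \Psi_\alpha R_{\Psi,\Phi} g\rangle \, d\mu(\alpha) = \int_\Omega \langle (\Psi_\alpha R_{\Psi,\Phi})^* A_\alpha R_{A,B} h, g\rangle \, d\mu(\alpha) = \langle S_{B,\Phi} h, g\rangle$; note that here, unlike in Proposition \ref{RIGHTSIMILARITYPROPOSITIONOPERATORVERSION}, no self-adjointness of $R_{\Psi,\Phi}$ is invoked, which is exactly why the adjoint appears. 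For $P_{B,\Phi} = P_{A,\Psi}$ I would compute directly from the definition $P_{B,\Phi} = \theta_B S_{B,\Phi}^{-1} \theta_\Phi^*$; substituting the formulas just obtained gives $\theta_A R_{A,B} (R_{\Psi,\Phi}^* S_{A,\Psi} R_{A,B})^{-1} R_{\Psi,\Phi}^* \theta_\Psi^* = \theta_A R_{A,B} R_{A,B}^{-1} S_{A,\Psi}^{-1} (R_{\Psi,\Phi}^*)^{-1} R_{\Psi,\Phi}^* \theta_\Psi^* = \theta_A S_{A,\Psi}^{-1} \theta_\Psi^* = P_{A,\Psi}$, using invertibility of $R_{A,B}$ and $R_{\Psi,\Phi}$ (hence of $R_{\Psi,\Phi}^*$). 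Implicit here is that $S_{B,\Phi}$ is indeed invertible, which is granted since $\{B_\alpha\}_{\alpha\in\Omega} \in \mathscr{F}_\Phi$ by hypothesis.

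Finally, for the Parseval statement, assume $(\{A_\alpha\}, \{\Psi_\alpha\})$ is Parseval, i.e. $S_{A,\Psi} = I_\mathcal{H}$. Then $S_{B,\Phi} = R_{\Psi,\Phi}^* R_{A,B}$, so $(\{B_\alpha\}, \{\Phi_\alpha\})$ is Parseval precisely when $R_{\Psi,\Phi}^* R_{A,B} = I_\mathcal{H}$, which is the claim. I do not anticipate a genuine obstacle here; the only point requiring a little care is bookkeeping the adjoint $R_{\Psi,\Phi}^*$ correctly throughout (the earlier Proposition \ref{RIGHTSIMILARITYPROPOSITIONOPERATORVERSION} drops it only because it additionally assumes $R_{\Psi,\Phi}$ self-adjoint), and confirming that invertibility of the $R$'s propagates to invertibility of $R_{\Psi,\Phi}^*$ and of the composite $R_{\Psi,\Phi}^* S_{A,\Psi} R_{A,B}$ so that $S_{B,\Phi}^{-1}$ exists and the cancellations in the $P_{B,\Phi}$ computation are legitimate. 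Since the proof of Proposition \ref{RIGHTSIMILARITYPROPOSITIONOPERATORVERSION} already exhibits the pairing argument, it suffices to indicate the modifications; I would write the proof as "Similar to the proof of Proposition \ref{RIGHTSIMILARITYPROPOSITIONOPERATORVERSION}, replacing $R_{\Psi,\Phi}$ by $R_{\Psi,\Phi}^*$ where an adjoint is needed," possibly with the one-line pairing computation for $S_{B,\Phi}$ spelled out.
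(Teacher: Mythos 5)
Your proposal is correct and follows essentially the same route as the paper: the paper's own proof consists only of the pointwise verification $\theta_Bh(\alpha)=B_\alpha h=A_\alpha(R_{A,B}h)=(\theta_AR_{A,B})h(\alpha)$ (and "similarly" for $\theta_\Phi$), leaving the identities for $S_{B,\Phi}$ and $P_{B,\Phi}$ and the Parseval equivalence to follow formally from $S_{B,\Phi}=\theta_\Phi^*\theta_B$ and $P_{B,\Phi}=\theta_BS_{B,\Phi}^{-1}\theta_\Phi^*$ exactly as you spell out. Your added care about where the adjoint $R_{\Psi,\Phi}^*$ appears (in contrast with Proposition \ref{RIGHTSIMILARITYPROPOSITIONOPERATORVERSION}, where positivity makes it invisible) is accurate and fills in what the paper leaves implicit.
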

\begin{proof}
$\theta_Bh(\alpha)=B_\alpha h=(A_\alpha R_{A,B})h=A_\alpha (R_{A,B}h)=\theta_A(R_{A,B}h)(\alpha)=(\theta_AR_{A,B})h(\alpha),\forall  \alpha \in \Omega, \forall h \in \mathcal{H}$ $\Rightarrow$ $\theta_B=\theta_AR_{A,B}  , \forall \alpha \in \Omega $. Similarly $\theta_\Phi=\theta_\Psi R_{\Psi,\Phi}$.
\end{proof}
 
\begin{theorem}\label{RIGHTSIMILARITY}
Let $ \{A_\alpha\}_{\alpha\in \Omega}\in \mathscr{F}_\Psi,$ $ \{B_\alpha\}_{\alpha\in \Omega}\in \mathscr{F}_\Phi.$ The following are equivalent.
\begin{enumerate}[\upshape(i)]
\item $B_\alpha=A_\alpha R_{A,B} , \Phi_\alpha=\Psi_\alpha R_{\Psi, \Phi} ,  \forall \alpha \in \Omega,$ for some invertible  $ R_{A,B} ,R_{\Psi, \Phi} \in \mathcal{B}(\mathcal{H}). $
\item $\theta_B=\theta_AR_{A,B}' , \theta_\Phi=\theta_\Psi R_{\Psi, \Phi}' $ for some invertible  $ R_{A,B}' ,R_{\Psi, \Phi}' \in \mathcal{B}(\mathcal{H}). $
\item $P_{B,\Phi}=P_{A,\Psi}.$
\end{enumerate}
If one of the above conditions is satisfied, then  invertible operators in  $ \operatorname{(i)}$ and  $ \operatorname{(ii)}$ are unique and are given by $R_{A,B}=S_{A,\Psi}^{-1}\theta_\Psi^*\theta_B, R_{\Psi, \Phi}=S_{A,\Psi}^{-1}\theta_A^*\theta_\Phi.$
In the case that $(\{A_\alpha\}_{\alpha\in \Omega},  \{\Psi_\alpha\}_{\alpha\in \Omega})$ is Parseval, then $(\{B_\alpha\}_{\alpha\in \Omega},  \{\Phi_\alpha\}_{\alpha\in \Omega})$ is  Parseval if and only if $R_{\Psi, \Phi}^*R_{A,B} $  is the identity operator if and only if $R_{A,B}R_{\Psi, \Phi}^* $  is the identity operator. 
\end{theorem}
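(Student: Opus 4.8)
The plan is to prove the chain (i) $\Rightarrow$ (ii) $\Rightarrow$ (iii) $\Rightarrow$ (i) and then read off the uniqueness formulas and the Parseval criterion. The first implication is almost trivial: for any $R\in\mathcal{B}(\mathcal{H})$ and all $h\in\mathcal{H}$, $\alpha\in\Omega$ one has $(\theta_A R)h(\alpha)=A_\alpha(Rh)=(A_\alpha R)h$, so $\theta_B=\theta_A R$ is equivalent to $B_\alpha=A_\alpha R$ for every $\alpha$, and likewise $\theta_\Phi=\theta_\Psi R'$ is equivalent to $\Phi_\alpha=\Psi_\alpha R'$. Thus (i) and (ii) are the same statement carried by the same pair of operators; this pointwise dictionary will be used again below.

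For (ii) $\Rightarrow$ (iii) I would substitute directly. Writing $R:=R'_{A,B}$, $R':=R'_{\Psi,\Phi}$, Proposition~\ref{2.2}(ii) gives $S_{B,\Phi}=\theta_\Phi^*\theta_B=(R')^*\theta_\Psi^*\theta_A R=(R')^*S_{A,\Psi}R$, which is therefore invertible; feeding this into $P_{B,\Phi}=\theta_B S_{B,\Phi}^{-1}\theta_\Phi^*=\theta_A R\bigl((R')^*S_{A,\Psi}R\bigr)^{-1}(R')^*\theta_\Psi^*$ lets the invertible factors $R$ and $(R')^*$ cancel, leaving $P_{B,\Phi}=\theta_A S_{A,\Psi}^{-1}\theta_\Psi^*=P_{A,\Psi}$.

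The real content is (iii) $\Rightarrow$ (i). I would propose the operators $R_{A,B}:=S_{A,\Psi}^{-1}\theta_\Psi^*\theta_B$ and $R_{\Psi,\Phi}:=S_{A,\Psi}^{-1}\theta_A^*\theta_\Phi$ and verify $\theta_A R_{A,B}=\theta_B$ and $\theta_\Psi R_{\Psi,\Phi}=\theta_\Phi$; together with the pointwise dictionary this already yields $B_\alpha=A_\alpha R_{A,B}$, $\Phi_\alpha=\Psi_\alpha R_{\Psi,\Phi}$. The first identity comes from $\theta_A R_{A,B}=P_{A,\Psi}\theta_B=P_{B,\Phi}\theta_B=\theta_B S_{B,\Phi}^{-1}\theta_\Phi^*\theta_B=\theta_B$, using $P_{A,\Psi}=\theta_A S_{A,\Psi}^{-1}\theta_\Psi^*$ and $\theta_\Phi^*\theta_B=S_{B,\Phi}$ (Proposition~\ref{2.2}(ii) applied to $(\{B_\alpha\},\{\Phi_\alpha\})$). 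For the second I would first record that $P_{\Psi,A}=P_{A,\Psi}^*=P_{B,\Phi}^*=P_{\Phi,B}$ by Proposition~\ref{2.2}(v) and that $S_{A,\Psi}=S_{\Psi,A}$, so that $\theta_\Psi R_{\Psi,\Phi}=\theta_\Psi S_{\Psi,A}^{-1}\theta_A^*\theta_\Phi=P_{\Psi,A}\theta_\Phi=P_{\Phi,B}\theta_\Phi=\theta_\Phi S_{\Phi,B}^{-1}\theta_B^*\theta_\Phi=\theta_\Phi$. I expect invertibility of $R_{A,B}$ and $R_{\Psi,\Phi}$ to be the main obstacle; the route around it is to exploit the symmetry of the hypothesis. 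Since $P_{A,\Psi}=P_{B,\Phi}$ is symmetric in the two frames, running the same construction with the roles of $(\{A_\alpha\},\{\Psi_\alpha\})$ and $(\{B_\alpha\},\{\Phi_\alpha\})$ interchanged produces $R_{B,A}:=S_{B,\Phi}^{-1}\theta_\Phi^*\theta_A$ and $R_{\Phi,\Psi}:=S_{B,\Phi}^{-1}\theta_B^*\theta_\Psi$ with $\theta_B R_{B,A}=\theta_A$ and $\theta_\Phi R_{\Phi,\Psi}=\theta_\Psi$. Then $\theta_A(R_{A,B}R_{B,A})=\theta_A$ and $\theta_B(R_{B,A}R_{A,B})=\theta_B$, and injectivity of $\theta_A,\theta_B$ (Proposition~\ref{2.2}(vi)) forces $R_{A,B}R_{B,A}=R_{B,A}R_{A,B}=I_\mathcal{H}$; hence $R_{A,B}$ is invertible with inverse $R_{B,A}$, and the same argument gives invertibility of $R_{\Psi,\Phi}$.

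For the uniqueness and Parseval tail: if $\theta_B=\theta_A R$ for some $R\in\mathcal{B}(\mathcal{H})$, applying $\theta_\Psi^*$ and Proposition~\ref{2.2}(ii) gives $\theta_\Psi^*\theta_B=S_{A,\Psi}R$, i.e. $R=S_{A,\Psi}^{-1}\theta_\Psi^*\theta_B$; applying $\theta_A^*$ to $\theta_\Phi=\theta_\Psi R'$ similarly gives $R'=S_{A,\Psi}^{-1}\theta_A^*\theta_\Phi$. Since both (i) and (ii) force $\theta_B=\theta_A R_{A,B}$ and $\theta_\Phi=\theta_\Psi R_{\Psi,\Phi}$, the operators in (i) and (ii) coincide and equal the displayed ones. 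Finally, if $(\{A_\alpha\},\{\Psi_\alpha\})$ is Parseval then $S_{A,\Psi}=I_\mathcal{H}$, so Lemma~\ref{SIM} gives $S_{B,\Phi}=R_{\Psi,\Phi}^*R_{A,B}$; hence $(\{B_\alpha\},\{\Phi_\alpha\})$ is Parseval precisely when $R_{\Psi,\Phi}^*R_{A,B}=I_\mathcal{H}$, and since $R_{A,B}$ and $R_{\Psi,\Phi}$ are invertible this is in turn equivalent to $R_{A,B}=(R_{\Psi,\Phi}^*)^{-1}$, i.e. to $R_{A,B}R_{\Psi,\Phi}^*=I_\mathcal{H}$, which completes the proof.
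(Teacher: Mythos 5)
Your argument is correct and follows the same route the paper takes: the pointwise dictionary between $\theta_B=\theta_A R$ and $B_\alpha=A_\alpha R$ for the equivalence of (i) and (ii) is exactly the step the paper writes out, and the remaining implications (substitution into $P_{B,\Phi}$ for (ii)$\Rightarrow$(iii); the candidates $R_{A,B}=S_{A,\Psi}^{-1}\theta_\Psi^*\theta_B$, $R_{\Psi,\Phi}=S_{A,\Psi}^{-1}\theta_A^*\theta_\Phi$ with invertibility obtained by symmetry and injectivity of $\theta_A,\theta_B$ from Proposition \ref{2.2}) are the ones the paper defers to its discrete predecessor. Your write-up is simply a self-contained version of that deferred argument, with all steps justified by Proposition \ref{2.2} and Lemma \ref{SIM}; no gaps.
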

\begin{proof}
(ii) $\Rightarrow$ (i)  $ B_\alpha h=\theta_Bh(\alpha)=\theta_Bh(\alpha)=(\theta_AR_{A,B}')h(\alpha)=\theta_A(R_{A,B}'h)(\alpha)=A_\alpha(R_{A,B}'h)=(A_\alpha R_{A,B}')h$, $\forall \alpha \in \Omega, \forall h \in \mathcal{H}$ $\Rightarrow$ $B_\alpha=A_\alpha R_{A,B}', \forall \alpha \in \Omega$. Similarly $ \Phi_\alpha=\Psi_\alpha R_{\Psi, \Phi}' ,  \forall \alpha \in \Omega$. Other arguments are similar to that in the proof of Theorem 4.4 in \cite{MAHESHKRISHNASAMJOHNSON}.
\end{proof}
\begin{corollary}
For any given continuous (ovf) $ (\{A_\alpha\}_{\alpha \in \Omega} , \{\Psi_\alpha\}_{\alpha \in \Omega})$, the canonical dual of $ (\{A_\alpha\}_{\alpha \in \Omega} $, $ \{\Psi_\alpha\}_{\alpha \in\Omega}  )$ is the only dual continuous (ovf) that is right-similar to $ (\{A_\alpha\}_{\alpha \in \Omega} , \{\Psi_\alpha\}_{\alpha \in \Omega} )$.
\end{corollary}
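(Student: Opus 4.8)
The plan is to establish two things separately: that the canonical dual of $(\{A_\alpha\}_{\alpha\in\Omega},\{\Psi_\alpha\}_{\alpha\in\Omega})$ is a dual continuous (ovf) which is right-similar to it, and that it is the only dual continuous (ovf) with this property.

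The first point is almost immediate. By definition, $(\{\widetilde A_\alpha\coloneqq A_\alpha S_{A,\Psi}^{-1}\}_{\alpha\in\Omega},\{\widetilde\Psi_\alpha\coloneqq\Psi_\alpha S_{A,\Psi}^{-1}\}_{\alpha\in\Omega})$ is a dual of $(\{A_\alpha\}_{\alpha\in\Omega},\{\Psi_\alpha\}_{\alpha\in\Omega})$; and it is clearly right-similar to it, with $R_{A,\widetilde A}=R_{\Psi,\widetilde\Psi}=S_{A,\Psi}^{-1}$, which is invertible because $S_{A,\Psi}$ is a bounded positive invertible operator.

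For uniqueness, I would take an arbitrary dual continuous (ovf) $(\{B_\alpha\}_{\alpha\in\Omega},\{\Phi_\alpha\}_{\alpha\in\Omega})$ that is right-similar to $(\{A_\alpha\}_{\alpha\in\Omega},\{\Psi_\alpha\}_{\alpha\in\Omega})$. Right-similarity places us in the hypothesis of Theorem \ref{RIGHTSIMILARITY}, so the intertwining operators are uniquely determined and equal $R_{A,B}=S_{A,\Psi}^{-1}\theta_\Psi^*\theta_B$ and $R_{\Psi,\Phi}=S_{A,\Psi}^{-1}\theta_A^*\theta_\Phi$. Being a dual means $\theta_B^*\theta_\Psi=I_\mathcal{H}=\theta_\Phi^*\theta_A$; taking adjoints and using that $S_{A,\Psi}^{-1}$ is self-adjoint gives $\theta_\Psi^*\theta_B=I_\mathcal{H}=\theta_A^*\theta_\Phi$. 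Substituting, $R_{A,B}=S_{A,\Psi}^{-1}=R_{\Psi,\Phi}$, whence $B_\alpha=A_\alpha S_{A,\Psi}^{-1}=\widetilde A_\alpha$ and $\Phi_\alpha=\Psi_\alpha S_{A,\Psi}^{-1}=\widetilde\Psi_\alpha$ for all $\alpha\in\Omega$. Thus $(\{B_\alpha\}_{\alpha\in\Omega},\{\Phi_\alpha\}_{\alpha\in\Omega})$ coincides with the canonical dual, as claimed.

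There is no serious obstacle here; the proof is a short combination of Theorem \ref{RIGHTSIMILARITY} (which already packages uniqueness of the similarity operators together with the explicit formulas for them) with the two duality identities. The only mild care needed is to note that the hypothesis supplies \emph{both} ingredients — the invertible intertwiners and the equalities $\theta_B^*\theta_\Psi=\theta_\Phi^*\theta_A=I_\mathcal{H}$ — and that these equalities force the intertwiners to be exactly $S_{A,\Psi}^{-1}$. Alternatively, one may bypass Theorem \ref{RIGHTSIMILARITY} and argue directly from Lemma \ref{SIM} (giving $\theta_B=\theta_A R_{A,B}$, $\theta_\Phi=\theta_\Psi R_{\Psi,\Phi}$) together with $\theta_A^*\theta_\Psi=S_{A,\Psi}$ from Proposition \ref{2.2}(ii), reaching $R_{A,B}^*S_{A,\Psi}=I_\mathcal{H}=R_{\Psi,\Phi}^*S_{A,\Psi}$ and hence the same conclusion.
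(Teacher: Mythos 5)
Your proof is correct and follows essentially the route the paper intends (the paper defers to Corollary 4.5 of the cited predecessor, but the argument is exactly this: Theorem \ref{RIGHTSIMILARITY} pins down $R_{A,B}=S_{A,\Psi}^{-1}\theta_\Psi^*\theta_B$ and $R_{\Psi,\Phi}=S_{A,\Psi}^{-1}\theta_A^*\theta_\Phi$, and the duality identities collapse both to $S_{A,\Psi}^{-1}$). One cosmetic remark: passing from $\theta_B^*\theta_\Psi=I_\mathcal{H}$ to $\theta_\Psi^*\theta_B=I_\mathcal{H}$ uses only $I_\mathcal{H}^*=I_\mathcal{H}$, not the self-adjointness of $S_{A,\Psi}^{-1}$; the latter is needed only in your alternative route via Lemma \ref{SIM}.
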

\begin{proof}
Similar to the proof of Corollary 4.5 in \cite{MAHESHKRISHNASAMJOHNSON}.
\end{proof}
\begin{corollary}
Two right-similar continuous operator-valued frames cannot be orthogonal.
\end{corollary}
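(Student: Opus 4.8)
The plan is to argue by contradiction, exploiting the fact that right-similarity links the analysis operators of the two frames through invertible operators, whereas orthogonality forces a certain composition to vanish.

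\medskip

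Suppose, towards a contradiction, that $(\{B_\alpha\}_{\alpha\in\Omega},\{\Phi_\alpha\}_{\alpha\in\Omega})$ is right-similar to $(\{A_\alpha\}_{\alpha\in\Omega},\{\Psi_\alpha\}_{\alpha\in\Omega})$, so that $B_\alpha=A_\alpha R_{A,B}$ and $\Phi_\alpha=\Psi_\alpha R_{\Psi,\Phi}$ for some invertible $R_{A,B},R_{\Psi,\Phi}\in\mathcal{B}(\mathcal{H})$, and also orthogonal, so that $\theta_\Phi^*\theta_A=\theta_B^*\theta_\Psi=0$. By Lemma \ref{SIM} we have $\theta_B=\theta_A R_{A,B}$ and $\theta_\Phi=\theta_\Psi R_{\Psi,\Phi}$, and therefore, using Proposition \ref{2.2}(ii) (namely $\theta_A^*\theta_\Psi=S_{A,\Psi}$ and $\theta_\Psi^*\theta_A=S_{A,\Psi}$),
$$\theta_B^*\theta_\Psi=R_{A,B}^*\theta_A^*\theta_\Psi=R_{A,B}^*S_{A,\Psi},\qquad \theta_\Phi^*\theta_A=R_{\Psi,\Phi}^*\theta_\Psi^*\theta_A=R_{\Psi,\Phi}^*S_{A,\Psi}.$$

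\medskip

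Now I would invoke that $S_{A,\Psi}$ is a bounded positive \emph{invertible} operator, since it is the frame operator of a continuous (ovf) (condition (ii) of Definition \ref{1}), and that $R_{A,B}^*$ (resp. $R_{\Psi,\Phi}^*$) is invertible because $R_{A,B}$ (resp. $R_{\Psi,\Phi}$) is. Hence $\theta_B^*\theta_\Psi=R_{A,B}^*S_{A,\Psi}$ is a composition of invertible operators, so it is invertible, and in particular $\theta_B^*\theta_\Psi\neq 0$ (on a nonzero Hilbert space). This contradicts $\theta_B^*\theta_\Psi=0$, and the proof is complete. The argument is short, and the only care needed is in citing the correct identities — $\theta_B=\theta_A R_{A,B}$ from Lemma \ref{SIM}, $\theta_A^*\theta_\Psi=S_{A,\Psi}$ from Proposition \ref{2.2}, and the invertibility of $S_{A,\Psi}$ built into the definition of a continuous (ovf); there is no genuine obstacle, the whole content being the observation that right-similarity turns the ``cross'' operator into a composition of invertibles, which can never be zero.
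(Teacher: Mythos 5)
Your argument is correct and is essentially the standard route the paper points to (via Corollary 4.6 of the cited prior work): right-similarity gives $\theta_B^*\theta_\Psi=R_{A,B}^*S_{A,\Psi}$ and $\theta_\Phi^*\theta_A=R_{\Psi,\Phi}^*S_{A,\Psi}$, which are invertible and hence nonzero, contradicting orthogonality. The citations to Lemma \ref{SIM}, Proposition \ref{2.2}(ii), and the invertibility of $S_{A,\Psi}$ from Definition \ref{1} are all accurate, and the parenthetical exclusion of the zero Hilbert space is the only caveat needed.
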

\begin{proof}
Similar to the proof of Corollary 4.6  in \cite{MAHESHKRISHNASAMJOHNSON}.
\end{proof}
\begin{remark}
 For every continuous (ovf) $(\{A_\alpha\}_{\alpha \in \Omega},\{\Psi_\alpha\}_{\alpha \in \Omega})$, each  of `continuous operator-valued frames'  $( \{A_\alpha S_{A, \Psi}^{-1}\}_{\alpha \in \Omega}, \{\Psi_\alpha\}_{\alpha \in \Omega}),$   $( \{A_\alpha S_{A, \Psi}^{-1/2}\}_{\alpha \in \Omega}, \{\Psi_\alpha S_{A,\Psi}^{-1/2}\}_{\alpha \in \Omega}),$ and  $ (\{A_\alpha \}_{\alpha \in \Omega}, \{\Psi_\alpha S_{A,\Psi}^{-1}\}_{\alpha \in \Omega})$ is a Parseval continuous  (ovf) which is right-similar to  $ (\{A_\alpha\}_{\alpha \in \Omega} , \{\Psi_\alpha\}_{\alpha \in \Omega}  ).$  Thus every continuous (ovf) is right-similar to  Parseval continuous   operator-valued frames.
\end{remark}

\section{Continuous frames and  representations of locally compact  groups} \label{FRAMESANDDISCRETEGROUPREPRESENTATIONS}
Let $ G$ be a locally compact group, $\mu_G$  be a left-invariant  Haar measure on $G$    (we refer \cite{FOLLAND, HEWITTROSS, DIESTELSPALSBURY, NACHBIN, AMBROSE} for locally compact groups and  Haar measures).  Let $\lambda $ be the left regular representation of $ G$ defined by $ \lambda_gf(x)=f(g^{-1}x), \forall  g, x \in G, \forall f \in\mathcal{B}(\mathcal{L}^2(G,\mathcal{H}_0)) $;  $\rho $ be the right regular representation of $ G$ defined by $ \rho_gf(x)=\Delta_G(g)^{1/2}f(xg), \forall g, x\in G,\forall f \in \mathcal{B}(\mathcal{L}^2(G,\mathcal{H}_0))$, where $\Delta_G$ is the modular function associated with $G$ \cite{TAKESAKI2}. 

 \begin{definition}
Let $ \pi$ be a unitary representation of a locally compact group $ G$ on  a Hilbert space $ \mathcal{H}.$ An operator $ A$ in $ \mathcal{B}(\mathcal{H}, \mathcal{H}_0)$ is called a continuous  operator-valued  frame generator (resp. a  Parseval frame generator) w.r.t. an operator $ \Psi$ in $ \mathcal{B}(\mathcal{H}, \mathcal{H}_0)$ if $(\{A_g\coloneqq A \pi_{g^{-1}}\}_{g\in G}, \{\Psi_g\coloneqq \Psi \pi_{g^{-1}}\}_{g\in G})$ is a continuous (ovf) (resp. a Parseval continuous (ovf)) in $ \mathcal{B}(\mathcal{H}, \mathcal{H}_0)$ (where the measure on $G$ is a left invariant  Haar measure $\mu_G$). In this case, we write $ (A,\Psi)$ is a continuous operator-valued  frame generator for $\pi$.
\end{definition}

\begin{proposition}\label{REPRESENATIONLEMMA}
Let $ (A,\Psi)$ and $ (B,\Phi)$ be   continuous operator-valued frame generators    in $\mathcal{B}(\mathcal{H},  \mathcal{H}_0)$ for a unitary representation $ \pi$ of a locally compact group $G$ on $ \mathcal{H}.$ Then
\begin{enumerate}[\upshape(i)]
\item $ \theta_A\pi_g=\lambda_g\theta_A,  \theta_\Psi \pi_g=\lambda_g\theta_\Psi,  \forall g \in G.$
\item $ \theta_A^*\theta_B,   \theta_\Psi^*\theta_\Phi,\theta_A^*\theta_\Phi$ are in the commutant $ \pi(G)'$ of $ \pi(G)''.$ Further, $ S_{A,\Psi} \in \pi(G)'$ and $(AS_{A, \Psi}^{-{1/2}} , \Psi S_{A, \Psi}^{-{1/2}})$ is a Parseval frame generator. 
\end{enumerate}
\end{proposition}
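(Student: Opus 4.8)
The plan is to prove the two identities in (i) first by direct computation on functions in $\mathcal{L}^2(G,\mathcal{H}_0)$, and then to feed those identities into an intertwining argument to obtain (ii). For (i), fix $g\in G$ and $h\in\mathcal{H}$. On the one hand, $\theta_A(\pi_g h)$ is the function $G\ni x\mapsto A_x(\pi_g h)=A\pi_{x^{-1}}\pi_g h=A\pi_{x^{-1}g}h$. On the other hand, $\lambda_g(\theta_A h)$ is the function $G\ni x\mapsto (\theta_A h)(g^{-1}x)=A_{g^{-1}x}h=A\pi_{(g^{-1}x)^{-1}}h=A\pi_{x^{-1}g}h$. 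These agree as elements of $\mathcal{L}^2(G,\mathcal{H}_0)$, so $\theta_A\pi_g=\lambda_g\theta_A$; the same computation with $\Psi$ in place of $A$ gives $\theta_\Psi\pi_g=\lambda_g\theta_\Psi$. Here the only subtlety is that the rewriting must be read as an equality of equivalence classes in $\mathcal{L}^2$, but since the manipulation is pointwise it is harmless, and left-invariance of $\mu_G$ guarantees $\lambda_g$ is unitary on $\mathcal{L}^2(G,\mathcal{H}_0)$ so that both sides lie in the right space.

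For (ii), take adjoints in (i): from $\theta_A\pi_g=\lambda_g\theta_A$ we get $\pi_g^*\theta_A^*=\theta_A^*\lambda_g^*$, i.e. $\pi_{g^{-1}}\theta_A^*=\theta_A^*\lambda_{g^{-1}}$ (using unitarity of $\pi_g$ and $\lambda_g$); replacing $g$ by $g^{-1}$ yields $\theta_A^*\lambda_g=\pi_g\theta_A^*$, and similarly $\theta_\Psi^*\lambda_g=\pi_g\theta_\Psi^*$. Now for the operator $\theta_A^*\theta_B$ I compute, for every $g\in G$,
\begin{align*}
(\theta_A^*\theta_B)\pi_g &= \theta_A^*(\theta_B\pi_g)=\theta_A^*\lambda_g\theta_B=(\theta_A^*\lambda_g)\theta_B=\pi_g\theta_A^*\theta_B,
\end{align*}
so $\theta_A^*\theta_B\in\pi(G)'$; replacing $B$ by $A$, by $\Psi$, by $\Phi$ in the appropriate slots gives the same conclusion for $\theta_\Psi^*\theta_\Phi$ and $\theta_A^*\theta_\Phi$, and in particular $S_{A,\Psi}=\theta_\Psi^*\theta_A\in\pi(G)'$ by Proposition \ref{2.2}(ii). (I note $\pi(G)'=\pi(G)'''$, so commuting with $\pi(G)$ is the same as commuting with $\pi(G)''$.)

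For the final clause, since $S_{A,\Psi}$ is a bounded positive invertible operator lying in the von Neumann algebra $\pi(G)'$, its inverse square root $S_{A,\Psi}^{-1/2}$ also lies in $\pi(G)'$ by the continuous functional calculus; hence $S_{A,\Psi}^{-1/2}$ commutes with every $\pi_{g^{-1}}$. Therefore $(AS_{A,\Psi}^{-1/2})\pi_{g^{-1}}=AS_{A,\Psi}^{-1/2}\pi_{g^{-1}}=A\pi_{g^{-1}}S_{A,\Psi}^{-1/2}$, and likewise for $\Psi S_{A,\Psi}^{-1/2}$, so the pair $(AS_{A,\Psi}^{-1/2},\Psi S_{A,\Psi}^{-1/2})$ generates exactly the right-similar continuous (ovf) $(\{A_g S_{A,\Psi}^{-1/2}\}_{g\in G},\{\Psi_g S_{A,\Psi}^{-1/2}\}_{g\in G})$; by Lemma \ref{SIM} (with $R_{A,B}=R_{\Psi,\Phi}=S_{A,\Psi}^{-1/2}$, which is positive invertible) its frame operator is $S_{A,\Psi}^{-1/2}S_{A,\Psi}S_{A,\Psi}^{-1/2}=I_\mathcal{H}$, so it is Parseval. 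I expect the only point requiring care is the bookkeeping with inverses when passing between $\theta\pi_g=\lambda_g\theta$ and its adjoint form, together with making sure the functional-calculus step legitimately places $S_{A,\Psi}^{-1/2}$ in the commutant — both of which are routine once stated explicitly.
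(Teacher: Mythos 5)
Your proof is correct and follows essentially the same route as the paper's: establish the intertwining relations of (i), take adjoints to get $\pi_g\theta_A^*=\theta_A^*\lambda_g$, deduce commutant membership, and use that $S_{A,\Psi}^{-1/2}\in\pi(G)'$ commutes with each $\pi_{g^{-1}}$ to verify Parsevalness. The only cosmetic differences are that you verify (i) pointwise rather than weakly against $f\in\mathcal{L}^2(G,\mathcal{H}_0)$, and you obtain the frame operator of the rescaled pair via the right-similarity machinery (for which Proposition \ref{RIGHTSIMILARITYPROPOSITIONOPERATORVERSION} is the cleaner citation than Lemma \ref{SIM}, since the former establishes that the rescaled family is a continuous (ovf) in the first place) instead of the paper's direct integral computation.
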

\begin{proof} 
\begin{enumerate}[\upshape(i)]
\item  For all $h\in \mathcal{H}$ and $f\in \mathcal{L}^2(G, \mathcal{H}_0)$,

\begin{align*}
\langle \lambda_g\theta_Ah, f\rangle &=\int_{G}\langle \lambda_g\theta_Ah(\alpha), f(\alpha)\rangle \,d\mu_G(\alpha)=\int_{G}\langle \theta_Ah(g^{-1}\alpha), f(\alpha)\rangle \,d\mu_G(\alpha)\\
&=\int_{G}\langle A_{g^{-1}\alpha}h, f(\alpha)\rangle \,d\mu_G(\alpha)
=\int_{G}\langle A\pi_{(g^{-1}\alpha)^{-1}}h, f(\alpha)\rangle \,d\mu_G(\alpha)\\
&=\left\langle  \pi_gh,\int_{G}(A\pi_{\alpha^{-1}})^* f(\alpha)\,d\mu_G(\alpha)\right\rangle 
=\left\langle  \pi_gh,\int_{G}A_\alpha^* f(\alpha)\,d\mu_G(\alpha)\right\rangle
\\
&=\langle \pi_gh, \theta_A^*f\rangle =\langle \theta_A\pi_gh, f\rangle 
\end{align*}
$\Rightarrow\lambda_g\theta_A=\theta_A\pi_g$. Similarly $ \theta_\Psi \pi_g=\lambda_g \theta_\Psi.$
\item $\theta_A^*\theta_B\pi_g=\theta_A^*\lambda_g\theta_B=(\lambda_{g^{-1}}\theta_A)^*\theta_B=(\theta_A\pi_{g^{-1}})^*\theta_B=\pi_g\theta_A^*\theta_B$ and for all $x,y \in \mathcal{H}$

\begin{align*}
\left\langle \int_{G} (\Psi S_{A,\Psi}^{-\frac{1}{2}}\pi_{g^{-1}})^*(A S_{A,\Psi}^{-\frac{1}{2}}\pi_{g^{-1}})x \, d \mu_G(g),y  \right\rangle &= \int_{G} \langle(\Psi S_{A,\Psi}^{-\frac{1}{2}}\pi_{g^{-1}})^*(A S_{A,\Psi}^{-\frac{1}{2}}\pi_{g^{-1}})x,y  \rangle\, d \mu_G(g)\\
&= \int_{G}\langle \pi_gS_{A,\Psi}^{-\frac{1}{2}}\Psi^* AS_{A,\Psi}^{-\frac{1}{2}}\pi_{g^{-1}}x,y  \rangle\, d \mu_G(g)\\
&=\int_{G}\langle S_{A,\Psi}^{-\frac{1}{2}}\pi_g\Psi^* A\pi_{g^{-1}}S_{A,\Psi}^{-\frac{1}{2}}x,y  \rangle\, d \mu_G(g)\\
&=\int_{G}\langle (\Psi\pi_{g^{-1}})^* (A\pi_{g^{-1}})S_{A,\Psi}^{-\frac{1}{2}}x,S_{A,\Psi}^{-\frac{1}{2}}y  \rangle\, d \mu_G(g)\\
&=\left\langle\int_{ G}\Psi_g^*A_g(S_{A,\Psi}^{-\frac{1}{2}}x)\, d \mu_G(g),S_{A,\Psi}^{-\frac{1}{2}}y\right\rangle\\
&=\langle S_{A,\Psi}(S_{A,\Psi}^{-\frac{1}{2}}x),S_{A,\Psi}^{-\frac{1}{2}}y \rangle =\langle x,y\rangle
\end{align*}
$\Rightarrow $ $\int_{G} (\Psi S_{A,\Psi}^{-\frac{1}{2}}\pi_{g^{-1}})^*(A S_{A,\Psi}^{-\frac{1}{2}}\pi_{g^{-1}})x \, d \mu(g)=x, \forall x \in \mathcal{H}$ and  hence the last part.
\end{enumerate}
\end{proof}

\begin{theorem}\label{gc1}
Let $ G$ be a locally compact group  with identity $ e$ and $( \{A_g\}_{g\in G},  \{\Psi_g\}_{g\in G})$ be a Parseval  continuous (ovf) in $ \mathcal{B}(\mathcal{H},\mathcal{H}_0).$ Then there is a  unitary representation $ \pi$  of $ G$ on  $ \mathcal{H}$  for which 
$$ A_g=A_e\pi_{g^{-1}}, ~\Psi_g=\Psi_e\pi_{g^{-1}}, \quad\forall  g \in G$$
 if and only if 
$$A_{gp}A_{gq}^*=A_pA_q^* ,~ A_{gp}\Psi_{gq}^*=A_p\Psi_q^*,~ \Psi_{gp}\Psi_{gq}^*=\Psi_p\Psi_q^*, \quad \forall g,p,q \in G.$$
\end{theorem}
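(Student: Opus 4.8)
The plan is to prove the two implications separately: the forward one ($\Rightarrow$) is a short algebraic computation, and the reverse one ($\Leftarrow$) requires building the representation out of the analysis operators $\theta_A,\theta_\Psi$.

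For the forward direction I would simply assume $A_g=A_e\pi_{g^{-1}}$, $\Psi_g=\Psi_e\pi_{g^{-1}}$ for a unitary representation $\pi$ and use $\pi_x^*=\pi_{x^{-1}}$ together with the homomorphism property. For instance,
\[
A_{gp}A_{gq}^*=A_e\pi_{p^{-1}}\pi_{g^{-1}}\pi_{g}\pi_{q}A_e^*=A_e\pi_{p^{-1}}\pi_{q}A_e^*=(A_e\pi_{p^{-1}})(A_e\pi_{q^{-1}})^*=A_pA_q^*,
\]
and the same manipulation, with $A_e$ or $A_q^*$ replaced by $\Psi_e$ or $\Psi_q^*$, yields the other two identities; no measure theory is needed here.

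For the reverse direction I would first record what ``Parseval'' buys us: $\theta_\Psi^*\theta_A=I_{\mathcal{H}}$ (Proposition \ref{2.2}(iii)), the operators $\theta_A,\theta_\Psi$ are injective with closed range (Proposition \ref{2.2}(vi)), and $P_{A,\Psi}=\theta_A\theta_\Psi^*$ since $S_{A,\Psi}=I_{\mathcal{H}}$. The crucial first step is to show that $\theta_A\theta_\Psi^*$ and $\theta_\Psi\theta_\Psi^*$, which act on $f\in\mathcal{L}^2(G,\mathcal{H}_0)$ by $(\theta_A\theta_\Psi^*f)(p)=\int_{G}A_p\Psi_q^*f(q)\,d\mu_G(q)$ and $(\theta_\Psi\theta_\Psi^*f)(p)=\int_{G}\Psi_p\Psi_q^*f(q)\,d\mu_G(q)$ (pulling the bounded operators $A_p$, $\Psi_p$ through the weak integral), commute with every $\lambda_g$; here I would substitute $q\mapsto gq$ (left-invariance of $\mu_G$) and invoke the rewritten hypotheses $A_p\Psi_{gq}^*=A_{g^{-1}p}\Psi_q^*$ and $\Psi_p\Psi_{gq}^*=\Psi_{g^{-1}p}\Psi_q^*$. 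The second step is to set $\pi_g\coloneqq\theta_\Psi^*\lambda_g\theta_A\in\mathcal{B}(\mathcal{H})$ and deduce, from step one and $\theta_\Psi^*\theta_A=I_{\mathcal{H}}$, the intertwining relations
\[
\theta_A\pi_g=\theta_A\theta_\Psi^*\lambda_g\theta_A=\lambda_g\theta_A\theta_\Psi^*\theta_A=\lambda_g\theta_A,\qquad \theta_\Psi\pi_g=\theta_\Psi\theta_\Psi^*\lambda_g\theta_A=\lambda_g\theta_\Psi\theta_\Psi^*\theta_A=\lambda_g\theta_\Psi.
\]
From these, $\pi_e=\theta_\Psi^*\theta_A=I_{\mathcal{H}}$; the identity $\theta_A\pi_g\pi_h=\lambda_g\lambda_h\theta_A=\lambda_{gh}\theta_A=\theta_A\pi_{gh}$ together with injectivity of $\theta_A$ gives $\pi_g\pi_h=\pi_{gh}$, so each $\pi_g$ is invertible with inverse $\pi_{g^{-1}}$; and
\[
\langle\pi_gh,\pi_gh'\rangle=\langle\theta_\Psi^*\theta_A\pi_gh,\pi_gh'\rangle=\langle\theta_A\pi_gh,\theta_\Psi\pi_gh'\rangle=\langle\lambda_g\theta_Ah,\lambda_g\theta_\Psi h'\rangle=\langle\theta_Ah,\theta_\Psi h'\rangle=\langle h,h'\rangle
\]
by unitarity of $\lambda_g$, so $\pi_g$ is a surjective isometry, i.e. unitary; strong continuity of $g\mapsto\pi_gh$ follows from that of $g\mapsto\lambda_g\theta_Ah=\theta_A\pi_gh$ and from $\theta_A$ being bounded below. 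Finally, evaluating $\theta_A\pi_g=\lambda_g\theta_A$ at $h$ gives $A_p\pi_gh=A_{g^{-1}p}h$, hence $A_e\pi_g=A_{g^{-1}}$ and $A_g=A_e\pi_{g^{-1}}$, and likewise $\Psi_g=\Psi_e\pi_{g^{-1}}$.

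I expect the main obstacle to be the first step of the reverse direction — verifying that the hypotheses force $\lambda_g$ to commute with the two ``Gramians'', which is precisely where the translation structure of the family is used and where one must handle the weak (Gelfand--Pettis) integrals and the substitution in the Haar integral with care. A smaller technical point worth flagging is that $\theta_A\pi_g=\lambda_g\theta_A$ is an equality in $\mathcal{L}^2(G,\mathcal{H}_0)$, so one really obtains $A_p\pi_g=A_{g^{-1}p}$ for $\mu_G$-almost every $p$; this yields the stated pointwise identities when $\{e\}$ is non-null (for instance when $G$ is discrete), and in general in the sense that $(\{A_g\}_{g\in G},\{\Psi_g\}_{g\in G})$ coincides, as a continuous operator-valued frame, with the one generated by $\pi$.
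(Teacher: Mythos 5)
Your argument follows the paper's proof very closely: the same operator $\pi_g\coloneqq\theta_\Psi^*\lambda_g\theta_A$, the same commutation of $\lambda_g$ with the Gramians $\theta_A\theta_\Psi^*$ and $\theta_\Psi\theta_\Psi^*$ (proved by the same left-invariant substitution in the Haar integral together with the translated hypotheses), the same use of $\theta_\Psi^*\theta_A=I_\mathcal{H}$ for the homomorphism property, and the same continuity argument for $g\mapsto\pi_g h$. Your route to unitarity (the isometry computation $\langle\pi_gh,\pi_gh'\rangle=\langle h,h'\rangle$, which needs only the two intertwining relations $\theta_A\pi_g=\lambda_g\theta_A$, $\theta_\Psi\pi_g=\lambda_g\theta_\Psi$) is marginally more economical than the paper's, which also establishes $\lambda_g\theta_A\theta_A^*=\theta_A\theta_A^*\lambda_g$ in order to verify $\pi_g\pi_g^*=\pi_g^*\pi_g=I_\mathcal{H}$ directly; both are sound. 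The forward implication is the same routine computation in both.

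The one genuine gap is the final step, which you flag yourself: extracting $A_g=A_e\pi_{g^{-1}}$ from the $\mathcal{L}^2(G,\mathcal{H}_0)$-identity $\theta_A\pi_g=\lambda_g\theta_A$ by evaluating at $p=e$ only yields $A_p\pi_g=A_{g^{-1}p}$ for $\mu_G$-almost every $p$, which falls short of the asserted pointwise conclusion unless $\{e\}$ is non-null. Since the theorem claims the identity for every $g$, your proof as written is incomplete for general locally compact $G$. The gap is closable, and the paper closes it by never evaluating the $\mathcal{L}^2$-identity at a point: it computes $A_e\pi_{g^{-1}}h$ directly (in weak form against vectors of $\mathcal{H}_0$) as
\begin{align*}
A_e\pi_{g^{-1}}h&=A_e\int_{G}\Psi_\alpha^*A_{g\alpha}h\,d\mu_G(\alpha)
=\int_{G}A_e\Psi_\alpha^*A_{g\alpha}h\,d\mu_G(\alpha)\\
&=\int_{G}A_g\Psi_{g\alpha}^*A_{g\alpha}h\,d\mu_G(\alpha)
=A_g\int_{G}\Psi_\beta^*A_\beta h\,d\mu_G(\beta)=A_gS_{A,\Psi}h=A_gh,
\end{align*}
where the bounded operator $A_e$ is pulled through the weak integral, the third equality applies the \emph{pointwise} hypothesis $A_{gp}\Psi_{gq}^*=A_p\Psi_q^*$ (with group element $g^{-1}$ and indices $p=g$, $q=g\alpha$, so that $A_e\Psi_\alpha^*=A_g\Psi_{g\alpha}^*$), the fourth is the substitution $\beta=g\alpha$, and the last uses Parsevalness. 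The same computation with $\Psi_e$ in place of $A_e$ gives $\Psi_g=\Psi_e\pi_{g^{-1}}$. If you replace your a.e.\ evaluation by this computation, your proof becomes complete and then coincides with the paper's in all essentials.
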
 
\begin{proof}
$(\Rightarrow)$ Similar to the proof of `only if' part of Theorem 5.3 in \cite{MAHESHKRISHNASAMJOHNSON}.

 $ (\Leftarrow)$ We claim   the following three equalities among them  we derive the second,  two others are similar.
 For all $ g \in G,$
 
 \begin{align*}
 \lambda_g\theta_A\theta_A^*=\theta_A\theta_A^*\lambda_g , ~ \lambda_g \theta_A\theta_\Psi^*=\theta_A\theta_\Psi^*\lambda_g ,~
 \lambda_g\theta_\Psi\theta_\Psi^*=\theta_\Psi\theta_\Psi^*\lambda_g.
 \end{align*}
Let $u,v \in \mathcal{L}^2(G, \mathcal{H}_0)$. Then 

 \begin{align*}
 \langle \lambda_g\theta_A\theta_\Psi^*\lambda_g^*u, v\rangle &= \langle \theta_\Psi^*\lambda_g^*u, \theta_A^*\lambda_g^*v\rangle\\
 &=\left\langle\int_{G}\Psi_\alpha^*\lambda_g^*u(\alpha)\,d\mu_G(\alpha) ,\int_{G}A_\beta^*\lambda_g^*v(\beta) \,d\mu_G(\beta)\right \rangle\\
 &=\left\langle\int_{G}\Psi_\alpha^*\lambda_{g^{-1}}u(\alpha)\,d\mu_G(\alpha) ,\int_{G}A_\beta^*\lambda_{g^{-1}}v(\beta) \,d\mu_G(\beta)\right \rangle\\
 &=\left\langle\int_{G}\Psi_\alpha^*u(g\alpha)\,d\mu_G(\alpha) ,\int_{G}A_\beta^*v(g\beta) \,d\mu_G(\beta)\right \rangle\\
 &=\left\langle\int_{G}\Psi_{g^{-1}p}^*u(p)\,d\mu_G(g^{-1}p) ,\int_{G}A_{g^{-1}q}^*v(q) \,d\mu_G(g^{-1}q)\right \rangle\\
 &=\left\langle\int_{G}\Psi_{g^{-1}p}^*u(p)\,d\mu_G(p) ,\int_{G}A_{g^{-1}q}^*v(q) \,d\mu_G(q)\right \rangle\\
 &=\int_{G}\int_{G}\langle A_{g^{-1}q}\Psi_{g^{-1}p}^*u(p), v(q)\rangle \,d\mu_G(q)\,d\mu_G(p)\\
 &=\int_{G}\int_{G}\langle A_{q}\Psi_{p}^*u(p), v(q)\rangle \,d\mu_G(q)\,d\mu_G(p)\\
 &=\left\langle\int_{G}\Psi_p^*u(p)\,d\mu_G(p) ,\int_{G}A_q^*v(q) \,d\mu_G(q)\right \rangle=\langle \theta_\Psi^*u, \theta_A^*v\rangle=\langle \theta_A\theta_\Psi^*u, v\rangle. 
 \end{align*}
 Define $ \pi : G \ni g  \mapsto \pi_g\coloneqq \theta_\Psi^*\lambda_g\theta_A  \in \mathcal{B}(\mathcal{H}).$ Using the fact that frame is Parseval,  $ \pi_g\pi_h=\theta_\Psi^*\lambda_g \theta_A \theta_\Psi^*\lambda_h\theta_A =\theta_\Psi^*\theta_A \theta_\Psi^*\lambda_g \lambda_h\theta_A = \theta_\Psi^*\lambda_{gh}\theta_A =\pi_{gh}$ for all $g, h \in G,$ and $\pi_g\pi_g^*=\theta_\Psi^*\lambda_g\theta_A\theta_A^*\lambda_{g^{-1}} \theta_\Psi=\theta_\Psi^*\theta_A\theta_A^*\lambda_g \lambda_{g^{-1}} \theta_\Psi=I_\mathcal{H},  \pi_g^*\pi_g=\theta_A^*\lambda_{g^{-1}}\theta_\Psi\theta_\Psi^*\lambda_{g}\theta_A=\theta_A^*\lambda_{g^{-1}} \lambda_{g}\theta_\Psi\theta_\Psi^*\theta_A=I_\mathcal{H} $ for all $ g \in G$. We next prove that, for each fixed $h \in \mathcal{H}$, the map $ \phi_h:G \ni g \mapsto \pi_gh \in \mathcal{H}$ is continuous. So, let $h \in \mathcal{H}$ be fixed. Then $\theta_Ah $ is fixed. Since $\lambda$ is a unitary  representation, the map $G \ni g \mapsto \lambda_g(\theta_Ah) \in \mathcal{L}^2(G,\mathbb{K}) $ is continuous. Continuity of $\theta_\Psi^*$ now gives that the map $G \ni g \mapsto \theta_\Psi^*(\lambda_g(\theta_Ah)) \in \mathcal{H} $ is continuous, i.e., $\phi_h$ is continuous. This proves $ \pi$ is a unitary representation. We now   prove  $ A_g=A_e\pi_{g^{-1}}, \Psi_g=\Psi_e\pi_{g^{-1}}  $ for all $ g \in G$. For all $ h \in \mathcal{H}$ and for all $f \in \mathcal{L}^2(G,\mathcal{H}_0)$,
 
 \begin{align*}
 \langle A_e\pi_{g^{-1}}h, f\rangle &= \langle \pi_{g^{-1}}h,A_e^* f\rangle= \langle \theta_\Psi^*\lambda_{g^{-1}}\theta_Ah,A_e^* f\rangle\\
 &=\left\langle \int_{G}\Psi_\alpha^*\lambda_{g^{-1}}\theta_Ah(\alpha)\,d\mu_G(\alpha),A_e^*f \right\rangle 
 =\left\langle \int_{G}\Psi_\alpha^*\theta_Ah(g\alpha)\,d\mu_G(\alpha),A_e^*f \right\rangle\\
 &=\left\langle \int_{G}\Psi_\alpha^*A_{g\alpha}h\,d\mu_G(\alpha),A_e^*f \right\rangle
 =\left\langle \int_{G}\Psi_{g^{-1}\beta}^*A_{\beta}h\,d\mu_G(g^{-1}\beta),A_e^*f \right\rangle\\
 &=\left\langle \int_{G}\Psi_{g^{-1}\beta}^*A_{\beta}h\,d\mu_G(\beta),A_e^*f \right\rangle
 =\int_{G}\langle A_{g^{-1}g}\Psi_{g^{-1}\beta}^*A_{\beta}h, f \rangle \,d\mu_G(\beta)\\
 &=\int_{G}\langle A_{g}\Psi_{\beta}^*A_{\beta}h, f \rangle \,d\mu_G(\beta)
 =\left\langle\int_{G}\Psi_{\beta}^*A_{\beta}h \,d\mu_G(\beta), A_g^*f\right\rangle \\
 &=\langle h,A_g^*f\rangle =\langle A_gh,f\rangle,
 \end{align*}
and

\begin{align*}
\langle \Psi_e\pi_{g^{-1}}h, f\rangle &= \langle \pi_{g^{-1}}h,\Psi_e^* f\rangle= \langle \theta_\Psi^*\lambda_{g^{-1}}\theta_Ah,\Psi_e^* f\rangle\\
&=\left\langle \int_{G}\Psi_\alpha^*\lambda_{g^{-1}}\theta_Ah(\alpha)\,d\mu_G(\alpha),\Psi_e^*f \right\rangle 
=\left\langle \int_{G}\Psi_\alpha^*\theta_Ah(g\alpha)\,d\mu_G(\alpha),\Psi_e^*f \right\rangle\\
&=\left\langle \int_{G}\Psi_\alpha^*A_{g\alpha}h\,d\mu_G(\alpha),\Psi_e^*f \right\rangle
=\left\langle \int_{G}\Psi_{g^{-1}\beta}^*A_{\beta}h\,d\mu_G(g^{-1}\beta),\Psi_e^*f \right\rangle\\
&=\left\langle \int_{G}\Psi_{g^{-1}\beta}^*A_{\beta}h\,d\mu_G(\beta),\Psi_e^*f \right\rangle
=\int_{G}\langle \Psi_{g^{-1}g}\Psi_{g^{-1}\beta}^*A_{\beta}h, f \rangle \,d\mu_G(\beta)\\
&=\int_{G}\langle \Psi_{g}\Psi_{\beta}^*A_{\beta}h, f \rangle \,d\mu_G(\beta)
=\left\langle\int_{G}\Psi_{\beta}^*A_{\beta}h \,d\mu_G(\beta), \Psi_g^*f\right\rangle \\
&=\langle h,\Psi_g^*f\rangle =\langle \Psi_gh,f\rangle.
\end{align*}
\end{proof}

\begin{corollary}
Let $ G$ be a locally compact group  with identity $ e$ and $( \{A_g\}_{g\in G},  \{\Psi_g\}_{g\in G})$ be a continuous (ovf) in $ \mathcal{B}(\mathcal{H},\mathcal{H}_0).$ Then there is a  unitary representation $ \pi$  of $ G$ on  $ \mathcal{H}$  for which
\begin{enumerate}[\upshape(i)]
\item  $ A_g=A_eS_{A,\Psi}^{-1}\pi_{g^{-1}}S_{A,\Psi}, \Psi_g=\Psi_e\pi_{g^{-1}}  $ for all $ g \in G$  if and only if $A_{gp}S_{A,\Psi}^{-2}A_{gq}^*=A_pS_{A,\Psi}^{-2}A_q^* , A_{gp}S_{A,\Psi}^{-1}\Psi_{gq}^*$ $=A_pS_{A,\Psi}^{-1}\Psi_q^*, \Psi_{gp}\Psi_{gq}^*=\Psi_p\Psi_q^*$ for all $ g,p,q \in G.$
\item $ A_g=A_eS_{A,\Psi}^{-1/2}\pi_{g^{-1}}S_{A,\Psi}^{1/2}, \Psi_g=\Psi_eS_{A,\Psi}^{-1/2}\pi_{g^{-1}}S_{A,\Psi}^{1/2}  $ for all $ g \in G$  if and only if $A_{gp}S_{A,\Psi}^{-1}A_{gq}^*=A_pS_{A,\Psi}^{-1}A_q^* , A_{gp}S_{A,\Psi}^{-1}\Psi_{gq}^*=A_pS_{A,\Psi}^{-1}\Psi_q^*, \Psi_{gp}S_{A,\Psi}^{-1}\Psi_{gq}^*=\Psi_pS_{A,\Psi}^{-1}\Psi_q^*$ for all $ g,p,q \in G.$
\item  $ A_g=A_e\pi_{g^{-1}}, \Psi_g=\Psi_eS_{A,\Psi}^{-1}\pi_{g^{-1}}S_{A,\Psi}  $ for all $ g \in G$  if and only if $A_{gp}A_{gq}^*=A_pA_q^* , A_{gp}S_{A,\Psi}^{-1}\Psi_{gq}^*=A_pS_{A,\Psi}^{-1}\Psi_q^*, \Psi_{gp}S_{A,\Psi}^{-2}\Psi_{gq}^*=\Psi_pS_{A,\Psi}^{-2}\Psi_q^*$ for all $ g,p,q \in G.$
\end{enumerate}	
\end{corollary}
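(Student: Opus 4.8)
The plan is to derive all three parts from Theorem \ref{gc1} by passing, in each case, to a suitable \emph{Parseval} continuous (ovf) that is right-similar to $(\{A_g\}_{g\in G},\{\Psi_g\}_{g\in G})$. Recall from the Remark at the end of the Similarity discussion that $(\{A_\alpha S_{A,\Psi}^{-1}\}_{\alpha}, \{\Psi_\alpha\}_{\alpha})$, $(\{A_\alpha S_{A,\Psi}^{-1/2}\}_{\alpha}, \{\Psi_\alpha S_{A,\Psi}^{-1/2}\}_{\alpha})$ and $(\{A_\alpha\}_{\alpha}, \{\Psi_\alpha S_{A,\Psi}^{-1}\}_{\alpha})$ are Parseval continuous operator-valued frames; these are exactly the three families that correspond to parts (i), (ii), (iii). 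Throughout I use that $S_{A,\Psi}$ is bounded, positive and invertible, hence self-adjoint with a bounded, positive, invertible square root $S_{A,\Psi}^{1/2}$, so that $S_{A,\Psi}^{-1}$ and $S_{A,\Psi}^{-1/2}$ are self-adjoint as well.

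For part (i), put $B_g := A_g S_{A,\Psi}^{-1}$ and $\Phi_g := \Psi_g$ for $g \in G$; then $(\{B_g\}_{g\in G},\{\Phi_g\}_{g\in G})$ is a Parseval continuous (ovf) with $B_e = A_e S_{A,\Psi}^{-1}$, $\Phi_e = \Psi_e$, so Theorem \ref{gc1} applies to it. It yields a unitary representation $\pi$ with $B_g = B_e\pi_{g^{-1}}$, $\Phi_g = \Phi_e\pi_{g^{-1}}$ for all $g$ if and only if $B_{gp}B_{gq}^* = B_pB_q^*$, $B_{gp}\Phi_{gq}^* = B_p\Phi_q^*$, $\Phi_{gp}\Phi_{gq}^* = \Phi_p\Phi_q^*$ for all $g,p,q$. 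I would then translate everything back: multiplying $B_g = B_e\pi_{g^{-1}}$ on the right by $S_{A,\Psi}$ gives $A_g = A_e S_{A,\Psi}^{-1}\pi_{g^{-1}}S_{A,\Psi}$, while $\Phi_g=\Phi_e\pi_{g^{-1}}$ is literally $\Psi_g = \Psi_e\pi_{g^{-1}}$; and, using $(S_{A,\Psi}^{-1})^* = S_{A,\Psi}^{-1}$, one gets $B_{gp}B_{gq}^* = A_{gp}S_{A,\Psi}^{-2}A_{gq}^*$, $B_{gp}\Phi_{gq}^* = A_{gp}S_{A,\Psi}^{-1}\Psi_{gq}^*$ and $\Phi_{gp}\Phi_{gq}^* = \Psi_{gp}\Psi_{gq}^*$. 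Substituting these into the statement of Theorem \ref{gc1} is precisely the equivalence asserted in (i).

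Parts (ii) and (iii) are handled the same way with different choices. For (ii) take $B_g := A_g S_{A,\Psi}^{-1/2}$, $\Phi_g := \Psi_g S_{A,\Psi}^{-1/2}$; then $B_{gp}B_{gq}^* = A_{gp}S_{A,\Psi}^{-1}A_{gq}^*$, $B_{gp}\Phi_{gq}^* = A_{gp}S_{A,\Psi}^{-1}\Psi_{gq}^*$, $\Phi_{gp}\Phi_{gq}^* = \Psi_{gp}S_{A,\Psi}^{-1}\Psi_{gq}^*$, and multiplying the representation identities on the right by $S_{A,\Psi}^{1/2}$ gives $A_g = A_e S_{A,\Psi}^{-1/2}\pi_{g^{-1}}S_{A,\Psi}^{1/2}$, $\Psi_g = \Psi_e S_{A,\Psi}^{-1/2}\pi_{g^{-1}}S_{A,\Psi}^{1/2}$. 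For (iii) take $B_g := A_g$, $\Phi_g := \Psi_g S_{A,\Psi}^{-1}$; then $B_{gp}B_{gq}^* = A_{gp}A_{gq}^*$, $B_{gp}\Phi_{gq}^* = A_{gp}S_{A,\Psi}^{-1}\Psi_{gq}^*$, $\Phi_{gp}\Phi_{gq}^* = \Psi_{gp}S_{A,\Psi}^{-2}\Psi_{gq}^*$, and the representation identities read $A_g = A_e\pi_{g^{-1}}$, $\Psi_g = \Psi_e S_{A,\Psi}^{-1}\pi_{g^{-1}}S_{A,\Psi}$. In each case Theorem \ref{gc1} applied to $(\{B_g\},\{\Phi_g\})$ delivers the claimed equivalence.

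There is no genuinely hard step here: the argument is a reduction to Theorem \ref{gc1}, and the only thing to watch is the bookkeeping of adjoints — in particular, that $S_{A,\Psi}$ and its real powers are self-adjoint, and that the representation $\pi$ furnished by Theorem \ref{gc1} for the twisted frame is the same $\pi$ appearing in the Corollary, which is what fixes the exact placement of $S_{A,\Psi}^{\pm 1}$ (and $S_{A,\Psi}^{\pm 1/2}$) in the formulas for $A_g$ and $\Psi_g$. One should also record at the start that each $(\{B_g\},\{\Phi_g\})$ is indeed a Parseval continuous (ovf), so that Theorem \ref{gc1} is applicable; this is exactly the content of the Remark cited above.
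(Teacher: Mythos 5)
Your proposal is correct and follows exactly the paper's own argument: apply Theorem \ref{gc1} to the three Parseval continuous operator-valued frames $(\{A_gS_{A,\Psi}^{-1}\}_{g\in G},\{\Psi_g\}_{g\in G})$, $(\{A_gS_{A,\Psi}^{-1/2}\}_{g\in G},\{\Psi_gS_{A,\Psi}^{-1/2}\}_{g\in G})$, $(\{A_g\}_{g\in G},\{\Psi_gS_{A,\Psi}^{-1}\}_{g\in G})$ and translate back using the self-adjointness of $S_{A,\Psi}^{\pm 1}$ and $S_{A,\Psi}^{\pm 1/2}$. The adjoint bookkeeping in all three parts checks out, so there is nothing to add.
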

\begin{proof}
We apply Theorem  \ref{gc1} to the Parseval continuous (ovf) 
\begin{enumerate}[\upshape(i)]
\item  $(\{A_gS_{A,\Psi}^{-1}\}_{g\in G}, \{\Psi_g\}_{g\in G})$ to get: there is a  unitary representation $ \pi$  of $ G$ on  $ \mathcal{H}$  for which $ A_gS_{A,\Psi}^{-1}=(A_eS_{A,\Psi}^{-1})\pi_{g^{-1}}, \Psi_g=\Psi_e\pi_{g^{-1}}  $ for all $ g \in G$  if and only if $(A_{gp}S_{A,\Psi}^{-1})(A_{gq}S_{A,\Psi}^{-1})^*=(A_pS_{A,\Psi}^{-1})(A_qS_{A,\Psi}^{-1})^*$, $(A_{gp}S_{A,\Psi}^{-1})\Psi_{gq}^*= (A_pS_{A,\Psi}^{-1})\Psi_q^*$, $ \Psi_{gp}\Psi_{gq}^*=\Psi_p\Psi_q^*$ for all $ g,p,q \in G.$
\item $( \{A_gS_{A,\Psi}^{-1/2}\}_{g\in G}, \{\Psi_gS_{A,\Psi}^{-1/2}\}_{g\in G})$ to get: there is a  unitary representation $ \pi$  of $ G$ on  $ \mathcal{H}$  for which $ A_gS_{A,\Psi}^{-1/2}=(A_eS_{A,\Psi}^{-1/2})\pi_{g^{-1}}, \Psi_gS_{A,\Psi}^{-1/2}=(\Psi_eS_{A,\Psi}^{-1/2})\pi_{g^{-1}}  $ for all $ g \in G$  if and only if $(A_{gp}S_{A,\Psi}^{-1/2})(A_{gq}S_{A,\Psi}^{-1/2})^*$ $=(A_pS_{A,\Psi}^{-1/2})(A_qS_{A,\Psi}^{-1/2})^*,(A_{gp}S_{A,\Psi}^{-1/2})(\Psi_{gq}S_{A,\Psi}^{-1/2})^*= (A_pS_{A,\Psi}^{-1/2})(\Psi_qS_{A,\Psi}^{-1/2})^*,$  $
(\Psi_{gp}S_{A,\Psi}^{-1/2})(\Psi_{gq}S_{A,\Psi}^{-1/2})^*$ $=(\Psi_pS_{A,\Psi}^{-1/2})(\Psi_qS_{A,\Psi}^{-1/2})^*$ for all $ g,p,q \in G.$
\item $( \{A_g\}_{g\in G}, \{\Psi_gS_{A,\Psi}^{-1}\}_{g\in G})$ to get: there is a  unitary representation $ \pi$  of $ G$ on  $ \mathcal{H}$  for which $ A_g=A_e\pi_{g^{-1}}, \Psi_gS_{A,\Psi}^{-1}=(\Psi_eS_{A,\Psi}^{-1})\pi_{g^{-1}}  $ for all $ g \in G$  if and only if $A_{gp}A_{gq}^*=A_pA_q^* , A_{gp}(\Psi_{gq}S_{A,\Psi}^{-1})^*=A_p(\Psi_qS_{A,\Psi}^{-1})^*, (\Psi_{gp}S_{A,\Psi}^{-1})(\Psi_{gq}S_{A,\Psi}^{-1})^*$ $=(\Psi_pS_{A,\Psi}^{-1})(\Psi_qS_{A,\Psi}^{-1})^*$ for all $ g,p,q \in G.$
\end{enumerate}		
\end{proof}

\begin{corollary}
Let $ G$ be a locally compact group with identity $ e$ and $ \{A_g\}_{g\in G}$ be a	
\begin{enumerate}[\upshape(i)]
\item  Parseval  continuous  (ovf) (w.r.t. itself) in $ \mathcal{B}(\mathcal{H},\mathcal{H}_0)$. Then there is a unitary representation $ \pi$  of $ G$ on  $ \mathcal{H}$  for which 
$$A_g=A_e\pi_{g^{-1}},\quad \forall g \in G$$  if and only if 
$$A_{gp}A_{gq}^*=A_pA_q^* ,  \quad\forall  g,p,q \in G.  $$
\item continuous (ovf)  (w.r.t. itself) in  $ \mathcal{B}(\mathcal{H},\mathcal{H}_0)$. Then there is a unitary representation $ \pi$  of $ G$ on  $ \mathcal{H}$  for which 
$$  A_g=A_eS_{A,\Psi}^{-1/2}\pi_{g^{-1}}S_{A,\Psi}^{1/2},\quad \forall g \in G$$  if and only if 
$$A_{gp}S_{A,A}^{-1}A_{gq}^*=A_pS_{A,A}^{-1}A_q^*,  \quad\forall  g,p,q \in G.  $$
\end{enumerate}
\end{corollary}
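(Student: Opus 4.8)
The plan is to derive both parts as direct specializations, taking the generating collection to be the frame itself, i.e. putting $\Psi_g = A_g$ for every $g \in G$. The first thing to record is purely notational: if $\{A_g\}_{g\in G}$ is a continuous (ovf) with respect to itself, then $\Psi_g = A_g$, so $\Psi_e = A_e$ and the frame operator is $S_{A,\Psi} = S_{A,A}$; consequently the two conclusions ``$A_g = A_e\pi_{g^{-1}}$'' and ``$\Psi_g = \Psi_e\pi_{g^{-1}}$'' (resp.\ their rescaled analogues in the non-Parseval case) become one and the same assertion.

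For (i) I would apply Theorem~\ref{gc1} to the Parseval continuous (ovf) $(\{A_g\}_{g\in G}, \{A_g\}_{g\in G})$. Its three coupling conditions
$$A_{gp}A_{gq}^* = A_pA_q^*,\qquad A_{gp}\Psi_{gq}^* = A_p\Psi_q^*,\qquad \Psi_{gp}\Psi_{gq}^* = \Psi_p\Psi_q^*$$
all reduce, after the substitution $\Psi = A$, to the single equation $A_{gp}A_{gq}^* = A_pA_q^*$, which is precisely the condition in the statement; and the surviving half of the conclusion of Theorem~\ref{gc1} is exactly the existence of a unitary representation $\pi$ with $A_g = A_e\pi_{g^{-1}}$ for all $g$.

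For (ii) I would instead invoke part (ii) of the Corollary immediately preceding this one, once more with $\Psi_g = A_g$ so that $S_{A,\Psi} = S_{A,A}$ throughout. There the three conditions
$$A_{gp}S_{A,\Psi}^{-1}A_{gq}^* = A_pS_{A,\Psi}^{-1}A_q^*,\qquad A_{gp}S_{A,\Psi}^{-1}\Psi_{gq}^* = A_pS_{A,\Psi}^{-1}\Psi_q^*,\qquad \Psi_{gp}S_{A,\Psi}^{-1}\Psi_{gq}^* = \Psi_pS_{A,\Psi}^{-1}\Psi_q^*$$
collapse to $A_{gp}S_{A,A}^{-1}A_{gq}^* = A_pS_{A,A}^{-1}A_q^*$, and the two conclusions $A_g = A_eS_{A,\Psi}^{-1/2}\pi_{g^{-1}}S_{A,\Psi}^{1/2}$ and $\Psi_g = \Psi_eS_{A,\Psi}^{-1/2}\pi_{g^{-1}}S_{A,\Psi}^{1/2}$ coincide and give the displayed identity. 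One should remember that this uses the fact (already exploited in deriving that Corollary) that $(\{A_gS_{A,A}^{-1/2}\}_{g\in G}, \{A_gS_{A,A}^{-1/2}\}_{g\in G})$ is a Parseval continuous (ovf), so that the hypotheses of Theorem~\ref{gc1} are genuinely available after rescaling.

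I do not expect a real obstacle: the whole content is the observation that, when $\Psi = A$, the three group-coupling identities appearing in Theorem~\ref{gc1} (respectively in the preceding Corollary) are literally the same identity, and the two displayed conclusions there merge into one. The only point meriting a sentence of care is checking that the hypotheses survive the substitution $\Psi = A$ --- namely, that a Parseval continuous (ovf) with respect to itself is a Parseval continuous (ovf) in the sense of Theorem~\ref{gc1}, and, in part (ii), that the rescaled family $\{A_gS_{A,A}^{-1/2}\}_{g\in G}$ is still a continuous (ovf) with respect to itself --- both of which are immediate from the definitions.
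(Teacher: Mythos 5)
Your proposal is correct and is exactly the derivation the paper intends: the corollary is stated without proof precisely because it is the specialization $\Psi_g = A_g$ of Theorem \ref{gc1} (for part (i)) and of part (ii) of the immediately preceding corollary (for part (ii)), under which the three coupling identities and the two displayed conclusions each collapse to one. The only cosmetic point is that the statement's $S_{A,\Psi}$ should be read as $S_{A,A}$ since $\Psi = A$, which you handle correctly.
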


\section{Perturbations}\label{PERTURBATIONS}
\begin{theorem}\label{PERTURBATION RESULT 1}
Let $ (\{A_\alpha\}_{\alpha\in \Omega}, \{\Psi_\alpha\}_{\alpha\in \Omega}) $  be  a continuous (ovf) in $ \mathcal{B}(\mathcal{H}, \mathcal{H}_0)$. Suppose  $\{B_\alpha\}_{\alpha\in \Omega} $ in $ \mathcal{B}(\mathcal{H}, \mathcal{H}_0)$ is such that 
 \begin{enumerate}[\upshape(i)]
 \item $ \Psi_\alpha ^*B_\alpha\geq 0, \forall \alpha \in \Omega$,
\item for each  $h \in \mathcal{H}$, the map    $\Omega \ni \alpha \mapsto B_\alpha h\in \mathcal{H}_0$ is measurable,
 \item there exist $\alpha, \beta, \gamma \geq 0  $ with $ \max\{\alpha+\gamma\|\theta_\Psi S_{A,\Psi}^{-1}\|, \beta\}<1$ such that 
 \begin{equation}\label{p3}
 \left\|\int_{\Omega}(A_\alpha^*-B_\alpha^*)f(\alpha)\,d\mu(\alpha)\right\|\leq \alpha\left\|\int_{\Omega}A_\alpha^*f(\alpha)\,d\mu(\alpha)\right\|+\beta\left\|\int_{\Omega}B_\alpha^*f(\alpha)\,d\mu(\alpha)\right\|+\gamma \|f\|,\quad \forall f \in \mathcal{L}^2(\Omega, \mathcal{H}_0).
 \end{equation} 
\end{enumerate}
  Then  $ (\{B_\alpha\}_{\alpha\in \Omega},\{\Psi_\alpha\}_{\alpha\in \Omega}) $ is a continuous  (ovf) with bounds $ \frac{1-(\alpha+\gamma\|\theta_\Psi S_{A,\Psi}^{-1}\|)}{(1+\beta)\|S_{A,\Psi}^{-1}\|}$ and $\frac{\|\theta_\Psi\|((1+\alpha)\|\theta_A\|+\gamma)}{1-\beta} $.
 \end{theorem}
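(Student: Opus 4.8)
The plan is to verify the three conditions of Definition \ref{1} for $(\{B_\alpha\}_{\alpha\in\Omega},\{\Psi_\alpha\}_{\alpha\in\Omega})$; in the spirit of Theorem \ref{OPERATORCHARACTERIZATIONHILBERT2}(i) it is enough to show $\theta_B^{*}\colon f\mapsto\int_{\Omega}B_\alpha^{*}f(\alpha)\,d\mu(\alpha)$ is well-defined and bounded and that $S_{B,\Psi}=\theta_\Psi^{*}\theta_B$ is bounded, positive and invertible with the stated bounds (measurability of $\alpha\mapsto B_\alpha h$ is hypothesis (ii), and $\alpha\mapsto\Psi_\alpha h$ is measurable since $(\{A_\alpha\}_{\alpha\in\Omega},\{\Psi_\alpha\}_{\alpha\in\Omega})$ is a continuous (ovf)). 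For the upper bound I would start from (\ref{p3}): combining it with $\|\theta_B^{*}f\|\le\|\theta_A^{*}f\|+\|\theta_A^{*}f-\theta_B^{*}f\|$ gives $(1-\beta)\|\theta_B^{*}f\|\le(1+\alpha)\|\theta_A^{*}f\|+\gamma\|f\|\le\big((1+\alpha)\|\theta_A\|+\gamma\big)\|f\|$, so $\theta_B^{*}$ (hence $\theta_B$) is bounded with $\|\theta_B\|\le\frac{(1+\alpha)\|\theta_A\|+\gamma}{1-\beta}$, and therefore $\|S_{B,\Psi}\|\le\|\theta_\Psi\|\,\|\theta_B\|\le\frac{\|\theta_\Psi\|((1+\alpha)\|\theta_A\|+\gamma)}{1-\beta}$, the claimed upper frame bound. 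Hypothesis (i) yields $\langle S_{B,\Psi}h,h\rangle=\int_\Omega\langle\Psi_\alpha^{*}B_\alpha h,h\rangle\,d\mu(\alpha)\ge0$ and, as each $\Psi_\alpha^{*}B_\alpha$ is self-adjoint, that $S_{B,\Psi}$ is self-adjoint; hence $\theta_B^{*}\theta_\Psi=(\theta_\Psi^{*}\theta_B)^{*}=S_{B,\Psi}^{*}=S_{B,\Psi}$.

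For the lower bound, the decisive move is to apply (\ref{p3}) with $f=\theta_\Psi h$, $h\in\mathcal{H}$. By Proposition \ref{2.2}(ii), $\theta_A^{*}\theta_\Psi=S_{A,\Psi}$, and by the above $\theta_B^{*}\theta_\Psi=S_{B,\Psi}$, so (\ref{p3}) reads
\[
\|S_{A,\Psi}h-S_{B,\Psi}h\|\le\alpha\|S_{A,\Psi}h\|+\beta\|S_{B,\Psi}h\|+\gamma\|\theta_\Psi h\|.
\]
Estimating $\|\theta_\Psi h\|=\|\theta_\Psi S_{A,\Psi}^{-1}(S_{A,\Psi}h)\|\le\|\theta_\Psi S_{A,\Psi}^{-1}\|\,\|S_{A,\Psi}h\|$, then bounding $\|S_{B,\Psi}h\|\le\|S_{A,\Psi}h-S_{B,\Psi}h\|+\|S_{A,\Psi}h\|$ to absorb the $\beta$-term, and finally using $\|S_{A,\Psi}h\|\ge\|h\|/\|S_{A,\Psi}^{-1}\|$, I would obtain
\[
(1+\beta)\|S_{B,\Psi}h\|\ge\big(1-\alpha-\gamma\|\theta_\Psi S_{A,\Psi}^{-1}\|\big)\|S_{A,\Psi}h\|\ge\frac{1-\alpha-\gamma\|\theta_\Psi S_{A,\Psi}^{-1}\|}{\|S_{A,\Psi}^{-1}\|}\,\|h\|.
\]
With $c=\frac{1-(\alpha+\gamma\|\theta_\Psi S_{A,\Psi}^{-1}\|)}{(1+\beta)\|S_{A,\Psi}^{-1}\|}>0$ (positivity being exactly the hypothesis $\max\{\alpha+\gamma\|\theta_\Psi S_{A,\Psi}^{-1}\|,\beta\}<1$) this says $\|S_{B,\Psi}h\|\ge c\|h\|$, so the self-adjoint operator $S_{B,\Psi}$ is bounded below and hence invertible; moreover, being positive with $S_{B,\Psi}^{2}\ge c^{2}I$ (from $\|S_{B,\Psi}h\|^{2}=\langle S_{B,\Psi}^{2}h,h\rangle\ge c^{2}\|h\|^{2}$), operator monotonicity of the square root gives $S_{B,\Psi}\ge cI$, i.e.\ $\langle S_{B,\Psi}h,h\rangle\ge c\|h\|^{2}$ --- the asserted lower frame bound.

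The main obstacle is to see that (\ref{p3}), although stated for the synthesis operators and an arbitrary $f\in\mathcal{L}^2(\Omega,\mathcal{H}_0)$, should be specialized to $f=\theta_\Psi h$ so that it turns into a comparison of $S_{A,\Psi}h$ with $S_{B,\Psi}h$, and that the residual term $\gamma\|\theta_\Psi h\|$ must be dominated by $\|S_{A,\Psi}h\|$ via $\theta_\Psi S_{A,\Psi}^{-1}$ --- which is exactly why the quantity $\|\theta_\Psi S_{A,\Psi}^{-1}\|$ occurs in condition (iii). A secondary, routine point is keeping the two identities $S_{A,\Psi}=\theta_A^{*}\theta_\Psi$ and $S_{B,\Psi}=\theta_B^{*}\theta_\Psi$ straight, which is permissible only after boundedness of $\theta_B$ and self-adjointness of $S_{B,\Psi}$ have been extracted from (\ref{p3}) and (i).
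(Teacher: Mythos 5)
Your argument is correct and yields exactly the stated constants, but it reaches the lower frame bound by a different route than the paper. Both proofs begin identically: you bound $T\colon f\mapsto\int_{\Omega}B_\alpha^{*}f(\alpha)\,d\mu(\alpha)$ via $(1-\beta)\|Tf\|\leq(1+\alpha)\|\theta_A^{*}f\|+\gamma\|f\|$, identify $T^{*}=\theta_B$, and read off the upper bound $\|\theta_\Psi\|\bigl((1+\alpha)\|\theta_A\|+\gamma\bigr)/(1-\beta)$. For the remaining (and decisive) part, the paper defers to the discrete-case argument of Theorem 7.6 of \cite{MAHESHKRISHNASAMJOHNSON}, which in turn invokes the Casazza--Christensen operator perturbation theorem (Theorem 1 of \cite{OLECAZASSA}) applied to the synthesis operators $\theta_A^{*}$ and $\theta_B^{*}$ --- the quantity $\|\theta_\Psi S_{A,\Psi}^{-1}\|$ entering as the norm of a right inverse of $\theta_A^{*}$. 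You instead bypass that external theorem entirely: specializing Inequality (\ref{p3}) at $f=\theta_\Psi h$ converts it, via $\theta_A^{*}\theta_\Psi=S_{A,\Psi}$ and $\theta_B^{*}\theta_\Psi=S_{B,\Psi}$ (the latter legitimate only after hypothesis (i) has given self-adjointness of $S_{B,\Psi}$, which you correctly note), into the scalar inequality $\|S_{A,\Psi}h-S_{B,\Psi}h\|\leq\alpha\|S_{A,\Psi}h\|+\beta\|S_{B,\Psi}h\|+\gamma\|\theta_\Psi h\|$, and the reverse triangle inequality together with $\|\theta_\Psi h\|\leq\|\theta_\Psi S_{A,\Psi}^{-1}\|\,\|S_{A,\Psi}h\|$ and $\|S_{A,\Psi}h\|\geq\|h\|/\|S_{A,\Psi}^{-1}\|$ gives $\|S_{B,\Psi}h\|\geq c\|h\|$ directly; positivity plus operator monotonicity of the square root then upgrades this to $S_{B,\Psi}\geq cI_{\mathcal{H}}$. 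What your approach buys is a self-contained proof in which the role of each hypothesis is transparent (in particular, why $\|\theta_\Psi S_{A,\Psi}^{-1}\|$ appears in condition (iii)); what the paper's approach buys is brevity and uniformity with the discrete case, at the cost of leaning on an external perturbation theorem and an argument printed elsewhere.
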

 \begin{proof}
 Define $T:\mathcal{L}^2(\Omega, \mathcal{H}_0)\ni f \mapsto \int_{\Omega}B_\alpha^*f(\alpha) \,d\mu(\alpha)\in \mathcal{H} $. Then for all $f \in \mathcal{L}^2(\Omega, \mathcal{H}_0)$,
 
 \begin{align*}
 \|Tf\|&=\left\|\int_{\Omega}B_\alpha^*f(\alpha)\,d\mu(\alpha)\right\|\leq \left\|\int_{\Omega}(B_\alpha^*-A_\alpha^*)f(\alpha)\,d\mu(\alpha)\right\|+\left\|\int_{\Omega}A_\alpha^*f(\alpha)\,d\mu(\alpha)\right\|\\
 &\leq(1+\alpha)\left\|\int_{\Omega}A_\alpha^*f(\alpha)\,d\mu(\alpha)\right\|+\beta\left\|\int_{\Omega}B_\alpha^*f(\alpha)\,d\mu(\alpha)\right\|+\gamma \|f\|\\
 &=(1+\alpha)\left\|\theta_A^*f\right\|+\beta\left\|Tf\right\|+\gamma \|f\|.
 \end{align*}
Hence 
 
\begin{align*}
\|Tf\|\leq \frac{1+\alpha}{1-\beta}\left\| \theta_A^*f\right\|+\frac{\gamma}{1-\beta}\|f\|,\quad f \in \mathcal{L}^2(\Omega, \mathcal{H}_0).
\end{align*}
Thus $T$ is bounded, therefore its adjoint exists, which is $ \theta_B$. Inequality (\ref{p3}) now gives
 $$ \|\theta_A^*f-\theta_B^*f\|\leq \alpha\|\theta_A^*f\|+\beta\|\theta_B^*f\|+\gamma\|f\|, \quad \forall  f  \in \mathcal{L}^2(\Omega, \mathcal{H}_0) .$$
 Other arguments are similar to the corresponding arguments used in the proof of Theorem 7.6 in \cite{MAHESHKRISHNASAMJOHNSON}. (we note that Theorem 1 in \cite{OLECAZASSA} (we also refer \cite{HILDING, CASAZZAKALTON}) was used in the proof of  Theorem 7.6 in \cite{MAHESHKRISHNASAMJOHNSON}).
\end{proof}

\begin{corollary}
 Let $ (\{A_\alpha\}_{\alpha\in \Omega}, \{\Psi_\alpha\}_{\alpha\in \Omega}) $  be  a continuous (ovf) in $ \mathcal{B}(\mathcal{H}, \mathcal{H}_0)$. Suppose  $\{B_\alpha\}_{\alpha\in \Omega} $ in $ \mathcal{B}(\mathcal{H}, \mathcal{H}_0)$ is such that 
 \begin{enumerate}[\upshape(i)]
  \item $ \Psi_\alpha ^*B_\alpha\geq 0, \forall \alpha \in \Omega$,
  \item for each  $h \in \mathcal{H}$, the map    $\Omega \ni \alpha \mapsto B_\alpha h\in \mathcal{H}_0$ is measurable,
   \item The map    $\Omega \ni \alpha \mapsto\|A_\alpha-B_\alpha\| \in \mathbb{R}$ is measurable,
 \item 
 $$ r \coloneqq \int_{\Omega}\|A_\alpha-B_\alpha\|^2\,d\mu(\alpha) <\frac{1}{\|\theta_\Psi S_{A,\Psi}^{-1}\|^2}.$$	
 \end{enumerate}
 Then $ (\{B_\alpha\}_{\alpha\in \Omega}, \{\Psi_\alpha\}_{\alpha\in \Omega}) $ is   a continuous  (ovf) with bounds $ \frac{1-\sqrt{r}\|\theta_\Psi S_{A,\Psi}^{-1}\|}{\|S_{A,\Psi}^{-1}\|}$ and ${\|\theta_\Psi\|(\|\theta_A\|+\sqrt{r})} $.
\end{corollary}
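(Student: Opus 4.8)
The plan is to obtain this Corollary as a direct specialization of Theorem \ref{PERTURBATION RESULT 1}, taking the perturbation parameters there (which the paper writes $\alpha,\beta,\gamma$) to be $\alpha=\beta=0$ and $\gamma=\sqrt{r}$. Hypotheses (i) and (ii) of the Corollary are verbatim hypotheses (i) and (ii) of Theorem \ref{PERTURBATION RESULT 1}, so the only substantive task is to check that Inequality (\ref{p3}) holds with these parameter values, i.e.\ that
\begin{equation*}
\left\|\int_{\Omega}(A_\alpha^*-B_\alpha^*)f(\alpha)\,d\mu(\alpha)\right\|\leq \sqrt{r}\,\|f\|,\qquad \forall f\in\mathcal{L}^2(\Omega,\mathcal{H}_0).
\end{equation*}

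To prove this estimate I would fix $f\in\mathcal{L}^2(\Omega,\mathcal{H}_0)$ and $h\in\mathcal{H}$ with $\|h\|\leq 1$, and note first that $\alpha\mapsto\langle f(\alpha),(A_\alpha-B_\alpha)h\rangle$ is measurable (by the measurability hypotheses (ii), (iii) together with the measurability of $\alpha\mapsto A_\alpha h$ built into the definition of a continuous (ovf)), and that $|\langle f(\alpha),(A_\alpha-B_\alpha)h\rangle|\leq\|f(\alpha)\|\,\|A_\alpha-B_\alpha\|$ pointwise, using $\|(A_\alpha-B_\alpha)h\|\leq\|A_\alpha-B_\alpha\|\|h\|\leq\|A_\alpha-B_\alpha\|$. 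Since by hypothesis (iii) the function $\alpha\mapsto\|A_\alpha-B_\alpha\|$ lies in $\mathcal{L}^2(\Omega,\mathbb{K})$ with $\mathcal{L}^2$-norm $\sqrt{r}$, the Cauchy--Schwarz inequality gives $\int_\Omega\|f(\alpha)\|\,\|A_\alpha-B_\alpha\|\,d\mu(\alpha)\leq\|f\|\sqrt{r}$; in particular the weak integral $\int_{\Omega}(A_\alpha^*-B_\alpha^*)f(\alpha)\,d\mu(\alpha)$ is well defined (this is exactly the Riesz-representation argument used in Section \ref{MK}), and $|\langle\int_{\Omega}(A_\alpha^*-B_\alpha^*)f(\alpha)\,d\mu(\alpha),h\rangle|\leq\sqrt{r}\,\|f\|$. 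Taking the supremum over $\|h\|\leq 1$ yields the displayed estimate.

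With $\alpha=\beta=0$ and $\gamma=\sqrt{r}$, the smallness condition $\max\{\alpha+\gamma\|\theta_\Psi S_{A,\Psi}^{-1}\|,\beta\}<1$ of Theorem \ref{PERTURBATION RESULT 1} becomes $\sqrt{r}\,\|\theta_\Psi S_{A,\Psi}^{-1}\|<1$, which is precisely hypothesis (iv). Hence Theorem \ref{PERTURBATION RESULT 1} applies and asserts that $(\{B_\alpha\}_{\alpha\in\Omega},\{\Psi_\alpha\}_{\alpha\in\Omega})$ is a continuous (ovf) with bounds $\frac{1-(\alpha+\gamma\|\theta_\Psi S_{A,\Psi}^{-1}\|)}{(1+\beta)\|S_{A,\Psi}^{-1}\|}$ and $\frac{\|\theta_\Psi\|((1+\alpha)\|\theta_A\|+\gamma)}{1-\beta}$; substituting $\alpha=\beta=0$, $\gamma=\sqrt{r}$ collapses these to $\frac{1-\sqrt{r}\,\|\theta_\Psi S_{A,\Psi}^{-1}\|}{\|S_{A,\Psi}^{-1}\|}$ and $\|\theta_\Psi\|(\|\theta_A\|+\sqrt{r})$, as claimed.

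I do not expect a genuine obstacle here: the entire content is the Cauchy--Schwarz bound that converts the $\mathcal{L}^2$-closeness $r=\int_\Omega\|A_\alpha-B_\alpha\|^2\,d\mu(\alpha)$ into the operator-form perturbation inequality (\ref{p3}), after which Theorem \ref{PERTURBATION RESULT 1} does all the work. The only point deserving a word of care is the well-definedness and measurability needed to legitimize the weak integral and the Cauchy--Schwarz step, and that is handled by the hypotheses exactly as indicated above.
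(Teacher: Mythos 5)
Your proposal is correct and follows exactly the paper's own route: set $\alpha=\beta=0$, $\gamma=\sqrt{r}$, verify Inequality (\ref{p3}) by Cauchy--Schwarz applied to $\|f(\alpha)\|\,\|A_\alpha-B_\alpha\|$, and invoke Theorem \ref{PERTURBATION RESULT 1}. Your treatment of the weak-integral/measurability point is slightly more explicit than the paper's one-line estimate, but the argument is the same.
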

\begin{proof}
Set $ \alpha =0, \beta=0, \gamma=\sqrt{r}$. Then $ \max\{\alpha+\gamma\|\theta_\Psi S_{A,\Psi}^{-1}\|, \beta\}<1$ and 

$$ \left\|\int_{\Omega}(A_\alpha^*-B_\alpha^*)f(\alpha)\,d\mu(\alpha)\right\|\leq \left(\int_{\Omega}\|A_\alpha^*-B_\alpha^*\|^2\,d\mu(\alpha) \right)^\frac{1}{2}\left(\int_{\Omega}\|f(\alpha)\|^2\,d\mu(\alpha)\right)^\frac{1}{2}= \gamma\|f\|, \quad \forall f \in \mathcal{L}^2(\Omega, \mathcal{H}_0).$$
 Theorem \ref{PERTURBATION RESULT 1} applies now.
\end{proof}
 \begin{theorem}\label{PERTURBATION RESULT 2}
 Let $ (\{A_\alpha\}_{\alpha\in \Omega}, \{\Psi_\alpha\}_{\alpha\in \Omega} )$ be a continuous (ovf) in $ \mathcal{B}(\mathcal{H}, \mathcal{H}_0)$ with bounds $ a$ and $b$. Suppose  $\{B_\alpha\}_{\alpha\in \Omega} $ is continuous Bessel  (w.r.t. itself) in $ \mathcal{B}(\mathcal{H}, \mathcal{H}_0)$  such that $ \theta_\Psi^*\theta_B\geq0$  and 
 there exist $\alpha, \beta, \gamma \geq 0  $ with $ \max\{\alpha+\frac{\gamma}{\sqrt{a}}, \beta\}<1$ and for all $h \in \mathcal{H}$, 
\begin{equation} \label{pe1}
 \left|\int_{\Omega}\langle(A_\alpha-B_\alpha)h,\Psi_\alpha h \rangle\,d\mu(\alpha)\right|^\frac{1}{2}\leq \alpha \left(\int_{\Omega}\langle A_\alpha h,\Psi_\alpha h \rangle\,d\mu(\alpha)\right)^\frac{1}{2}+\beta\left(\int_{\Omega}\langle B_\alpha h,\Psi_\alpha h \rangle\,d\mu(\alpha)\right)^\frac{1}{2}+\gamma \|h\|. 
 \end{equation}
 Then  $ (\{B_\alpha\}_{\alpha\in \Omega}, \{\Psi_\alpha\}_{\alpha\in \Omega}) $ is a continuous (ovf) with bounds $a\left(1-\frac{\alpha+\beta+\frac{\gamma}{\sqrt{a}}}{1+\beta}\right)^2 $ and $b\left(1+\frac{\alpha+\beta+\frac{\gamma}{\sqrt{b}}}{1-\beta}\right)^2 .$
 \end{theorem}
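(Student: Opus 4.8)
The plan is to show that the operator $S_{B,\Psi}\colon\mathcal{H}\ni h\mapsto\int_\Omega\Psi_\alpha^*B_\alpha h\,d\mu(\alpha)\in\mathcal{H}$ is well-defined, bounded, positive and invertible, with the two constants in the statement as frame bounds; once this is done, $(\{B_\alpha\}_{\alpha\in\Omega},\{\Psi_\alpha\}_{\alpha\in\Omega})$ is a continuous (ovf) by Definition \ref{1}. Condition (i) of that definition and the boundedness of $\theta_B$ and $\theta_\Psi$ come for free, since $\{\Psi_\alpha\}_{\alpha\in\Omega}$ is part of a continuous (ovf) and $\{B_\alpha\}_{\alpha\in\Omega}$ is continuous Bessel (w.r.t. itself); in particular $S_{B,\Psi}=\theta_\Psi^*\theta_B$ is bounded, and it is positive (hence self-adjoint) by the hypothesis $\theta_\Psi^*\theta_B\ge 0$. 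So the whole content of the proof is a quantitative two-sided estimate for $\langle S_{B,\Psi}h,h\rangle$.

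For $h\in\mathcal{H}$ I would set $P(h)=\langle S_{A,\Psi}h,h\rangle=\int_\Omega\langle A_\alpha h,\Psi_\alpha h\rangle\,d\mu(\alpha)$ and $Q(h)=\langle S_{B,\Psi}h,h\rangle=\int_\Omega\langle B_\alpha h,\Psi_\alpha h\rangle\,d\mu(\alpha)$. Both are nonnegative reals ($S_{A,\Psi}$ and $S_{B,\Psi}$ being positive), with $\sqrt a\,\|h\|\le\sqrt{P(h)}\le\sqrt b\,\|h\|$, and $P(h)-Q(h)=\int_\Omega\langle(A_\alpha-B_\alpha)h,\Psi_\alpha h\rangle\,d\mu(\alpha)$ is exactly the quantity inside the absolute value in (\ref{pe1}). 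Using the elementary inequality $|\sqrt x-\sqrt y|\le\sqrt{|x-y|}$, valid for all $x,y\ge 0$, the hypothesis (\ref{pe1}) then yields $|\sqrt{P(h)}-\sqrt{Q(h)}|\le\alpha\sqrt{P(h)}+\beta\sqrt{Q(h)}+\gamma\|h\|$ for every $h\in\mathcal{H}$.

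I would then split this into its two one-sided consequences. From the ``upper'' half, $(1-\beta)\sqrt{Q(h)}\le(1+\alpha)\sqrt{P(h)}+\gamma\|h\|\le\bigl((1+\alpha)\sqrt b+\gamma\bigr)\|h\|$; since $\beta<1$ one rewrites $\frac{(1+\alpha)\sqrt b+\gamma}{1-\beta}=\sqrt b\,\bigl(1+\frac{\alpha+\beta+\gamma/\sqrt b}{1-\beta}\bigr)$ and squares to get $\langle S_{B,\Psi}h,h\rangle\le b\,\bigl(1+\frac{\alpha+\beta+\gamma/\sqrt b}{1-\beta}\bigr)^2\|h\|^2$. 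From the ``lower'' half, $(1+\beta)\sqrt{Q(h)}\ge(1-\alpha)\sqrt{P(h)}-\gamma\|h\|$; since $\alpha<1$ the right-hand side is nondecreasing in $\sqrt{P(h)}$, hence at least $\bigl((1-\alpha)\sqrt a-\gamma\bigr)\|h\|$, which is strictly positive exactly because $\alpha+\gamma/\sqrt a<1$. Rewriting $\frac{(1-\alpha)\sqrt a-\gamma}{1+\beta}=\sqrt a\,\bigl(1-\frac{\alpha+\beta+\gamma/\sqrt a}{1+\beta}\bigr)$ and squaring gives $\langle S_{B,\Psi}h,h\rangle\ge a\,\bigl(1-\frac{\alpha+\beta+\gamma/\sqrt a}{1+\beta}\bigr)^2\|h\|^2$. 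A positive operator pinched between two strictly positive multiples of $I_\mathcal{H}$ is invertible, which finishes the proof.

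I do not anticipate a serious obstacle: this is the continuous operator-valued reformulation of the classical Casazza--Christensen perturbation lemma (Theorem 1 of \cite{OLECAZASSA}) applied to the positive quadratic forms $P$ and $Q$ in place of norms. The only points that need genuine care are that $P(h)$ and $Q(h)$ are honestly real (so the scalar rearrangements are legitimate), which follows from self-adjointness of $S_{A,\Psi}$ and $S_{B,\Psi}$, and that the sign conditions $(1-\alpha)\sqrt a-\gamma>0$ and $1-\beta>0$ used in the rearrangements are indeed consequences of $\max\{\alpha+\gamma/\sqrt a,\beta\}<1$ together with $\gamma\ge 0$.
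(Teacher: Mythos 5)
Your proposal is correct and follows essentially the same route as the paper's proof: both reduce the problem to a two-sided estimate on the quadratic form $\langle S_{B,\Psi}h,h\rangle$, obtained by combining the square-root triangle inequality $|\sqrt{x}-\sqrt{y}|\le\sqrt{|x-y|}$ with hypothesis (\ref{pe1}) and the bounds $\sqrt{a}\|h\|\le\langle S_{A,\Psi}h,h\rangle^{1/2}\le\sqrt{b}\|h\|$. The only cosmetic difference is that the paper absorbs the term $\gamma\|h\|$ into $\tfrac{\gamma}{\sqrt{a}}\langle S_{A,\Psi}h,h\rangle^{1/2}$ before rearranging while you rearrange first, which yields the identical constants.
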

 \begin{proof}
For all $ h $ in $\mathcal{H}$, 
	
 \begin{align*}
 \left(\int_{\Omega}\langle B_\alpha h,\Psi_\alpha h \rangle\,d\mu(\alpha)\right)^\frac{1}{2}
 &\leq  \left|\int_{\Omega}\langle (B_\alpha -A_\alpha )h,\Psi_\alpha h \rangle\,d\mu(\alpha)\right|^\frac{1}{2}+\left(\int_{\Omega}\langle A_\alpha h,\Psi_\alpha h \rangle\,d\mu(\alpha)\right)^\frac{1}{2} \\
 &\leq(1+ \alpha )\left(\int_{\Omega}\langle A_\alpha h,\Psi_\alpha h \rangle\,d\mu(\alpha)\right)^\frac{1}{2}+\beta\left(\int_{\Omega}\langle B_\alpha h,\Psi_\alpha h \rangle\,d\mu(\alpha)\right)^\frac{1}{2}+\gamma \|h\|
 \end{align*}
 
 which implies, for all   $h \in \mathcal{H}$,
 
 \begin{align*}
 (1-\beta)\left(\int_{\Omega}\langle B_\alpha h,\Psi_\alpha h \rangle\,d\mu(\alpha)\right)^\frac{1}{2}
 &\leq(1+ \alpha )\left(\int_{\Omega}\langle A_\alpha h,\Psi_\alpha h \rangle\,d\mu(\alpha)\right)^\frac{1}{2}+\gamma \|h\|
 \\
 &\leq (1+\alpha )\sqrt{b}\|h\|+\gamma\|h\|.
 \end{align*}
  From Inequality (\ref{pe1}),  for all   $h \in \mathcal{H}$,
  
 \begin{align*}
 \left(\int_{\Omega}\langle A_\alpha h,\Psi_\alpha h \rangle\,d\mu(\alpha)\right)^\frac{1}{2}
 &\leq  \left|\int_{\Omega}\langle (A_\alpha -B_\alpha )h,\Psi_\alpha h \rangle\,d\mu(\alpha)\right|^\frac{1}{2}+\left(\int_{\Omega}\langle B_\alpha h,\Psi_\alpha h \rangle\,d\mu(\alpha)\right)^\frac{1}{2} \\
 &\leq \alpha \left(\int_{\Omega}\langle A_\alpha h,\Psi_\alpha h \rangle\,d\mu(\alpha)\right)^\frac{1}{2}+(1+\beta)\left(\int_{\Omega}\langle B_\alpha h,\Psi_\alpha h \rangle\,d\mu(\alpha)\right)^\frac{1}{2}+\gamma \|h\|\\
 &\leq \left(\alpha+\frac{\gamma}{\sqrt{a}}\right) \left(\int_{\Omega}\langle A_\alpha h,\Psi_\alpha h \rangle\,d\mu(\alpha)\right)^\frac{1}{2}+(1+\beta)\left(\int_{\Omega}\langle B_\alpha h,\Psi_\alpha h \rangle\,d\mu(\alpha)\right)^\frac{1}{2}
 \end{align*}
 which produces
 
 \begin{align*}
 \left(1-\left(\alpha +\frac{\gamma}{\sqrt{a}}\right) \right)\left(\int_{\Omega}\langle A_\alpha h,\Psi_\alpha h \rangle\,d\mu(\alpha)\right)^\frac{1}{2}
 \leq  (1+\beta)\left(\int_{\Omega}\langle B_\alpha h,\Psi_\alpha h \rangle\,d\mu(\alpha)\right)^\frac{1}{2},\quad  \forall h \in \mathcal{H}.
  \end{align*}
  But $\sqrt{a}\|h\|\leq  \left(\int_{\Omega}\langle A_\alpha h,\Psi_\alpha h \rangle\,d\mu(\alpha)\right)^{1/2},  \forall h \in \mathcal{H}.$
 Thus $ (\{B_\alpha\}_{\alpha\in \Omega}, \{\Psi_\alpha\}_{\alpha\in \Omega} )$ is a continuous (ovf) with bounds $a\left(1-\frac{\alpha+\beta+\frac{\gamma}{\sqrt{a}}}{1+\beta}\right)^2 $ and 
 $b\left(1+\frac{\alpha+\beta+\frac{\gamma}{\sqrt{b}}}{1-\beta}\right)^2 .$
 \end{proof}
 \begin{theorem}\label{OVFQUADRATICPERTURBATION}
Let $ (\{A_\alpha\}_{\alpha\in \Omega}, \{\Psi_\alpha\}_{\alpha\in \Omega}) $  be a continuous  (ovf) in $ \mathcal{B}(\mathcal{H}, \mathcal{H}_0)$. Suppose  $\{B_\alpha\}_{\alpha\in \Omega} $ in $ \mathcal{B}(\mathcal{H}, \mathcal{H}_0)$ is such that 
 \begin{enumerate}[\upshape(i)]
\item $ \Psi_\alpha ^*B_\alpha\geq 0, \forall \alpha \in \Omega$,
 \item for each  $h \in \mathcal{H}$, the map    $\Omega \ni \alpha \mapsto B_\alpha h\in \mathcal{H}_0$ is measurable,
 \item The map    $\Omega \ni \alpha \mapsto\|A_\alpha-B_\alpha\| \in \mathbb{R}$ is measurable and $   \int_{\Omega}\|A_\alpha-B_\alpha\|\,d\mu(\alpha) \in \mathbb{R}$,
 \item  The map    $\Omega \ni \alpha \mapsto\|\Psi_\alpha S_{A,\Psi}^{-1}\|\in \mathbb{R}$ is measurable and $   \int_{\Omega}\|A_\alpha-B_\alpha\|\|\Psi_\alpha S_{A,\Psi}^{-1}\|\,d\mu(\alpha) \in \mathbb{R}$,
 \item $\int_{\Omega}\|A_\alpha-B_\alpha\|\|\Psi_\alpha S_{A,\Psi}^{-1}\|\,d\mu(\alpha)<1.$
 \end{enumerate}
Then  $ (\{B_\alpha\}_{\alpha\in \Omega}, \{\Psi_\alpha\}_{\alpha\in \Omega}) $ is a continuous  (ovf) with bounds   $\frac{1-\int_{\Omega}\|A_\alpha-B_\alpha\|\|\Psi_\alpha S_{A,\Psi}^{-1}\|\,d\mu(\alpha)}{\|S_{A,\Psi}^{-1}\|}$ and $\|\theta_\Psi\|((\int_{\Omega}\|A_\alpha-B_\alpha\|^2\,d\mu(\alpha))^{1/2}+\|\theta_A\|) $.
 \end{theorem}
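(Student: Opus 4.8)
The plan is to treat $(\{B_\alpha\}_{\alpha\in\Omega},\{\Psi_\alpha\}_{\alpha\in\Omega})$ as a small perturbation of $(\{A_\alpha\}_{\alpha\in\Omega},\{\Psi_\alpha\}_{\alpha\in\Omega})$ and to invert the perturbed frame operator $S_{B,\Psi}$ by a Neumann series, much as in the proof of Theorem \ref{PERTURBATION RESULT 1}, but routing the estimate through $S_{A,\Psi}^{-1}$ so that the quantity $\int_\Omega\|A_\alpha-B_\alpha\|\,\|\Psi_\alpha S_{A,\Psi}^{-1}\|\,d\mu(\alpha)$ is exactly what controls the argument. First I would check that $\theta_B\colon\mathcal{H}\to\mathcal{L}^2(\Omega,\mathcal{H}_0)$, $\theta_Bh\colon\alpha\mapsto B_\alpha h$, is a well-defined bounded operator: from $\|B_\alpha h\|\le\|A_\alpha h\|+\|A_\alpha-B_\alpha\|\,\|h\|$ and Minkowski's inequality in $\mathcal{L}^2(\Omega)$,
\[
\|\theta_Bh\|=\Big(\int_\Omega\|B_\alpha h\|^2\,d\mu(\alpha)\Big)^{1/2}\le\|\theta_Ah\|+\Big(\int_\Omega\|A_\alpha-B_\alpha\|^2\,d\mu(\alpha)\Big)^{1/2}\|h\|,
\]
so $\|\theta_B\|\le\|\theta_A\|+\big(\int_\Omega\|A_\alpha-B_\alpha\|^2\,d\mu(\alpha)\big)^{1/2}$. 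Together with hypothesis (ii) and the measurability of $\alpha\mapsto\Psi_\alpha h$ inherited from $(\{A_\alpha\},\{\Psi_\alpha\})$, this shows $S_{B,\Psi}=\theta_\Psi^*\theta_B$ is well defined and bounded, and it is positive because each $\Psi_\alpha^*B_\alpha\ge0$ forces $\langle S_{B,\Psi}h,h\rangle=\int_\Omega\langle\Psi_\alpha^*B_\alpha h,h\rangle\,d\mu(\alpha)\ge0$; in particular $\|S_{B,\Psi}\|\le\|\theta_\Psi\|\,\|\theta_B\|\le\|\theta_\Psi\|\big((\int_\Omega\|A_\alpha-B_\alpha\|^2\,d\mu(\alpha))^{1/2}+\|\theta_A\|\big)$, which is the asserted upper bound.

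For invertibility, set $r:=\int_\Omega\|A_\alpha-B_\alpha\|\,\|\Psi_\alpha S_{A,\Psi}^{-1}\|\,d\mu(\alpha)$, which lies in $[0,1)$ by hypotheses (iv)--(v), and record (via Proposition \ref{2.2}(ii), $\theta_A^*\theta_\Psi=S_{A,\Psi}$) that $\theta_\Psi S_{A,\Psi}^{-1}$ is a bounded right inverse of $\theta_A^*$. I would then compute, for all $h,g\in\mathcal{H}$,
\[
\big\langle (I_\mathcal{H}-S_{A,\Psi}^{-1}S_{B,\Psi})h,g\big\rangle=\big\langle(S_{A,\Psi}-S_{B,\Psi})h,S_{A,\Psi}^{-1}g\big\rangle=\int_\Omega\big\langle(A_\alpha-B_\alpha)h,\ \Psi_\alpha S_{A,\Psi}^{-1}g\big\rangle\,d\mu(\alpha),
\]
using $\langle S_{A,\Psi}h,k\rangle=\int_\Omega\langle A_\alpha h,\Psi_\alpha k\rangle\,d\mu(\alpha)$ and $\langle S_{B,\Psi}h,k\rangle=\langle\theta_Bh,\theta_\Psi k\rangle=\int_\Omega\langle B_\alpha h,\Psi_\alpha k\rangle\,d\mu(\alpha)$ with $k=S_{A,\Psi}^{-1}g$. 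Bounding the integrand by $\|A_\alpha-B_\alpha\|\,\|\Psi_\alpha S_{A,\Psi}^{-1}\|\,\|h\|\,\|g\|$ gives $\|I_\mathcal{H}-S_{A,\Psi}^{-1}S_{B,\Psi}\|\le r<1$, so $S_{A,\Psi}^{-1}S_{B,\Psi}$ is invertible with inverse of norm $\le(1-r)^{-1}$; hence $S_{B,\Psi}$ is invertible with $\|S_{B,\Psi}^{-1}\|\le\|S_{A,\Psi}^{-1}\|/(1-r)$. Since $S_{B,\Psi}$ is bounded, positive and invertible and the two measurability conditions hold, $(\{B_\alpha\}_{\alpha\in\Omega},\{\Psi_\alpha\}_{\alpha\in\Omega})$ is a continuous (ovf) by Definition \ref{1}, and $\langle S_{B,\Psi}h,h\rangle\ge\|S_{B,\Psi}^{-1}\|^{-1}\|h\|^2\ge\frac{1-r}{\|S_{A,\Psi}^{-1}\|}\|h\|^2$ gives precisely the asserted lower frame bound.

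The main obstacle is really the bookkeeping at the start: boundedness of $\theta_B$ as a map into $\mathcal{L}^2(\Omega,\mathcal{H}_0)$ genuinely requires $\int_\Omega\|A_\alpha-B_\alpha\|^2\,d\mu(\alpha)<\infty$, which hypotheses (iii)--(iv) supply only in an $L^1$-type form but which is tacitly built into the shape of the claimed upper bound, so I would spell out the Minkowski step explicitly to keep this transparent. Everything afterwards is a Neumann-series perturbation of the known frame operator $S_{A,\Psi}$ — the continuous, operator-valued analogue of the mechanism behind Theorem \ref{PERTURBATION RESULT 1} — and the only delicate point is to pair with $S_{A,\Psi}^{-1}g$ rather than with $g$: a cruder estimate through $\|S_{A,\Psi}\|$ would only deliver the weaker lower bound $a-\|S_{A,\Psi}\|\,r$ instead of $(1-r)/\|S_{A,\Psi}^{-1}\|$.
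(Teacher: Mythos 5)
Your proof follows essentially the same route as the paper's: establish boundedness of $\theta_B$ with $\|\theta_B\|\leq\|\theta_A\|+\bigl(\int_{\Omega}\|A_\alpha-B_\alpha\|^2\,d\mu(\alpha)\bigr)^{1/2}$ (the paper gets this by testing $\theta_B^*$ against $f\in\mathcal{L}^2(\Omega,\mathcal{H}_0)$, you by Minkowski's inequality --- same estimate), obtain positivity of $S_{B,\Psi}$ from hypothesis (i), and invert $S_{B,\Psi}$ via the Neumann-series bound $\|I_\mathcal{H}-S_{A,\Psi}^{-1}S_{B,\Psi}\|\leq\int_{\Omega}\|A_\alpha-B_\alpha\|\|\Psi_\alpha S_{A,\Psi}^{-1}\|\,d\mu(\alpha)<1$, which is exactly the paper's computation applied to the adjoint product $S_{B,\Psi}S_{A,\Psi}^{-1}$ (same operator norm). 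Your side remark is also accurate: the hypotheses as stated only give $L^1$-integrability of $\alpha\mapsto\|A_\alpha-B_\alpha\|$, yet both your argument and the paper's tacitly require $\int_{\Omega}\|A_\alpha-B_\alpha\|^2\,d\mu(\alpha)<\infty$, as the claimed upper frame bound already betrays.
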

 \begin{proof}
Let $ \alpha =(\int_{\Omega}\|A_\alpha-B_\alpha\|^2\,d\mu(\alpha))^{1/2} $ and $\beta =\int_{\Omega}\|A_\alpha-B_\alpha\|\|\Psi_\alpha S_{A,\Psi}^{-1}\|\,d\mu(\alpha)$.  Fix $f\in \mathcal{L}^2(\Omega, \mathcal{H}_0)$. Then for all $h \in \mathcal{H}$

\begin{align*}
\left| \int_{\Omega}\langle B_\alpha^*f(\alpha), h\rangle \,d\mu(\alpha)\right|&\leq\left| \int_{\Omega}\langle (B_\alpha^*-A_\alpha^*)f(\alpha), h\rangle \,d\mu(\alpha)\right|+\left| \int_{\Omega}\langle A_\alpha^*f(\alpha), h\rangle \,d\mu(\alpha)\right| \\
&=\left| \int_{\Omega}\langle f(\alpha), (B_\alpha-A_\alpha)h\rangle \,d\mu(\alpha)\right|+|\langle \theta_A^*f, h\rangle |\\
&\leq \int_{\Omega}\|f(\alpha)\|\|(B_\alpha-A_\alpha)h\|\,d\mu(\alpha)+ \|f\|\|\theta_Ah\|\\
&\leq\|h\| \int_{\Omega}\|f(\alpha)\|\|B_\alpha-A_\alpha\|\,d\mu(\alpha)+ \|f\|\|\theta_A\|\|h\|\\
&\leq\|h\| \left(\int_{\Omega}\|f(\alpha)\|^2\,d\mu(\alpha)\right)^\frac{1}{2}\left(\int_{\Omega}\|B_\alpha-A_\alpha\|^2\,d\mu(\alpha)\right)^\frac{1}{2}+ \|f\|\|\theta_A\|\|h\|\\
&=\|h\|\|f\|\alpha+ \|f\|\|\theta_A\|\|h\|=(\|f\|\alpha+ \|f\|\|\theta_A\|)\|h\|.
\end{align*}
 Hence $\theta_B$ exists and $\|\theta_B\|=\|\theta_B^*\|=\sup\{|\langle \theta_B^*f,h \rangle|:f \in \mathcal{L}^2(\Omega, \mathcal{H}_0) , h \in \mathcal{H}, \|f\|\leq1, \|h\|\leq1\}\leq \sup\{\|h\|\|f\|\alpha+ \|f\|\|\theta_A\|\|h\|:f \in \mathcal{L}^2(\Omega, \mathcal{H}_0) , h \in \mathcal{H}, \|f\|\leq1, \|h\|\leq1\}= \alpha+\|\theta_A\|$. Therefore  $S_{B,\Psi}=\theta_\Psi^*\theta_B$ exists and is positive. Now 

\begin{align*}
\|I_\mathcal{H}-S_{B,\Psi}S_{A,\Psi}^{-1}\|&=\sup_{h, g \in \mathcal{H}, \|h\|=1=\|g\|} |\langle (I_\mathcal{H}-S_{B,\Psi}S_{A,\Psi}^{-1})h, g\rangle |\\
&=\sup_{h, g \in \mathcal{H}, \|h\|=1=\|g\|}\left| \int_{\Omega}\langle A_\alpha^*\Psi_\alpha S_{A,\Psi}^{-1}h, g\rangle \,d \mu(\alpha) -\int_{\Omega}\langle B_\alpha^*\Psi_\alpha S_{A,\Psi}^{-1}h, g\rangle \,d \mu(\alpha)\right|\\
&=\sup_{h, g \in \mathcal{H}, \|h\|=1=\|g\|}\left| \int_{\Omega}\langle (A_\alpha^*-B_\alpha^*)\Psi_\alpha S_{A,\Psi}^{-1}h, g\rangle \,d \mu(\alpha) \right|\\
&\leq \sup_{h, g \in \mathcal{H}, \|h\|=1=\|g\|} \int_{\Omega}|\langle (A_\alpha^*-B_\alpha^*)\Psi_\alpha S_{A,\Psi}^{-1}h, g\rangle |\,d \mu(\alpha) \\
&\leq \sup_{h, g \in \mathcal{H}, \|h\|=1=\|g\|} \int_{\Omega}\| A_\alpha^*-B_\alpha^*\|\|\Psi_\alpha S_{A,\Psi}^{-1}\|\|h\|\|g\|\,d \mu(\alpha)\\
&=\int_{\Omega}\| A_\alpha-B_\alpha\|\|\Psi_\alpha S_{A,\Psi}^{-1}\|\,d \mu(\alpha) =\beta<1.
\end{align*}
Other arguments are  similar to the corresponding arguments used in the proof of Theorem \ref{PERTURBATION RESULT 1}.
\end{proof}

\section{Case $\mathcal{H}_0=\mathbb{K}$}\label{SEQUENTIAL} 
\begin{definition}\label{SEQUENTIAL2}
A set of vectors   $ \{x_\alpha\}_{\alpha\in \Omega}$  in a Hilbert space  $\mathcal{H}$ is said to be a continuous frame    w.r.t.  a set $ \{\tau_\alpha\}_{\alpha\in \Omega}$ in $\mathcal{H}$  if 
\begin{enumerate}[\upshape(i)]
\item for each  $h \in \mathcal{H}$, both    maps   $\Omega \ni \alpha \mapsto\langle  h, x_\alpha\rangle \in \mathbb{K}$ and $\Omega \ni\alpha \mapsto \langle  h, \tau_\alpha \rangle \in\mathbb{K}$ are measurable,
\item the map (we call as frame operator) $S_{x,\tau} : \mathcal{H} \ni h \mapsto \int_{\Omega}\langle  h, x_\alpha\rangle\tau_\alpha  \,d\mu(\alpha)\in \mathcal{H} $ (the integral is in the weak-sense) is a well-defined bounded positive invertible operator,
\item  both maps (we call as analysis operator and its adjoint as synthesis operator)  $ \theta_x:\mathcal{H} \ni h \mapsto \theta_xh \in \mathcal{L}^2(\Omega, \mathbb{K}) $, $\theta_xh: \Omega \ni \alpha \mapsto \langle  h, x_\alpha\rangle \in \mathbb{K}$, $\theta_\tau:\mathcal{H} \ni h \mapsto \theta_\tau h \in \mathcal{L}^2(\Omega, \mathcal{H}_0) $, $\theta_\tau h: \Omega \ni \alpha \mapsto \langle  h, \tau_\alpha\rangle \in \mathbb{K} $ are well-defined bounded linear operators. 
\end{enumerate}
We note that $\theta_x^*:\mathcal{L}^2(\Omega, \mathbb{K})\ni f \mapsto \int_{\Omega}f(\alpha)x_\alpha  \,d\mu(\alpha)\in \mathcal{H} $, $\theta_\tau^*:\mathcal{L}^2(\Omega, \mathbb{K})\ni f \mapsto \int_{\Omega}f(\alpha)\tau_\alpha  \,d\mu(\alpha)\in \mathcal{H}  $(both integrals are in the weak-sense). Notions of frame bounds, Parseval frame are similar to the same in Definition 8.1 in \cite{MAHESHKRISHNASAMJOHNSON}.  If the condition \text{\upshape(ii)}  is replaced by ``the map  $S_{x,\tau} : \mathcal{H} \ni h \mapsto \int_{\Omega}\langle  h, x_\alpha\rangle\tau_\alpha  \,d\mu(\alpha)\in \mathcal{H} $ is a well-defined bounded positive  operator", then we say $ \{x_\alpha\}_{\alpha\in \Omega}$   w.r.t. $ \{\tau_\alpha\}_{\alpha\in\Omega}$ is Bessel. If $ \{x_\alpha\}_{\alpha\in\Omega}$ is continuous frame (resp. Bessel) w.r.t. $\{\tau_\alpha\}_{\alpha\in\Omega}$, then we write $(\{x_\alpha\}_{\alpha\in\Omega},\{\tau_\alpha\}_{\alpha\in\Omega})$ is a continuous frame (resp. Bessel). 

For fixed $ \Omega, \mathcal{H},$  and $ \{\tau_\alpha\}_{\alpha\in \Omega}$,   the set of all continuous frames for $ \mathcal{H}$  w.r.t.  $ \{\tau_\alpha\}_{\alpha\in \Omega}$ is denoted by $ \mathscr{F}_\tau.$
\end{definition}
We note that (ii) in Definition \ref{SEQUENTIAL2} implies that  there are real $ a,b >0$ such that 

\begin{align*}
a\|h\|^2\leq \langle S_{x,\tau}h, h\rangle =\left\langle\int_\Omega\langle h, x_\alpha\rangle\tau_\alpha   \,d\mu(\alpha), h\right \rangle   =\int_\Omega\langle  h, x_\alpha \rangle\langle   \tau_\alpha,h \rangle \, d\mu(\alpha)\leq b\|h\|^2,\quad \forall h \in \mathcal{H},
\end{align*}
and  (iii) implies that there exist $c,d>0$ such that 

\begin{align*}
\|\theta_xh\|^2=\langle\theta_xh,\theta_xh \rangle=\int_\Omega\langle \theta_xh(\alpha),\theta_xh(\alpha) \rangle  \,d \mu(\alpha) =\int_\Omega|\langle  h, x_\alpha\rangle |^2  \,d\mu(\alpha)\leq c\|h\|^2,\quad \forall h \in \mathcal{H};\\
\|\theta_\tau  h\|^2=\langle\theta_\tau h,\theta_\tau h \rangle=\int_\Omega\langle \theta_\tau h(\alpha),\theta_\tau h(\alpha) \rangle  \,d \mu(\alpha) =\int_\Omega|\langle  h, \tau_\alpha\rangle |^2 \,d\mu(\alpha)\leq d\|h\|^2,\quad \forall h \in \mathcal{H}.
\end{align*}
 We note, whenever $(\{x_\alpha\}_{\alpha\in\Omega},\{\tau_\alpha\}_{\alpha\in\Omega})$ is a continuous frame for $ \mathcal{H}$, then $\overline{\operatorname{span}}\{x_\alpha\}_{\alpha\in \Omega}=\mathcal{H}=\overline{\operatorname{span}}\{\tau_\alpha\}_{\alpha\in\Omega}.$
\begin{theorem}\label{OVFTOSEQUENCEANDVICEVERSATHEOREM}
Let $\{x_\alpha\}_{\alpha\in\Omega}, \{\tau_\alpha\}_{\alpha\in\Omega}$ be in $\mathcal{H}$. Define $A_\alpha: \mathcal{H} \ni h \mapsto \langle h, x_\alpha \rangle \in \mathbb{K} $, $\Psi_\alpha: \mathcal{H} \ni h \mapsto \langle h, \tau_\alpha \rangle \in \mathbb{K}, \forall \alpha \in \Omega $. Then   $(\{x_\alpha\}_{\alpha\in\Omega}, \{\tau_\alpha\}_{\alpha\in\Omega})$ is a continuous frame for  $\mathcal{H}$ if and only if  $(\{A_\alpha\}_{\alpha\in\Omega}, \{\Psi_\alpha\}_{\alpha\in\Omega})$ is a  continuous operator-valued frame  in $\mathcal{B}(\mathcal{H},\mathbb{K})$.
\end{theorem}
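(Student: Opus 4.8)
The plan is to observe that, under the identifications $A_\alpha h = \langle h, x_\alpha \rangle$ and $\Psi_\alpha h = \langle h, \tau_\alpha \rangle$, every structural ingredient appearing in Definition \ref{1} coincides with the corresponding ingredient in Definition \ref{SEQUENTIAL2}, so that the two ``frame'' requirements become literally the same statement. First I would record the elementary facts about these functionals: $A_\alpha, \Psi_\alpha \in \mathcal{B}(\mathcal{H},\mathbb{K})$ with $\|A_\alpha\| = \|x_\alpha\|$, $\|\Psi_\alpha\| = \|\tau_\alpha\|$ by the Riesz representation theorem, and their adjoints $A_\alpha^*, \Psi_\alpha^* : \mathbb{K} \to \mathcal{H}$ are the ``multiplication'' maps $A_\alpha^* c = c\, x_\alpha$ and $\Psi_\alpha^* c = c\, \tau_\alpha$ (indeed $\langle A_\alpha h, c\rangle_{\mathbb{K}} = \overline{c}\langle h, x_\alpha\rangle = \langle h, c\, x_\alpha\rangle$, so $A_\alpha^* c = c\, x_\alpha$, and likewise for $\Psi_\alpha$).

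Next I would translate the three conditions of Definition \ref{1} one at a time. For condition (i): since $\mathcal{H}_0 = \mathbb{K}$, the map $\Omega \ni \alpha \mapsto A_\alpha h \in \mathcal{H}_0$ is nothing but $\alpha \mapsto \langle h, x_\alpha\rangle$, so it is measurable exactly when the corresponding map in Definition \ref{SEQUENTIAL2}(i) is (similarly for $\Psi_\alpha$ and $\tau_\alpha$). For condition (ii): from the formula for $\Psi_\alpha^*$ we get $\Psi_\alpha^* A_\alpha h = \langle h, x_\alpha\rangle \tau_\alpha$, hence $S_{A,\Psi} h = \int_{\Omega} \Psi_\alpha^* A_\alpha h \, d\mu(\alpha) = \int_{\Omega} \langle h, x_\alpha\rangle \tau_\alpha \, d\mu(\alpha) = S_{x,\tau} h$; moreover, when $\mathcal{H}_0 = \mathbb{K}$ the Gelfand--Pettis integral used to define $S_{A,\Psi}$ is defined by precisely the Riesz-representation construction used to define $S_{x,\tau}$ in Definition \ref{SEQUENTIAL2}, so the two weak integrals agree and the positivity/invertibility requirements are identical. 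For condition (iii): $\theta_A h$ is the element of $\mathcal{L}^2(\Omega,\mathbb{K})$ given by $\alpha \mapsto A_\alpha h = \langle h, x_\alpha\rangle$, that is $\theta_A = \theta_x$, and likewise $\theta_\Psi = \theta_\tau$; consequently $\theta_A^* = \theta_x^*$ and $\theta_\Psi^* = \theta_\tau^*$ (which matches the displayed formulas for the synthesis operators).

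With all of these identifications in hand, the equivalence is immediate: $(\{A_\alpha\}_{\alpha\in\Omega}, \{\Psi_\alpha\}_{\alpha\in\Omega})$ satisfies the three conditions of Definition \ref{1} if and only if $(\{x_\alpha\}_{\alpha\in\Omega}, \{\tau_\alpha\}_{\alpha\in\Omega})$ satisfies the three conditions of Definition \ref{SEQUENTIAL2}, and along the way one sees that the frame bounds, the Parseval property, and the Bessel property are transported unchanged. I do not expect a genuine obstacle: the only points needing a moment of care are (a) getting the adjoint $A_\alpha^*$ right, so that $\Psi_\alpha^* A_\alpha$ reproduces the kernel $h \mapsto \langle h, x_\alpha\rangle \tau_\alpha$, and (b) checking that in the scalar case $\mathcal{H}_0 = \mathbb{K}$ the weak-integral construction of $S_{A,\Psi}$ is the same as that of $S_{x,\tau}$, so that no separate convergence discussion is required.
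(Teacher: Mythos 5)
Your proposal is correct and follows essentially the same route as the paper: the paper's proof consists precisely of the chain of equalities $\left\langle \int_{\Omega}\Psi^*_\alpha A_\alpha h \,d\mu(\alpha), g \right\rangle = \int_{\Omega}\langle A_\alpha h, \Psi_\alpha g\rangle\,d\mu(\alpha) = \int_{\Omega}\langle h,x_\alpha \rangle\langle \tau_\alpha,g \rangle\,d\mu(\alpha) = \left\langle \int_{\Omega}\langle h,x_\alpha \rangle \tau_\alpha\,d\mu(\alpha), g\right\rangle$, identifying $S_{A,\Psi}$ with $S_{x,\tau}$ in the weak sense. You spell out in addition the identifications $A_\alpha^*c = c\,x_\alpha$, $\theta_A=\theta_x$, $\theta_\Psi=\theta_\tau$ and the matching of the measurability conditions, which the paper leaves implicit; this is a welcome elaboration rather than a different argument.
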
 
\begin{proof}
$\langle\int_{\Omega}\Psi^*_\alpha A_\alpha h \,d\mu(\alpha), g \rangle =\int_{\Omega}\langle\Psi^*_\alpha A_\alpha h,g \rangle\,d\mu(\alpha)=\int_{\Omega}\langle A_\alpha h, \Psi_\alpha g\rangle\,d\mu(\alpha)=\int_{\Omega}\langle h,x_\alpha \rangle\langle \tau_\alpha,h \rangle\,d\mu(\alpha)=\langle\int_{\Omega}\langle h,x_\alpha \rangle \tau_\alpha\,d\mu(\alpha), g\rangle,\forall h,g \in \mathcal{H}$.
\end{proof}
\begin{proposition}
Definition \ref{SEQUENTIAL2} holds if and only if there are  $ a, b, c,  d  >0$ such that
\begin{enumerate}[\upshape(i)]
\item for each  $h \in \mathcal{H}$, both    maps   $\Omega \ni \alpha \mapsto\langle  h, x_\alpha\rangle \in \mathbb{K}$, $\Omega \ni\alpha \mapsto \langle  h, \tau_\alpha \rangle \in\mathbb{K}$ are measurable, 
\item $ a\|h\|^2\leq \int_{\Omega}\langle h,x_\alpha\rangle\langle \tau_\alpha, h \rangle \,d\mu(\alpha)\leq b\|h\|^2 , \forall h \in \mathcal{H},$
\item $  \int_{\Omega}|\langle h,x_\alpha\rangle|^2\,d\mu(\alpha)  \leq c\|h\|^2 , \forall h \in \mathcal{H}; 
\int_{\Omega}|\langle h,\tau_\alpha\rangle|^2\,d\mu(\alpha) \leq d\|h\|^2 , \forall h \in \mathcal{H},$
\item $ \int_{\Omega}\langle h,x_\alpha\rangle \tau_\alpha\,d\mu(\alpha)=\int_{\Omega}\langle h,\tau_\alpha\rangle x_\alpha\,d\mu(\alpha),  \forall h \in \mathcal{H}$.
\end{enumerate}
If the space is over $ \mathbb{C},$ then  \text {\upshape{(iv)}} can be omitted.
\end{proposition}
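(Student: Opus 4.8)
The plan is to reduce everything to the operator-valued case already treated. Put $A_\alpha\colon\mathcal{H}\ni h\mapsto\langle h,x_\alpha\rangle\in\mathbb{K}$ and $\Psi_\alpha\colon\mathcal{H}\ni h\mapsto\langle h,\tau_\alpha\rangle\in\mathbb{K}$; these belong to $\mathcal{B}(\mathcal{H},\mathbb{K})$ and their adjoints are $A_\alpha^*c=c\,x_\alpha$, $\Psi_\alpha^*c=c\,\tau_\alpha$ for $c\in\mathbb{K}$. By Theorem \ref{OVFTOSEQUENCEANDVICEVERSATHEOREM}, $(\{x_\alpha\}_{\alpha\in\Omega},\{\tau_\alpha\}_{\alpha\in\Omega})$ is a continuous frame for $\mathcal{H}$ if and only if $(\{A_\alpha\}_{\alpha\in\Omega},\{\Psi_\alpha\}_{\alpha\in\Omega})$ is a continuous (ovf) in $\mathcal{B}(\mathcal{H},\mathbb{K})$, so it suffices to feed this pair into the operator-valued characterization proved above (the unlabeled proposition preceding Definition \ref{RIESZOVF}) and translate its four conditions.

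That translation is routine. Measurability of $\alpha\mapsto A_\alpha h$ and $\alpha\mapsto\Psi_\alpha h$ is condition \text{\upshape(i)}. Because $\Psi_\alpha^*A_\alpha h=\langle h,x_\alpha\rangle\tau_\alpha$ and $A_\alpha^*\Psi_\alpha h=\langle h,\tau_\alpha\rangle x_\alpha$, the symmetry requirement $\int_\Omega\Psi_\alpha^*A_\alpha h\,d\mu(\alpha)=\int_\Omega A_\alpha^*\Psi_\alpha h\,d\mu(\alpha)$ is exactly condition \text{\upshape(iv)}. Because $\langle A_\alpha h,\Psi_\alpha h\rangle_{\mathbb{K}}=\langle h,x_\alpha\rangle\overline{\langle h,\tau_\alpha\rangle}=\langle h,x_\alpha\rangle\langle\tau_\alpha,h\rangle$, the two-sided frame estimate becomes condition \text{\upshape(ii)}; and since $\|A_\alpha h\|^2=|\langle h,x_\alpha\rangle|^2$ and $\|\Psi_\alpha h\|^2=|\langle h,\tau_\alpha\rangle|^2$, the two Bessel bounds become condition \text{\upshape(iii)}. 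This gives the equivalence.

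For the closing remark, suppose $\mathbb{K}=\mathbb{C}$ and that \text{\upshape(i)} and \text{\upshape(iii)} hold. As in the discussion opening Section \ref{MK}, \text{\upshape(i)} together with \text{\upshape(iii)} already makes $\theta_x,\theta_\tau$ well-defined bounded operators, so $S_{x,\tau}:=\theta_\tau^*\theta_x$ is a bounded operator on $\mathcal{H}$ satisfying $\langle S_{x,\tau}h,g\rangle=\int_\Omega\langle h,x_\alpha\rangle\langle\tau_\alpha,g\rangle\,d\mu(\alpha)$ for all $h,g\in\mathcal{H}$, and a one-line conjugation computation gives $S_{x,\tau}^*=S_{\tau,x}$. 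Condition \text{\upshape(ii)} then forces $\langle S_{x,\tau}h,h\rangle\in\mathbb{R}$ for every $h\in\mathcal{H}$, and on a complex Hilbert space a bounded operator with real quadratic form is self-adjoint; hence $S_{x,\tau}=S_{x,\tau}^*=S_{\tau,x}$, which is precisely \text{\upshape(iv)}. So over $\mathbb{C}$ condition \text{\upshape(iv)} is automatic, whereas over $\mathbb{R}$ it must be imposed.

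I do not expect a genuine obstacle here: everything is bookkeeping on top of Theorem \ref{OVFTOSEQUENCEANDVICEVERSATHEOREM} and the operator-valued characterization. The only point deserving a touch of care is that, in the complex case, one must know $S_{x,\tau}$ is an honest bounded operator before applying ``real quadratic form $\Rightarrow$ self-adjoint'', which is exactly why the argument invokes the boundedness of $\theta_x$ and $\theta_\tau$ coming from \text{\upshape(i)} and \text{\upshape(iii)} rather than any part of \text{\upshape(ii)}.
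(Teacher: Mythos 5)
Your argument is correct. The paper states this proposition without any proof, so there is nothing to compare against line by line; but your reduction via Theorem \ref{OVFTOSEQUENCEANDVICEVERSATHEOREM} to the unlabeled operator-valued characterization in Section \ref{MK} is exactly the device the paper itself uses elsewhere (e.g.\ in the second proof of Theorem \ref{GROUPC1}), and your dictionary $A_\alpha^*c=c\,x_\alpha$, $\Psi_\alpha^*A_\alpha h=\langle h,x_\alpha\rangle\tau_\alpha$, $\langle A_\alpha h,\Psi_\alpha h\rangle=\langle h,x_\alpha\rangle\langle\tau_\alpha,h\rangle$ is accurate. The only caution worth recording is that the operator-valued characterization you lean on is itself stated without proof in the paper; your closing paragraph effectively supplies its one nontrivial ingredient (boundedness of $S_{x,\tau}=\theta_\tau^*\theta_x$ from (i) and (iii), self-adjointness from (iv) --- or, over $\mathbb{C}$, from the real quadratic form --- and then invertibility from the lower bound in (ii) together with density of the range), so the proof is complete as written.
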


\begin{proposition}
 Let $(\{x_\alpha\}_{\alpha\in \Omega},\{\tau_\alpha\}_{\alpha \in \Omega} )$ be a continuous frame for  $ \mathcal{H}$  with upper frame bound $b$.  If for some $ \alpha \in \Omega $ we have $\{\alpha\}$ is measurable and  $  \langle x_\alpha, x_\beta \rangle\langle \tau_\beta, x_\alpha \rangle \geq0, \forall \beta  \in \Omega$,  then $ \mu(\{\alpha\})\langle x_\alpha, \tau_\alpha \rangle\leq b$ for that $\alpha. $
 \end{proposition}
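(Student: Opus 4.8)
The plan is to specialize the argument used for the earlier operator-valued proposition (the one bounding $\mu(\{\alpha\})\|\Psi_\alpha^*A_\alpha\|$) to the scalar setting via Theorem \ref{OVFTOSEQUENCEANDVICEVERSATHEOREM}, the only change being that one tests the frame inequality against the distinguished vector $h=x_\alpha$ instead of an arbitrary $h$. Indeed, under the identification $A_\beta h=\langle h,x_\beta\rangle$, $\Psi_\beta h=\langle h,\tau_\beta\rangle$ one has $\langle\Psi_\beta^*A_\beta h,h\rangle=\langle h,x_\beta\rangle\langle\tau_\beta,h\rangle$, so the hypothesis that $\langle x_\alpha,x_\beta\rangle\langle\tau_\beta,x_\alpha\rangle\geq 0$ for all $\beta\in\Omega$ is exactly the positivity of the scalar function $\beta\mapsto\langle\Psi_\beta^*A_\beta x_\alpha,x_\alpha\rangle$, and it plays the role of the hypothesis $\Psi_\alpha^*A_\alpha\geq 0$ there.

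First I would dispose of the trivial case $x_\alpha=0$, where the left-hand side is $0<b$. Assuming $x_\alpha\neq 0$, I would set $\phi(\beta)\coloneqq\langle x_\alpha,x_\beta\rangle\langle\tau_\beta,x_\alpha\rangle$; this is a genuine nonnegative measurable scalar function, measurability coming from condition (i) of Definition \ref{SEQUENTIAL2} applied with $h=x_\alpha$ (the map $\beta\mapsto\langle\tau_\beta,x_\alpha\rangle$ being the complex conjugate of $\beta\mapsto\langle x_\alpha,\tau_\beta\rangle$). The weak-sense description of the frame operator recorded just after Definition \ref{SEQUENTIAL2} then gives $\int_\Omega\phi\,d\mu=\langle S_{x,\tau}x_\alpha,x_\alpha\rangle\leq b\|x_\alpha\|^2$ by the upper frame bound.

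Next, since $\{\alpha\}$ is measurable and $\phi\geq 0$,
\[
\mu(\{\alpha\})\,\|x_\alpha\|^2\langle\tau_\alpha,x_\alpha\rangle=\int_{\{\alpha\}}\phi\,d\mu\leq\int_{\{\alpha\}}\phi\,d\mu+\int_{\Omega\setminus\{\alpha\}}\phi\,d\mu=\int_\Omega\phi\,d\mu\leq b\|x_\alpha\|^2,
\]
and dividing by $\|x_\alpha\|^2>0$ yields $\mu(\{\alpha\})\langle\tau_\alpha,x_\alpha\rangle\leq b$. Finally $\phi(\alpha)\geq 0$ forces $\langle\tau_\alpha,x_\alpha\rangle\geq 0$, hence $\langle\tau_\alpha,x_\alpha\rangle=\langle x_\alpha,\tau_\alpha\rangle$, which gives the claimed inequality. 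I do not expect a real obstacle here; the only points needing a moment's care are the measurability of $\phi$ and the fact that the weak integral over the single point $\{\alpha\}$ reduces to $\mu(\{\alpha\})\,\phi(\alpha)$, which is legitimate because $\phi$ is an honest measurable scalar function rather than a weakly defined vector integral.
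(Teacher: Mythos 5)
Your proposal is correct and follows essentially the same route as the paper: test the upper frame bound at $h=x_\alpha$, split $\int_{\Omega}\langle x_\alpha,x_\beta\rangle\langle\tau_\beta,x_\alpha\rangle\,d\mu(\beta)$ into the piece over $\{\alpha\}$ and its complement, and drop the nonnegative remainder to get $\mu(\{\alpha\})\|x_\alpha\|^2\langle\tau_\alpha,x_\alpha\rangle\leq b\|x_\alpha\|^2$. The paper leaves implicit the points you spell out (the case $x_\alpha=0$, dividing by $\|x_\alpha\|^2$, and that $\langle\tau_\alpha,x_\alpha\rangle\geq 0$ makes it equal to $\langle x_\alpha,\tau_\alpha\rangle$), so your write-up is just a more careful version of the same argument.
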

 \begin{proof}
 	
 \begin{align*}
\mu(\{\alpha\})\langle x_\alpha, x_\alpha \rangle\langle \tau_\alpha, x_\alpha \rangle &\leq \int_{\{\alpha\}}\langle x_\alpha, x_\beta \rangle\langle \tau_\beta, x_\alpha \rangle\,d\mu(\beta)+\int_{\Omega\setminus\{\alpha\}}\langle x_\alpha, x_\beta \rangle\langle \tau_\beta, x_\alpha \rangle\,d\mu(\beta)\\
&=\int_{\Omega}\langle x_\alpha, x_\beta \rangle\langle \tau_\beta, x_\alpha \rangle\,d\mu(\beta)\leq b\|x_\alpha\|^2.
 \end{align*}
 \end{proof}

\begin{proposition}
For every $ \{x_\alpha\}_{\alpha \in \Omega}  \in \mathscr{F}_\tau$,
\begin{enumerate}[\upshape (i)]
 \item  $\theta_x^* \theta_xh =  \int_{\Omega} \langle h,x_\alpha\rangle x_\alpha\,d\mu(\alpha), \theta_\tau^* \theta_\tau h =  \int_{\Omega} \langle h,\tau_\alpha\rangle \tau_\alpha\,d\mu(\alpha),  \forall h \in \mathcal{H}.$ 
\item $ S_{x, \tau} = \theta_\tau^*\theta_x=\theta_x^*\theta_\tau .$ In particular,
 $$ S_{x, \tau}h =\int_{\Omega}\langle h,x_\alpha\rangle\tau_\alpha\,d\mu(\alpha)=\int_{\Omega}\langle h,\tau_\alpha\rangle x_\alpha\,d\mu(\alpha),\quad \forall h \in \mathcal{H} ~\operatorname{and}~$$
 $$\langle S_{x, \tau}h, g\rangle =\int_{\Omega}\langle h,x_\alpha\rangle\langle\tau_\alpha, g\rangle\,d\mu(\alpha)=\int_{\Omega}\langle h,\tau_\alpha\rangle \langle x_\alpha,g\rangle\,d\mu(\alpha), \quad \forall h,g  \in \mathcal{H}.$$
 \item Every $ h \in \mathcal{H}$ can be written as 
 \begin{align*}
 h &=\int_{\Omega}\langle h, S^{-1}_{x, \tau}\tau_\alpha\rangle  x_\alpha\,d\mu(\alpha)=\int_{\Omega}\langle h,\tau_\alpha \rangle S^{-1}_{x, \tau}  x_\alpha\,d\mu(\alpha) \\
 &=\int_{\Omega}\langle h, S^{-1}_{x, \tau}x_\alpha\rangle  \tau_\alpha\,d\mu(\alpha)=\int_{\Omega}\langle h, x_\alpha\rangle S^{-1}_{x, \tau}  \tau_\alpha\,d\mu(\alpha).
 \end{align*}
 \item $(\{x_\alpha\}_{\alpha \in \Omega}, \{\tau_\alpha\}_ {\alpha \in \Omega})$ is Parseval  if and only if $  \theta_\tau^*\theta_x=I_\mathcal{H}.$ 
 \item $(\{x_\alpha\}_{\alpha \in \Omega}, \{\tau_\alpha \}_ {\alpha \in \Omega})$ is Parseval  if and only if $ \theta_x\theta_\tau^* $ is idempotent.
 \item $P_{x,\tau}\coloneqq\theta_xS_{x,\tau}^{-1}\theta_\tau^* $ is idempotent.
 \item $\theta_x$ and $ \theta_\tau$ are injective and  their  ranges are closed.
 \item  $\theta_x^* $ and $ \theta_\tau^*$ are surjective.
 \end{enumerate}
 \end{proposition}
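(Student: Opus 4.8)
The plan is to obtain all eight items from the scalar–operator dictionary of Theorem \ref{OVFTOSEQUENCEANDVICEVERSATHEOREM} combined with the already-proved Proposition \ref{2.2}, supplying a short direct computation only for the reconstruction formulas in (iii), which have no exact counterpart there.

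First I would record the identifications. For $A_\alpha\colon h\mapsto\langle h,x_\alpha\rangle$ and $\Psi_\alpha\colon h\mapsto\langle h,\tau_\alpha\rangle$ in $\mathcal{B}(\mathcal{H},\mathbb{K})$ one checks that $A_\alpha^*c=cx_\alpha$ and $\Psi_\alpha^*c=c\tau_\alpha$ for $c\in\mathbb{K}$, whence $\theta_x=\theta_A$, $\theta_\tau=\theta_\Psi$, $\theta_x^*f=\int_\Omega f(\alpha)x_\alpha\,d\mu(\alpha)$, $\theta_\tau^*f=\int_\Omega f(\alpha)\tau_\alpha\,d\mu(\alpha)$, and $S_{x,\tau}=S_{A,\Psi}$. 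By Theorem \ref{OVFTOSEQUENCEANDVICEVERSATHEOREM}, $\{x_\alpha\}_{\alpha\in\Omega}\in\mathscr{F}_\tau$ exactly when $(\{A_\alpha\}_{\alpha\in\Omega},\{\Psi_\alpha\}_{\alpha\in\Omega})$ is a continuous (ovf) in $\mathcal{B}(\mathcal{H},\mathbb{K})$. Then Proposition \ref{2.2}(i) gives $\theta_x^*\theta_x h=\int_\Omega A_\alpha^*A_\alpha h\,d\mu(\alpha)=\int_\Omega\langle h,x_\alpha\rangle x_\alpha\,d\mu(\alpha)$ and likewise for $\tau$, which is (i); Proposition \ref{2.2}(ii) gives $S_{x,\tau}=\theta_\tau^*\theta_x=\theta_x^*\theta_\tau$, and unwinding the adjoints yields the two integral formulas and the bilinear formula of (ii) (using $S_{x,\tau}=S_{\tau,x}$ for the second pair). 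Items (iv)--(viii) are verbatim translations of Proposition \ref{2.2}(iii)--(vii) through this dictionary (with $P_{x,\tau}=P_{A,\Psi}$); in particular the surjectivity of $\theta_x^*,\theta_\tau^*$ in (viii) follows from the injectivity and closed range of $\theta_x,\theta_\tau$ in (vii) via the closed range theorem, exactly as in Proposition \ref{2.2}(vii).

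For (iii) I would argue directly. Since $S_{x,\tau}$ is bounded, positive and invertible, $S_{x,\tau}^{-1}$ is bounded and self-adjoint, and a bounded operator passes through a Gelfand--Pettis integral: for bounded $T$ and $g\in\mathcal{H}$ one has $\langle T\int_\Omega\varphi(\alpha)\,d\mu(\alpha),g\rangle=\int_\Omega\langle\varphi(\alpha),T^*g\rangle\,d\mu(\alpha)=\int_\Omega\langle T\varphi(\alpha),g\rangle\,d\mu(\alpha)$, so $T\int_\Omega\varphi=\int_\Omega T\varphi$ weakly. Writing $h=S_{x,\tau}^{-1}(S_{x,\tau}h)$ and inserting $S_{x,\tau}h=\int_\Omega\langle h,x_\alpha\rangle\tau_\alpha\,d\mu(\alpha)$ from (ii) gives $h=\int_\Omega\langle h,x_\alpha\rangle S_{x,\tau}^{-1}\tau_\alpha\,d\mu(\alpha)$, while inserting $S_{x,\tau}h=\int_\Omega\langle h,\tau_\alpha\rangle x_\alpha\,d\mu(\alpha)$ gives $h=\int_\Omega\langle h,\tau_\alpha\rangle S_{x,\tau}^{-1}x_\alpha\,d\mu(\alpha)$. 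Dually, $h=S_{x,\tau}(S_{x,\tau}^{-1}h)$ together with the two formulas for $S_{x,\tau}$ applied to $S_{x,\tau}^{-1}h$, after moving $S_{x,\tau}^{-1}$ out of the first slot of each inner product by self-adjointness, produces $h=\int_\Omega\langle h,S_{x,\tau}^{-1}x_\alpha\rangle\tau_\alpha\,d\mu(\alpha)=\int_\Omega\langle h,S_{x,\tau}^{-1}\tau_\alpha\rangle x_\alpha\,d\mu(\alpha)$.

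The only genuinely delicate point is the interchange of the bounded operator $S_{x,\tau}^{-1}$ with the weak integrals in (iii), and I expect that to be the main (minor) obstacle; it is dispatched by the testing-against-$g$ argument above. Everything else is a bookkeeping translation from Proposition \ref{2.2}.
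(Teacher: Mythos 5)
Your proof is correct, and it fills a gap the paper leaves open: this proposition is stated without any proof (unlike its operator-valued counterpart, Proposition \ref{2.2}, which gets a direct inner-product computation for (i)--(ii) and a pointer to the companion paper for the rest). Your route is slightly different from what the paper implicitly intends: rather than redoing the computations $\langle \theta_x^*\theta_x h, g\rangle = \int_\Omega \langle h, x_\alpha\rangle\langle x_\alpha, g\rangle\,d\mu(\alpha)$ and $\langle S_{x,\tau}h,g\rangle = \int_\Omega\langle h,x_\alpha\rangle\langle\tau_\alpha,g\rangle\,d\mu(\alpha)$ from scratch in the scalar setting, you push everything through the dictionary of Theorem \ref{OVFTOSEQUENCEANDVICEVERSATHEOREM} via $A_\alpha h=\langle h,x_\alpha\rangle$, $\Psi_\alpha h=\langle h,\tau_\alpha\rangle$, which correctly identifies $\theta_x=\theta_A$, $\theta_\tau=\theta_\Psi$, $S_{x,\tau}=S_{A,\Psi}$ and makes (i), (ii) and (iv)--(viii) immediate corollaries of Proposition \ref{2.2}; this buys economy and makes transparent that the scalar case is literally the $\mathcal{H}_0=\mathbb{K}$ specialization, at the cost of having to verify the (easy) adjoint identifications $A_\alpha^*c=cx_\alpha$, $\Psi_\alpha^*c=c\tau_\alpha$ once. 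Your separate treatment of (iii) is also right and necessary, since the reconstruction formula has no analogue in Proposition \ref{2.2}; the testing-against-$g$ argument you give for commuting $S_{x,\tau}^{-1}$ past the Gelfand--Pettis integral is exactly the computation the paper itself carries out later for the weak continuous frame version of the same identity, and your use of self-adjointness of $S_{x,\tau}^{-1}$ to move it between slots is sound. The closed-range-theorem deduction of (viii) from (vii) is likewise the standard argument and matches the referenced source.
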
 

 \begin{definition}
 A continuous frame  $( \{x_\alpha\}_{\alpha \in \Omega}, \{\tau_\alpha\}_{\alpha \in \Omega})$ for $ \mathcal{H}$ is called a Riesz frame if $P_{x,\tau} = I_{\mathcal{L}^2(\Omega, \mathbb{K})} $. 
 \end{definition}

\begin{proposition}\label{RIESZFRAMECHARACTERIZATION}
A continuous frame  $( \{x_\alpha\}_{\alpha \in \Omega}, \{\tau_\alpha\}_{\alpha \in \Omega})$ for $ \mathcal{H}$ is  a Riesz continuous frame if  and only if $ \theta_x(\mathcal{H})=\mathcal{L}^2(\Omega, \mathbb{K})$ if  and only if $ \theta_\tau(\mathcal{H})=\mathcal{L}^2(\Omega, \mathbb{K}).$
\end{proposition}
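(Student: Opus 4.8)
The plan is to carry over, to the scalar case $\mathcal{H}_0=\mathbb{K}$, the argument behind Proposition~\ref{RIESZOVFCHARACTERIZATIONPROPOSITION}: pin down the range of the idempotent $P_{x,\tau}\coloneqq\theta_xS_{x,\tau}^{-1}\theta_\tau^*$ as $\theta_x(\mathcal{H})$, and then use that a bounded idempotent equals the identity precisely when it is onto. First I would assemble the facts already available from Proposition~\ref{2.2} and the scalar proposition preceding Definition~\ref{RIESZFRAMECHARACTERIZATION}, in their $\mathcal{H}_0=\mathbb{K}$ forms: $S_{x,\tau}=\theta_\tau^*\theta_x=\theta_x^*\theta_\tau$ is bounded, positive and invertible; $\theta_x$ and $\theta_\tau$ are injective with closed range; $\theta_x^*$ and $\theta_\tau^*$ are surjective; and $P_{x,\tau}$ is idempotent, with $P_{x,\tau}^*=\theta_\tau S_{x,\tau}^{-1}\theta_x^*=P_{\tau,x}$ (the last equality using that $S_{x,\tau}$ is self-adjoint and that $S_{\tau,x}=\theta_x^*\theta_\tau=S_{x,\tau}$).

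Next I would identify $\operatorname{ran}(P_{x,\tau})$. From $P_{x,\tau}=\theta_x\bigl(S_{x,\tau}^{-1}\theta_\tau^*\bigr)$ the inclusion $\operatorname{ran}(P_{x,\tau})\subseteq\theta_x(\mathcal{H})$ is immediate. For the reverse, $P_{x,\tau}\theta_x=\theta_xS_{x,\tau}^{-1}(\theta_\tau^*\theta_x)=\theta_xS_{x,\tau}^{-1}S_{x,\tau}=\theta_x$, so $P_{x,\tau}$ fixes $\theta_x(\mathcal{H})$ pointwise and hence $\theta_x(\mathcal{H})\subseteq\operatorname{ran}(P_{x,\tau})$. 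Thus $\operatorname{ran}(P_{x,\tau})=\theta_x(\mathcal{H})$, which is a closed subspace of $\mathcal{L}^2(\Omega,\mathbb{K})$.

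Now the equivalences. For a bounded idempotent $P$ one has $P=I$ iff $P$ is surjective, since $Pg=f$ forces $Pf=P^2g=Pg=f$. Applying this to $P_{x,\tau}$ together with the previous step, $(\{x_\alpha\}_{\alpha\in\Omega},\{\tau_\alpha\}_{\alpha\in\Omega})$ is a Riesz continuous frame, i.e.\ $P_{x,\tau}=I_{\mathcal{L}^2(\Omega,\mathbb{K})}$, if and only if $\theta_x(\mathcal{H})=\mathcal{L}^2(\Omega,\mathbb{K})$. For the remaining equivalence I would note that the conditions of Definition~\ref{SEQUENTIAL2} are symmetric in the two families and $S_{\tau,x}=S_{x,\tau}$, so $(\{\tau_\alpha\}_{\alpha\in\Omega},\{x_\alpha\}_{\alpha\in\Omega})$ is again a continuous frame, with idempotent $P_{\tau,x}=P_{x,\tau}^*$ whose range equals $\theta_\tau(\mathcal{H})$ by the same computation. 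Since an operator is the identity iff its adjoint is, $P_{x,\tau}=I$ iff $P_{\tau,x}=I$ iff $\theta_\tau(\mathcal{H})=\mathcal{L}^2(\Omega,\mathbb{K})$; chaining the three conditions finishes the proof.

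I do not anticipate a genuine obstacle here: the only delicate point is to recognize the definition of ``Riesz'' ($P_{x,\tau}=I$) as a surjectivity statement for the idempotent $P_{x,\tau}$, whose range has been computed to be $\theta_x(\mathcal{H})$; everything else transfers verbatim from the operator-valued setting (Proposition~\ref{RIESZOVFCHARACTERIZATIONPROPOSITION}) and from the mapping properties of $\theta_x$, $\theta_\tau$ and $S_{x,\tau}$ recorded above.
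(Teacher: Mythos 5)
Your proof is correct: computing $\operatorname{ran}(P_{x,\tau})=\theta_x(\mathcal{H})$ from $P_{x,\tau}\theta_x=\theta_x$, invoking the fact that a bounded idempotent is the identity iff it is surjective, and passing to the adjoint $P_{x,\tau}^*=P_{\tau,x}$ for the second equivalence is exactly the standard argument the paper intends (it only defers to Proposition 8.20 of its companion paper rather than writing it out). No gaps; all the ingredients you use ($S_{x,\tau}=\theta_\tau^*\theta_x=\theta_x^*\theta_\tau$ self-adjoint and invertible, $P_{x,\tau}$ idempotent) are established in the preceding scalar proposition.
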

\begin{proof}
Similar to the proof of Proposition 8.20 in \cite{MAHESHKRISHNASAMJOHNSON}.
\end{proof}

\begin{definition}
A continuous frame   $(\{y_\alpha\}_{\alpha\in \Omega}, \{\omega_\alpha\}_{\alpha\in \Omega})$  for  $\mathcal{H}$ is said to be a dual of a continuous frame  $ ( \{x_\alpha\}_{\alpha\in \Omega}, \{\tau_\alpha\}_{\alpha\in \Omega})$ for  $\mathcal{H}$  if $ \theta_\omega^*\theta_x= \theta_y^*\theta_\tau=I_{\mathcal{H}}$. The `continuous frame' $(   \{\widetilde{x}_\alpha\coloneqq S_{x,\tau}^{-1}x_\alpha\}_{\alpha\in \Omega},\{\widetilde{\tau}_\alpha\coloneqq S_{x,\tau}^{-1}\tau_\alpha\}_{\alpha \in \Omega} )$, which is a `dual' of $ (\{x_\alpha\}_{\alpha\in \Omega}, \{\tau_\alpha\}_{\alpha\in \Omega})$ is called the canonical dual of $ (\{x_\alpha\}_{\alpha\in \Omega}, \{\tau_\alpha\}_{\alpha\in \Omega})$.
\end{definition}
\begin{proposition}
 Let $( \{x_\alpha\}_{\alpha\in \Omega},\{\tau_\alpha\}_{\alpha\in \Omega} )$ be a continuous frame for  $\mathcal{H}.$ If $ h \in \mathcal{H}$ has representation  $ h=\int_{\Omega}f(\alpha)x_\alpha\,d\mu(\alpha)= \int_{\Omega}g(\alpha)\tau_\alpha\,d\mu(\alpha), $ for some  measurable  $ f,g : \Omega \rightarrow \mathbb{K}$,  then 
 $$ \int_{\Omega}f(\alpha)\overline{g(\alpha)} \,d\mu(\alpha)=\int_{\Omega}\langle h, \widetilde{\tau}_\alpha\rangle\langle \widetilde{x}_\alpha , h \rangle\,d\mu(\alpha)+\int_{\Omega}( f(\alpha)-\langle h, \widetilde{\tau}_\alpha\rangle)(\overline{g(\alpha)}-\langle \widetilde{x}_\alpha, h\rangle)\,d\mu(\alpha). $$
\end{proposition}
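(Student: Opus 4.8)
The plan is to mimic the proof of the operator-valued analogue established above (the Proposition just before Theorem~\ref{CANONICALDUALFRAMEPROPERTYOPERATORVERSION}): expand the right-hand side, distribute the product inside the second integral into its four constituent terms, and observe that three of them collapse onto the single quantity $\langle h, S_{x,\tau}^{-1}h\rangle$, leaving only $\int_\Omega f(\alpha)\overline{g(\alpha)}\,d\mu(\alpha)$.

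I would first assemble the facts to be used. By definition $\widetilde{x}_\alpha = S_{x,\tau}^{-1}x_\alpha$ and $\widetilde{\tau}_\alpha = S_{x,\tau}^{-1}\tau_\alpha$; since $S_{x,\tau}$ is positive it is self-adjoint, hence so is $S_{x,\tau}^{-1}$, so $\langle h, S_{x,\tau}^{-1}h\rangle$ is a real number equal to $\langle S_{x,\tau}^{-1}h, h\rangle$. Put $k := S_{x,\tau}^{-1}h$. The hypothesis that $h = \int_\Omega f(\alpha)x_\alpha\,d\mu(\alpha) = \int_\Omega g(\alpha)\tau_\alpha\,d\mu(\alpha)$ in the weak sense gives, by the defining property of the Gelfand--Pettis integral applied to the fixed vector $k$, that $\int_\Omega f(\alpha)\langle x_\alpha, k\rangle\,d\mu(\alpha) = \langle h, k\rangle = \int_\Omega g(\alpha)\langle \tau_\alpha, k\rangle\,d\mu(\alpha)$; moreover $\langle \widetilde{x}_\alpha, h\rangle = \langle x_\alpha, k\rangle$ and $\langle \widetilde{\tau}_\alpha, h\rangle = \langle \tau_\alpha, k\rangle$, so $\langle h, \widetilde{\tau}_\alpha\rangle = \langle k, \tau_\alpha\rangle$. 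Finally, from the identity $\langle S_{x,\tau}u, v\rangle = \int_\Omega \langle u, \tau_\alpha\rangle\langle x_\alpha, v\rangle\,d\mu(\alpha)$ recorded in the Proposition describing $S_{x,\tau}$, taking $u = v = k$ yields $\int_\Omega \langle h, \widetilde{\tau}_\alpha\rangle\langle \widetilde{x}_\alpha, h\rangle\,d\mu(\alpha) = \langle S_{x,\tau}k, k\rangle = \langle h, S_{x,\tau}^{-1}h\rangle$, which in particular shows this term is finite.

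With $a_\alpha := \langle h, \widetilde{\tau}_\alpha\rangle$ and $b_\alpha := \langle \widetilde{x}_\alpha, h\rangle$, expanding $(f(\alpha) - a_\alpha)(\overline{g(\alpha)} - b_\alpha) = f(\alpha)\overline{g(\alpha)} - f(\alpha)b_\alpha - a_\alpha\overline{g(\alpha)} + a_\alpha b_\alpha$ lets me write the right-hand side as $2\int_\Omega a_\alpha b_\alpha\,d\mu(\alpha) + \int_\Omega f(\alpha)\overline{g(\alpha)}\,d\mu(\alpha) - \int_\Omega f(\alpha)b_\alpha\,d\mu(\alpha) - \int_\Omega a_\alpha\overline{g(\alpha)}\,d\mu(\alpha)$. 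By the facts above, $\int_\Omega f(\alpha)b_\alpha\,d\mu(\alpha) = \int_\Omega f(\alpha)\langle x_\alpha, k\rangle\,d\mu(\alpha) = \langle h, S_{x,\tau}^{-1}h\rangle$, while $\int_\Omega a_\alpha\overline{g(\alpha)}\,d\mu(\alpha) = \overline{\int_\Omega g(\alpha)\langle \tau_\alpha, k\rangle\,d\mu(\alpha)} = \overline{\langle h,k\rangle} = \langle S_{x,\tau}^{-1}h, h\rangle$, and $2\int_\Omega a_\alpha b_\alpha\,d\mu(\alpha) = 2\langle h, S_{x,\tau}^{-1}h\rangle$. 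Since $\langle h, S_{x,\tau}^{-1}h\rangle = \langle S_{x,\tau}^{-1}h, h\rangle$, these three quantities cancel and the right-hand side equals $\int_\Omega f(\alpha)\overline{g(\alpha)}\,d\mu(\alpha)$, the left-hand side.

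I expect the only point requiring care — rather than the algebra, which is routine conjugate bookkeeping as in the operator-valued version — to be the legitimacy of splitting $\int_\Omega (f(\alpha) - a_\alpha)(\overline{g(\alpha)} - b_\alpha)\,d\mu(\alpha)$ into the sum of its four summands, i.e.\ the $\mu$-integrability of each of $\alpha \mapsto f(\alpha)b_\alpha$, $\alpha \mapsto a_\alpha\overline{g(\alpha)}$, $\alpha \mapsto a_\alpha b_\alpha$ and $\alpha \mapsto f(\alpha)\overline{g(\alpha)}$. The first two are integrable precisely because the weak integrals representing $h$ exist (so $\alpha \mapsto f(\alpha)\langle x_\alpha, k\rangle$ and $\alpha \mapsto g(\alpha)\langle \tau_\alpha, k\rangle$ are integrable for the fixed vector $k$), the third is integrable by the frame-operator identity quoted above, and the last is then integrable as the remaining summand. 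Once integrability is in hand, the computation closes as described.
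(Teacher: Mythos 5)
Your proposal is correct and follows essentially the same route as the paper's own proof: expand the product in the second integral, identify the cross terms and the $\langle h,\widetilde{\tau}_\alpha\rangle\langle\widetilde{x}_\alpha,h\rangle$ terms with $\langle h, S_{x,\tau}^{-1}h\rangle$ via the weak-integral representations of $h$ and the frame-operator identity, and cancel using self-adjointness of $S_{x,\tau}^{-1}$. Your explicit attention to the integrability of the four summands is a small refinement the paper leaves implicit, but it does not change the argument.
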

\begin{proof}
Right side $ =$
\begin{align*}
 &\int_{\Omega}\langle S_{x,\tau}^{-1}h, \tau_\alpha\rangle\langle x_\alpha , S_{x,\tau}^{-1}h \rangle\,d\mu(\alpha)+\int_{\Omega} f(\alpha)\overline{g(\alpha)}\,d\mu(\alpha)-\int_{\Omega}f(\alpha)\langle x_\alpha, S_{x,\tau}^{-1}h\rangle \,d\mu(\alpha)\\
 &~-\int_{\Omega}\overline{g(\alpha)}\langle S_{x,\tau}^{-1}h, \tau_\alpha\rangle \,d\mu(\alpha)+\int_{\Omega}\langle S_{x,\tau}^{-1} h, \tau_\alpha\rangle\langle x_\alpha, S_{x,\tau}^{-1}h\rangle  \,d\mu(\alpha)\\
&= 2\langle S_{x, \tau}S_{x, \tau}^{-1}h, S_{x, \tau}^{-1}h\rangle+\int_{\Omega} f(\alpha)\overline{g(\alpha)}\,d\mu(\alpha)-\left\langle\int_{\Omega}f(\alpha)x_\alpha\,d\mu(\alpha),S_{x, \tau}^{-1}h \right\rangle -\left\langle S_{x, \tau}^{-1}h,\int_{\Omega} g(\alpha)\tau_\alpha\,d\mu(\alpha) \right\rangle\\
&=2\langle h, S_{x, \tau}^{-1}h\rangle+\int_{\Omega} f(\alpha)\overline{g(\alpha)}\,d\mu(\alpha)-\langle h, S_{x, \tau}^{-1}h\rangle-\langle  S_{x, \tau}^{-1}h, h\rangle
 =\text{Left side.}
 \end{align*}
 \end{proof} 
 \begin{theorem}\label{CANONICALDUALFRAMEPROPERTYSEQUENTIALVERSION}
 Let $( \{x_\alpha\}_{\alpha\in \Omega},\{\tau_\alpha\}_{\alpha\in \Omega} )$ be a continuous frame for $ \mathcal{H}$ with frame bounds $ a$ and $ b.$ Then the following statements are true.
  \begin{enumerate}[\upshape(i)]
 \item The canonical dual continuous frame of the canonical dual continuous frame  of $ (\{x_\alpha\}_{\alpha\in \Omega} ,\{\tau_\alpha\}_{\alpha\in \Omega} )$ is itself.
 \item$ \frac{1}{b}, \frac{1}{a}$ are frame bounds for the  canonical dual of $ (\{x_\alpha\}_{\alpha\in \Omega},\{\tau_\alpha\}_{\alpha\in \Omega}).$
 \item If $ a, b $ are optimal frame bounds for $( \{x_\alpha\}_{\alpha\in \Omega} , \{\tau_\alpha\}_{\alpha\in \Omega}),$ then $ \frac{1}{b}, \frac{1}{a}$ are optimal  frame bounds for its canonical dual.
 \end{enumerate} 
 \end{theorem}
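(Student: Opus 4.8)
The plan is to reduce all three parts to the single identity
$$ S_{\widetilde{x},\widetilde{\tau}} = S_{x,\tau}^{-1}, $$
exactly as in the proof of Theorem~\ref{CANONICALDUALFRAMEPROPERTYOPERATORVERSION} (the scalar analogue of Theorem~2.41 in \cite{MAHESHKRISHNASAMJOHNSON}). First I would record that $\widetilde{x}_\alpha=S_{x,\tau}^{-1}x_\alpha$, $\widetilde{\tau}_\alpha=S_{x,\tau}^{-1}\tau_\alpha$ together with self-adjointness of $S_{x,\tau}$ give the factorizations $\theta_{\widetilde{x}}=\theta_xS_{x,\tau}^{-1}$ and $\theta_{\widetilde{\tau}}=\theta_\tau S_{x,\tau}^{-1}$; since $S_{x,\tau}^{-1}$ is bounded, the measurability conditions and the Bessel bounds in Definition~\ref{SEQUENTIAL2} pass to $(\{\widetilde{x}_\alpha\}_{\alpha\in\Omega},\{\widetilde{\tau}_\alpha\}_{\alpha\in\Omega})$, so the canonical dual is genuinely a continuous frame and $S_{\widetilde{x},\widetilde{\tau}}=\theta_{\widetilde{\tau}}^*\theta_{\widetilde{x}}=S_{x,\tau}^{-1}\theta_\tau^*\theta_xS_{x,\tau}^{-1}=S_{x,\tau}^{-1}S_{x,\tau}S_{x,\tau}^{-1}=S_{x,\tau}^{-1}$. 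Equivalently, one may verify this directly: for $h,g\in\mathcal{H}$, using $S_{x,\tau}=S_{\tau,x}$ positive,
\begin{align*}
\langle S_{\widetilde{x},\widetilde{\tau}}h, g\rangle &= \int_{\Omega}\langle h, \widetilde{x}_\alpha\rangle\langle \widetilde{\tau}_\alpha, g\rangle\,d\mu(\alpha) = \int_{\Omega}\langle S_{x,\tau}^{-1}h, x_\alpha\rangle\langle \tau_\alpha, S_{x,\tau}^{-1}g\rangle\,d\mu(\alpha)\\
&= \langle S_{x,\tau}(S_{x,\tau}^{-1}h), S_{x,\tau}^{-1}g\rangle = \langle S_{x,\tau}^{-1}h, g\rangle.
\end{align*}

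For (i): the canonical dual of $(\{\widetilde{x}_\alpha\}_{\alpha\in\Omega},\{\widetilde{\tau}_\alpha\}_{\alpha\in\Omega})$ is, by definition, $(\{S_{\widetilde{x},\widetilde{\tau}}^{-1}\widetilde{x}_\alpha\}_{\alpha\in\Omega},\{S_{\widetilde{x},\widetilde{\tau}}^{-1}\widetilde{\tau}_\alpha\}_{\alpha\in\Omega})$, and substituting $S_{\widetilde{x},\widetilde{\tau}}^{-1}=S_{x,\tau}$ yields $(\{S_{x,\tau}S_{x,\tau}^{-1}x_\alpha\}_{\alpha\in\Omega},\{S_{x,\tau}S_{x,\tau}^{-1}\tau_\alpha\}_{\alpha\in\Omega})=(\{x_\alpha\}_{\alpha\in\Omega},\{\tau_\alpha\}_{\alpha\in\Omega})$.

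For (ii): from the observed inequalities following Definition~\ref{SEQUENTIAL2} we have $aI_\mathcal{H}\leq S_{x,\tau}\leq bI_\mathcal{H}$ with $S_{x,\tau}$ positive and invertible, and inversion reverses the order on positive invertible operators, so $\tfrac{1}{b}I_\mathcal{H}\leq S_{x,\tau}^{-1}\leq\tfrac{1}{a}I_\mathcal{H}$; since $S_{\widetilde{x},\widetilde{\tau}}=S_{x,\tau}^{-1}$, the numbers $\tfrac{1}{b},\tfrac{1}{a}$ are frame bounds for the canonical dual. For (iii): the optimal frame bounds of a continuous frame are $\min\sigma(S_{x,\tau})$ and $\max\sigma(S_{x,\tau})$, and by the spectral mapping theorem $\sigma(S_{x,\tau}^{-1})=\{\lambda^{-1}:\lambda\in\sigma(S_{x,\tau})\}$, whence $\min\sigma(S_{\widetilde{x},\widetilde{\tau}})=1/b$ and $\max\sigma(S_{\widetilde{x},\widetilde{\tau}})=1/a$ when $a,b$ are optimal for $(\{x_\alpha\}_{\alpha\in\Omega},\{\tau_\alpha\}_{\alpha\in\Omega})$.

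I do not expect a genuine obstacle here: the whole argument is routine operator theory once $S_{\widetilde{x},\widetilde{\tau}}=S_{x,\tau}^{-1}$ is in hand. The only point deserving explicit care is confirming that $S_{\widetilde{x},\widetilde{\tau}}$ is well-defined in the first place — i.e. that the canonical dual meets the measurability and Bessel requirements of Definition~\ref{SEQUENTIAL2} — which is precisely why I would write down $\theta_{\widetilde{x}}=\theta_xS_{x,\tau}^{-1}$ and $\theta_{\widetilde{\tau}}=\theta_\tau S_{x,\tau}^{-1}$ before doing the bound estimates. Everything else is the scalar transcription of the proof of Theorem~\ref{CANONICALDUALFRAMEPROPERTYOPERATORVERSION}.
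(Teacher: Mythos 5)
Your proposal is correct and follows essentially the same route as the paper: the key step in both is the direct computation $\langle S_{\widetilde{x},\widetilde{\tau}}h,g\rangle=\langle S_{x,\tau}^{-1}h,g\rangle$, from which (i) follows by substitution and (ii), (iii) by inverting the operator inequality $aI_\mathcal{H}\leq S_{x,\tau}\leq bI_\mathcal{H}$ (the paper defers these last two to its earlier operator-valued version, which you have simply written out). Your preliminary check that $\theta_{\widetilde{x}}=\theta_xS_{x,\tau}^{-1}$ and $\theta_{\widetilde{\tau}}=\theta_\tau S_{x,\tau}^{-1}$, so that the canonical dual is genuinely a continuous frame, is a sound addition the paper leaves implicit.
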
 
 \begin{proof}
  For $ h, g \in \mathcal{H},$ 
  \begin{align*}
  \left\langle \int_{\Omega}\langle h, \widetilde{x}_\alpha \rangle\widetilde{\tau}_\alpha\,d\mu(\alpha), g\right \rangle &= \int_{\Omega}\left\langle h, S_{x,\tau}^{-1}x_\alpha \rangle \langle S_{x,\tau}^{-1}\tau_\alpha , g \right \rangle\,d\mu(\alpha)\\
   &=\left\langle\int_{\Omega}\langle  S_{x,\tau}^{-1}h, x_\alpha \rangle  \tau_\alpha \,d\mu(\alpha), S_{x,\tau}^{-1}g \right \rangle= \left\langle S_{x,\tau}S_{x,\tau}^{-1}h, S_{x,\tau}^{-1}g\right \rangle=\left\langle S_{x,\tau}^{-1}h, g\right \rangle.
  \end{align*}
Thus the frame operator for the canonical dual $(\{\widetilde{x}_\alpha\}_{\alpha \in \Omega},   \{\widetilde{\tau}_\alpha\}_{\alpha\in \Omega} )$ is  $ S_{x,\tau}^{-1}.$ Therefore, its canonical dual is $(\{ S_{x,\tau}S_{x,\tau}^{-1}x_\alpha\}_{\alpha \in \Omega},   \{ S_{x,\tau} S_{x,\tau}^{-1}\tau_\alpha\}_{\alpha\in \Omega} ).$ Others can be proved as in the  earlier  consideration `continuous operator-valued frame'.
  \end{proof}
 \begin{proposition}
 Let  $ (\{x_\alpha\}_{\alpha\in \Omega}, \{\tau_\alpha\}_{\alpha\in \Omega}) $ and $ (\{y_\alpha\}_{\alpha\in \Omega}, \{\omega_\alpha\}_{\alpha\in \Omega}) $ be  continuous frames for   $\mathcal{H}$. Then the following are equivalent.
 \begin{enumerate}[\upshape(i)]
 \item $ (\{y_\alpha\}_{\alpha\in \Omega}, \{\omega_\alpha\}_{\alpha\in \Omega}) $ is  dual of $ (\{x_\alpha\}_{\alpha\in \Omega}, \{\tau_\alpha\}_{\alpha\in \Omega}) $. 
 \item $ \int_{\Omega}\langle h, x_\alpha \rangle \omega_\alpha \,d \mu(\alpha)= \int_{\Omega}\langle h, \tau_\alpha\rangle y_\alpha \,d \mu(\alpha)=h, \forall h \in  \mathcal{H}.$ 
 \end{enumerate}
 \end{proposition}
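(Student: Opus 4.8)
The plan is to reduce both statements to a single inner-product identity, exactly as in the proof of Proposition \ref{DUALOVFCHARACTERIZATION}. First I would fix $h, g \in \mathcal{H}$ and unwind the definitions of the analysis operators from Definition \ref{SEQUENTIAL2}: since $\theta_x h \in \mathcal{L}^2(\Omega,\mathbb{K})$ is the function $\alpha \mapsto \langle h, x_\alpha\rangle$ and $\theta_\omega g$ is $\alpha \mapsto \langle g, \omega_\alpha\rangle$, I would compute
\[
\langle \theta_\omega^*\theta_x h, g\rangle = \langle \theta_x h, \theta_\omega g\rangle = \int_\Omega \langle h, x_\alpha\rangle \,\overline{\langle g, \omega_\alpha\rangle}\, d\mu(\alpha) = \int_\Omega \langle h, x_\alpha\rangle \langle \omega_\alpha, g\rangle\, d\mu(\alpha),
\]
and recognize the right-hand side, by the very definition of the weak integral, as $\big\langle \int_\Omega \langle h, x_\alpha\rangle \omega_\alpha\, d\mu(\alpha), g\big\rangle$. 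Running the same computation with the pair $(\tau_\alpha, y_\alpha)$ in place of $(x_\alpha, \omega_\alpha)$ yields $\langle \theta_y^*\theta_\tau h, g\rangle = \big\langle \int_\Omega \langle h, \tau_\alpha\rangle y_\alpha\, d\mu(\alpha), g\big\rangle$.

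Having these two identities for all $h, g \in \mathcal{H}$, the equivalence is immediate: $\theta_\omega^*\theta_x = I_\mathcal{H}$ holds if and only if $\big\langle \int_\Omega \langle h, x_\alpha\rangle \omega_\alpha\, d\mu(\alpha), g\big\rangle = \langle h, g\rangle$ for all $h, g$, that is, if and only if $\int_\Omega \langle h, x_\alpha\rangle \omega_\alpha\, d\mu(\alpha) = h$ for every $h \in \mathcal{H}$; likewise $\theta_y^*\theta_\tau = I_\mathcal{H}$ if and only if $\int_\Omega \langle h, \tau_\alpha\rangle y_\alpha\, d\mu(\alpha) = h$ for every $h$. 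Conjoining the two halves gives (i) $\Leftrightarrow$ (ii).

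There is essentially no obstacle here; the only points requiring a little care are bookkeeping ones — keeping the conjugate-linear slot of the $\mathcal{L}^2(\Omega,\mathbb{K})$ inner product on the correct side so that $\overline{\langle g,\omega_\alpha\rangle} = \langle \omega_\alpha, g\rangle$, and invoking the fact (guaranteed by the hypothesis that $(\{x_\alpha\}_{\alpha\in\Omega}, \{\tau_\alpha\}_{\alpha\in\Omega})$ and $(\{y_\alpha\}_{\alpha\in\Omega}, \{\omega_\alpha\}_{\alpha\in\Omega})$ are continuous frames, hence the relevant measurability and Bessel bounds hold) that the weak integrals $\int_\Omega \langle h, x_\alpha\rangle \omega_\alpha\, d\mu(\alpha)$ and $\int_\Omega \langle h, \tau_\alpha\rangle y_\alpha\, d\mu(\alpha)$ are well-defined elements of $\mathcal{H}$. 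This is just the $\mathcal{H}_0 = \mathbb{K}$ specialization of Proposition \ref{DUALOVFCHARACTERIZATION} via Theorem \ref{OVFTOSEQUENCEANDVICEVERSATHEOREM}, so one could alternatively phrase it as a corollary of that proposition.
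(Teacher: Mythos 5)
Your proposal is correct and follows essentially the same route as the paper: both compute $\langle \theta_\omega^*\theta_x h, g\rangle = \int_\Omega \langle h, x_\alpha\rangle\langle \omega_\alpha, g\rangle\,d\mu(\alpha) = \bigl\langle \int_\Omega \langle h, x_\alpha\rangle\omega_\alpha\,d\mu(\alpha), g\bigr\rangle$ (and its counterpart for $\theta_y^*\theta_\tau$) and then read off the equivalence from the definition of duality as $\theta_\omega^*\theta_x = \theta_y^*\theta_\tau = I_\mathcal{H}$. No gaps.
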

 \begin{proof}
 $\langle \theta_\omega^*\theta_xh, g \rangle =\langle \theta_xh, \theta_\omega g \rangle=\int_{\Omega}\theta_xh(\alpha)\overline{\theta_\omega g(\alpha)}\,d \mu(\alpha) =\int_{\Omega}\langle h, x_\alpha\rangle \langle \omega_\alpha, g\rangle \,d \mu(\alpha)=\langle \int_{\Omega}\langle h, x_\alpha \rangle \omega_\alpha \,d \mu(\alpha), g\rangle, $ $\forall h, g \in \mathcal{H}$. Similarly $\langle \theta_y^*\theta_\tau h, g \rangle= \langle \int_{\Omega}\langle h, \tau_\alpha\rangle y_\alpha \,d \mu(\alpha), g\rangle $,  $\forall h, g \in \mathcal{H}$.
 \end{proof}
 \begin{theorem}
  If $ (\{x_\alpha\}_{\alpha\in \Omega}, \{\tau_\alpha\}_{\alpha\in \Omega})$ is a Riesz continuous  frame  for   $\mathcal{H}$, then it has unique dual. 
 \end{theorem}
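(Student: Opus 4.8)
The plan is to transplant, essentially verbatim, the argument used for the uniqueness of the dual of a Riesz continuous operator-valued frame. Suppose $(\{y_\alpha\}_{\alpha\in\Omega},\{\omega_\alpha\}_{\alpha\in\Omega})$ and $(\{y'_\alpha\}_{\alpha\in\Omega},\{\omega'_\alpha\}_{\alpha\in\Omega})$ are continuous frames for $\mathcal{H}$, both of which are duals of $(\{x_\alpha\}_{\alpha\in\Omega},\{\tau_\alpha\}_{\alpha\in\Omega})$. By the definition of a dual this records $\theta_\omega^*\theta_x=\theta_y^*\theta_\tau=I_\mathcal{H}$ and $\theta_{\omega'}^*\theta_x=\theta_{y'}^*\theta_\tau=I_\mathcal{H}$. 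Passing to adjoints and using that $S_{x,\tau}=\theta_\tau^*\theta_x=\theta_x^*\theta_\tau$ is positive, hence self-adjoint, these also read $\theta_x^*\theta_\omega=\theta_x^*\theta_{\omega'}=I_\mathcal{H}$ and $\theta_\tau^*\theta_y=\theta_\tau^*\theta_{y'}=I_\mathcal{H}$. Subtracting within each pair yields $\theta_x^*(\theta_\omega-\theta_{\omega'})=0$ and $\theta_\tau^*(\theta_y-\theta_{y'})=0$.

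Next I would left-multiply the first identity by $\theta_\tau S_{x,\tau}^{-1}$ and the second by $\theta_x S_{x,\tau}^{-1}$. Since $P_{x,\tau}=\theta_x S_{x,\tau}^{-1}\theta_\tau^*$ and $P_{\tau,x}\coloneqq\theta_\tau S_{x,\tau}^{-1}\theta_x^*=P_{x,\tau}^*$ (the last equality because $S_{x,\tau}=S_{\tau,x}$ is self-adjoint), this produces $P_{\tau,x}(\theta_\omega-\theta_{\omega'})=0$ and $P_{x,\tau}(\theta_y-\theta_{y'})=0$. Because $(\{x_\alpha\}_{\alpha\in\Omega},\{\tau_\alpha\}_{\alpha\in\Omega})$ is a Riesz continuous frame, $P_{x,\tau}=I_{\mathcal{L}^2(\Omega,\mathbb{K})}$ by definition, and therefore also $P_{\tau,x}=P_{x,\tau}^*=I_{\mathcal{L}^2(\Omega,\mathbb{K})}$. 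Hence $\theta_\omega=\theta_{\omega'}$ and $\theta_y=\theta_{y'}$. Evaluating these equalities at an arbitrary $h\in\mathcal{H}$ and $\alpha\in\Omega$ gives $\langle h,\omega_\alpha\rangle=\theta_\omega h(\alpha)=\theta_{\omega'}h(\alpha)=\langle h,\omega'_\alpha\rangle$ and likewise $\langle h,y_\alpha\rangle=\langle h,y'_\alpha\rangle$, so $\omega_\alpha=\omega'_\alpha$ and $y_\alpha=y'_\alpha$ for (almost) every $\alpha$; that is, the two duals coincide.

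I do not expect a substantive obstacle here: the argument is a direct scalar analogue of the operator-valued statement, and the only points needing a line of justification are the identity $P_{\tau,x}=P_{x,\tau}^*$ (derived exactly as in the operator-valued Proposition~\ref{2.2}, from $S_{x,\tau}=S_{\tau,x}$ and its positivity) and the standard identification of an equality of $\mathcal{L}^2(\Omega,\mathbb{K})$-valued analysis operators with the corresponding a.e.\ equality of the generating families. Alternatively, the whole theorem can be deduced from the operator-valued version via Theorem~\ref{OVFTOSEQUENCEANDVICEVERSATHEOREM}, which identifies $(\{x_\alpha\}_{\alpha\in\Omega},\{\tau_\alpha\}_{\alpha\in\Omega})$ with the continuous operator-valued frame $(\{A_\alpha\}_{\alpha\in\Omega},\{\Psi_\alpha\}_{\alpha\in\Omega})$ in $\mathcal{B}(\mathcal{H},\mathbb{K})$ and under which the analysis operators, frame operators, and the idempotents $P$ are literally the same, so that Riesz-ness and duality pass across the correspondence; I would record this as a remark but present the direct proof above as the main one.
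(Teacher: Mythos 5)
Your proof is correct and follows essentially the same route as the paper's: subtract the two duality identities, left-multiply by $\theta_\tau S_{x,\tau}^{-1}$ (resp.\ $\theta_x S_{x,\tau}^{-1}$) to produce $P_{\tau,x}$ and $P_{x,\tau}$, invoke the Riesz condition $P_{x,\tau}=I_{\mathcal{L}^2(\Omega,\mathbb{K})}$ (and its adjoint) to conclude equality of the analysis operators, and then evaluate pointwise. The only differences are cosmetic: you make explicit the passage to adjoints and the identity $P_{\tau,x}=P_{x,\tau}^*$, which the paper uses silently.
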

 \begin{proof}
 Let $ (\{y_\alpha\}_{\alpha\in \Omega}, \{\omega_\alpha\}_{\alpha\in \Omega})$ and  $ (\{z_\alpha\}_{\alpha\in \Omega}, \{\rho_\alpha\}_{\alpha\in \Omega})$ be  dual continuous frames  of $ (\{x_\alpha\}_{\alpha\in \Omega}, \{\tau_\alpha\}_{\alpha\in \Omega})$. Then $\theta_x^*\theta_\omega=\theta_\tau^*\theta_y=\theta_x^*\theta_\rho=\theta_\tau^*\theta_z=I_\mathcal{H}$ $\Rightarrow$ $\theta_x^*(\theta_\omega-\theta_\rho)=0=\theta_\tau^*(\theta_y-\theta_z)$ $\Rightarrow$ $I_\mathcal{H}(\theta_\omega-\theta_\rho)=P_{\tau,x}(\theta_\omega-\theta_\rho)=\theta_\tau S_{x,\tau}^{-1}\theta_x^*(\theta_\omega-\theta_\rho)=0=\theta_xS_{x,\tau}^{-1}\theta_\tau^*(\theta_y-\theta_z)=P_{x,\tau}(\theta_y-\theta_z)=I_\mathcal{H}(\theta_y-\theta_z)$  $\Rightarrow$ $\theta_\omega=\theta_\rho$, $\theta_y=\theta_z$ $\Rightarrow$ $\langle h, \omega_\alpha\rangle=\theta_\omega h(\alpha)=\theta_\rho h(\alpha)=\langle h, \rho_\alpha\rangle$, $\langle h,y_\alpha \rangle=\theta_y h(\alpha)=\theta_z h(\alpha)=\langle h,z_\alpha\rangle$, $\forall h \in \mathcal{H}, \forall \alpha \in \Omega$ $\Rightarrow$ $\omega_\alpha=\rho_\alpha, y_\alpha=z_\alpha, \forall \alpha \in \Omega$. Hence the dual of $ (\{x_\alpha\}_{\alpha\in \Omega}, \{\tau_\alpha\}_{\alpha\in \Omega})$ is unique.
 \end{proof}
 
 \begin{proposition}
 Let $ (\{x_\alpha\}_{\alpha\in \Omega}, \{\tau_\alpha\}_{\alpha\in \Omega}) $  be a  continuous frame for   $\mathcal{H}$. If $ (\{y_\alpha\}_{\alpha\in \Omega}, \{\omega_\alpha\}_{\alpha\in \Omega}) $ is a dual of $ (\{x_\alpha\}_{\alpha\in \Omega}, \{\tau_\alpha\}_{\alpha\in \Omega}) $, then there exist continuous Bessel  $ \{z_\alpha\}_{\alpha\in \Omega}$ and $ \{\rho_\alpha\}_{\alpha\in \Omega} $ (w.r.t. themselves) for  $\mathcal{H}$ such that $ y_\alpha=S_{x,\tau}^{-1}x_\alpha+z_\alpha, \omega_\alpha=S_{x,\tau}^{-1}\tau_\alpha+\rho_\alpha,\forall \alpha \in \Omega$,  and $\theta_z(\mathcal{H})\perp \theta_\tau(\mathcal{H}),\theta_\rho(\mathcal{H})\perp \theta_x(\mathcal{H})$. Converse holds if  $ \theta_\rho^*\theta_z \geq 0$.
 \end{proposition}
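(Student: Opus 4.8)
The plan is to transcribe the operator-valued argument (the analogue of Proposition~2.44 of \cite{MAHESHKRISHNASAMJOHNSON}, and the $G$-frame version proved earlier in this paper) into the scalar setting, working entirely through the analysis operators $\theta_x,\theta_\tau,\theta_y,\theta_\omega$ and the identities $S_{x,\tau}=\theta_\tau^*\theta_x=\theta_x^*\theta_\tau$ (so $S_{x,\tau}$ is self-adjoint, positive and invertible) together with the duality relations $\theta_\omega^*\theta_x=\theta_y^*\theta_\tau=I_\mathcal{H}$.

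For the forward direction I would put $z_\alpha\coloneqq y_\alpha-S_{x,\tau}^{-1}x_\alpha$ and $\rho_\alpha\coloneqq\omega_\alpha-S_{x,\tau}^{-1}\tau_\alpha$ for every $\alpha\in\Omega$. Measurability of $\alpha\mapsto\langle h,z_\alpha\rangle$ and $\alpha\mapsto\langle h,\rho_\alpha\rangle$ is immediate since $\langle h,S_{x,\tau}^{-1}x_\alpha\rangle=\langle S_{x,\tau}^{-1}h,x_\alpha\rangle$. The key observation is that the canonical dual has analysis operators $\theta_{\widetilde x}=\theta_xS_{x,\tau}^{-1}$ and $\theta_{\widetilde\tau}=\theta_\tau S_{x,\tau}^{-1}$, whence $\theta_z=\theta_y-\theta_xS_{x,\tau}^{-1}$ and $\theta_\rho=\theta_\omega-\theta_\tau S_{x,\tau}^{-1}$ are bounded; thus $\{z_\alpha\}_{\alpha\in\Omega}$ and $\{\rho_\alpha\}_{\alpha\in\Omega}$ are continuous Bessel w.r.t. themselves. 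Feeding the duality identities and $\theta_\tau^*\theta_x=S_{x,\tau}$ into $\theta_\tau^*\theta_z$ and $\theta_x^*\theta_\rho$ yields $\theta_\tau^*\theta_z=I_\mathcal{H}-S_{x,\tau}S_{x,\tau}^{-1}=0$ and $\theta_x^*\theta_\rho=I_\mathcal{H}-S_{x,\tau}S_{x,\tau}^{-1}=0$, i.e. $\theta_z(\mathcal{H})\perp\theta_\tau(\mathcal{H})$ and $\theta_\rho(\mathcal{H})\perp\theta_x(\mathcal{H})$.

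For the converse, I would start from the given decomposition with $\{z_\alpha\},\{\rho_\alpha\}$ continuous Bessel, $\theta_z(\mathcal{H})\perp\theta_\tau(\mathcal{H})$, $\theta_\rho(\mathcal{H})\perp\theta_x(\mathcal{H})$ and $\theta_\rho^*\theta_z\geq0$. Then $\theta_y=\theta_xS_{x,\tau}^{-1}+\theta_z$ and $\theta_\omega=\theta_\tau S_{x,\tau}^{-1}+\theta_\rho$ are bounded, the relevant coefficient maps are measurable, and expanding $S_{y,\omega}=\theta_\omega^*\theta_y$ while using $\theta_z^*\theta_\tau=0=\theta_\rho^*\theta_x$ collapses the cross terms to leave $S_{y,\omega}=S_{x,\tau}^{-1}+\theta_\rho^*\theta_z$. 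By hypothesis this operator is self-adjoint and satisfies $S_{y,\omega}\geq S_{x,\tau}^{-1}\geq\frac{1}{b}I_\mathcal{H}$, hence is bounded positive invertible, so $(\{y_\alpha\}_{\alpha\in\Omega},\{\omega_\alpha\}_{\alpha\in\Omega})$ is a continuous frame; and $\theta_\omega^*\theta_x=S_{x,\tau}^{-1}\theta_\tau^*\theta_x+\theta_\rho^*\theta_x=I_\mathcal{H}$, $\theta_y^*\theta_\tau=S_{x,\tau}^{-1}\theta_x^*\theta_\tau+\theta_z^*\theta_\tau=I_\mathcal{H}$ show it is a dual.

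I do not expect a serious obstacle: the bulk is the orthogonality bookkeeping that annihilates the cross terms. The one genuine subtlety — and the reason the extra hypothesis $\theta_\rho^*\theta_z\geq0$ is needed in the converse — is that $\theta_\rho^*\theta_z$ need not be self-adjoint in general, so without it the operator $S_{x,\tau}^{-1}+\theta_\rho^*\theta_z$ need not even be positive; the assumption $\theta_\rho^*\theta_z\geq0$ simultaneously secures self-adjointness and invertibility of the perturbed frame operator. All of this runs in exact parallel with Proposition~2.44 of \cite{MAHESHKRISHNASAMJOHNSON} and with the operator-valued analogue established earlier in the paper.
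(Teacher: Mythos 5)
Your proof is correct and follows exactly the route the paper intends: the paper defers to Proposition 8.28 of \cite{MAHESHKRISHNASAMJOHNSON}, whose argument is precisely this decomposition $z_\alpha=y_\alpha-S_{x,\tau}^{-1}x_\alpha$, $\rho_\alpha=\omega_\alpha-S_{x,\tau}^{-1}\tau_\alpha$, the identification $\theta_z=\theta_y-\theta_xS_{x,\tau}^{-1}$, $\theta_\rho=\theta_\omega-\theta_\tau S_{x,\tau}^{-1}$, and the cancellation $S_{y,\omega}=S_{x,\tau}^{-1}+\theta_\rho^*\theta_z$ in the converse. Your remark on why $\theta_\rho^*\theta_z\geq0$ is needed (self-adjointness and invertibility of the perturbed frame operator) is also the right explanation.
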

 \begin{proof}
 Similar to the proof of Proposition 8.28 in \cite{MAHESHKRISHNASAMJOHNSON}.
 \end{proof}
 
\begin{proposition}\label{DUALCHARATERIZATIONLEMMA2}
 Let $ (\{x_\alpha\}_{\alpha\in \Omega}, \{\tau_\alpha\}_{\alpha\in \Omega}) $  be a  continuous frame for   $\mathcal{H}$. Then the bounded left-inverses of 
\begin{enumerate}[\upshape(i)]
\item $ \theta_x$ are precisely $S_{x,\tau}^{-1}\theta_\tau^*+U(I_{\mathcal{L}^2(\Omega, \mathbb{K})}-\theta_xS_{x,\tau}^{-1}\theta_\tau^*)$, where $U\in \mathcal{B}(\mathcal{L}^2(\Omega, \mathbb{K}), \mathcal{H})$.
\item $ \theta_\tau$ are precisely $S_{x,\tau}^{-1}\theta_x^*+V(I_{\mathcal{L}^2(\Omega, \mathbb{K})}-\theta_\tau S_{x,\tau}^{-1}\theta_x^*)$, where $V\in \mathcal{B}( \mathcal{L}^2(\Omega, \mathbb{K}),\mathcal{H})$. 
\end{enumerate}	
 \end{proposition}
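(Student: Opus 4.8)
The plan is to deduce the statement from an elementary fact about bounded left-inverses together with the structural properties of continuous frames already at hand. First I would record that, for a continuous frame $(\{x_\alpha\}_{\alpha\in\Omega},\{\tau_\alpha\}_{\alpha\in\Omega})$, the operators $\theta_x,\theta_\tau$ are bounded and injective with closed range, $S_{x,\tau}=\theta_\tau^*\theta_x=\theta_x^*\theta_\tau=S_{\tau,x}$ is a bounded positive invertible operator, and $P_{x,\tau}=\theta_xS_{x,\tau}^{-1}\theta_\tau^*$ is idempotent. The crucial remark is that $L_0:=S_{x,\tau}^{-1}\theta_\tau^*\in\mathcal{B}(\mathcal{L}^2(\Omega,\mathbb{K}),\mathcal{H})$ is a bounded left-inverse of $\theta_x$, because $L_0\theta_x=S_{x,\tau}^{-1}\theta_\tau^*\theta_x=S_{x,\tau}^{-1}S_{x,\tau}=I_\mathcal{H}$, and that $\theta_xL_0=\theta_xS_{x,\tau}^{-1}\theta_\tau^*=P_{x,\tau}$.

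Next I would isolate the general lemma: if $\theta\in\mathcal{B}(\mathcal{H},\mathcal{K})$ has a bounded left-inverse $L_0$, then the collection of all bounded left-inverses of $\theta$ is precisely $\{L_0+U(I_\mathcal{K}-\theta L_0):U\in\mathcal{B}(\mathcal{K},\mathcal{H})\}$. The inclusion $\supseteq$ follows from the computation $(L_0+U(I_\mathcal{K}-\theta L_0))\theta=L_0\theta+U(\theta-\theta L_0\theta)=I_\mathcal{H}+U(\theta-\theta)=I_\mathcal{H}$, using only $L_0\theta=I_\mathcal{H}$ (note that $\theta L_0$ is automatically idempotent, since $(\theta L_0)(\theta L_0)=\theta(L_0\theta)L_0=\theta L_0$); the inclusion $\subseteq$ follows because, given any bounded left-inverse $L$, the choice $U=L$ gives $L_0+L(I_\mathcal{K}-\theta L_0)=L_0+L-L\theta L_0=L_0+L-L_0=L$. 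Specializing to $\theta=\theta_x$, $\mathcal{K}=\mathcal{L}^2(\Omega,\mathbb{K})$ and $L_0=S_{x,\tau}^{-1}\theta_\tau^*$ yields $\operatorname{(i)}$ verbatim.

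For $\operatorname{(ii)}$ I would repeat the argument with $\theta_\tau$ in place of $\theta_x$: the operator $S_{x,\tau}^{-1}\theta_x^*$ is a bounded left-inverse of $\theta_\tau$ since $S_{x,\tau}^{-1}\theta_x^*\theta_\tau=S_{x,\tau}^{-1}S_{\tau,x}=S_{x,\tau}^{-1}S_{x,\tau}=I_\mathcal{H}$, and $\theta_\tau S_{x,\tau}^{-1}\theta_x^*=P_{\tau,x}$; the lemma then gives that the bounded left-inverses of $\theta_\tau$ are exactly $S_{x,\tau}^{-1}\theta_x^*+V(I_{\mathcal{L}^2(\Omega,\mathbb{K})}-\theta_\tau S_{x,\tau}^{-1}\theta_x^*)$ with $V\in\mathcal{B}(\mathcal{L}^2(\Omega,\mathbb{K}),\mathcal{H})$. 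There is no genuinely hard step here; the only points requiring care are checking that all operators in sight are bounded and that the displayed products are well-defined, both of which are immediate from the definition of a continuous frame and the invertibility of $S_{x,\tau}$. Alternatively, one could bypass the direct computation entirely by invoking Theorem \ref{OVFTOSEQUENCEANDVICEVERSATHEOREM} to transfer the corresponding operator-valued statement from Section \ref{MK} to the scalar setting $\mathcal{H}_0=\mathbb{K}$.
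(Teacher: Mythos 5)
Your proposal is correct and follows the standard parametrization of bounded left-inverses (exhibit one left-inverse $L_0=S_{x,\tau}^{-1}\theta_\tau^*$ via $\theta_\tau^*\theta_x=S_{x,\tau}$, then show every left-inverse has the form $L_0+U(I-\theta_xL_0)$ by the two computations you give), which is exactly the route the paper takes by deferring to the analogous Lemma 8.30 of the cited companion paper. No gaps; the identities $\theta_\tau^*\theta_x=\theta_x^*\theta_\tau=S_{x,\tau}$ that you invoke are already established in the section.
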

 \begin{proof}
Similar to the proof of Lemma 8.30 in \cite{MAHESHKRISHNASAMJOHNSON}.
\end{proof}

\begin{definition}
A continuous frame   $(\{y_\alpha\}_{\alpha\in \Omega},  \{\omega_\alpha\}_{\alpha\in \Omega})$  for  $\mathcal{H}$ is said to be orthogonal to a continuous frame   $( \{x_\alpha\}_{\alpha\in \Omega}, \{\tau_\alpha\}_{\alpha\in \Omega})$ for $\mathcal{H}$ if $\theta_\omega^*\theta_x= \theta_y^*\theta_\tau=0.$
\end{definition}

\begin{proposition}
Let  $ (\{x_\alpha\}_{\alpha\in \Omega}, \{\tau_\alpha\}_{\alpha\in \Omega}) $, $ (\{y_\alpha\}_{\alpha\in \Omega}, \{\omega_\alpha\}_{\alpha\in \Omega})$ be continuous  frames for  $\mathcal{H}$. Then the following are equivalent.
\begin{enumerate}[\upshape(i)]
\item $(\{y_\alpha\}_{\alpha\in \Omega},\{\omega_\alpha\}_{\alpha\in \Omega}) $ is orthogonal to  $( \{x_\alpha\}_{\alpha\in \Omega},  \{\tau_\alpha\}_{\alpha\in \Omega}) $.
\item $  \int_{ \Omega}\langle h, x_\alpha\rangle \omega_\alpha\,d\mu(\alpha)=0=\int_{ \Omega}\langle g, \tau_\alpha\rangle y_\alpha\,d\mu(\alpha), \forall h \in  \mathcal{H}$. 
\end{enumerate}
\end{proposition}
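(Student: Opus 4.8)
The plan is to imitate the short computation already used for Proposition~\ref{DUALOVFCHARACTERIZATION} (and for the analogous dual-characterization proposition in the scalar case stated just above): the whole equivalence reduces to unwinding $\theta_\omega^*\theta_x$ and $\theta_y^*\theta_\tau$ in weak form and recognizing the resulting weak integrals.

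First I would fix $h,g\in\mathcal{H}$ and write
\[
\langle \theta_\omega^*\theta_x h,g\rangle=\langle \theta_x h,\theta_\omega g\rangle=\int_{\Omega}\theta_x h(\alpha)\,\overline{\theta_\omega g(\alpha)}\,d\mu(\alpha)=\int_{\Omega}\langle h,x_\alpha\rangle\langle \omega_\alpha,g\rangle\,d\mu(\alpha),
\]
which is legitimate because, by Definition~\ref{SEQUENTIAL2}(iii), $\theta_x$ and $\theta_\omega$ are well-defined bounded operators into $\mathcal{L}^2(\Omega,\mathbb{K})$, so the middle expression is an honest $\mathcal{L}^2$-inner product and the last integral is finite by Cauchy--Schwarz (exactly the estimate used when the weak integrals and $S_{x,\tau}$ were introduced). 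By the Riesz-representation definition of the weak integral $\int_{\Omega}\langle h,x_\alpha\rangle\omega_\alpha\,d\mu(\alpha)$, the right-hand side equals $\big\langle \int_{\Omega}\langle h,x_\alpha\rangle\omega_\alpha\,d\mu(\alpha),g\big\rangle$. Letting $g$ range over $\mathcal{H}$ shows that $\theta_\omega^*\theta_x=0$ if and only if $\int_{\Omega}\langle h,x_\alpha\rangle\omega_\alpha\,d\mu(\alpha)=0$ for every $h\in\mathcal{H}$.

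Then I would run the mirror-image computation: $\langle \theta_y^*\theta_\tau h,g\rangle=\int_{\Omega}\langle h,\tau_\alpha\rangle\langle y_\alpha,g\rangle\,d\mu(\alpha)=\big\langle \int_{\Omega}\langle h,\tau_\alpha\rangle y_\alpha\,d\mu(\alpha),g\big\rangle$, so $\theta_y^*\theta_\tau=0$ if and only if $\int_{\Omega}\langle h,\tau_\alpha\rangle y_\alpha\,d\mu(\alpha)=0$ for all $h$. Conjoining the two equivalences, and recalling that orthogonality of $(\{y_\alpha\}_{\alpha\in\Omega},\{\omega_\alpha\}_{\alpha\in\Omega})$ to $(\{x_\alpha\}_{\alpha\in\Omega},\{\tau_\alpha\}_{\alpha\in\Omega})$ means precisely $\theta_\omega^*\theta_x=\theta_y^*\theta_\tau=0$, yields (i)$\Leftrightarrow$(ii). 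The dummy letter $g$ appearing in the second half of (ii) is immaterial, since the identity is asserted for all vectors.

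I do not expect any genuine obstacle here; the only point needing a word of care is the interchange justifying that $\int_{\Omega}\langle h,x_\alpha\rangle\langle \omega_\alpha,g\rangle\,d\mu(\alpha)$ is $\langle(\text{weak integral}),g\rangle$ and is finite, but this is immediate from the boundedness of $\theta_x,\theta_\omega$ together with Cauchy--Schwarz in $\mathcal{L}^2(\Omega,\mathbb{K})$ --- the very construction used when $S_{x,\tau}$ was first defined. Consequently the argument is a single two-line display together with its mirror image, and one could equally well simply write ``Similar to the proof of Proposition~\ref{DUALOVFCHARACTERIZATION}.''
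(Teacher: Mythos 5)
Your proposal is correct and is exactly the computation the paper uses for the companion dual-characterization proposition (the paper leaves this orthogonality version unproved, evidently because it is the same identification $\langle\theta_\omega^*\theta_xh,g\rangle=\bigl\langle\int_{\Omega}\langle h,x_\alpha\rangle\omega_\alpha\,d\mu(\alpha),g\bigr\rangle$ with $I_\mathcal{H}$ replaced by $0$). Nothing further is needed.
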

\begin{proposition}
Two orthogonal continuous frames  have a common dual continuous frame.	
\end{proposition}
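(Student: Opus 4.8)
The plan is to construct the common dual explicitly by \emph{adding the two canonical duals}. Write the two orthogonal continuous frames as $(\{x_\alpha\}_{\alpha\in\Omega},\{\tau_\alpha\}_{\alpha\in\Omega})$ and $(\{y_\alpha\}_{\alpha\in\Omega},\{\omega_\alpha\}_{\alpha\in\Omega})$, so that by hypothesis $\theta_\omega^*\theta_x=\theta_y^*\theta_\tau=0$, and taking adjoints also $\theta_x^*\theta_\omega=\theta_\tau^*\theta_y=0$. Set $S:=S_{x,\tau}$ and $T:=S_{y,\omega}$ (each bounded, positive, invertible and hence self-adjoint, with $S=\theta_\tau^*\theta_x=\theta_x^*\theta_\tau$ and $T=\theta_\omega^*\theta_y=\theta_y^*\theta_\omega$ by the proposition on $\theta^*\theta$ above), and define
\[
z_\alpha:=S^{-1}x_\alpha+T^{-1}y_\alpha=\widetilde{x}_\alpha+\widetilde{y}_\alpha,\qquad \rho_\alpha:=S^{-1}\tau_\alpha+T^{-1}\omega_\alpha=\widetilde{\tau}_\alpha+\widetilde{\omega}_\alpha,\qquad \alpha\in\Omega,
\]
the pointwise sums of the canonical-dual vectors. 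I claim $(\{z_\alpha\}_{\alpha\in\Omega},\{\rho_\alpha\}_{\alpha\in\Omega})$ is a common dual of both.

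First I would check that it is a continuous frame. Since $\langle h,z_\alpha\rangle=\langle S^{-1}h,x_\alpha\rangle+\langle T^{-1}h,y_\alpha\rangle$ and likewise for $\rho_\alpha$, the relevant scalar-valued maps are measurable, and a routine computation (as for the canonical dual) gives $\theta_z=\theta_xS^{-1}+\theta_yT^{-1}$ and $\theta_\rho=\theta_\tau S^{-1}+\theta_\omega T^{-1}$, so both analysis operators are well-defined and bounded. Then the polarization argument used throughout the section identifies the frame operator as $S_{z,\rho}=\theta_\rho^*\theta_z$; expanding this product, applying $\theta_\tau^*\theta_x=S$ and $\theta_\omega^*\theta_y=T$, and using the four orthogonality identities to kill the cross terms yields $S_{z,\rho}=S^{-1}+T^{-1}$, a sum of two bounded positive invertible operators, hence itself bounded, positive and invertible.

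Finally I would verify the duality relations. Using $\theta_\tau^*\theta_x=S=\theta_x^*\theta_\tau$, $\theta_\omega^*\theta_y=T=\theta_y^*\theta_\omega$ together with $\theta_\tau^*\theta_y=\theta_\omega^*\theta_x=\theta_x^*\theta_\omega=\theta_y^*\theta_\tau=0$, one computes
\[
\theta_\rho^*\theta_x=S^{-1}\theta_\tau^*\theta_x+T^{-1}\theta_\omega^*\theta_x=I_\mathcal{H},\qquad \theta_z^*\theta_\tau=S^{-1}\theta_x^*\theta_\tau+T^{-1}\theta_y^*\theta_\tau=I_\mathcal{H},
\]
and symmetrically $\theta_\rho^*\theta_y=T^{-1}\theta_\omega^*\theta_y+S^{-1}\theta_\tau^*\theta_y=I_\mathcal{H}$ and $\theta_z^*\theta_\omega=T^{-1}\theta_y^*\theta_\omega+S^{-1}\theta_x^*\theta_\omega=I_\mathcal{H}$. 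Hence $(\{z_\alpha\}_{\alpha\in\Omega},\{\rho_\alpha\}_{\alpha\in\Omega})$ is simultaneously a dual of $(\{x_\alpha\}_{\alpha\in\Omega},\{\tau_\alpha\}_{\alpha\in\Omega})$ and of $(\{y_\alpha\}_{\alpha\in\Omega},\{\omega_\alpha\}_{\alpha\in\Omega})$. I do not anticipate any real obstacle; the only points needing slight care are confirming that $S_{z,\rho}$ is positive and invertible (immediate once it is seen to be $S^{-1}+T^{-1}$) and keeping straight which orthogonality identity annihilates which cross term. This is the continuous-frame analogue of Proposition 2.48 in \cite{MAHESHKRISHNASAMJOHNSON} and of the operator-valued statement proved just above.
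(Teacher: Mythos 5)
Your construction is correct and is essentially the paper's own argument: the paper defers to Proposition 8.34 of the cited reference, but the explicit proof it gives for the weak continuous analogue in Section 8 uses exactly this sum of canonical duals, $z_\alpha=S_{x,\tau}^{-1}x_\alpha+S_{y,\omega}^{-1}y_\alpha$ and $\rho_\alpha=S_{x,\tau}^{-1}\tau_\alpha+S_{y,\omega}^{-1}\omega_\alpha$, with the cross terms killed by orthogonality and $S_{z,\rho}=S_{x,\tau}^{-1}+S_{y,\omega}^{-1}$. Your operator-level bookkeeping via $\theta_z=\theta_xS^{-1}+\theta_yT^{-1}$ and $\theta_\rho=\theta_\tau S^{-1}+\theta_\omega T^{-1}$ checks out, so no changes are needed.
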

\begin{proof}
Similar to the proof of Proposition 8.34 in \cite{MAHESHKRISHNASAMJOHNSON}.
\end{proof}

\begin{proposition}
Let $ (\{x_\alpha\}_{\alpha\in  \Omega}, \{\tau_\alpha\}_{\alpha\in  \Omega}) $ and $ (\{y_\alpha\}_{\alpha\in  \Omega}, \{\omega_\alpha\}_{\alpha\in  \Omega}) $ be  two Parseval continuous  frames for  $\mathcal{H}$ which are  orthogonal. If $A,B,C,D \in \mathcal{B}(\mathcal{H})$ are such that $ AC^*+BD^*=I_\mathcal{H}$, then  $ (\{Ax_\alpha+By_\alpha\}_{\alpha\in  \Omega}, \{C\tau_\alpha+D\omega_\alpha\}_{\alpha\in  \Omega}) $ is a Parseval continuous    frame for  $\mathcal{H}$. In particular,  if scalars $ a,b,c,d$ satisfy $a\bar{c}+b\bar{d} =1$, then $ (\{ax_\alpha+by_\alpha\}_{\alpha\in  \Omega}, \{c\tau_j+d\omega_\alpha\}_{\alpha\in  \Omega}) $ is a  Parseval continuous  frame for  $\mathcal{H}$.
\end{proposition}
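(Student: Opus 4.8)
The plan is to run the argument through the analysis operators, exactly paralleling the operator-valued version established earlier in the paper. Set $z_\alpha := Ax_\alpha + By_\alpha$ and $\sigma_\alpha := C\tau_\alpha + D\omega_\alpha$ for $\alpha \in \Omega$. The first step is the elementary observation that scaling a synthesis vector by a bounded operator pulls back through the inner product as its adjoint: for $h \in \mathcal{H}$ and $\alpha \in \Omega$,
\[
\theta_z h(\alpha) = \langle h, Ax_\alpha + By_\alpha\rangle = \langle A^*h, x_\alpha\rangle + \langle B^*h, y_\alpha\rangle = (\theta_x A^* + \theta_y B^*)h(\alpha),
\]
so $\theta_z = \theta_x A^* + \theta_y B^*$, and likewise $\theta_\sigma = \theta_\tau C^* + \theta_\omega D^*$. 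In particular the maps $\alpha \mapsto z_\alpha$, $\alpha \mapsto \sigma_\alpha$ are measurable in the required sense and $\theta_z, \theta_\sigma$ are well-defined bounded linear operators, being finite sums of compositions of bounded operators; hence $(\{z_\alpha\}_{\alpha\in\Omega},\{\sigma_\alpha\}_{\alpha\in\Omega})$ is at least continuous Bessel, and its frame operator exists with $S_{z,\sigma} = \theta_\sigma^*\theta_z$ by the construction preceding Definition~\ref{SEQUENTIAL2} (the $\mathcal{H}_0 = \mathbb{K}$ specialization of the discussion in Section~\ref{MK}).

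The crux is then a direct computation of $S_{z,\sigma}$. Expanding and using $\theta_\sigma^* = C\theta_\tau^* + D\theta_\omega^*$,
\[
S_{z,\sigma} = \theta_\sigma^*\theta_z = (C\theta_\tau^* + D\theta_\omega^*)(\theta_x A^* + \theta_y B^*) = C\theta_\tau^*\theta_x A^* + C\theta_\tau^*\theta_y B^* + D\theta_\omega^*\theta_x A^* + D\theta_\omega^*\theta_y B^*.
\]
Now I invoke the hypotheses. Since $(\{x_\alpha\}_{\alpha\in\Omega},\{\tau_\alpha\}_{\alpha\in\Omega})$ and $(\{y_\alpha\}_{\alpha\in\Omega},\{\omega_\alpha\}_{\alpha\in\Omega})$ are Parseval, $\theta_\tau^*\theta_x = I_\mathcal{H} = \theta_\omega^*\theta_y$; since the second is orthogonal to the first, $\theta_\omega^*\theta_x = 0$ and $\theta_y^*\theta_\tau = 0$, the latter giving $\theta_\tau^*\theta_y = 0$ upon taking adjoints. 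The two middle terms vanish and the surviving terms give $S_{z,\sigma} = CA^* + DB^* = (AC^* + BD^*)^* = I_\mathcal{H}$. Thus $S_{z,\sigma}$ is bounded, positive and invertible, so $(\{z_\alpha\}_{\alpha\in\Omega},\{\sigma_\alpha\}_{\alpha\in\Omega})$ is a continuous frame; and $\theta_\sigma^*\theta_z = I_\mathcal{H}$ is precisely the Parseval condition.

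For the ``in particular'' assertion I would take $A = aI_\mathcal{H}$, $B = bI_\mathcal{H}$, $C = cI_\mathcal{H}$, $D = dI_\mathcal{H}$, so that $AC^* + BD^* = (a\bar{c} + b\bar{d})I_\mathcal{H} = I_\mathcal{H}$ by assumption, and the first part applies verbatim. I expect no genuine obstacle here: the only points needing care are getting the adjoints right in $\theta_z = \theta_x A^* + \theta_y B^*$ (and similarly for $\theta_\sigma$) and correctly matching each of the four cross terms with the right Parseval or orthogonality relation; everything else is bookkeeping mirroring the already-proved operator-valued statement.
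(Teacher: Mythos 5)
Your proof is correct and follows essentially the same route as the paper: the paper likewise establishes $\theta_{Ax+By}=\theta_xA^*+\theta_yB^*$ and $\theta_{C\tau+D\omega}=\theta_\tau C^*+\theta_\omega D^*$ and then defers the remaining four-term expansion of $\theta_{C\tau+D\omega}^*\theta_{Ax+By}$ (using $\theta_\tau^*\theta_x=\theta_\omega^*\theta_y=I_\mathcal{H}$, $\theta_\omega^*\theta_x=\theta_\tau^*\theta_y=0$, and $CA^*+DB^*=(AC^*+BD^*)^*=I_\mathcal{H}$) to the cited Proposition 8.35 of the companion paper. You have simply written out the details the paper outsources, and they check out.
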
 
\begin{proof}
For all $h \in \mathcal{H}$ and $\alpha \in \Omega$ we see $ \theta_{Ax+By} h (\alpha)= \langle h,Ax_\alpha+By_\alpha \rangle=\langle A^*h,x_\alpha \rangle+\langle B^*h,y_\alpha \rangle=\theta_x(A^*h)(\alpha)+\theta_y(B^*h)(\alpha)=(\theta_xA^*+\theta_yB^*)h(\alpha).$ Similarly $ \theta_{C\tau+D\omega}=\theta_\tau C^*+\theta_\omega D^* $. Other arguments are similar to that in the proof of Proposition 8.35 in \cite{MAHESHKRISHNASAMJOHNSON}.
\end{proof} 
 
\begin{definition}
Two continuous frames  $(\{x_\alpha\}_{\alpha\in \Omega},  \{\tau_\alpha\}_{\alpha\in  \Omega}) $ and $(\{y_\alpha\}_{\alpha\in  \Omega},\{\omega_\alpha\}_{\alpha\in  \Omega})$  for $ \mathcal{H}$ are called disjoint if $(\{x_\alpha\oplus y_\alpha\}_{\alpha\in  \Omega},\{\tau_\alpha\oplus\omega_\alpha\}_{\alpha\in  \Omega})$ is a continuous  frame for $\mathcal{H}\oplus\mathcal{H} $.
\end{definition} 
\begin{proposition}\label{SEQUENTIALDISJOINTFRAMEPROPOSITION}
If $(\{x_\alpha\}_{\alpha\in  \Omega},\{\tau_\alpha\}_{\alpha\in  \Omega} )$  and $ (\{y_\alpha\}_{\alpha\in  \Omega}, \{\omega_\alpha\}_{\alpha\in  \Omega} )$  are  orthogonal continuous  frames  for $\mathcal{H}$, then  they  are disjoint. Further, if both $(\{x_\alpha\}_{\alpha\in  \Omega},\{\tau_\alpha\}_{\alpha\in  \Omega} )$  and $ (\{y_\alpha\}_{\alpha\in  \Omega}, \{\omega_\alpha\}_{\alpha\in  \Omega} )$ are  Parseval, then $(\{x_\alpha\oplus y_\alpha\}_{\alpha \in  \Omega},\{\tau_\alpha\oplus \omega_\alpha\}_{\alpha \in  \Omega})$ is Parseval.
\end{proposition}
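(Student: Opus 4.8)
The plan is to mimic the operator-valued disjointness proposition proved earlier (for $\{A_\alpha\oplus B_\alpha\}$), working directly with the analysis and synthesis operators on $\mathcal{H}\oplus\mathcal{H}$. First I would record the elementary identities for the analysis operators of the direct sum: for every $h\oplus g\in\mathcal{H}\oplus\mathcal{H}$ and $\alpha\in\Omega$, $\theta_{x\oplus y}(h\oplus g)(\alpha)=\langle h\oplus g,x_\alpha\oplus y_\alpha\rangle=\langle h,x_\alpha\rangle+\langle g,y_\alpha\rangle=\theta_x h(\alpha)+\theta_y g(\alpha)$, hence $\theta_{x\oplus y}(h\oplus g)=\theta_x h+\theta_y g$, and likewise $\theta_{\tau\oplus\omega}(h\oplus g)=\theta_\tau h+\theta_\omega g$. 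Measurability of $\alpha\mapsto\langle h\oplus g,x_\alpha\oplus y_\alpha\rangle$ and of $\alpha\mapsto\langle h\oplus g,\tau_\alpha\oplus\omega_\alpha\rangle$ is immediate since these are sums of measurable scalar functions, and boundedness of $\theta_{x\oplus y}$ and $\theta_{\tau\oplus\omega}$ follows from boundedness of $\theta_x,\theta_y,\theta_\tau,\theta_\omega$.

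Next I would compute the adjoint $\theta_{\tau\oplus\omega}^*$: for $f\in\mathcal{L}^2(\Omega,\mathbb{K})$, testing against $h\oplus g$ gives $\langle\theta_{\tau\oplus\omega}^*f,h\oplus g\rangle=\langle f,\theta_\tau h+\theta_\omega g\rangle=\langle\theta_\tau^*f,h\rangle+\langle\theta_\omega^*f,g\rangle=\langle\theta_\tau^*f\oplus\theta_\omega^*f,h\oplus g\rangle$, so $\theta_{\tau\oplus\omega}^*f=\theta_\tau^*f\oplus\theta_\omega^*f$. Combining this with the previous step and the identity $S_{x\oplus y,\tau\oplus\omega}=\theta_{\tau\oplus\omega}^*\theta_{x\oplus y}$ (established above for the scalar setting), one obtains $S_{x\oplus y,\tau\oplus\omega}(h\oplus g)=\theta_\tau^*(\theta_x h+\theta_y g)\oplus\theta_\omega^*(\theta_x h+\theta_y g)=(\theta_\tau^*\theta_x h+\theta_\tau^*\theta_y g)\oplus(\theta_\omega^*\theta_x h+\theta_\omega^*\theta_y g)$.

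Now the orthogonality hypothesis enters: by definition $(\{y_\alpha\},\{\omega_\alpha\})$ orthogonal to $(\{x_\alpha\},\{\tau_\alpha\})$ means $\theta_\omega^*\theta_x=\theta_y^*\theta_\tau=0$, and taking adjoints also yields $\theta_x^*\theta_\omega=\theta_\tau^*\theta_y=0$. Hence the two cross terms $\theta_\tau^*\theta_y g$ and $\theta_\omega^*\theta_x h$ vanish and $S_{x\oplus y,\tau\oplus\omega}(h\oplus g)=S_{x,\tau}h\oplus S_{y,\omega}g$, i.e. $S_{x\oplus y,\tau\oplus\omega}=S_{x,\tau}\oplus S_{y,\omega}$. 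Since $S_{x,\tau}$ and $S_{y,\omega}$ are bounded, positive and invertible, so is the direct sum, with inverse $S_{x,\tau}^{-1}\oplus S_{y,\omega}^{-1}$; together with the measurability and boundedness already checked, this verifies all three conditions of Definition \ref{SEQUENTIAL2}, so $(\{x_\alpha\oplus y_\alpha\}_{\alpha\in\Omega},\{\tau_\alpha\oplus\omega_\alpha\}_{\alpha\in\Omega})$ is a continuous frame for $\mathcal{H}\oplus\mathcal{H}$ and the two frames are disjoint. Finally, if both frames are Parseval then $S_{x,\tau}=I_\mathcal{H}=S_{y,\omega}$, whence $S_{x\oplus y,\tau\oplus\omega}=I_{\mathcal{H}\oplus\mathcal{H}}$, which is exactly the Parseval condition. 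There is no genuine obstacle here; the only step that requires a little care is the bookkeeping in identifying $\theta_{\tau\oplus\omega}^*$ and keeping track of which cross terms the orthogonality hypothesis annihilates.
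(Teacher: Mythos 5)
Your proposal is correct and follows essentially the same route as the paper: compute $\theta_{x\oplus y}(h\oplus g)=\theta_x h+\theta_y g$ and $\theta_{\tau\oplus\omega}^*f=\theta_\tau^*f\oplus\theta_\omega^*f$, then use orthogonality to kill the cross terms and obtain $S_{x\oplus y,\tau\oplus\omega}=S_{x,\tau}\oplus S_{y,\omega}$, which is bounded, positive and invertible (and equals $I_{\mathcal{H}\oplus\mathcal{H}}$ in the Parseval case). No gaps.
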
 
\begin{proof}
For all $h\oplus g \in \mathcal{H}\oplus\mathcal{H} $, $\theta_{x\oplus y}(h\oplus g)(\alpha)=\langle h\oplus g,x_\alpha\oplus y_\alpha \rangle =\langle h,x_\alpha \rangle+\langle  g, y_\alpha \rangle=\theta_xh(\alpha)+\theta_yg(\alpha)=(\theta_xh+\theta_yg)(\alpha)$ and for all $f \in \mathcal{L}^2(\Omega, \mathbb{K})$, $\langle \theta_{\tau\oplus \omega}^*f, h\oplus g\rangle =\langle f,\theta_{\tau\oplus \omega}(h\oplus g)\rangle=\langle \theta_\tau^*f, h \rangle+\langle\theta_\omega^*f,  g \rangle=\langle \theta_\tau^*f\oplus\theta_\omega^*f, h\oplus g\rangle .$ Thus  $S_{x\oplus y,\tau\oplus\omega}(h\oplus g)=\theta^*_{\tau\oplus\omega}\theta_{x\oplus y}(h\oplus g)=\theta^*_{\tau\oplus\omega}(\theta_xh+\theta_yg)=\theta_\tau^*(\theta_xh+\theta_yg)\oplus\theta_\omega^*(\theta_xh+\theta_yg)=(S_{x ,\tau}+0)\oplus (0+S_{ y,\omega})=S_{x ,\tau}\oplus S_{ y,\omega}$, which is bounded positive invertible with $S_{x\oplus y,\tau\oplus\omega}^{-1}=S_{x ,\tau}^{-1}\oplus S_{ y,\omega}^{-1}$. 
\end{proof}
  
\textbf{Characterization} 
\begin{theorem}\label{SEQUENTIALCHARACTERIZATIONHILBERT2}
Let $\{x_\alpha\}_{\alpha\in\Omega},\{\tau_\alpha\}_{\alpha\in\Omega}$ be in $ \mathcal{H}$ such that for each  $h \in \mathcal{H}$, both    maps   $\Omega \ni \alpha \mapsto\langle  h, x_\alpha\rangle \in \mathbb{K}$, $\Omega \ni\alpha \mapsto \langle  h, \tau_\alpha \rangle \in\mathbb{K}$ are measurable. Then $(\{x_\alpha\}_{\alpha\in\Omega},\{\tau_\alpha\}_{\alpha\in\Omega})$  is a continuous frame with bounds $ a $ and $ b$ (resp. Bessel with bound $ b$)
 \begin{enumerate}[\upshape(i)]
\item  if and only if 
$$U: \mathcal{L}^2(\Omega, \mathbb{K})\ni f\mapsto\int_{\Omega}f(\alpha)x_\alpha \,d \mu(\alpha) \in \mathcal{H}, ~\text{and} ~ V:\mathcal{L}^2(\Omega, \mathbb{K})\ni g\mapsto\int_{\Omega}g(\alpha)\tau_\alpha \,d \mu(\alpha) \in \mathcal{H} $$ 
are well-defined, $ U,V \in  \mathcal{B}(\mathcal{L}^2(\Omega, \mathbb{K}), \mathcal{H})$ such that $ aI_\mathcal{H}\leq VU^*\leq bI_\mathcal{H}$   (resp. $ 0\leq VU^*\leq bI_\mathcal{H}).$
\item  if and only if 
$$U: \mathcal{L}^2(\Omega, \mathbb{K})\ni f\mapsto\int_{\Omega}f(\alpha)x_\alpha \,d \mu(\alpha) \in \mathcal{H}, ~\text{and} ~ S: \mathcal{H} \ni x\mapsto Sx \in \mathcal{L}^2(\Omega, \mathbb{K}),~ Sx: \Omega \ni \alpha \mapsto \langle x,\tau_\alpha\rangle   \in \mathbb{K} $$ 
are well-defined, $ U \in  \mathcal{B}(\mathcal{L}^2(\Omega, \mathbb{K}), \mathcal{H})$, $ S \in  \mathcal{B}(\mathcal{H}, \mathcal{L}^2(\Omega, \mathbb{K}))$  such that  $ aI_\mathcal{H}\leq S^*U^*\leq bI_\mathcal{H}$ (resp.  $ 0\leq S^*U^*\leq bI_\mathcal{H}).$
\item   if and only if 
$$ R: \mathcal{H} \ni h\mapsto  Rh \in \mathcal{L}^2(\Omega, \mathbb{K}),Rh: \Omega \ni \alpha \mapsto \langle h, x_\alpha \rangle \in \mathbb{K}, ~\text{and} ~  V:\mathcal{L}^2(\Omega, \mathbb{K})\ni g\mapsto\int_{\Omega}g(\alpha)\tau_\alpha \,d \mu(\alpha) \in \mathcal{H} $$ 
are well-defined, $ R \in  \mathcal{B}(\mathcal{H}, \mathcal{L}^2(\Omega, \mathbb{K}))$, $ V \in  \mathcal{B}(\mathcal{L}^2(\Omega, \mathbb{K}), \mathcal{H})$ such that  $ aI_\mathcal{H}\leq VR\leq bI_\mathcal{H}$  (resp.  $ 0\leq VR\leq bI_\mathcal{H} ).$
\item  if and only if 
$$ R: \mathcal{H} \ni h\mapsto  Rh \in \mathcal{L}^2(\Omega, \mathbb{K}),Rh: \Omega \ni \alpha \mapsto \langle h, x_\alpha \rangle \in \mathbb{K} ,~\text{and} ~  S: \mathcal{H} \ni x\mapsto Sx\in \mathcal{L}^2(\Omega, \mathbb{K}),~ Sx: \Omega \ni \alpha \mapsto \langle x, \tau_\alpha \rangle \in \mathbb{K} $$
are well-defined, $ R,S \in  \mathcal{B}(\mathcal{H}, \mathcal{L}^2(\Omega, \mathbb{K}))$ such that  $ aI_\mathcal{H}\leq S^*R\leq bI_\mathcal{H}$ (resp.   $ 0\leq S^*R\leq bI_\mathcal{H}).$   
\end{enumerate}
\end{theorem}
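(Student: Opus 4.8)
The plan is to reduce the statement to the operator-valued characterization already proved in Theorem \ref{OPERATORCHARACTERIZATIONHILBERT2}, exactly as the sequential definitions were reduced to operator-valued ones in Theorem \ref{OVFTOSEQUENCEANDVICEVERSATHEOREM}. First I would put $A_\alpha:\mathcal{H}\ni h\mapsto\langle h,x_\alpha\rangle\in\mathbb{K}$ and $\Psi_\alpha:\mathcal{H}\ni h\mapsto\langle h,\tau_\alpha\rangle\in\mathbb{K}$ for all $\alpha\in\Omega$, and record the elementary identities $A_\alpha^*c=c\,x_\alpha$, $\Psi_\alpha^*c=c\,\tau_\alpha$ for $c\in\mathbb{K}$ (a one-line conjugate-linearity check). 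These give $\int_\Omega A_\alpha^*f(\alpha)\,d\mu(\alpha)=\int_\Omega f(\alpha)x_\alpha\,d\mu(\alpha)$ and $\int_\Omega\Psi_\alpha^*g(\alpha)\,d\mu(\alpha)=\int_\Omega g(\alpha)\tau_\alpha\,d\mu(\alpha)$, so the operators $U,V$ in the statement are precisely $\theta_x^*,\theta_\tau^*$, while the operators $R,S$ are precisely $\theta_x,\theta_\tau$ (indeed $Rh(\alpha)=\langle h,x_\alpha\rangle$ and $Sx(\alpha)=\langle x,\tau_\alpha\rangle$ match the formulas in (ii)--(iv)). By Theorem \ref{OVFTOSEQUENCEANDVICEVERSATHEOREM} and its proof, $(\{x_\alpha\}_{\alpha\in\Omega},\{\tau_\alpha\}_{\alpha\in\Omega})$ is a continuous frame (resp. Bessel) for $\mathcal{H}$ if and only if $(\{A_\alpha\}_{\alpha\in\Omega},\{\Psi_\alpha\}_{\alpha\in\Omega})$ is a continuous (ovf) (resp. Bessel) in $\mathcal{B}(\mathcal{H},\mathbb{K})$, and the frame operators coincide: $S_{x,\tau}=S_{A,\Psi}$.

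Next I would simply invoke Theorem \ref{OPERATORCHARACTERIZATIONHILBERT2} for $(\{A_\alpha\}_{\alpha\in\Omega},\{\Psi_\alpha\}_{\alpha\in\Omega})$ in $\mathcal{B}(\mathcal{H},\mathbb{K})$. Its items (i)--(iv) say that $(\{A_\alpha\},\{\Psi_\alpha\})$ is a continuous (ovf) with bounds $a,b$ (resp. Bessel with bound $b$) iff the relevant pair among $\theta_A^*,\theta_\Psi^*,\theta_A,\theta_\Psi$ is well-defined and bounded and the corresponding operator $VU^*$, $S^*U^*$, $VR$, or $S^*R$ satisfies $aI_\mathcal{H}\le(\cdot)\le bI_\mathcal{H}$ (resp. $0\le(\cdot)\le bI_\mathcal{H}$). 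Under the dictionary of the first paragraph each of these operators equals $\theta_\Psi^*\theta_A=S_{A,\Psi}=S_{x,\tau}$, so the four operator-valued conditions become verbatim conditions (i)--(iv) of the present theorem; and since $\langle S_{x,\tau}h,h\rangle=\int_\Omega\langle h,x_\alpha\rangle\langle\tau_\alpha,h\rangle\,d\mu(\alpha)$, the sandwich inequalities are exactly the continuous-frame bound inequalities, so the bounds match on the nose.

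The only point that is not pure bookkeeping, and which I would either spell out or cite from the discussion preceding Definition \ref{1} (specialized to $\mathcal{H}_0=\mathbb{K}$), is that under the standing measurability hypothesis the map $f\mapsto\int_\Omega f(\alpha)x_\alpha\,d\mu(\alpha)$ is a well-defined bounded operator $\mathcal{L}^2(\Omega,\mathbb{K})\to\mathcal{H}$ if and only if $\theta_x:h\mapsto(\alpha\mapsto\langle h,x_\alpha\rangle)$ is a well-defined bounded operator $\mathcal{H}\to\mathcal{L}^2(\Omega,\mathbb{K})$, the two being adjoints of one another; the same for $\tau$. This is the $\mathcal{H}_0=\mathbb{K}$ case of the equivalence between Condition (ii) of Definition \ref{FIRSTDEFINITION} and boundedness of the analysis operator noted earlier. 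Given that, the four equivalences follow simultaneously by choosing in each item which member of the pair $(\theta_x,\theta_x^*)$ and of $(\theta_\tau,\theta_\tau^*)$ to display; I expect no real obstacle — the argument is the verbatim $\mathcal{H}_0=\mathbb{K}$ specialization of the (short) proof of Theorem \ref{OPERATORCHARACTERIZATIONHILBERT2}.
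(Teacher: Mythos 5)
Your proposal is correct and is essentially the paper's own argument: the paper's proof consists precisely of the identifications $U=\theta_x^*$, $V=\theta_\tau^*$ (and $R=\theta_x$, $S=\theta_\tau$) together with $VU^*=\theta_\tau^*\theta_x=S_{x,\tau}$, which is exactly your dictionary. Routing the bookkeeping through Theorem \ref{OVFTOSEQUENCEANDVICEVERSATHEOREM} and Theorem \ref{OPERATORCHARACTERIZATIONHILBERT2} rather than writing the two-line specialization directly is only a difference in packaging, and your explicit remark that boundedness of the synthesis map is equivalent to boundedness of the analysis map (the two being adjoints) fills in the one point the paper leaves tacit.
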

\begin{proof}
We prove the first one for continuous Bessel, others are similar. 
 
$(\Rightarrow)$ $U=\theta_x^*$, $V=\theta_\tau^*$ and $VU^*=\theta_\tau^*\theta_x=S_{x,\tau}$. $ (\Leftarrow)$ $\theta_x=U^*$, $\theta_\tau=V^*$ and $S_{x,\tau}=\theta_\tau^*\theta_x=VU^*$.
\end{proof}
 
\textbf{Similarity}
\begin{definition}
 A continuous frame  $ (\{y_\alpha\}_{\alpha\in  \Omega},\{\omega_\alpha\}_{\alpha\in  \Omega})$ for $ \mathcal{H}$ is said to be  similar to a continuous frame $ (\{x_\alpha\}_{\alpha\in  \Omega},\{\tau_\alpha\}_{\alpha\in  \Omega})$ for $ \mathcal{H}$ if there are invertible operators $ T_{x,y}, T_{\tau,\omega} \in \mathcal{B}(\mathcal{H})$ such that $ y_\alpha=T_{x,y}x_\alpha, \omega_\alpha=T_{\tau,\omega}\tau_\alpha,   \forall \alpha \in  \Omega.$
 \end{definition}     
   
 \begin{proposition}
 Let $ \{x_\alpha\}_{\alpha\in  \Omega}\in \mathscr{F}_\tau$  with frame bounds $a, b,$  let $T_{x,y} , T_{\tau,\omega}\in \mathcal{B}(\mathcal{H})$ be positive, invertible, commute with each other, commute with $ S_{x, \tau}$, and let $y_\alpha=T_{x,y}x_\alpha , \omega_\alpha=T_{\tau,\omega}\tau_\alpha,  \forall \alpha \in  \Omega.$ Then 
 \begin{enumerate}[\upshape(i)]
 \item $ \{y_\alpha\}_{\alpha\in  \Omega}\in \mathscr{F}_\tau$ and $ \frac{a}{\|T_{x,y}^{-1}\|\|T_{\tau,\omega}^{-1}\|}\leq S_{y, \omega} \leq b\|T_{x,y}T_{\tau,\omega}\|.$ Assuming that $ (\{x_\alpha\}_{\alpha\in  \Omega},\{\tau_\alpha\}_{\alpha\in  \Omega})$ is Parseval, then $(\{y_\alpha\}_{\alpha\in  \Omega},  \{\omega_\alpha\}_{\alpha\in  \Omega})$ is Parseval  if and only if   $ T_{\tau, \omega}T_{x,y}=I_\mathcal{H}.$  
 \item $ \theta_y=\theta_x T_{x,y}, \theta_\omega=\theta_\tau T_{\tau,\omega}, S_{y,\omega}=T_{\tau,\omega}S_{x, \tau}T_{x,y},  P_{y,\omega} =P_{x, \tau}.$
 \end{enumerate}
 \end{proposition}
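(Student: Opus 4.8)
The plan is to follow the proof of the operator-valued analogue, Proposition \ref{RIGHTSIMILARITYPROPOSITIONOPERATORVERSION}, specialised to $\mathcal{H}_0=\mathbb{K}$ (one could instead transport everything through Theorem \ref{OVFTOSEQUENCEANDVICEVERSATHEOREM}, but a direct argument is cleaner). I would first dispatch part (ii), which carries most of the content. For each $h\in\mathcal{H}$ and $\alpha\in\Omega$, using that $T_{x,y}$ is positive and hence self-adjoint, $\theta_y h(\alpha)=\langle h,y_\alpha\rangle=\langle h,T_{x,y}x_\alpha\rangle=\langle T_{x,y}h,x_\alpha\rangle=(\theta_x T_{x,y})h(\alpha)$, so $\theta_y=\theta_x T_{x,y}$, and likewise $\theta_\omega=\theta_\tau T_{\tau,\omega}$. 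Taking adjoints and composing gives $S_{y,\omega}=\theta_\omega^*\theta_y=T_{\tau,\omega}^*\theta_\tau^*\theta_x T_{x,y}=T_{\tau,\omega}S_{x,\tau}T_{x,y}$, and then $P_{y,\omega}=\theta_y S_{y,\omega}^{-1}\theta_\omega^*=\theta_x T_{x,y}(T_{\tau,\omega}S_{x,\tau}T_{x,y})^{-1}T_{\tau,\omega}\theta_\tau^*=\theta_x S_{x,\tau}^{-1}\theta_\tau^*=P_{x,\tau}$ after cancelling the invertible outer factors; for this last identity only invertibility of the $T$'s is used, not the commutation hypotheses.

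For part (i) the hypotheses that $T_{x,y},T_{\tau,\omega}$ are positive, commute with each other and with $S_{x,\tau}$ come into play. Set $P\coloneqq T_{\tau,\omega}T_{x,y}$; being a product of commuting positive operators it is positive and invertible, and it commutes with $S_{x,\tau}$, so $S_{y,\omega}=PS_{x,\tau}$ is positive and invertible, which shows $\{y_\alpha\}_{\alpha\in\Omega}$ is a continuous frame with respect to $\{\omega_\alpha\}_{\alpha\in\Omega}$ (the measurability clause of Definition \ref{SEQUENTIAL2}(i) passes from $\{x_\alpha\},\{\tau_\alpha\}$ to $\{y_\alpha\},\{\omega_\alpha\}$ since the latter are images under fixed bounded operators). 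Writing $S_{y,\omega}=P^{1/2}S_{x,\tau}P^{1/2}$ and using $aI_\mathcal{H}\le S_{x,\tau}\le bI_\mathcal{H}$ gives $aP\le S_{y,\omega}\le bP$; combining this with $\|P^{-1}\|^{-1}I_\mathcal{H}\le P\le\|P\|I_\mathcal{H}$, together with $\|P^{-1}\|=\|T_{x,y}^{-1}T_{\tau,\omega}^{-1}\|\le\|T_{x,y}^{-1}\|\,\|T_{\tau,\omega}^{-1}\|$ and $\|P\|=\|T_{x,y}T_{\tau,\omega}\|$, yields the stated two-sided estimate. Finally, if $(\{x_\alpha\}_{\alpha\in\Omega},\{\tau_\alpha\}_{\alpha\in\Omega})$ is Parseval then $S_{x,\tau}=I_\mathcal{H}$, so $S_{y,\omega}=T_{\tau,\omega}T_{x,y}$, and this operator is the identity --- i.e. $(\{y_\alpha\}_{\alpha\in\Omega},\{\omega_\alpha\}_{\alpha\in\Omega})$ is Parseval --- precisely when $T_{\tau,\omega}T_{x,y}=I_\mathcal{H}$.

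The only delicate point is the chain of operator inequalities in part (i): one must note that $P^{1/2}$ commutes with $S_{x,\tau}$ (via the continuous functional calculus, since $P$ does), that conjugating $aI_\mathcal{H}\le S_{x,\tau}$ by $P^{1/2}$ gives $aP\le S_{y,\omega}$, and that a positive invertible operator $P$ satisfies $P\ge\|P^{-1}\|^{-1}I_\mathcal{H}$. This is exactly where positivity and the commutation hypotheses are indispensable: without them $T_{\tau,\omega}T_{x,y}$ need not be self-adjoint and the comparison $S_{y,\omega}\ge aP$ would break down. Everything else is routine bookkeeping with adjoints, essentially identical to the proof of Proposition \ref{RIGHTSIMILARITYPROPOSITIONOPERATORVERSION}.
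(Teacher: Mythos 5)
Your proof is correct and follows essentially the same route as the paper, whose own proof records only the single identity $S_{y,\omega}=T_{\tau,\omega}S_{x,\tau}T_{x,y}$ (obtained there by a direct inner-product/integral computation rather than via $\theta_y=\theta_x T_{x,y}$, $\theta_\omega=\theta_\tau T_{\tau,\omega}$); the remaining details you supply --- the $P^{1/2}$-conjugation for the frame bounds, the cancellation giving $P_{y,\omega}=P_{x,\tau}$, and the Parseval criterion --- are exactly what the paper leaves implicit. The only cosmetic mismatch is that the statement writes $\{y_\alpha\}_{\alpha\in\Omega}\in\mathscr{F}_\tau$ while you (reasonably, in line with the operator-valued analogue) establish membership in $\mathscr{F}_\omega$; both in fact hold, since $S_{y,\tau}=S_{x,\tau}T_{x,y}$ is likewise a product of commuting positive invertible operators.
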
     
 \begin{proof}
 For all $ h, g \in \mathcal{H},$ 
 
 \begin{align*}
 &\langle T_{\tau,\omega}S_{x,\tau}T_{x,y}h, g\rangle=\langle S_{x,\tau}T_{x,y}h, T_{\tau,\omega}^*g\rangle=\int_{\Omega}\langle  T_{x,y}h, x_\alpha \rangle \langle \tau_\alpha, T_{\tau,\omega}^*g\rangle \,d\mu(\alpha)\\
 &=\int_{\Omega}\langle  h, T_{x,y} x_\alpha \rangle \langle T_{\tau,\omega}\tau_\alpha, g\rangle \,d\mu(\alpha)=\int_{\Omega}\langle h,y_\alpha \rangle \langle \omega_\alpha , g \rangle \,d\mu(\alpha)=\langle S_{y,\omega}h, g\rangle.
 \end{align*}
 \end{proof}     
 \begin{lemma}\label{SEQUENTIALSIMILARITYLEMMA}
 Let $ \{x_\alpha\}_{\alpha\in  \Omega}\in \mathscr{F}_\tau,$ $ \{y_\alpha\}_{\alpha\in  \Omega}\in \mathscr{F}_\omega$ and   $y_\alpha=T_{x, y}x_\alpha , \omega_\alpha=T_{\tau,\omega}\tau_\alpha,  \forall \alpha \in  \Omega$, for some invertible $T_{x,y}, T_{\tau,\omega}\in \mathcal{B}(\mathcal{H}).$ Then 
 $ \theta_y=\theta_x T^*_{x,y}, \theta_\omega=\theta_\tau T^*_{\tau,\omega}, S_{y,\omega}=T_{\tau,\omega}S_{x, \tau}T_{x,y}^*,  P_{y,\omega}=P_{x, \tau}.$ Assuming that $ (\{x_\alpha\}_{\alpha\in  \Omega},\{\tau_\alpha\}_{\alpha\in  \Omega})$ is Parseval, then $ (\{y_\alpha\}_{\alpha\in  \Omega},\{\omega_\alpha\}_{\alpha\in  \Omega})$ is Parseval if and only if $T_{\tau,\omega}T_{x,y}^*=I_\mathcal{H}.$
 \end{lemma}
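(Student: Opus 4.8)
The plan is to first pin down how the analysis operators transform under the given similarity, and then read off every remaining assertion by routine operator bookkeeping. Fixing $h\in\mathcal{H}$ and $\alpha\in\Omega$, I would compute
\begin{align*}
\theta_yh(\alpha)=\langle h,y_\alpha\rangle=\langle h,T_{x,y}x_\alpha\rangle=\langle T_{x,y}^{*}h,x_\alpha\rangle=\theta_x(T_{x,y}^{*}h)(\alpha)=(\theta_xT_{x,y}^{*})h(\alpha),
\end{align*}
which forces $\theta_y=\theta_xT_{x,y}^{*}$; the identical computation with $\omega_\alpha=T_{\tau,\omega}\tau_\alpha$ gives $\theta_\omega=\theta_\tau T_{\tau,\omega}^{*}$. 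Nothing needs to be checked about measurability or boundedness here, since the hypotheses $\{x_\alpha\}_{\alpha\in\Omega}\in\mathscr{F}_\tau$ and $\{y_\alpha\}_{\alpha\in\Omega}\in\mathscr{F}_\omega$ already guarantee that $\theta_x$, $\theta_\tau$, $\theta_y$, $\theta_\omega$ are well-defined bounded operators by Definition \ref{SEQUENTIAL2}.

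Next I would feed these factorizations into the identity $S_{y,\omega}=\theta_\omega^{*}\theta_y$ (valid for any element of $\mathscr{F}_\omega$, as recorded earlier in this section) to obtain
\begin{align*}
S_{y,\omega}=\theta_\omega^{*}\theta_y=(\theta_\tau T_{\tau,\omega}^{*})^{*}(\theta_xT_{x,y}^{*})=T_{\tau,\omega}\,\theta_\tau^{*}\theta_x\,T_{x,y}^{*}=T_{\tau,\omega}S_{x,\tau}T_{x,y}^{*}.
\end{align*}
Invertibility of $T_{x,y}$ and $T_{\tau,\omega}$ then yields $S_{y,\omega}^{-1}=(T_{x,y}^{*})^{-1}S_{x,\tau}^{-1}T_{\tau,\omega}^{-1}$, and substituting $\theta_y=\theta_xT_{x,y}^{*}$, $\theta_\omega^{*}=T_{\tau,\omega}\theta_\tau^{*}$ into $P_{y,\omega}=\theta_yS_{y,\omega}^{-1}\theta_\omega^{*}$ lets the two similarity operators cancel against $S_{y,\omega}^{-1}$, leaving $P_{y,\omega}=\theta_xS_{x,\tau}^{-1}\theta_\tau^{*}=P_{x,\tau}$.

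For the final claim, if $(\{x_\alpha\}_{\alpha\in\Omega},\{\tau_\alpha\}_{\alpha\in\Omega})$ is Parseval then $S_{x,\tau}=I_\mathcal{H}$, so the displayed formula collapses to $S_{y,\omega}=T_{\tau,\omega}T_{x,y}^{*}$; since $(\{y_\alpha\}_{\alpha\in\Omega},\{\omega_\alpha\}_{\alpha\in\Omega})$ is Parseval precisely when $S_{y,\omega}=I_\mathcal{H}$, this holds if and only if $T_{\tau,\omega}T_{x,y}^{*}=I_\mathcal{H}$. I do not expect any genuine obstacle; the one point requiring care — and the only place where this differs from the operator-valued analogue Lemma \ref{SIM}, in which the similarity operators act on the right and no adjoints appear — is keeping the adjoints straight when $T_{x,y}$ and $T_{\tau,\omega}$ are moved across inner products. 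Accordingly the write-up can legitimately be compressed, as the paper does in parallel situations, to ``similar to the proof of Lemma \ref{SIM}'' with the adjoints inserted in the appropriate places.
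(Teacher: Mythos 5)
Your proposal is correct and follows the same route as the paper: the paper's proof consists of exactly the computation $\theta_yh(\alpha)=\langle h,T_{x,y}x_\alpha\rangle=\langle T_{x,y}^*h,x_\alpha\rangle=\theta_x(T_{x,y}^*h)(\alpha)$ giving $\theta_y=\theta_xT_{x,y}^*$ and $\theta_\omega=\theta_\tau T_{\tau,\omega}^*$, leaving the identities for $S_{y,\omega}$, $P_{y,\omega}$ and the Parseval equivalence implicit. You merely spell out those remaining steps (correctly), and your remark that the adjoints appear here, unlike in Lemma \ref{SIM}, is exactly the right point of care.
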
     
 \begin{proof}
 $\theta_yh(\alpha)=\langle  h, y_\alpha \rangle =\langle  h, T_{x, y}x_\alpha \rangle=\langle   T_{x, y}^*h,x_\alpha \rangle=\theta_x( T_{x, y}^*h)(\alpha)$ $\Rightarrow$ $ \theta_y=\theta_x T^*_{x,y}$. Similarly $\theta_\omega=\theta_\tau T^*_{\tau,\omega}$.	
 \end{proof}  
 \begin{theorem}\label{SEQUENTIALSIMILARITYCHARACTERIZATION}
 Let $ \{x_\alpha\}_{\alpha\in  \Omega}\in \mathscr{F}_\tau,$ $ \{y_\alpha\}_{\alpha\in  \Omega}\in \mathscr{F}_\omega.$ The following are equivalent.
 \begin{enumerate}[\upshape(i)]
 \item $y_\alpha=T_{x,y}x_\alpha , \omega_\alpha=T_{\tau, \omega}\tau_\alpha ,  \forall \alpha \in  \Omega,$ for some invertible  $ T_{x,y} ,T_{\tau, \omega} \in \mathcal{B}(\mathcal{H}). $
 \item $\theta_y=\theta_x{T'}_{x,y}^* , \theta_\omega=\theta_\tau {T'}_{\tau, \omega}^* $ for some invertible  $ {T'}_{x,y} ,{T'}_{\tau, \omega} \in \mathcal{B}(\mathcal{H}). $
 \item $P_{y,\omega}=P_{x,\tau}.$
 \end{enumerate}
 If one of the above conditions is satisfied, then  invertible operators in  $\operatorname{(i)}$ and  $\operatorname{(ii)}$ are unique and are given by  $T_{x,y}=\theta_y^*\theta_\tau S_{x,\tau}^{-1}, T_{\tau, \omega}=\theta_\omega^*\theta_x S_{x,\tau}^{-1}.$
 In the case that $(\{x_\alpha\}_{\alpha\in  \Omega},  \{\tau_\alpha\}_{\alpha\in  \Omega})$ is Parseval, then $(\{y_\alpha\}_{\alpha\in \Omega},  \{\omega_\alpha\}_{\alpha\in   \Omega})$ is  Parseval if and only if $T_{\tau, \omega}T_{x,y}^* =I_\mathcal{H}$   if and only if $ T_{x,y}^*T_{\tau, \omega} =I_\mathcal{H}$. 
 \end{theorem}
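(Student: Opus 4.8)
The plan is to mirror the proof of Theorem \ref{RIGHTSIMILARITY}, exploiting that the analysis operators $\theta_x,\theta_\tau,\theta_y,\theta_\omega$ are injective with closed range and that $\theta_\tau^*\theta_x=\theta_x^*\theta_\tau=S_{x,\tau}$, $\theta_\omega^*\theta_y=\theta_y^*\theta_\omega=S_{y,\omega}$. First I would dispose of $\operatorname{(i)}\Leftrightarrow\operatorname{(ii)}$. The implication $\operatorname{(i)}\Rightarrow\operatorname{(ii)}$ is immediate from Lemma \ref{SEQUENTIALSIMILARITYLEMMA}, which gives $\theta_y=\theta_x T_{x,y}^*$ and $\theta_\omega=\theta_\tau T_{\tau,\omega}^*$; so one may take ${T'}_{x,y}=T_{x,y}$, ${T'}_{\tau,\omega}=T_{\tau,\omega}$. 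Conversely, if $\theta_y=\theta_x{T'}_{x,y}^*$ then for every $h\in\mathcal{H}$ and $\alpha\in\Omega$,
\[
\langle h,y_\alpha\rangle=\theta_yh(\alpha)=\theta_x({T'}_{x,y}^*h)(\alpha)=\langle {T'}_{x,y}^*h,x_\alpha\rangle=\langle h,{T'}_{x,y}x_\alpha\rangle,
\]
so $y_\alpha={T'}_{x,y}x_\alpha$, and likewise $\omega_\alpha={T'}_{\tau,\omega}\tau_\alpha$; this is $\operatorname{(i)}$ with $T_{x,y}={T'}_{x,y}$, $T_{\tau,\omega}={T'}_{\tau,\omega}$. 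The same Lemma records $P_{y,\omega}=P_{x,\tau}$, giving $\operatorname{(i)}\Rightarrow\operatorname{(iii)}$.

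The core of the argument is $\operatorname{(iii)}\Rightarrow\operatorname{(ii)}$. The key observation is that $P_{x,\tau}\theta_x=\theta_x S_{x,\tau}^{-1}\theta_\tau^*\theta_x=\theta_x$ (and similarly $P_{y,\omega}\theta_y=\theta_y$), so, $P_{x,\tau}$ being a bounded idempotent, $\operatorname{range}(P_{x,\tau})=\operatorname{range}(\theta_x)$; dually, from $P_{x,\tau}^*=\theta_\tau S_{x,\tau}^{-1}\theta_x^*$ together with $\theta_x^*\theta_\tau=S_{x,\tau}$ one gets $\operatorname{range}(P_{x,\tau}^*)=\operatorname{range}(\theta_\tau)$. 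Hence the hypothesis $P_{y,\omega}=P_{x,\tau}$ forces $\operatorname{range}(\theta_y)=\operatorname{range}(\theta_x)$ and $\operatorname{range}(\theta_\omega)=\operatorname{range}(\theta_\tau)$. Since $\theta_x$ is injective with closed range it is bounded below, so $\theta_x^*\theta_x$ is invertible and $W\coloneqq(\theta_x^*\theta_x)^{-1}\theta_x^*\theta_y$ satisfies $\theta_xW=\theta_y$ (because $\theta_x(\theta_x^*\theta_x)^{-1}\theta_x^*$ is the orthogonal projection onto $\operatorname{range}(\theta_x)$, which contains $\operatorname{range}(\theta_y)$), and $W$ is the unique such operator by injectivity of $\theta_x$. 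Interchanging the roles of $x$ and $y$ produces $W'$ with $\theta_yW'=\theta_x$, whence $\theta_xWW'=\theta_x$ and $\theta_yW'W=\theta_y$ give $WW'=W'W=I_\mathcal{H}$, so $W$ is invertible and ${T'}_{x,y}\coloneqq W^*$ works. The analogous construction with $\theta_\tau,\theta_\omega$ supplies the invertible ${T'}_{\tau,\omega}$. This closes the cycle, so the three conditions are equivalent; I expect this step — identifying the range of the idempotent and manufacturing a two-sided inverse — to be the only point requiring real care, the rest being bookkeeping.

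Finally, for uniqueness and the explicit formulas I would note that if $y_\alpha=T_{x,y}x_\alpha$ then $\theta_y=\theta_x T_{x,y}^*$, so $\theta_\tau^*\theta_y=\theta_\tau^*\theta_x T_{x,y}^*=S_{x,\tau}T_{x,y}^*$, giving $T_{x,y}^*=S_{x,\tau}^{-1}\theta_\tau^*\theta_y$ and hence $T_{x,y}=\theta_y^*\theta_\tau S_{x,\tau}^{-1}$ (using $S_{x,\tau}^*=S_{x,\tau}$); symmetrically, using $\theta_x^*\theta_\tau=S_{x,\tau}$, $T_{\tau,\omega}=\theta_\omega^*\theta_x S_{x,\tau}^{-1}$. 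These expressions depend only on the two frames, which forces uniqueness, and the operators of $\operatorname{(ii)}$ coincide with them. For the Parseval clause, when $S_{x,\tau}=I_\mathcal{H}$ Lemma \ref{SEQUENTIALSIMILARITYLEMMA} already gives $S_{y,\omega}=T_{\tau,\omega}S_{x,\tau}T_{x,y}^*=T_{\tau,\omega}T_{x,y}^*$, so $(\{y_\alpha\}_{\alpha\in\Omega},\{\omega_\alpha\}_{\alpha\in\Omega})$ is Parseval iff $T_{\tau,\omega}T_{x,y}^*=I_\mathcal{H}$; and since $T_{x,y},T_{\tau,\omega}$ are invertible this is equivalent to $T_{x,y}^*=T_{\tau,\omega}^{-1}$, i.e. to $T_{x,y}^*T_{\tau,\omega}=I_\mathcal{H}$.
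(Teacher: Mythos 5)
Your proposal is correct, and where it overlaps with the paper's explicit proof it coincides: the paper only writes out $\operatorname{(ii)}\Rightarrow\operatorname{(i)}$ (with a typo, $\langle {T'}_{x,y}^*h, y_\alpha\rangle$ where $x_\alpha$ is meant — you have it right) and defers everything else to Theorem 8.45 of \cite{MAHESHKRISHNASAMJOHNSON}, whereas you supply the deferred arguments in full. Your treatment of the core step $\operatorname{(iii)}\Rightarrow\operatorname{(ii)}$ — identifying $\operatorname{range}(P_{x,\tau})=\operatorname{range}(\theta_x)$ and $\operatorname{range}(P_{x,\tau}^*)=\operatorname{range}(\theta_\tau)$, then building $W$ from the Moore--Penrose-type left inverse $(\theta_x^*\theta_x)^{-1}\theta_x^*$ and getting invertibility by symmetry — is valid; the only simplification available is to take ${T'}_{x,y}^*\coloneqq S_{x,\tau}^{-1}\theta_\tau^*\theta_y$ directly (the formula you derive anyway in the uniqueness part) and verify $\theta_x{T'}_{x,y}^*=P_{x,\tau}\theta_y=P_{y,\omega}\theta_y=\theta_y$, which avoids the range/projection detour since the frame-theoretic left inverse $S_{x,\tau}^{-1}\theta_\tau^*$ of $\theta_x$ is already on hand. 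The uniqueness formulas and the Parseval equivalences are handled exactly as Lemma \ref{SEQUENTIALSIMILARITYLEMMA} dictates; no gaps.
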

 \begin{proof}
 (ii) $\Rightarrow$ (i) For all $h \in \mathcal{H}$ and $\alpha \in \Omega$, $\langle h, y_\alpha\rangle =\theta_yh(\alpha)=\theta_x({T'}_{x,y}^*h)(\alpha)=\langle {T'}_{x,y}^*h, y_\alpha\rangle =\langle h, {T'}_{x,y} y_\alpha\rangle$ which implies $y_\alpha={T'}_{x,y}x_\alpha , \omega_\alpha={T'}_{\tau, \omega}\tau_\alpha ,  \forall \alpha \in  \Omega$. Other arguments are similar to that in the proof of Theorem 8.45 in \cite{MAHESHKRISHNASAMJOHNSON}.
 \end{proof}
 \begin{corollary}
 For any given continuous frame $ (\{x_\alpha\}_{\alpha \in  \Omega} , \{\tau_\alpha\}_{\alpha \in  \Omega})$, the canonical dual of $ (\{x_\alpha\}_{\alpha \in  \Omega} $, $ \{\tau_\alpha\}_{\alpha \in  \Omega})$ is the only dual continuous frame that is similar to $ (\{x_\alpha\}_{\alpha \in  \Omega} , \{\tau_\alpha\}_{\alpha \in  \Omega} )$.
 \end{corollary}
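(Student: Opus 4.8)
The plan is to imitate the proof of Corollary 4.5 in \cite{MAHESHKRISHNASAMJOHNSON}, the key tools being Lemma \ref{SEQUENTIALSIMILARITYLEMMA} and the identity $S_{x,\tau}=\theta_\tau^*\theta_x=\theta_x^*\theta_\tau$ recorded earlier. First I would dispose of existence: the canonical dual $(\{\widetilde{x}_\alpha=S_{x,\tau}^{-1}x_\alpha\}_{\alpha\in\Omega},\{\widetilde{\tau}_\alpha=S_{x,\tau}^{-1}\tau_\alpha\}_{\alpha\in\Omega})$ is, by its very definition, a dual continuous frame of $(\{x_\alpha\}_{\alpha\in\Omega},\{\tau_\alpha\}_{\alpha\in\Omega})$, and it is similar to it via the invertible (indeed positive invertible) operators $T_{x,\widetilde{x}}=T_{\tau,\widetilde{\tau}}=S_{x,\tau}^{-1}\in\mathcal{B}(\mathcal{H})$. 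So a dual continuous frame similar to $(\{x_\alpha\}_{\alpha\in\Omega},\{\tau_\alpha\}_{\alpha\in\Omega})$ exists.

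For uniqueness, suppose $(\{y_\alpha\}_{\alpha\in\Omega},\{\omega_\alpha\}_{\alpha\in\Omega})$ is a dual continuous frame of $(\{x_\alpha\}_{\alpha\in\Omega},\{\tau_\alpha\}_{\alpha\in\Omega})$ that is similar to it, say $y_\alpha=T_{x,y}x_\alpha$ and $\omega_\alpha=T_{\tau,\omega}\tau_\alpha$ for invertible $T_{x,y},T_{\tau,\omega}\in\mathcal{B}(\mathcal{H})$. Being a dual continuous frame, $\{y_\alpha\}_{\alpha\in\Omega}\in\mathscr{F}_\omega$, so Lemma \ref{SEQUENTIALSIMILARITYLEMMA} applies and gives $\theta_y=\theta_x T_{x,y}^*$, $\theta_\omega=\theta_\tau T_{\tau,\omega}^*$, hence $\theta_y^*=T_{x,y}\theta_x^*$ and $\theta_\omega^*=T_{\tau,\omega}\theta_\tau^*$. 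Plugging these into the dual conditions $\theta_y^*\theta_\tau=I_\mathcal{H}$ and $\theta_\omega^*\theta_x=I_\mathcal{H}$ and using $\theta_x^*\theta_\tau=\theta_\tau^*\theta_x=S_{x,\tau}$ yields $T_{x,y}S_{x,\tau}=I_\mathcal{H}=T_{\tau,\omega}S_{x,\tau}$, whence $T_{x,y}=T_{\tau,\omega}=S_{x,\tau}^{-1}$. Therefore $y_\alpha=S_{x,\tau}^{-1}x_\alpha=\widetilde{x}_\alpha$ and $\omega_\alpha=S_{x,\tau}^{-1}\tau_\alpha=\widetilde{\tau}_\alpha$ for all $\alpha\in\Omega$, i.e. $(\{y_\alpha\}_{\alpha\in\Omega},\{\omega_\alpha\}_{\alpha\in\Omega})$ coincides with the canonical dual.

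I do not expect a genuine obstacle here; the argument is a direct substitution. The only points that need a little care are making sure that the hypothesis ``dual continuous frame'' already puts $\{y_\alpha\}_{\alpha\in\Omega}$ in $\mathscr{F}_\omega$ so that Lemma \ref{SEQUENTIALSIMILARITYLEMMA} is legitimately invoked, and keeping track of the adjoints in the similarity relations — the invertible operators multiply the frame vectors, but it is their adjoints that appear in $\theta_y,\theta_\omega$, and this is exactly what forces both adjoints to equal $S_{x,\tau}^{-1}$ (a self-adjoint operator), so no inconsistency arises.
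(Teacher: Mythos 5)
Your argument is correct and is exactly the route the paper intends (its proof is a pointer to Corollary 8.46 of the predecessor paper, which runs the same way): invoke Lemma \ref{SEQUENTIALSIMILARITYLEMMA} to get $\theta_y^*=T_{x,y}\theta_x^*$ and $\theta_\omega^*=T_{\tau,\omega}\theta_\tau^*$, feed these into the duality conditions together with $S_{x,\tau}=\theta_x^*\theta_\tau=\theta_\tau^*\theta_x$, and conclude $T_{x,y}=T_{\tau,\omega}=S_{x,\tau}^{-1}$. Your two points of care — that a dual is by definition a continuous frame so the lemma applies, and that the adjoints land on $S_{x,\tau}^{-1}$ which is self-adjoint — are exactly the right ones.
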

 \begin{proof}
 Similar to the proof of Corollary 8.46 in	\cite{MAHESHKRISHNASAMJOHNSON}.
 \end{proof}
 \begin{corollary}
 Two similar  continuous frames cannot be orthogonal.
 \end{corollary}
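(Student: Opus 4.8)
The plan is a short proof by contradiction. Suppose, towards a contradiction, that a continuous frame $(\{y_\alpha\}_{\alpha\in\Omega},\{\omega_\alpha\}_{\alpha\in\Omega})$ for $\mathcal{H}$ is simultaneously similar and orthogonal to a continuous frame $(\{x_\alpha\}_{\alpha\in\Omega},\{\tau_\alpha\}_{\alpha\in\Omega})$ for $\mathcal{H}$. Similarity provides invertible operators $T_{x,y},T_{\tau,\omega}\in\mathcal{B}(\mathcal{H})$ with $y_\alpha=T_{x,y}x_\alpha$ and $\omega_\alpha=T_{\tau,\omega}\tau_\alpha$ for all $\alpha\in\Omega$; by Lemma \ref{SEQUENTIALSIMILARITYLEMMA} this yields $\theta_\omega=\theta_\tau T_{\tau,\omega}^*$, hence $\theta_\omega^*=T_{\tau,\omega}\theta_\tau^*$.

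Next I would feed this into the orthogonality hypothesis, which forces $\theta_\omega^*\theta_x=0$. Then
\[
0=\theta_\omega^*\theta_x=T_{\tau,\omega}\theta_\tau^*\theta_x=T_{\tau,\omega}S_{x,\tau},
\]
using the identity $S_{x,\tau}=\theta_\tau^*\theta_x$ recorded earlier for elements of $\mathscr{F}_\tau$. Since $(\{x_\alpha\}_{\alpha\in\Omega},\{\tau_\alpha\}_{\alpha\in\Omega})$ is a continuous frame, $S_{x,\tau}$ is a bounded positive invertible operator, and $T_{\tau,\omega}$ is invertible by hypothesis, so $T_{\tau,\omega}S_{x,\tau}$ is invertible on $\mathcal{H}\neq\{0\}$ and in particular nonzero — a contradiction.

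There is essentially no obstacle here; the only points to be careful about are that the lower frame bound guarantees $\mathcal{H}\neq\{0\}$ (so that an invertible operator on $\mathcal{H}$ is nonzero) and that $\theta_\tau^*\theta_x$ equals $S_{x,\tau}$, both already available in the excerpt. One could equally run the argument on the other defining equation of orthogonality, namely $\theta_y^*\theta_\tau=T_{x,y}\theta_x^*\theta_\tau=T_{x,y}S_{x,\tau}\neq0$, since $S_{x,\tau}=\theta_x^*\theta_\tau$ as well.
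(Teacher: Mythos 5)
Your argument is correct and is essentially the same as the paper's: similarity plus Lemma \ref{SEQUENTIALSIMILARITYLEMMA} give $\theta_\omega^*=T_{\tau,\omega}\theta_\tau^*$, so orthogonality would force $0=\theta_\omega^*\theta_x=T_{\tau,\omega}\theta_\tau^*\theta_x=T_{\tau,\omega}S_{x,\tau}$, a product of invertible operators, which is absurd. The paper itself only defers to the discrete analogue (Corollary 8.47 of the cited reference), where the same computation is carried out, so no genuinely different route is involved.
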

 \begin{proof}
Similar to the proof of Corollary 8.47 in \cite{MAHESHKRISHNASAMJOHNSON}.
 \end{proof}
 \begin{remark}
 For every continuous frame  $(\{x_\alpha\}_{\alpha \in  \Omega}, \{\tau_\alpha\}_{\alpha \in  \Omega}),$ each  of `continuous frames'  $( \{S_{x, \tau}^{-1}x_\alpha\}_{\alpha \in  \Omega}$, $ \{\tau_\alpha\}_{\alpha \in  \Omega})$,    $(\{S_{x, \tau}^{-1/2}x_\alpha\}_{\alpha \in  \Omega}, \{S_{x,\tau}^{-1/2}\tau_\alpha\}_{\alpha \in  \Omega}),$ and  $ (\{x_\alpha \}_{\alpha \in  \Omega}, \{S_{x,\tau}^{-1}\tau_\alpha\}_{\alpha \in  \Omega})$ is a  Parseval continuous  frame  which is similar to  $ (\{x_\alpha\}_{\alpha \in  \Omega} , \{\tau_\alpha\}_{\alpha \in  \Omega}).$  Hence each continuous frame is similar to  Parseval continuous  frames.
\end{remark}

\textbf{Continuous frames and   representations of locally compact  groups}

Let $G$, $d\mu_G$, $\lambda$, $\rho$ be as in Section \ref{FRAMESANDDISCRETEGROUPREPRESENTATIONS} and $\mathcal{H}_0=\mathbb{K}$. We denote the von Neumann algebra generated by unitaries $\{\lambda_g\}_{g\in G} $ (resp. $\{\rho_g\}_{g\in G} $ )  in $ \mathcal{B}(\mathcal{L}^2(G,\mathbb{K}))$ by $\mathscr{L}(G) $ (resp. $\mathscr{R}(G) $). Then $\mathscr{L}(G)'=\mathscr{R}(G)$ and $\mathscr{R}(G)'=\mathscr{L}(G)$ \cite{TAKESAKI2}.
\begin{definition}
Let $ \pi$ be a unitary representation of a locally compact group $ G$ on a Hilbert space $ \mathcal{H}.$ An element  $ x$ in $ \mathcal{H}$ is called a continuous frame generator (resp. a  Parseval frame generator) w.r.t. $ \tau$ in $ \mathcal{H}$  if $(\{x_g\coloneqq  \pi_{g}x\}_{g\in G},\{\tau_g\coloneqq  \pi_{g}\tau\}_{g\in G}) $ is a  continuous frame  (resp. Parseval frame) for  $ \mathcal{H}$.
In this case we write $ (x,\tau)$ is a frame generator for $\pi$. 
\end{definition}
\begin{proposition}\label{REPRESENATIONLEMMAGROUP}
Let $ (x,\tau)$ and $ (y,\omega)$ be  frame generators    in $\mathcal{H}$ for a unitary representation $ \pi$ of  $G$ on $ \mathcal{H}.$ Then
\begin{enumerate}[\upshape(i)]
 \item $ \theta_x\pi_g=\lambda_g\theta_x,  \theta_\tau \pi_g=\lambda_g\theta_\tau,   \forall g \in G.$
 \item $ \theta_x^*\theta_y,   \theta_\tau^*\theta_\omega,\theta_x^*\theta_\omega$ are in the commutant $ \pi(G)'$ of $ \pi(G)''.$ Further, $ S_{x,\tau} \in \pi(G)'$ and $(S_{x, \tau}^{-1/2}x, S_{x, \tau}^{-1/2}\tau)$ is a Parseval frame generator. 
 \item $ \theta_xT\theta_\tau^*,\theta_xT\theta_y^*, \theta_\tau T\theta_\omega^* \in \mathscr{R}(G), \forall T \in \pi (G)'.$ In particular, $ P_{x, \tau} \in \mathscr{R}(G). $
 \end{enumerate}
 \end{proposition}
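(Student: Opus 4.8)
The plan is to mirror the structure of the proof of Proposition \ref{REPRESENATIONLEMMA}, specializing to the scalar case $\mathcal{H}_0 = \mathbb{K}$, and then to add the third part, which is the genuinely new ingredient here since it involves the commutation theory for $\mathscr{L}(G)$ and $\mathscr{R}(G)$. For part (i), I would compute $\langle \lambda_g \theta_x h, f\rangle$ for $h \in \mathcal{H}$ and $f \in \mathcal{L}^2(G,\mathbb{K})$ exactly as in Proposition \ref{REPRESENATIONLEMMA}(i): expand $\lambda_g\theta_x h(\alpha) = \theta_x h(g^{-1}\alpha) = \langle h, x_{g^{-1}\alpha}\rangle = \langle h, \pi_{g^{-1}\alpha}x\rangle$, use $\pi_{g^{-1}\alpha} = \pi_{g^{-1}}\pi_\alpha$ (hence $\langle h, \pi_{g^{-1}\alpha}x\rangle = \langle \pi_g h, \pi_\alpha x\rangle = \langle \pi_g h, x_\alpha\rangle$), and recognize the result as $\langle \theta_x \pi_g h, f\rangle$. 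The same works verbatim for $\theta_\tau$. So $\theta_x\pi_g = \lambda_g\theta_x$ and $\theta_\tau\pi_g = \lambda_g\theta_\tau$ for all $g \in G$.

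For part (ii), from (i) we get $\theta_x^*\theta_y\,\pi_g = \theta_x^*\lambda_g\theta_y = (\lambda_{g^{-1}}\theta_x)^*\theta_y = (\theta_x\pi_{g^{-1}})^*\theta_y = \pi_g\theta_x^*\theta_y$, so $\theta_x^*\theta_y$ commutes with every $\pi_g$, i.e. lies in $\pi(G)'$; the arguments for $\theta_\tau^*\theta_\omega$ and $\theta_x^*\theta_\omega$ are identical. In particular $S_{x,\tau} = \theta_\tau^*\theta_x \in \pi(G)'$ (using $S_{x,\tau} = \theta_\tau^*\theta_x$ from the scalar analogue of Proposition \ref{2.2}), hence $S_{x,\tau}^{-1}$ and $S_{x,\tau}^{-1/2}$ are in $\pi(G)'$ as well, so they commute with $\pi_g$; then $x_g S_{x,\tau}^{-1/2}$-type manipulations as in the proof of Proposition \ref{REPRESENATIONLEMMA}(ii) — or more directly, the identity $\theta_{S_{x,\tau}^{-1/2}x} = \theta_x S_{x,\tau}^{-1/2}$ from Lemma \ref{SEQUENTIALSIMILARITYLEMMA} together with $S_{x,\tau}^{-1/2} \in \pi(G)'$ — show that $(S_{x,\tau}^{-1/2}x, S_{x,\tau}^{-1/2}\tau)$ has frame operator $S_{x,\tau}^{-1/2}S_{x,\tau}S_{x,\tau}^{-1/2} = I_\mathcal{H}$, i.e. is a Parseval frame generator.

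For part (iii), let $T \in \pi(G)'$. I would show $\theta_x T\theta_\tau^*$ commutes with every $\rho_g$, which by $\mathscr{R}(G)' = \mathscr{L}(G)$... wait — rather, since $\mathscr{R}(G)$ is generated by the $\rho_g$ and a von Neumann algebra equals its bicommutant, it suffices to show $\theta_x T\theta_\tau^*$ lies in $\mathscr{L}(G)' = \mathscr{R}(G)$, i.e. that it commutes with every $\lambda_g$. Using (i): $\lambda_g\,\theta_x T\theta_\tau^* = \theta_x\pi_g T\theta_\tau^* = \theta_x T\pi_g\theta_\tau^* = \theta_x T(\theta_\tau\pi_{g^{-1}})^*\cdot(\text{care with adjoints})$ — more cleanly, $\theta_\tau^*\lambda_g = (\lambda_{g^{-1}}\theta_\tau)^* = (\theta_\tau\pi_{g^{-1}})^* = \pi_g\theta_\tau^*$, so $\theta_x T\theta_\tau^*\lambda_g = \theta_x T\pi_g\theta_\tau^* = \theta_x\pi_g T\theta_\tau^* = \lambda_g\theta_x T\theta_\tau^*$. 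Hence $\theta_x T\theta_\tau^* \in \{\lambda_g\}' = \mathscr{L}(G)' = \mathscr{R}(G)$. The same computation gives $\theta_x T\theta_y^*, \theta_\tau T\theta_\omega^* \in \mathscr{R}(G)$. Taking $T = S_{x,\tau}^{-1} \in \pi(G)'$ yields $P_{x,\tau} = \theta_x S_{x,\tau}^{-1}\theta_\tau^* \in \mathscr{R}(G)$.

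The main obstacle is purely bookkeeping: one must be careful with the adjoint/inverse relations when passing $\lambda_g$ through $\theta_\tau^*$ (the sign of $g$ flips, and one must invoke $\lambda_g^* = \lambda_{g^{-1}}$), and one must correctly cite that $\mathscr{L}(G) = \mathscr{R}(G)'$ and $\mathscr{R}(G) = \mathscr{L}(G)'$ so that "commutes with all $\lambda_g$'' upgrades to "lies in $\mathscr{R}(G)$.'' No analytic difficulty arises beyond what is already handled in Definition \ref{SEQUENTIAL2} and Proposition \ref{REPRESENATIONLEMMA}.
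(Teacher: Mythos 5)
Your proposal is correct and follows essentially the same route as the paper: part (i) is the identical integral computation, part (ii) uses the same intertwining relation $\theta_x^*\theta_y\pi_g=\pi_g\theta_x^*\theta_y$ and the same key fact $S_{x,\tau}^{-1/2}\in\pi(G)'$ (the paper verifies the Parseval claim by a direct integral computation of the new frame operator, while you route it through $\theta_{S^{-1/2}x}=\theta_xS_{x,\tau}^{-1/2}$, but these are interchangeable), and part (iii) is the standard commutant argument $\theta_xT\theta_\tau^*\in\{\lambda_g\}'=\mathscr{L}(G)'=\mathscr{R}(G)$, which is what the paper's cited reference does.
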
   
  \begin{proof} 
  \begin{enumerate}[\upshape(i)]
  \item For all $h\in \mathcal{H}$ and $f\in \mathcal{L}^2(G, \mathbb{K})$,
  
  \begin{align*}
  \langle \lambda_g\theta_xh, f\rangle &=\int_{G}\lambda_g(\theta_xh)(\alpha)\overline{f(\alpha)}\,d\mu_G(\alpha)=\int_{G}(\theta_xh)(g^{-1}\alpha)\overline{f(\alpha)}\,d\mu_G(\alpha)\\
  &=\int_{G}\langle h, x_{g^{-1}\alpha}\rangle\overline{f(\alpha)}\,d\mu_G(\alpha)=\int_{G}\langle h, \pi_{g^{-1}\alpha}x\rangle\overline{f(\alpha)}\,d\mu_G(\alpha)
  \\
  &=\int_{G}\langle \pi_gh, \pi_{\alpha}x\rangle\overline{f(\alpha)}\,d\mu_G(\alpha)=\int_{G}\langle \pi_gh, x_\alpha\rangle\overline{f(\alpha)}\,d\mu_G(\alpha)\\
  &=\int_{G}\theta_x(\pi_gh)(\alpha)\overline{f(\alpha)}\,d\mu_G(\alpha)=\langle \theta_x\pi_gh, f\rangle.
  \end{align*}
  \item First part is similar to the proof of (ii) in Proposition 8.51 in \cite{MAHESHKRISHNASAMJOHNSON}. For second, let  $h, g \in \mathcal{H}$. Then 
  
  \begin{align*}
  \left \langle S_{S_{x, \tau}^{-\frac{1}{2}}x,S_{x, \tau}^{-\frac{1}{2}}\tau}h, g\right \rangle &=\int_{G}\langle h, \pi_\alpha S_{x, \tau}^{-\frac{1}{2}}x \rangle\langle \pi_\alpha S_{x, \tau}^{-\frac{1}{2}}\tau, g\rangle\,d\mu_G(\alpha)\\
  &=\int_{G}\langle h,  S_{x, \tau}^{-\frac{1}{2}} \pi_\alpha x \rangle\langle  S_{x, \tau}^{-\frac{1}{2}} \pi_\alpha\tau, g\rangle\,d\mu_G(\alpha)
 \\
 & =\int_{G}\langle   S_{x, \tau}^{-\frac{1}{2}}h,  \pi_\alpha x \rangle\langle \pi_\alpha\tau,  S_{x, \tau}^{-\frac{1}{2}}  g\rangle\,d\mu_G(\alpha)\\
  &=\langle S_{x, \tau}  (S_{x, \tau}^{-\frac{1}{2}}  h), S_{x, \tau}^{-\frac{1}{2}}  g\rangle=\langle h, g\rangle.
  \end{align*}
 \item Similar to the proof of (iii) in Proposition 8.51 in \cite{MAHESHKRISHNASAMJOHNSON}.
\end{enumerate}
\end{proof} 
\begin{theorem}\label{GROUPC1}
Let $ G$ be a locally compact group with identity $ e$ and $( \{x_g\}_{g\in G},  \{\tau_g\}_{g\in G})$ be a   Parseval continuous frame  for  $\mathcal{H}.$ Then there is a unitary representation $ \pi$  of $ G$ on  $ \mathcal{H}$  for which 
$$ x_g=\pi_gx_e, ~\tau_g=\pi_g\tau_e ,\quad \forall g \in G$$ 
 if and only if 
$$\langle x_{gp} , x_{gq}\rangle=\langle x_p, x_q \rangle, ~ \langle x_{gp},  \tau_{gq}\rangle=\langle x_p, \tau_q \rangle, ~\langle \tau_{gp} , \tau_{gq}\rangle=\langle \tau_p, \tau_q \rangle,  \quad \forall  g,p,q \in G.  $$
\end{theorem}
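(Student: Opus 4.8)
The plan is to run the argument of Theorem \ref{gc1} in the scalar case. Put $A_\alpha := \langle \cdot, x_\alpha\rangle$ and $\Psi_\alpha := \langle \cdot, \tau_\alpha\rangle$, so that (by Theorem \ref{OVFTOSEQUENCEANDVICEVERSATHEOREM}) $(\{x_\alpha\}_{\alpha\in\Omega},\{\tau_\alpha\}_{\alpha\in\Omega})$ being a Parseval continuous frame for $\mathcal{H}$ is the same as $(\{A_\alpha\}_{\alpha\in G},\{\Psi_\alpha\}_{\alpha\in G})$ being a Parseval continuous (ovf) in $\mathcal{B}(\mathcal{H},\mathbb{K})$, with $\theta_A=\theta_x$, $\theta_\Psi=\theta_\tau$, $A_g = \pi_g x_e$-type conclusions matching $x_g=\pi_gx_e$, and the three scalar identities in the statement being exactly the specializations $A_{gp}A_{gq}^*=A_pA_q^*$, $A_{gp}\Psi_{gq}^*=A_p\Psi_q^*$, $\Psi_{gp}\Psi_{gq}^*=\Psi_p\Psi_q^*$. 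Hence one may either quote Theorem \ref{gc1} directly or transcribe its proof; I sketch the latter. For the ``only if'' direction, if $x_g=\pi_gx_e$ and $\tau_g=\pi_g\tau_e$ for a unitary representation $\pi$, then $\pi_{gq}^*\pi_{gp}=\pi_{(gq)^{-1}(gp)}=\pi_{q^{-1}p}=\pi_q^*\pi_p$, so $\langle x_{gp},x_{gq}\rangle=\langle \pi_{gq}^*\pi_{gp}x_e,x_e\rangle=\langle \pi_q^*\pi_px_e,x_e\rangle=\langle x_p,x_q\rangle$, and the same computation with $x_e$ replaced by $\tau_e$ in one or both slots gives the mixed and $\tau$-identities.

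For the converse I would define $\pi:G\ni g\mapsto \pi_g:=\theta_\tau^*\lambda_g\theta_x\in\mathcal{B}(\mathcal{H})$. The first step is to establish the three commutation relations $\lambda_g\theta_x\theta_x^*=\theta_x\theta_x^*\lambda_g$, $\lambda_g\theta_x\theta_\tau^*=\theta_x\theta_\tau^*\lambda_g$, $\lambda_g\theta_\tau\theta_\tau^*=\theta_\tau\theta_\tau^*\lambda_g$ for every $g\in G$. For the middle one, for $u,v\in\mathcal{L}^2(G,\mathbb{K})$ write $\langle \lambda_g\theta_x\theta_\tau^*\lambda_g^*u,v\rangle=\langle \theta_\tau^*\lambda_{g^{-1}}u,\theta_x^*\lambda_{g^{-1}}v\rangle$, use $\lambda_{g^{-1}}u(\alpha)=u(g\alpha)$, change variables $p=g\alpha$, $q=g\beta$ (left-invariance of $\mu_G$), and replace $\langle \tau_{g^{-1}p},x_{g^{-1}q}\rangle$ by $\langle \tau_p,x_q\rangle$ (the conjugate of the mixed hypothesis) to recover $\langle \theta_x\theta_\tau^*u,v\rangle$; the other two are identical using the $x$- and $\tau$-hypotheses. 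With these in hand and the Parseval identities $\theta_\tau^*\theta_x=\theta_x^*\theta_\tau=I_\mathcal{H}$, one gets $\pi_g\pi_h=\theta_\tau^*\lambda_g\theta_x\theta_\tau^*\lambda_h\theta_x=\theta_\tau^*\theta_x\theta_\tau^*\lambda_{gh}\theta_x=\pi_{gh}$ and $\pi_g\pi_g^*=\theta_\tau^*\theta_x\theta_x^*\theta_\tau=I_\mathcal{H}=\theta_x^*\theta_\tau\theta_\tau^*\theta_x=\pi_g^*\pi_g$; strong continuity of $g\mapsto\pi_gh$ follows from strong continuity of $\lambda$ on the fixed vector $\theta_xh$ together with boundedness of $\theta_\tau^*$, exactly as in Theorem \ref{gc1}, so $\pi$ is a unitary representation.

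It remains to identify $x_g$ and $\tau_g$. For $h\in\mathcal{H}$, $\langle \pi_{g^{-1}}h,x_e\rangle=\langle \theta_\tau^*\lambda_{g^{-1}}\theta_xh,x_e\rangle=\int_G\langle h,x_{g\alpha}\rangle\langle \tau_\alpha,x_e\rangle\,d\mu_G(\alpha)$; the substitution $\beta=g\alpha$, left-invariance, and the mixed hypothesis in the form $\langle \tau_{g^{-1}\beta},x_e\rangle=\langle \tau_{g^{-1}\beta},x_{g^{-1}g}\rangle=\langle \tau_\beta,x_g\rangle$ turn this into $\langle \int_G\langle h,x_\beta\rangle\tau_\beta\,d\mu_G(\beta),x_g\rangle=\langle S_{x,\tau}h,x_g\rangle=\langle h,x_g\rangle$ (Parseval), so $x_g=\pi_{g^{-1}}^*x_e=\pi_gx_e$; repeating with $x_e$ replaced by $\tau_e$ and invoking the $\tau$-hypothesis gives $\tau_g=\pi_g\tau_e$. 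I expect the main obstacle to be purely bookkeeping: getting the Haar-measure substitutions and the Fubini interchanges right in the commutation relations and in this last identification, and tracking which of the three hypotheses is used where; the rest is a direct transcription of the proof of Theorem \ref{gc1}.
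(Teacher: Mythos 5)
Your proposal is correct and matches the paper's own treatment: the paper in fact gives both of the routes you mention, a direct transcription of the proof of Theorem \ref{gc1} with $\pi_g:=\theta_\tau^*\lambda_g\theta_x$ (its ``Proof 1'') and the reduction to Theorem \ref{gc1} via Theorem \ref{OVFTOSEQUENCEANDVICEVERSATHEOREM} (its ``Proof 2''). The only cosmetic difference is that you identify $x_g$ by computing $\langle\pi_{g^{-1}}h,x_e\rangle$ (using the mixed hypothesis) while the paper computes $\langle\pi_g x_e,h\rangle$ (using the $x$--$x$ hypothesis); both are valid since all three identities are assumed.
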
 
\begin{proof} 
Proof 1. $(\Rightarrow)$   Similar to the proof of `only if' part of Theorem 8.52 in \cite{MAHESHKRISHNASAMJOHNSON}.
 
$(\Leftarrow)$ We state the   following three, among them we prove third, others are similar.
$$   \lambda_g\theta_x\theta_x^*=\theta_x\theta_x^*\lambda_g,~  \lambda_g\theta_x\theta_\tau^*=\theta_x\theta_\tau^*\lambda_g,~ \lambda_g\theta_\tau\theta_\tau^*=\theta_\tau\theta_\tau^*\lambda_g,\quad \forall g \in G.$$
For all $u,v \in \mathcal{L}^2(G,\mathbb{K})$,

\begin{align*}
\langle\lambda_g\theta_\tau\theta_\tau^*\lambda_g^*u ,v \rangle&=\langle\theta_\tau^*\lambda_g^*u ,\theta_\tau^*\lambda_g^*v \rangle=\langle\theta_\tau^*\lambda_{g^{-1}}u ,\theta_\tau^*\lambda_{g^{-1}}v \rangle\\
& =\left\langle \int_{G}\lambda_{g^{-1}}u(\alpha)\tau_\alpha\,d\mu_G(\alpha),\int_{G}\lambda_{g^{-1}}v(\beta)\tau_\beta\,d\mu_G(\beta)\right\rangle\\
&=\left\langle \int_{G}u(g\alpha)\tau_\alpha\,d\mu_G(\alpha),\int_{G}v(g\beta)\tau_\beta\,d\mu_G(\beta)\right\rangle\\
&=\left\langle \int_{G}u(p)\tau_{g^{-1}p}\,d\mu_G(g^{-1}p),\int_{G}v(q)\tau_{g^{-1}q}\,d\mu_G(g^{-1}q)\right\rangle\\
&=\left\langle \int_{G}u(p)\tau_{g^{-1}p}\,d\mu_G(p),\int_{G}v(q)\tau_{g^{-1}q}\,d\mu_G(q)\right\rangle\\
&=\int_{G}\int_{G}u(p)\overline{v(q)}\langle\tau_{g^{-1}p}, \tau_{g^{-1}q}\rangle \,d\mu_G(q)\,d\mu_G(p)\\
&=\int_{G}\int_{G}u(p)\overline{v(q)}\langle\tau_{p}, \tau_{q}\rangle \,d\mu_G(q)\,d\mu_G(p)\\
&=\left\langle \int_{G}u(p)\tau_p\,d\mu_G(p), \int_{G}v(q)\tau_q\,d\mu_G(q)\right\rangle 
=\langle\theta_\tau^*u ,\theta_\tau^* v \rangle=\langle\theta_\tau\theta_\tau^*u ,v \rangle.
\end{align*}

Define $ \pi : G \ni g  \mapsto \pi_g\coloneqq \theta_\tau^*\lambda_g\theta_x  \in \mathcal{B}(\mathcal{H}).$ Using the Parsevalness of given frame, we get   $ \pi_g\pi_h=\theta_\tau^*\lambda_g \theta_x \theta_\tau^*\lambda_h \theta_x =\theta_\tau^*\lambda_g \lambda_h\theta_x \theta_\tau^* \theta_x = \theta_\tau^*\lambda_g \lambda_h\theta_x = \theta_\tau^*\lambda_{gh} \theta_x =\pi_{gh}$ for all $g, h \in G,$ and $\pi_g\pi_g^*=\theta_\tau^*\lambda_g\theta_x\theta_x^*\lambda_{g^{-1}}\theta_\tau$ $=\theta_\tau^*\theta_x\theta_x^*\lambda_g\lambda_{g^{-1}}\theta_\tau=I_\mathcal{H},  \pi_g^*\pi_g= \theta_x^*\lambda_{g^{-1}}\theta_\tau\theta_\tau^*\lambda_g\theta_x= \theta_x^*\lambda_{g^{-1}}\lambda_g\theta_\tau\theta_\tau^*\theta_x=I_\mathcal{H} $ for all $ g \in G$. To prove $ \pi$ is a unitary representation we use the same idea used in the proof of Theorem \ref{gc1}. Let $h \in \mathcal{H}$ be fixed. Then $\theta_xh $ is fixed. Since $\lambda$ is a unitary  representation, the map $G \ni g \mapsto \lambda_g(\theta_xh) \in \mathcal{L}^2(G,\mathbb{K}) $ is continuous. Continuity of $\theta_\tau^*$ now gives that the map $G \ni g \mapsto \theta_\tau^*(\lambda_g(\theta_xh)) \in \mathcal{H} $ is continuous.   We now establish   $ x_g=\pi_gx_e, \tau_g=\pi_g\tau_e  $ for all $ g \in G$. For all $h \in \mathcal{H}$,

\begin{align*}
\langle \pi_gx_e,h \rangle &=\langle \theta_\tau^*\lambda_g\theta_xx_e,h \rangle\\
&=\left\langle\int_{G}\lambda_g\theta_xx_e (\alpha)\tau_\alpha\,d\mu_G(\alpha), h\right\rangle =\left\langle\int_{G}\theta_xx_e (g^{-1}\alpha)\tau_\alpha\,d\mu_G(\alpha), h\right\rangle\\
&=\int_{G}\langle \theta_xx_e (g^{-1}\alpha)\tau_\alpha,h \rangle \,d\mu_G(\alpha)=\int_{G}\langle \langle x_e,x_{g^{-1}\alpha} \rangle  \tau_\alpha,h \rangle \,d\mu_G(\alpha)\\
&=\int_{G} \langle x_{g^{-1}g},x_{g^{-1}\alpha} \rangle  \langle \tau_\alpha,h \rangle \,d\mu_G(\alpha)
=\int_{G} \langle x_{g},x_{\alpha} \rangle  \langle \tau_\alpha,h \rangle \,d\mu_G(\alpha)\\
&=\left\langle\int_{G}\langle x_{g},x_{\alpha} \rangle \tau_\alpha\,d\mu_G(\alpha) ,h \right\rangle=\langle x_g, h\rangle ,
\end{align*}
and

\begin{align*}
\langle \pi_g\tau_e,h \rangle &=\langle \theta_\tau^*\lambda_g\theta_x\tau_e,h \rangle\\
&=\left\langle\int_{G}\lambda_g\theta_x\tau_e (\alpha)\tau_\alpha\,d\mu_G(\alpha), h\right\rangle =\left\langle\int_{G}\theta_x\tau_e (g^{-1}\alpha)\tau_\alpha\,d\mu_G(\alpha), h\right\rangle\\
&=\int_{G}\langle \theta_x\tau_e (g^{-1}\alpha)\tau_\alpha,h \rangle \,d\mu_G(\alpha)=\int_{G}\langle \langle \tau_e,x_{g^{-1}\alpha} \rangle  \tau_\alpha,h \rangle \,d\mu_G(\alpha)\\
&=\int_{G} \langle \tau_{g^{-1}g},x_{g^{-1}\alpha} \rangle  \langle \tau_\alpha,h \rangle \,d\mu_G(\alpha)
=\int_{G} \langle \tau_{g},x_{\alpha} \rangle  \langle \tau_\alpha,h \rangle \,d\mu_G(\alpha)\\
&=\left\langle\int_{G}\langle \tau_{g},x_{\alpha} \rangle \tau_\alpha\,d\mu_G(\alpha) ,h \right\rangle=\langle \tau_g, h\rangle .
\end{align*}
Proof 2. Define $A_g: \mathcal{H} \ni h \mapsto \langle h, x_g \rangle \in \mathbb{K} $, $\Psi_g: \mathcal{H} \ni h \mapsto \langle h, \tau_g \rangle \in \mathbb{K}, \forall g \in G $. Then, from Theorem \ref{OVFTOSEQUENCEANDVICEVERSATHEOREM},  $(\{x_g\}_{g\in G}, \{\tau_g\}_{g\in G})$ is a continuous frame for  $\mathcal{H}$ if and only if  $(\{A_g\}_{g\in G}, \{\Psi_g\}_{g\in G})$ is a continuous (ovf)  in $\mathcal{B}(\mathcal{H},\mathbb{K})$. Further, from the proof of Theorem \ref{OVFTOSEQUENCEANDVICEVERSATHEOREM}, we also see that $(\{x_g\}_{g\in G}, \{\tau_g\}_{g\in G})$ is a Parseval continuous frame  if and only if  $(\{A_g\}_{g\in G}, \{\Psi_g\}_{g\in G})$ is a Parseval continuous (ovf). Now applying Theorem \ref{gc1} to the  Parseval continuous (ovf) $(\{A_g\}_{g\in G}, \{\Psi_g\}_{g\in G})$ yields - there is a unitary representation $ \pi$  of $ G$ on  $ \mathcal{H}$  for which 

\begin{align}\label{PROOF2FIRSTEQUATION}
 A_g=A_e\pi_{g^{-1}}, ~\Psi_g=\Psi_e\pi_{g^{-1}}, \quad\forall  g \in G
\end{align}
if and only if 

\begin{align}\label{PROOF2SECONDEQUATION}
A_{gp}A_{gq}^*=A_pA_q^* ,~ A_{gp}\Psi_{gq}^*=A_p\Psi_q^*,~ \Psi_{gp}\Psi_{gq}^*=\Psi_p\Psi_q^*, \quad \forall g,p,q \in G.
\end{align}
But Equation (\ref{PROOF2FIRSTEQUATION}) holds if and only if 

\begin{align*}
&\langle h, x_g \rangle=A_gh=A_e\pi_{g^{-1}}h=\langle \pi_{g^{-1}}h, x_e \rangle=\langle h, \pi_gx_e \rangle, \\
&\langle h, \tau_g \rangle=\Psi_gh=\Psi_e\pi_{g^{-1}}h=\langle \pi_{g^{-1}}h, \tau_g \rangle=\langle h, \pi_g\tau_e \rangle, \quad\forall  g \in G, \forall h \in \mathcal{H}\\
&\iff x_g=\pi_gx_e, ~\tau_g=\pi_g\tau_e ,\quad \forall g \in G.
\end{align*}
Also, Equation (\ref{PROOF2SECONDEQUATION}) holds if and only if 

\begin{align*}
&\langle \alpha x_{gq}, x_{gp}\rangle =A_{gp}A_{gq}^*\alpha=A_pA_q^*\alpha=\langle \alpha x_{q}, x_{p}\rangle,\\
&\langle \alpha \tau_{gq}, x_{gp}\rangle=A_{gp}\Psi_{gq}^*\alpha=A_p\Psi_q^*\alpha=\langle \alpha \tau_{q}, x_{p}\rangle,\\
& \langle \alpha \tau_{gq}, \tau_{gp}\rangle=\Psi_{gp}\Psi_{gq}^*\alpha=\Psi_p\Psi_q^*\alpha=\langle \alpha \tau_{q}, \tau_{p}\rangle  , \quad\forall \alpha \in \mathbb{K}\\
&\iff \langle x_{gp} , x_{gq}\rangle=\langle x_p, x_q \rangle, ~ \langle x_{gp},  \tau_{gq}\rangle=\langle x_p, \tau_q \rangle, ~\langle \tau_{gp} , \tau_{gq}\rangle=\langle \tau_p, \tau_q \rangle,  \quad\forall  g,p,q \in G.
\end{align*}
\end{proof}
\begin{corollary}
Let $ G$ be a locally compact  group with identity $ e$ and $( \{x_g\}_{g\in G},  \{\tau_g\}_{g\in G})$ be a  continuous frame  for  $\mathcal{H}.$ Then there is a unitary representation $ \pi$  of $ G$ on  $ \mathcal{H}$  for which
\begin{enumerate}[\upshape(i)]
\item  $ x_g=S_{x,\tau}\pi_gS_{x,\tau}^{-1}x_e, \tau_g=\pi_g\tau_e $ for all $ g \in G$  if and only if $\langle S_{x,\tau}^{-2} x_{gp} , x_{gq}\rangle=\langle S_{x,\tau}^{-2} x_p, x_q \rangle,  \langle S_{x,\tau}^{-1} x_{gp},  \tau_{gq}\rangle=\langle S_{x,\tau}^{-1} x_p, \tau_q \rangle, \langle \tau_{gp} , \tau_{gq}\rangle=\langle \tau_p, \tau_q \rangle  $ for all $ g,p,q \in G.$
\item  $ x_g=S_{x,\tau}^{1/2}\pi_gS_{x,\tau}^{-1/2}x_e, \tau_g=S_{x,\tau}^{1/2}\pi_gS_{x,\tau}^{-1/2}\tau_e $ for all $ g \in G$  if and only if $\langle S_{x,\tau}^{-1} x_{gp} , x_{gq}\rangle=\langle S_{x,\tau}^{-1}x_p, x_q \rangle$, $  \langle S_{x,\tau}^{-1}x_{gp},  \tau_{gq}\rangle=\langle S_{x,\tau}^{-1}x_p, \tau_q \rangle, \langle S_{x,\tau}^{-1}\tau_{gp} , \tau_{gq}\rangle=\langle S_{x,\tau}^{-1}\tau_p, \tau_q \rangle  $ for all $ g,p,q \in G.$
\item  $ x_g=\pi_gx_e, \tau_g=S_{x,\tau}\pi_gS_{x,\tau}^{-1}\tau_e $ for all $ g \in G$  if and only if $\langle x_{gp} , x_{gq}\rangle=\langle x_p, x_q \rangle,  \langle x_{gp}, S_{x,\tau}^{-1} \tau_{gq}\rangle=\langle x_p, S_{x,\tau}^{-1}\tau_q \rangle, \langle \tau_{gp} , S_{x,\tau}^{-2}\tau_{gq}\rangle=\langle \tau_p, S_{x,\tau}^{-2}\tau_q \rangle  $ for all $ g,p,q \in G.$
\end{enumerate}
\end{corollary}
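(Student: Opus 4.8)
The plan is to deduce all three parts from Theorem~\ref{GROUPC1} by applying it to the appropriate Parseval continuous frame that is similar to $(\{x_g\}_{g\in G},\{\tau_g\}_{g\in G})$. Recall from the Remark following Theorem~\ref{SEQUENTIALSIMILARITYCHARACTERIZATION} that
\[
(\{S_{x,\tau}^{-1}x_g\}_{g\in G},\{\tau_g\}_{g\in G}),\quad (\{S_{x,\tau}^{-1/2}x_g\}_{g\in G},\{S_{x,\tau}^{-1/2}\tau_g\}_{g\in G}),\quad (\{x_g\}_{g\in G},\{S_{x,\tau}^{-1}\tau_g\}_{g\in G})
\]
are each a Parseval continuous frame; these will be used for (i), (ii) and (iii) respectively. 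I will use throughout that the frame operator $S_{x,\tau}$ is positive and invertible, hence self-adjoint, so that $S_{x,\tau}^{-1}$ and $S_{x,\tau}^{\pm 1/2}$ are self-adjoint as well; this is what makes the conversion between the two ways of writing the conditions work.

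For part (i), set $y_g:=S_{x,\tau}^{-1}x_g$. Since $(\{y_g\}_{g\in G},\{\tau_g\}_{g\in G})$ is a Parseval continuous frame, Theorem~\ref{GROUPC1} says there is a unitary representation $\pi$ of $G$ on $\mathcal{H}$ with $y_g=\pi_gy_e$ and $\tau_g=\pi_g\tau_e$ for all $g\in G$ if and only if $\langle y_{gp},y_{gq}\rangle=\langle y_p,y_q\rangle$, $\langle y_{gp},\tau_{gq}\rangle=\langle y_p,\tau_q\rangle$ and $\langle\tau_{gp},\tau_{gq}\rangle=\langle\tau_p,\tau_q\rangle$ for all $g,p,q\in G$. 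Now $y_g=\pi_gy_e$ is, after applying $S_{x,\tau}$, precisely $x_g=S_{x,\tau}\pi_gS_{x,\tau}^{-1}x_e$; and by self-adjointness of the powers of $S_{x,\tau}$ we have $\langle y_{gp},y_{gq}\rangle=\langle S_{x,\tau}^{-2}x_{gp},x_{gq}\rangle$ and $\langle y_{gp},\tau_{gq}\rangle=\langle S_{x,\tau}^{-1}x_{gp},\tau_{gq}\rangle$, so the three conditions become exactly those displayed in (i). Parts (ii) and (iii) are obtained in the same way: for (ii) take $y_g:=S_{x,\tau}^{-1/2}x_g$ and $\omega_g:=S_{x,\tau}^{-1/2}\tau_g$, so that $y_g=\pi_gy_e$ and $\omega_g=\pi_g\omega_e$ become $x_g=S_{x,\tau}^{1/2}\pi_gS_{x,\tau}^{-1/2}x_e$ and $\tau_g=S_{x,\tau}^{1/2}\pi_gS_{x,\tau}^{-1/2}\tau_e$, while $\langle y_{gp},y_{gq}\rangle=\langle S_{x,\tau}^{-1}x_{gp},x_{gq}\rangle$, $\langle y_{gp},\omega_{gq}\rangle=\langle S_{x,\tau}^{-1}x_{gp},\tau_{gq}\rangle$ and $\langle\omega_{gp},\omega_{gq}\rangle=\langle S_{x,\tau}^{-1}\tau_{gp},\tau_{gq}\rangle$; for (iii) take $y_g:=x_g$ and $\omega_g:=S_{x,\tau}^{-1}\tau_g$, so $\omega_g=\pi_g\omega_e$ reads $\tau_g=S_{x,\tau}\pi_gS_{x,\tau}^{-1}\tau_e$ and $\langle\omega_{gp},\omega_{gq}\rangle=\langle\tau_{gp},S_{x,\tau}^{-2}\tau_{gq}\rangle$.

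I do not expect a genuine obstacle: the corollary is just the continuous-frame repackaging of Theorem~\ref{GROUPC1} under the three canonical ways of Parsevalising a frame, mirroring the corollary that follows Theorem~\ref{gc1} in the operator-valued setting. The only points needing attention are (a) invoking the Remark after Theorem~\ref{SEQUENTIALSIMILARITYCHARACTERIZATION} to be sure each modified frame is genuinely Parseval (and is still indexed by $G$ with its Haar measure, so that Theorem~\ref{GROUPC1} applies), and (b) bookkeeping with the self-adjoint fractional powers of $S_{x,\tau}$ when rewriting the inner-product hypotheses of Theorem~\ref{GROUPC1} in the forms shown in (i)--(iii). As an alternative, one could instead pass through Theorem~\ref{OVFTOSEQUENCEANDVICEVERSATHEOREM} and the corollary following Theorem~\ref{gc1}, exactly as in ``Proof 2'' of Theorem~\ref{GROUPC1}, but the direct argument above is more economical.
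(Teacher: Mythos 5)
Your proposal is correct and is essentially identical to the paper's own proof: the paper also applies Theorem \ref{GROUPC1} to the three Parseval frames $(\{S_{x,\tau}^{-1}x_g\},\{\tau_g\})$, $(\{S_{x,\tau}^{-1/2}x_g\},\{S_{x,\tau}^{-1/2}\tau_g\})$ and $(\{x_g\},\{S_{x,\tau}^{-1}\tau_g\})$, then rewrites the resulting conditions using the self-adjointness of the powers of $S_{x,\tau}$.
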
   
\begin{proof} Apply Theorem \ref{GROUPC1} to 
\begin{enumerate}[\upshape(i)]
\item $(\{S^{-1}_{x,\tau}x_g\}_{g\in G},  \{\tau_g\}_{g\in G}) $ to get: there is a unitary representation $ \pi$  of $ G$ on  $ \mathcal{H}$  for which $ S_{x,\tau}^{-1}x_g=\pi_gS_{x,\tau}^{-1}x_e, \tau_g=\pi_g\tau_e $ for all $ g \in G$  if and only if $\langle S_{x,\tau}^{-1}x_{gp} , S_{x,\tau}^{-1}x_{gq}\rangle=\langle S_{x,\tau}^{-1}x_p,S_{x,\tau}^{-1} x_q \rangle,  \langle S_{x,\tau}^{-1} x_{gp},  \tau_{gq}\rangle=\langle S_{x,\tau}^{-1} x_p, \tau_q \rangle, \langle \tau_{gp} , \tau_{gq}\rangle=\langle \tau_p, \tau_q \rangle  $ for all $ g,p,q \in G.$
\item $(\{S^{-1/2}_{x,\tau}x_g\}_{g\in G},  \{S^{-1/2}_{x,\tau}\tau_g\}_{g\in G}) $ to get: there is a unitary representation $ \pi$  of $ G$ on  $ \mathcal{H}$  for which $ S_{x,\tau}^{-1/2}x_g=\pi_g(S_{x,\tau}^{-1/2}x_e), S_{x,\tau}^{-1/2}\tau_g=\pi_g(S_{x,\tau}^{-1/2}\tau_e) $ for all $ g \in G$  if and only if $\langle S_{x,\tau}^{-1/2}x_{gp} , S_{x,\tau}^{-1/2}x_{gq}\rangle=\langle S_{x,\tau}^{-1/2}x_p, S_{x,\tau}^{-1/2}x_q \rangle,  \langle S_{x,\tau}^{-1/2}x_{gp},  S_{x,\tau}^{-1/2}\tau_{gq}\rangle=\langle S_{x,\tau}^{-1/2}x_p, S_{x,\tau}^{-1/2}\tau_q \rangle, \langle S_{x,\tau}^{-1/2}\tau_{gp},
S_{x,\tau}^{-1/2}\tau_{gq}\rangle=\langle S_{x,\tau}^{-1/2}\tau_p, $ $ S_{x,\tau}^{-1/2}\tau_q \rangle  $ for all $ g,p,q \in G.$
\item $(\{x_g\}_{g\in G},  \{S^{-1}_{x,\tau}\tau_g\}_{g\in G}) $ to get: there is a unitary representation $ \pi$  of $ G$ on  $ \mathcal{H}$  for which $ x_g=\pi_gx_e, S_{x,\tau}^{-1}\tau_g=\pi_g(S_{x,\tau}^{-1}\tau_e) $ for all $ g \in G$  if and only if $\langle x_{gp} , x_{gq}\rangle=\langle x_p, x_q \rangle,  \langle x_{gp},  S_{x,\tau}^{-1}\tau_{gq}\rangle=\langle x_p, S_{x,\tau}^{-1}\tau_q \rangle,  \langle S_{x,\tau}^{-1}\tau_{gp} , S_{x,\tau}^{-1}\tau_{gq}\rangle=\langle S_{x,\tau}^{-1}\tau_p, S_{x,\tau}^{-1}\tau_q \rangle  $ for all $ g,p,q \in G.$
\end{enumerate}
\end{proof}
 \begin{corollary}
 Let $ G$ be a locally compact group with identity $ e$ and $ \{x_g\}_{g\in G}$ be a	
 \begin{enumerate}[\upshape(i)]
 \item  Parseval  continuous  frame (w.r.t. itself) for  $ \mathcal{H}$. Then there is a unitary representation $ \pi$  of $ G$ on  $ \mathcal{H}$  for which 
 $$x_g=\pi_{g}x_e,\quad \forall g \in G$$  
 if and only if 
 $$\langle x_{gp} , x_{gq}\rangle=\langle x_p, x_q \rangle,  \quad\forall  g,p,q \in G.  $$
 \item continuous frame (w.r.t. itself) for  $ \mathcal{H}$. Then there is a unitary representation $ \pi$  of $ G$ on  $ \mathcal{H}$  for which 
 $$ x_g=S_{x,x}^{1/2}\pi_{g}S_{x,x}^{-1/2}x_e,\quad \forall g \in G$$ 
  if and only if 
 $$\langle S_{x,x}^{-1}x_{gp} , x_{gq}\rangle=\langle S_{x,x}^{-1}x_p, x_q \rangle,  \quad\forall  g,p,q \in G.  $$
 \end{enumerate}
 \end{corollary}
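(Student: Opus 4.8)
The plan is to derive both statements as the ``with respect to itself'' specializations of the group-representation results already obtained for a pair of families: Theorem~\ref{GROUPC1} for part~(i), and part~(ii) of the Corollary immediately preceding the present one for part~(ii). In each case one sets $\tau_g=x_g$ for every $g\in G$ and observes that the three symmetry conditions and the two recovery identities occurring there collapse, respectively, to the single condition and the single identity stated here.

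For part~(i): the hypothesis ``$\{x_g\}_{g\in G}$ is a Parseval continuous frame (w.r.t.\ itself)'' is exactly the hypothesis of Theorem~\ref{GROUPC1} applied to the pair $(\{x_g\}_{g\in G},\{x_g\}_{g\in G})$. That theorem then produces a unitary representation $\pi$ of $G$ on $\mathcal{H}$ with $x_g=\pi_g x_e$ and (the redundant copy) $x_g=\pi_g x_e$ for all $g$, if and only if $\langle x_{gp},x_{gq}\rangle=\langle x_p,x_q\rangle$ holds for all $g,p,q\in G$, the other two conditions of Theorem~\ref{GROUPC1} being literally the same equation. This is precisely the asserted equivalence; the only things to note are that nothing is lost in discarding the duplicated hypotheses and that Parsevalness w.r.t.\ itself matches the Parseval hypothesis of Theorem~\ref{GROUPC1} with the second family equal to the first.

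For part~(ii): apply part~(ii) of the preceding Corollary to the pair $(\{x_g\}_{g\in G},\{x_g\}_{g\in G})$, so that $S_{x,\tau}$ there becomes $S_{x,x}$. It yields a unitary representation $\pi$ with $x_g=S_{x,x}^{1/2}\pi_g S_{x,x}^{-1/2}x_e$ for all $g$ if and only if $\langle S_{x,x}^{-1}x_{gp},x_{gq}\rangle=\langle S_{x,x}^{-1}x_p,x_q\rangle$ for all $g,p,q\in G$, the two remaining conditions again reducing to the same equation. Equivalently, and this is how I would actually write it out, one uses the remark following Theorem~\ref{SEQUENTIALSIMILARITYCHARACTERIZATION}: $(\{S_{x,x}^{-1/2}x_g\}_{g\in G},\{S_{x,x}^{-1/2}x_g\}_{g\in G})$ is a Parseval continuous frame for $\mathcal{H}$, Theorem~\ref{GROUPC1} applies to it directly, and then one conjugates the resulting representation by the positive invertible operator $S_{x,x}^{1/2}$ (which commutes with $S_{x,x}$ and with $S_{x,x}^{-1/2}$) to recover $x_g=S_{x,x}^{1/2}\pi_g S_{x,x}^{-1/2}x_e$, exactly as in the proof of the preceding Corollary.

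There is no real obstacle here: all the substantive work --- building $\pi$ as $g\mapsto\theta_x^*\lambda_g\theta_x$, checking it is a strongly continuous unitary representation, and recovering the frame vectors --- is already contained in Theorem~\ref{GROUPC1}. The only care needed is the bookkeeping in part~(ii), namely tracking that the relevant frame operator is $S_{x,x}$ and that conjugation by $S_{x,x}^{1/2}$ correctly transports the Parseval-frame representation to the given family; this is routine because $S_{x,x}$ is positive and invertible.
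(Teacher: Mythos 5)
Your proposal is correct and is exactly the derivation the paper intends (it omits the proof of this corollary precisely because it is the specialization $\tau_g=x_g$ of Theorem~\ref{GROUPC1} for part (i) and of part (ii) of the immediately preceding corollary for part (ii), with the three conditions collapsing to one since $S_{x,x}^{-1/2}$ is self-adjoint). No gaps; the alternative route you sketch for part (ii) via the Parseval frame $\{S_{x,x}^{-1/2}x_g\}_{g\in G}$ is the same argument used to prove that preceding corollary.
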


\textbf{Perturbations}  
\begin{theorem}\label{PERTURBATION1SV}
Let $ (\{x_\alpha\}_{\alpha \in \Omega},\{\tau_\alpha\}_{\alpha \in \Omega} )$ be a continuous frame for  $\mathcal{H}. $ Suppose $ \{y_\alpha\}_{\alpha \in \Omega}$  in $ \mathcal{H}$ is  such that
\begin{enumerate}[\upshape(i)]
\item $ \langle h,y_\alpha \rangle \tau_\alpha=\langle h,\tau_\alpha \rangle y_\alpha, \langle h, y_\alpha\rangle  \langle \tau_\alpha,h \rangle \geq 0, \forall h \in \mathcal{H}, \forall \alpha \in \Omega$,
\item for each  $h \in \mathcal{H}$, the map    $\Omega \ni \alpha \mapsto \langle h,y_\alpha \rangle\in \mathbb{K}$ is measurable,
\item there exist $ \alpha, \beta, \gamma \geq0$ with  $ \max\{\alpha+\gamma\|\theta_\tau S_{x,\tau}^{-1}\|, \beta\}<1$ such that 

\begin{align*}\label{PERTURBATIONINEQUALITYAEQUENTIALFIRST}
\left\|\int_{\Omega}f(\alpha)(x_\alpha-y_\alpha) \,d\mu(\alpha)\right\|\leq \alpha\left\|\int_{\Omega}f(\alpha)x_\alpha\,d\mu(\alpha)\right \|+\beta\left\|\int_{ \Omega}f(\alpha)y_\alpha\,d\mu(\alpha)\right \|+\gamma \|f\|,\quad \forall f \in \mathcal{L}^2(\Omega, \mathbb{K}).
\end{align*}	 
\end{enumerate}
Then  $ (\{y_\alpha\}_{\alpha \in \Omega},\{\tau_\alpha\}_{\alpha \in \Omega} )$ is  a continuous frame  with bounds $ \frac{1-(\alpha+\gamma\|\theta_\tau S_{x,\tau}^{-1}\|)}{(1+\beta)\|S_{x,\tau}^{-1}\|}$ and $\frac{\|\theta_\tau\|((1+\alpha)\|\theta_x\|+\gamma)}{1-\beta} $.
\end{theorem}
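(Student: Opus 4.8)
The plan is to transport the proof of Theorem~\ref{PERTURBATION RESULT 1} to the scalar setting of Definition~\ref{SEQUENTIAL2}, and to reduce the invertibility of the perturbed frame operator to Theorem~1 of \cite{OLECAZASSA}. First I would introduce the candidate synthesis operator $T\colon\mathcal{L}^2(\Omega,\mathbb{K})\ni f\mapsto\int_\Omega f(\alpha)y_\alpha\,d\mu(\alpha)\in\mathcal{H}$ (weak integral). Writing $y_\alpha=x_\alpha+(y_\alpha-x_\alpha)$ and applying hypothesis~(iii) gives $\|Tf\|\le(1+\alpha)\|\theta_x^*f\|+\beta\|Tf\|+\gamma\|f\|$, hence $\|Tf\|\le\frac{(1+\alpha)\|\theta_x\|+\gamma}{1-\beta}\|f\|$ for all $f$. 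Thus $T$ is well defined and bounded, and by hypothesis~(ii) its adjoint is precisely the analysis map $\theta_y\colon h\mapsto(\alpha\mapsto\langle h,y_\alpha\rangle)$, so $\theta_y\in\mathcal{B}(\mathcal{H},\mathcal{L}^2(\Omega,\mathbb{K}))$ with $\|\theta_y\|=\|T\|\le\frac{(1+\alpha)\|\theta_x\|+\gamma}{1-\beta}$. Feeding this back into (iii) (with $\theta_x^*f=\int_\Omega f(\alpha)x_\alpha\,d\mu(\alpha)$ and $\theta_y^*f=Tf$) yields $\|\theta_x^*f-\theta_y^*f\|\le\alpha\|\theta_x^*f\|+\beta\|\theta_y^*f\|+\gamma\|f\|$ for every $f\in\mathcal{L}^2(\Omega,\mathbb{K})$.

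Next I would note that $S_{y,\tau}=\theta_\tau^*\theta_y$ (the usual identification of the frame operator), and that hypothesis~(i) forces $\theta_y^*\theta_\tau=S_{y,\tau}$ as well, so $S_{y,\tau}$ is bounded, self-adjoint and positive, with $\langle S_{y,\tau}h,h\rangle=\langle\theta_yh,\theta_\tau h\rangle\le\|\theta_\tau\|\|\theta_y\|\|h\|^2\le\frac{\|\theta_\tau\|((1+\alpha)\|\theta_x\|+\gamma)}{1-\beta}\|h\|^2$; this is the claimed upper frame bound. For the lower bound I would substitute $f=\theta_\tau S_{x,\tau}^{-1}h$ into the inequality of the first paragraph. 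Since $\theta_x^*\theta_\tau=S_{x,\tau}$ and $\theta_y^*\theta_\tau=S_{y,\tau}$, it becomes $\|h-S_{y,\tau}S_{x,\tau}^{-1}h\|\le(\alpha+\gamma\|\theta_\tau S_{x,\tau}^{-1}\|)\|h\|+\beta\|S_{y,\tau}S_{x,\tau}^{-1}h\|$ for all $h\in\mathcal{H}$.

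Now the operator $W\coloneqq S_{y,\tau}S_{x,\tau}^{-1}\in\mathcal{B}(\mathcal{H})$ satisfies $\|h-Wh\|\le\lambda_1\|h\|+\lambda_2\|Wh\|$ with $\lambda_1=\alpha+\gamma\|\theta_\tau S_{x,\tau}^{-1}\|<1$ and $\lambda_2=\beta<1$, so Theorem~1 in \cite{OLECAZASSA} (see also \cite{HILDING,CASAZZAKALTON}) shows that $W$ is invertible with $\|W^{-1}\|\le\frac{1+\beta}{1-(\alpha+\gamma\|\theta_\tau S_{x,\tau}^{-1}\|)}$. Hence $S_{y,\tau}=WS_{x,\tau}$ is invertible, $S_{y,\tau}^{-1}=S_{x,\tau}^{-1}W^{-1}$, and positivity of $S_{y,\tau}$ gives $\langle S_{y,\tau}h,h\rangle\ge\|S_{y,\tau}^{-1}\|^{-1}\|h\|^2\ge\frac{1-(\alpha+\gamma\|\theta_\tau S_{x,\tau}^{-1}\|)}{(1+\beta)\|S_{x,\tau}^{-1}\|}\|h\|^2$. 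The measurability requirements in Definition~\ref{SEQUENTIAL2} hold by hypothesis~(ii) and because $(\{x_\alpha\}_{\alpha\in\Omega},\{\tau_\alpha\}_{\alpha\in\Omega})$ is a continuous frame, while $\theta_y,\theta_\tau$ are bounded; so $(\{y_\alpha\}_{\alpha\in\Omega},\{\tau_\alpha\}_{\alpha\in\Omega})$ is a continuous frame with bounds $\frac{1-(\alpha+\gamma\|\theta_\tau S_{x,\tau}^{-1}\|)}{(1+\beta)\|S_{x,\tau}^{-1}\|}$ and $\frac{\|\theta_\tau\|((1+\alpha)\|\theta_x\|+\gamma)}{1-\beta}$.

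The step I expect to cost the most care is the reduction itself: verifying that the weak (Gelfand--Pettis) integral defining $T$ is regular enough that $T^*=\theta_y$, and invoking the abstract perturbation lemma of \cite{OLECAZASSA} so that exactly the operator-norm estimate for $W^{-1}$ drops out; the remaining manipulations are the routine book-keeping already used for Theorem~\ref{PERTURBATION RESULT 1}.
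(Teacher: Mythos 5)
Your proposal is correct and follows essentially the same route as the paper: the paper's proof defines the same operator $T$, derives the same two inequalities $\|Tf\|\leq\frac{1+\alpha}{1-\beta}\|\theta_x^*f\|+\frac{\gamma}{1-\beta}\|f\|$ and $\|\theta_x^*f-\theta_y^*f\|\leq\alpha\|\theta_x^*f\|+\beta\|\theta_y^*f\|+\gamma\|f\|$, and then defers the remaining steps to the proof of Theorem \ref{PERTURBATION RESULT 1}, which in turn rests on Theorem 1 of \cite{OLECAZASSA}. Your substitution $f=\theta_\tau S_{x,\tau}^{-1}h$ and the application of that lemma to $W=S_{y,\tau}S_{x,\tau}^{-1}$ is precisely the deferred argument, spelled out with the correct constants.
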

\begin{proof}
Define $T:\mathcal{L}^2(\Omega, \mathbb{K})\ni f \mapsto \int_{\Omega}f(\alpha)y_\alpha \,d\mu(\alpha)\in \mathcal{H} $. Then for all $f \in \mathcal{L}^2(\Omega, \mathbb{K})$,
 
 \begin{align*}
 \|Tf\|&=\left\|\int_{\Omega}f(\alpha)y_\alpha\,d\mu(\alpha)\right\|\leq \left\|\int_{\Omega}f(\alpha)(y_\alpha-x_\alpha)\,d\mu(\alpha)\right\|+\left\|\int_{\Omega}f(\alpha)x_\alpha\,d\mu(\alpha)\right\|\\
 &=(1+\alpha)\left\|\int_{\Omega}f(\alpha)x_\alpha\,d\mu(\alpha)\right\|+\beta\left\|\int_{\Omega}f(\alpha)y_\alpha\,d\mu(\alpha)\right\|+\gamma \|f\|\\
 &=(1+\alpha)\left\|\theta_x^*f\right\|+\beta\left\|Tf\right\|+\gamma \|f\|,
 \end{align*}
which implies

 \begin{align*}
 \|Tf\|\leq \frac{1+\alpha}{1-\beta}\left\| \theta_x^*f\right\|+\frac{\gamma}{1-\beta}\|f\|,\quad \forall f \in \mathcal{L}^2(\Omega, \mathbb{K}) \text{ and }
 \end{align*}
$$ \|\theta_x^*f-\theta_y^*f\|\leq \alpha\|\theta_x^*f\|+\beta\|\theta_y^*f\|+\gamma\|f\|, \quad \forall  f  \in \mathcal{L}^2(\Omega, \mathbb{K}) .$$
 Other arguments are similar to the corresponding arguments used in the proof of Theorem \ref{PERTURBATION RESULT 1}.
\end{proof} 
\begin{corollary}
Let $ (\{x_\alpha\}_{\alpha \in \Omega},\{\tau_\alpha\}_{\alpha \in \Omega} )$ be a continuous frame for  $\mathcal{H}. $ Suppose $ \{y_\alpha\}_{\alpha \in \Omega}$  in $ \mathcal{H}$ is  such that
\begin{enumerate}[\upshape(i)]
\item $ \langle h,y_\alpha \rangle \tau_\alpha=\langle h,\tau_\alpha \rangle y_\alpha, \langle h, y_\alpha\rangle  \langle \tau_\alpha,h \rangle \geq 0, \forall h \in \mathcal{H}, \forall \alpha \in \Omega$,
\item for each  $h \in \mathcal{H}$, the map    $\Omega \ni \alpha \mapsto \langle h,y_\alpha \rangle\in \mathbb{K}$ is measurable,
\item The map    $\Omega \ni \alpha \mapsto\|x_\alpha-y_\alpha\| \in \mathbb{R}$ is measurable,
\item  
$$ r\coloneqq \int_{\Omega}\|x_\alpha-y_\alpha\|^2\,d\mu(\alpha)<\frac{1}{\|\theta_\tau S_{x,\tau}^{-1}\|^2}.$$	 
\end{enumerate}
Then  $ (\{y_\alpha\}_{\alpha \in \Omega},\{\tau_\alpha\}_{\alpha \in \Omega} )$ is  a continuous frame  with bounds $ \frac{1-\sqrt{r}\|\theta_\tau S_{x,\tau}^{-1}\|}{\|S_{x,\tau}^{-1}\|}$ and $\|\theta_\tau\|(\|\theta_x\|+\sqrt{r}) $.
\end{corollary}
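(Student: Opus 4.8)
The plan is to deduce this corollary directly from Theorem \ref{PERTURBATION1SV} by the same parameter specialization that was used to derive the earlier corollary of Theorem \ref{PERTURBATION RESULT 1}. Concretely, I would set $\alpha = 0$, $\beta = 0$, and $\gamma = \sqrt{r}$, where $r = \int_\Omega \|x_\alpha - y_\alpha\|^2\, d\mu(\alpha)$, and then check that all three hypotheses of Theorem \ref{PERTURBATION1SV} are met. Hypotheses (i) and (ii) of that theorem are verbatim hypotheses (i) and (ii) of the present statement, so nothing needs to be done for them.

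For hypothesis (iii) of Theorem \ref{PERTURBATION1SV}, the requirement $\max\{\alpha + \gamma\|\theta_\tau S_{x,\tau}^{-1}\|,\ \beta\} < 1$ becomes $\sqrt{r}\,\|\theta_\tau S_{x,\tau}^{-1}\| < 1$, which is precisely hypothesis (iv) here after taking square roots. It then remains to verify the perturbation inequality
\[
\left\|\int_\Omega f(\alpha)(x_\alpha - y_\alpha)\, d\mu(\alpha)\right\| \le \sqrt{r}\,\|f\|, \qquad \forall f \in \mathcal{L}^2(\Omega, \mathbb{K}).
\]
For this I would first observe that $\alpha \mapsto \langle x_\alpha - y_\alpha, g\rangle$ is measurable for each $g \in \mathcal{H}$ (from measurability of $\alpha \mapsto \langle h, x_\alpha\rangle$, part of the frame hypothesis, and of $\alpha \mapsto \langle h, y_\alpha\rangle$, hypothesis (ii)), and that by the Cauchy--Schwarz inequality together with hypothesis (iii) the scalar function $\alpha \mapsto |f(\alpha)|\,\|x_\alpha - y_\alpha\|$ lies in $\mathcal{L}^1(\Omega)$ with $\int_\Omega |f(\alpha)|\,\|x_\alpha - y_\alpha\|\, d\mu(\alpha) \le \|f\|\,\sqrt{r}$. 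This simultaneously guarantees that the weak (Gelfand--Pettis) integral $\int_\Omega f(\alpha)(x_\alpha - y_\alpha)\, d\mu(\alpha)$ is a well-defined element of $\mathcal{H}$ and yields the displayed bound.

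Feeding these facts into Theorem \ref{PERTURBATION1SV} at once gives that $(\{y_\alpha\}_{\alpha\in\Omega}, \{\tau_\alpha\}_{\alpha\in\Omega})$ is a continuous frame, and substituting $\alpha = \beta = 0$, $\gamma = \sqrt{r}$ into the frame bounds $\frac{1 - (\alpha + \gamma\|\theta_\tau S_{x,\tau}^{-1}\|)}{(1+\beta)\|S_{x,\tau}^{-1}\|}$ and $\frac{\|\theta_\tau\|((1+\alpha)\|\theta_x\| + \gamma)}{1-\beta}$ produces exactly the asserted bounds $\frac{1 - \sqrt{r}\,\|\theta_\tau S_{x,\tau}^{-1}\|}{\|S_{x,\tau}^{-1}\|}$ and $\|\theta_\tau\|(\|\theta_x\| + \sqrt{r})$. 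There is essentially no genuine obstacle; the only point that calls for any care is the upgrade, via Cauchy--Schwarz, of the $\mathcal{L}^2$-control on $\alpha \mapsto \|x_\alpha - y_\alpha\|$ to the $\mathcal{L}^1$-integrability needed to make sense of the weak integral in the perturbation inequality, and this is entirely routine.
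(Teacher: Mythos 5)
Your proposal is correct and follows essentially the same route as the paper: set $\alpha=0$, $\beta=0$, $\gamma=\sqrt{r}$, verify the perturbation inequality by Cauchy--Schwarz, and invoke Theorem \ref{PERTURBATION1SV}. The extra remark about upgrading the $\mathcal{L}^2$-control to $\mathcal{L}^1$-integrability so that the weak integral is well defined is a harmless (and slightly more careful) elaboration of what the paper leaves implicit.
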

\begin{proof}
Take $ \alpha =0, \beta=0, \gamma=\sqrt{r}$. Then $ \max\{\alpha+\gamma\|\theta_\tau S_{x,\tau}^{-1}\|, \beta\}<1$ and 
$$\left\|\int_{\Omega}f(\alpha)(x_\alpha-y_\alpha)\,d\mu(\alpha) \right\|\leq  \left(\int_{\Omega}|f(\alpha)|^2\,d\mu(\alpha)\right)^\frac{1}{2} \left(\int_{\Omega}\|x_\alpha-y_\alpha\|^2\,d\mu(\alpha)\right)^\frac{1}{2}\leq \gamma  \|f\|,\quad\forall f \in \mathcal{L}^2(\Omega, \mathbb{K}).$$	
 Now we can apply Theorem \ref{PERTURBATION1SV}.
\end{proof}
\begin{theorem}\label{PERTURBATION2SV}
Let $ (\{x_\alpha\}_{\alpha \in \Omega},\{\tau_\alpha\}_{\alpha \in \Omega} )$ be a continuous frame for  $\mathcal{H}$ with bounds $ a$ and $b.$ Suppose $ \{y_\alpha\}_{\alpha \in \Omega}$  in $ \mathcal{H}$ is  such that   $ \int_{\Omega}\langle h,y_\alpha \rangle \langle \tau_\alpha, h \rangle\,d\mu(\alpha)$ exists for all $ h \in \mathcal{H}$ and  is nonnegative for all $ h \in \mathcal{H}$ and there exist $ \alpha, \beta, \gamma \geq0$ with  $ \max\{\alpha+\frac{\gamma}{\sqrt{a}}, \beta\}<1$ and for all $ h \in \mathcal{H}$,

\begin{align*}
\left| \int_{\Omega}\langle h,x_\alpha-y_\alpha\rangle \langle \tau_\alpha, h\rangle \,d\mu(\alpha)\right|^\frac{1}{2} \leq \alpha\left(\int_{\Omega}\langle h,x_\alpha\rangle \langle \tau_\alpha,h \rangle \,d\mu(\alpha)\right)^\frac{1}{2} + \beta\left(\int_{\Omega}\langle h,y_\alpha\rangle \langle \tau_\alpha,h \rangle\,d\mu(\alpha) \right)^\frac{1}{2} +\gamma \|h\|.
\end{align*}
Then $ (\{y_\alpha\}_{\alpha \in \Omega},\{\tau_\alpha\}_{\alpha\in \Omega} )$ is  a continuous frame with bounds $a\left(1-\frac{\alpha+\beta+\frac{\gamma}{\sqrt{a}}}{1+\beta}\right)^2 $ and $b\left(1+\frac{\alpha+\beta+\frac{\gamma}{\sqrt{b}}}{1-\beta}\right)^2.$
\end{theorem}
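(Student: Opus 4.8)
The plan is to imitate, in the scalar setting, the proof of Theorem~\ref{PERTURBATION RESULT 2}; equivalently, one may put $A_\alpha\colon\mathcal H\ni h\mapsto\langle h,x_\alpha\rangle\in\mathbb K$, $\Psi_\alpha\colon h\mapsto\langle h,\tau_\alpha\rangle$, $B_\alpha\colon h\mapsto\langle h,y_\alpha\rangle$, note that $\langle A_\alpha h,\Psi_\alpha h\rangle=\langle h,x_\alpha\rangle\langle\tau_\alpha,h\rangle$ and similarly for the other pairings, so that Theorem~\ref{OVFTOSEQUENCEANDVICEVERSATHEOREM} makes $(\{A_\alpha\}_{\alpha\in\Omega},\{\Psi_\alpha\}_{\alpha\in\Omega})$ a continuous (ovf) in $\mathcal B(\mathcal H,\mathbb K)$ with the same bounds $a,b$ (their frame operators coincide), and turns the hypothesised inequality into exactly (\ref{pe1}) for $(A,B,\Psi)$, whereupon Theorem~\ref{PERTURBATION RESULT 2} applies. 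I describe the direct route. Put $Q_x(h)=\int_\Omega\langle h,x_\alpha\rangle\langle\tau_\alpha,h\rangle\,d\mu(\alpha)$ and $Q_y(h)=\int_\Omega\langle h,y_\alpha\rangle\langle\tau_\alpha,h\rangle\,d\mu(\alpha)$: both exist ($Q_x$ because $(\{x_\alpha\},\{\tau_\alpha\})$ is a continuous frame, $Q_y$ by hypothesis), both are $\ge0$, and $a\|h\|^2\le Q_x(h)\le b\|h\|^2$. Write $Q_{x-y}(h)=Q_x(h)-Q_y(h)=\int_\Omega\langle h,x_\alpha-y_\alpha\rangle\langle\tau_\alpha,h\rangle\,d\mu(\alpha)$, so that the hypothesis reads $|Q_{x-y}(h)|^{1/2}\le\alpha Q_x(h)^{1/2}+\beta Q_y(h)^{1/2}+\gamma\|h\|$.

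For the upper bound, from $Q_y(h)=Q_x(h)-Q_{x-y}(h)\le Q_x(h)+|Q_{x-y}(h)|$ and $\sqrt{s+t}\le\sqrt s+\sqrt t$ for $s,t\ge0$, together with the hypothesised inequality,
\begin{align*}
Q_y(h)^{1/2}&\le Q_x(h)^{1/2}+|Q_{x-y}(h)|^{1/2}\le(1+\alpha)Q_x(h)^{1/2}+\beta Q_y(h)^{1/2}+\gamma\|h\|,
\end{align*}
whence $(1-\beta)Q_y(h)^{1/2}\le((1+\alpha)\sqrt b+\gamma)\|h\|$ and $Q_y(h)\le b\bigl(1+\tfrac{\alpha+\beta+\gamma/\sqrt b}{1-\beta}\bigr)^2\|h\|^2$. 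For the lower bound, symmetrically $Q_x(h)=Q_y(h)+Q_{x-y}(h)\le Q_y(h)+|Q_{x-y}(h)|$ gives $Q_x(h)^{1/2}\le\alpha Q_x(h)^{1/2}+(1+\beta)Q_y(h)^{1/2}+\gamma\|h\|$, and since $\gamma\|h\|\le(\gamma/\sqrt a)Q_x(h)^{1/2}$ this rearranges to
\begin{align*}
\Bigl(1-\alpha-\tfrac{\gamma}{\sqrt a}\Bigr)Q_x(h)^{1/2}\le(1+\beta)Q_y(h)^{1/2};
\end{align*}
using $\sqrt a\|h\|\le Q_x(h)^{1/2}$ once more, together with $\alpha+\gamma/\sqrt a<1$ and $\beta<1$, this yields $Q_y(h)\ge a\bigl(1-\tfrac{\alpha+\beta+\gamma/\sqrt a}{1+\beta}\bigr)^2\|h\|^2$.

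It remains to confirm that $(\{y_\alpha\}_{\alpha\in\Omega},\{\tau_\alpha\}_{\alpha\in\Omega})$ meets Definition~\ref{SEQUENTIAL2}: measurability of $\alpha\mapsto\langle h,\tau_\alpha\rangle$ and boundedness of $\theta_\tau$ are inherited from $(\{x_\alpha\},\{\tau_\alpha\})$; once $\theta_y$ is available the identity $Q_y(h)=\langle S_{y,\tau}h,h\rangle=\langle\theta_\tau^*\theta_yh,h\rangle$ with $Q_y\ge0$ and the two bounds above make $S_{y,\tau}$ bounded, positive and invertible (the bounds then doubling as the frame bounds), while the symmetry $\int_\Omega\langle h,y_\alpha\rangle\tau_\alpha\,d\mu=\int_\Omega\langle h,\tau_\alpha\rangle y_\alpha\,d\mu$ is automatic over $\mathbb C$ and follows from self-adjointness of $S_{y,\tau}$ over $\mathbb R$. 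The one step that is not purely formal — and the main obstacle — is the boundedness of the analysis operator $\theta_y$, i.e.\ that $\{y_\alpha\}$ is continuous Bessel with respect to itself; this is precisely the role of the standing hypothesis ``$\{B_\alpha\}$ continuous Bessel'' in Theorem~\ref{PERTURBATION RESULT 2}, and I would secure it from the Bessel-type content of the assumptions on $\{y_\alpha\}$ (for instance, by first exhibiting the polarised form $(h,g)\mapsto\int_\Omega\langle h,y_\alpha\rangle\langle\tau_\alpha,g\rangle\,d\mu$ as bounded, coming from the quadratic bound on $Q_y$), adding it to the hypotheses if necessary; with $\theta_y$ in hand, $S_{y,\tau}=\theta_\tau^*\theta_y$ is legitimate and the estimates above complete the proof.
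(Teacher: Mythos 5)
Your two-sided estimates on $Q_y$ are exactly the ones in the paper's proof of Theorem \ref{PERTURBATION RESULT 2}, to which the paper's own proof of this statement simply defers, so the approach is essentially the same and the frame-bound computations are correct. The one place where you go beyond the paper is also the one genuine issue, and you have diagnosed it accurately: Definition \ref{SEQUENTIAL2} requires the analysis operator $\theta_y$ to be bounded into $\mathcal{L}^2(\Omega,\mathbb{K})$, and the operator-valued model Theorem \ref{PERTURBATION RESULT 2} secures this by \emph{assuming} that $\{B_\alpha\}_{\alpha\in\Omega}$ is continuous Bessel with respect to itself, whereas the scalar statement here drops that hypothesis. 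It cannot be recovered from what is assumed: control of $Q_y(h)=\int_{\Omega}\langle h,y_\alpha\rangle\langle\tau_\alpha,h\rangle\,d\mu(\alpha)$ gives no control of $\int_{\Omega}|\langle h,y_\alpha\rangle|^2\,d\mu(\alpha)$ (the $y_\alpha$ may be arbitrarily large in directions on which the $\tau_\alpha$ are small), so your fallback of adding the Bessel hypothesis is the right repair. Without it, your argument — via polarization over $\mathbb{C}$, which makes $S_{y,\tau}$ a well-defined bounded positive invertible operator from the bounds on $Q_y$ — establishes only the ``weak continuous frame'' conclusion in the sense of Section \ref{FURTHEREXTENSION}, not membership in Definition \ref{SEQUENTIAL2}; this is a defect of the statement as printed rather than of your proof.
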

\begin{proof}
Similar to the proof of Theorem \ref{PERTURBATION RESULT 2}.
\end{proof} 
\begin{theorem}\label{PERTURBATION3SV}
Let $ (\{x_\alpha\}_{\alpha\in \Omega}, \{\tau_\alpha\}_{\alpha\in \Omega}) $  be  a continuous frame for  $\mathcal{H}$. Suppose  $\{y_\alpha\}_{\alpha\in \Omega} $ in $\mathcal{H}$ is such that 
\begin{enumerate}[\upshape(i)]
\item $ \langle h,y_\alpha \rangle \tau_\alpha=\langle h,\tau_\alpha\rangle y_\alpha, \langle h, y_\alpha\rangle  \langle \tau_\alpha,h \rangle \geq 0, \forall h \in \mathcal{H}, \forall \alpha \in \Omega$,
\item for each  $h \in \mathcal{H}$, the map    $\Omega \ni \alpha \mapsto \langle h, y_\alpha \rangle \in \mathbb{K}$ is measurable,
\item The map    $\Omega \ni \alpha \mapsto\|x_\alpha-y_\alpha\| \in \mathbb{R}$ is measurable and $   \int_{\Omega}\|x_\alpha-y_\alpha\|\,d\mu(\alpha) \in \mathbb{R}$,
\item  The map    $\Omega \ni \alpha \mapsto\| S_{x,\tau}^{-1}\tau_\alpha\|\in \mathbb{R}$ is measurable and $   \int_{\Omega}\|x_\alpha-y_\alpha\|\| S_{x,\tau}^{-1}\tau_\alpha\|\,d\mu(\alpha) \in \mathbb{R}$,
\item $\int_{\Omega}\|x_\alpha-y_\alpha\|\| S_{x,\tau}^{-1}\tau_\alpha\|\,d\mu(\alpha)<1.$
\end{enumerate}
Then  $ (\{y_\alpha\}_{\alpha\in \Omega}, \{\tau_\alpha\}_{\alpha\in \Omega}) $ is a continuous frame with bounds  $\frac{1-\int_{\Omega}\|x_\alpha-y_\alpha\|\|S_{x,\tau}^{-1}\tau_\alpha\|\,d\mu(\alpha)}{\|S_{x,\tau}^{-1}\|}$ and $\|\theta_\tau\|((\int_{\Omega}\|x_\alpha-y_\alpha\|^2\,d\mu(\alpha))^{1/2}+\|\theta_x\|) $.
\end{theorem}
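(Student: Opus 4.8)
The plan is to transcribe, into the scalar setting, the argument used for Theorem \ref{OVFQUADRATICPERTURBATION} and Theorem \ref{PERTURBATION RESULT 1}. Put $\beta\coloneqq\int_{\Omega}\|x_\alpha-y_\alpha\|\,\|S_{x,\tau}^{-1}\tau_\alpha\|\,d\mu(\alpha)$; this is finite by hypothesis (iv) and, by hypothesis (v), $\beta<1$. First I would check that the prospective synthesis operator $f\mapsto\int_{\Omega}f(\alpha)y_\alpha\,d\mu(\alpha)$ is a well-defined bounded map on $\mathcal{L}^2(\Omega,\mathbb{K})$. For $f\in\mathcal{L}^2(\Omega,\mathbb{K})$ and $h\in\mathcal{H}$, split $y_\alpha=x_\alpha+(y_\alpha-x_\alpha)$ and use that $\theta_x^*f=\int_{\Omega}f(\alpha)x_\alpha\,d\mu(\alpha)$ already exists to estimate $\big|\int_{\Omega}f(\alpha)\langle y_\alpha-x_\alpha,h\rangle\,d\mu(\alpha)\big|\le\|h\|\int_{\Omega}|f(\alpha)|\,\|x_\alpha-y_\alpha\|\,d\mu(\alpha)\le\|h\|\,\|f\|\,\big(\int_{\Omega}\|x_\alpha-y_\alpha\|^2\,d\mu(\alpha)\big)^{1/2}$ by Cauchy--Schwarz, exactly as in the proof of Theorem \ref{OVFQUADRATICPERTURBATION}. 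Hence this operator is bounded with norm at most $\big(\int_{\Omega}\|x_\alpha-y_\alpha\|^2\,d\mu(\alpha)\big)^{1/2}+\|\theta_x\|$; its adjoint is $\theta_y$, so $\theta_y$ exists and $\|\theta_y\|\le\big(\int_{\Omega}\|x_\alpha-y_\alpha\|^2\,d\mu(\alpha)\big)^{1/2}+\|\theta_x\|$. Then $S_{y,\tau}\coloneqq\theta_\tau^*\theta_y$ is bounded with $S_{y,\tau}h=\int_{\Omega}\langle h,y_\alpha\rangle\tau_\alpha\,d\mu(\alpha)$, and hypothesis (i) gives both $S_{y,\tau}=\theta_\tau^*\theta_y=\theta_y^*\theta_\tau=S_{y,\tau}^*$ and $\langle S_{y,\tau}h,h\rangle=\int_{\Omega}\langle h,y_\alpha\rangle\langle\tau_\alpha,h\rangle\,d\mu(\alpha)\ge0$, so $S_{y,\tau}$ is a bounded positive operator.

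The heart of the proof is the estimate $\|I_{\mathcal{H}}-S_{y,\tau}S_{x,\tau}^{-1}\|\le\beta$. For $h,g\in\mathcal{H}$ I would compute $\langle(I_{\mathcal{H}}-S_{y,\tau}S_{x,\tau}^{-1})h,g\rangle$ by writing $\langle h,g\rangle=\langle S_{x,\tau}(S_{x,\tau}^{-1}h),g\rangle$, expanding $S_{x,\tau}=\theta_x^*\theta_\tau$ and $S_{y,\tau}=\theta_y^*\theta_\tau$ (the latter being legitimate precisely because of hypothesis (i)), and using self-adjointness of $S_{x,\tau}^{-1}$, so as to get $\langle(I_{\mathcal{H}}-S_{y,\tau}S_{x,\tau}^{-1})h,g\rangle=\int_{\Omega}\langle h,S_{x,\tau}^{-1}\tau_\alpha\rangle\,\langle x_\alpha-y_\alpha,g\rangle\,d\mu(\alpha)$. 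Pointwise Cauchy--Schwarz bounds the integrand by $\|h\|\,\|S_{x,\tau}^{-1}\tau_\alpha\|\,\|x_\alpha-y_\alpha\|\,\|g\|$, and integrating gives $|\langle(I_{\mathcal{H}}-S_{y,\tau}S_{x,\tau}^{-1})h,g\rangle|\le\beta\,\|h\|\,\|g\|$, hence $\|I_{\mathcal{H}}-S_{y,\tau}S_{x,\tau}^{-1}\|\le\beta<1$.

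To finish, the bounded-inverse argument of the proof of Theorem \ref{PERTURBATION RESULT 1} (Theorem 1 in \cite{OLECAZASSA}, or simply the Neumann series) shows $S_{y,\tau}S_{x,\tau}^{-1}$ is invertible with $\|(S_{y,\tau}S_{x,\tau}^{-1})^{-1}\|\le(1-\beta)^{-1}$, so $S_{y,\tau}=(S_{y,\tau}S_{x,\tau}^{-1})S_{x,\tau}$ is bounded, positive and invertible. Combining this with the measurability of $\alpha\mapsto\langle h,y_\alpha\rangle$ (hypothesis (ii)) and of $\alpha\mapsto\langle h,\tau_\alpha\rangle$ (given), and with the boundedness of $\theta_y$ and $\theta_\tau$, all conditions of Definition \ref{SEQUENTIAL2} are met, so $(\{y_\alpha\}_{\alpha\in\Omega},\{\tau_\alpha\}_{\alpha\in\Omega})$ is a continuous frame. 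The frame bounds follow from $S_{y,\tau}^{-1}=S_{x,\tau}^{-1}(S_{y,\tau}S_{x,\tau}^{-1})^{-1}$, which yields $\|S_{y,\tau}^{-1}\|\le\|S_{x,\tau}^{-1}\|/(1-\beta)$, hence $\langle S_{y,\tau}h,h\rangle\ge\|S_{y,\tau}^{-1}\|^{-1}\|h\|^2\ge\frac{1-\beta}{\|S_{x,\tau}^{-1}\|}\|h\|^2$, and from $\langle S_{y,\tau}h,h\rangle=\langle\theta_y h,\theta_\tau h\rangle\le\|\theta_y\|\,\|\theta_\tau\|\,\|h\|^2\le\|\theta_\tau\|\big(\big(\int_{\Omega}\|x_\alpha-y_\alpha\|^2\,d\mu(\alpha)\big)^{1/2}+\|\theta_x\|\big)\|h\|^2$. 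I expect the main obstacle to be the rewriting in the second paragraph: one must steer the computation so that the small factor $\|S_{x,\tau}^{-1}\tau_\alpha\|$ appears under the integral rather than the cruder $\|S_{x,\tau}^{-1}\|\,\|x_\alpha-y_\alpha\|$, and this is exactly where hypothesis (i) --- equivalently $S_{y,\tau}=S_{\tau,y}$ --- is indispensable; the only other delicate point is the $\mathcal{L}^2$-integrability of $\alpha\mapsto\|x_\alpha-y_\alpha\|$ needed to make $\theta_y$ bounded.
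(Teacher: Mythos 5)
Your proposal is correct and follows essentially the same route as the paper's proof: the same Cauchy--Schwarz estimate establishing boundedness of the synthesis operator (hence of $\theta_y$) with norm at most $\bigl(\int_{\Omega}\|x_\alpha-y_\alpha\|^2\,d\mu(\alpha)\bigr)^{1/2}+\|\theta_x\|$, the same key computation showing $\|I_{\mathcal{H}}-S_{y,\tau}S_{x,\tau}^{-1}\|\leq\beta<1$ by moving $S_{x,\tau}^{-1}$ onto $\tau_\alpha$ inside the integral, and the same Neumann-series conclusion for invertibility and the frame bounds. The $\mathcal{L}^2$-integrability of $\alpha\mapsto\|x_\alpha-y_\alpha\|$ that you flag is likewise tacitly assumed in the paper's own argument, so there is no substantive divergence.
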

\begin{proof}
Let $\alpha=(\int_{\Omega}\|x_\alpha-y_\alpha\|^2\,d\mu(\alpha))^{1/2} $,  $\beta=\int_{\Omega}\|x_\alpha-y_\alpha\|\|S_{x,\tau}^{-1}\tau_\alpha\|\,d\mu(\alpha) $.  Fix $f\in \mathcal{L}^2(\Omega, \mathbb{K})$. Then for all $h \in \mathcal{H}$,

\begin{align*}
\left| \int_{\Omega}\langle f(\alpha)y_\alpha, h\rangle \,d\mu(\alpha)\right|&\leq\left| \int_{\Omega}\langle f(\alpha)(y_\alpha-x_\alpha), h\rangle \,d\mu(\alpha)\right|+\left| \int_{\Omega}\langle f(\alpha)x_\alpha, h\rangle \,d\mu(\alpha)\right| \\
&=\left| \int_{\Omega}\langle f(\alpha)(y_\alpha-x_\alpha), h\rangle \,d\mu(\alpha)\right|+|\langle \theta_x^*f, h\rangle |\\
&\leq \int_{\Omega}|f(\alpha)|\|y_\alpha-x_\alpha\|\|h\|\,d\mu(\alpha)+ \|f\|\|\theta_xh\|\\
&\leq\|h\| \int_{\Omega}|f(\alpha)|\|y_\alpha-x_\alpha\|\,d\mu(\alpha)+ \|f\|\|\theta_x\|\|h\|\\
&\leq\|h\| \left(\int_{\Omega}|f(\alpha)|^2\,d\mu(\alpha)\right)^\frac{1}{2}\left(\int_{\Omega}\|y_\alpha-x_\alpha\|^2\,d\mu(\alpha)\right)^\frac{1}{2}+ \|f\|\|\theta_x\|\|h\|\\
&=\|h\|\|f\|\alpha+ \|f\|\|\theta_x\|\|h\|=(\|f\|\alpha+ \|f\|\|\theta_x\|)\|h\|
\end{align*}
and hence

\begin{align*}
\|I_\mathcal{H}-S_{y,\tau}S_{x,\tau}^{-1}\|&=\sup_{h, g \in \mathcal{H}, \|h\|=1=\|g\|} |\langle (I_\mathcal{H}-S_{y,\tau}S_{x,\tau}^{-1})h, g\rangle |\\
&=\sup_{h, g \in \mathcal{H}, \|h\|=1=\|g\|}\left| \int_{\Omega}\langle S_{x,\tau}^{-1}h, \tau_\alpha \rangle \langle x_\alpha ,  g\rangle \,d \mu(\alpha) -\int_{\Omega}\langle S_{x,\tau}^{-1}h, \tau_\alpha \rangle \langle y_\alpha ,  g\rangle \,d \mu(\alpha)\right|\\
&=\sup_{h, g \in \mathcal{H}, \|h\|=1=\|g\|}\left| \int_{\Omega}\langle S_{x,\tau}^{-1}h, \tau_\alpha \rangle \langle x_\alpha-y_\alpha ,  g\rangle \,d \mu(\alpha) \right|\\
&=\sup_{h, g \in \mathcal{H}, \|h\|=1=\|g\|}\left| \int_{\Omega}\langle h, S_{x,\tau}^{-1} \tau_\alpha \rangle \langle x_\alpha-y_\alpha ,  g\rangle \,d \mu(\alpha) \right|\\
&\leq \sup_{h, g \in \mathcal{H}, \|h\|=1=\|g\|} \int_{\Omega}\| x_\alpha-y_\alpha\|\| S_{x,\tau}^{-1}\tau_\alpha\|\|h\|\|g\|\,d \mu(\alpha)\\
&=\int_{\Omega}\| x_\alpha-y_\alpha\|\| S_{x,\tau}^{-1}\tau_\alpha\|\,d \mu(\alpha) =\beta<1.
\end{align*}
Other arguments are  similar to the corresponding arguments in the proof of  Theorem \ref{OVFQUADRATICPERTURBATION}.

\end{proof}

\section{The finite dimensional case}\label{THEFINITEDIMENSIONALCASE}
\begin{theorem}\label{FINITEDIMENSIONALCHARATERIZATIONHILBERT}
Let $ \mathcal{H}$ be a finite dimensional Hilbert space, $G$ be a locally compact group, $ \{x_\alpha\}_{\alpha \in G} , \{\tau_\alpha\}_{\alpha \in G} $ be  a  set of vectors in $ \mathcal{H}$  such that 
\begin{enumerate}[\upshape(i)]
\item $\langle h,  x_\alpha \rangle \tau_\alpha =\langle h, \tau_\alpha \rangle x_\alpha, \forall h \in  \mathcal{H}, \forall \alpha \in G$.
\item $ \langle h, x_\alpha\rangle \langle\tau_\alpha, h\rangle\geq0, \forall h \in  \mathcal{H}, \forall \alpha \in G. $
\item for each  $h \in \mathcal{H}$, both    maps   $G \ni \alpha \mapsto\langle  h, x_\alpha\rangle \in \mathbb{K}$, $G \ni\alpha \mapsto \langle  h, \tau_\alpha \rangle \in\mathbb{K}$ are measurable.
\item the map $G \ni \alpha \mapsto \|x_\alpha\| \|\tau_\alpha\|\in \mathbb{K}$ is in $\mathcal{L}^2(G, \mathbb{K})$.
\item for each  $h \in \mathcal{H}$, the map   $G \ni \alpha \mapsto\langle  h, x_\alpha\rangle\langle  \tau_\alpha, h \rangle \in \mathbb{K}$ is continuous.
\end{enumerate}
Then  $ (\{x_\alpha\}_{\alpha \in G} , \{\tau_\alpha\}_{\alpha \in G}) $ is a continuous frame for $ \mathcal{H}$ if and only if for every pair $G_x, G_\tau$ of subsets of $G$ satisfying $G_x\cap G_\tau=\emptyset$ and $G_x\cup G_\tau=G$ one has $ \operatorname{span}\{x_{\alpha},\tau_{\beta}:\alpha \in G_x ,  \beta \in  G_\tau\}= \mathcal{H}.$
\end{theorem}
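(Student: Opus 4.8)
The plan is to reduce the frame property, in finite dimensions, to the strict positivity of the quadratic form $h\mapsto\langle S_{x,\tau}h,h\rangle=\int_{G}\langle h,x_\alpha\rangle\langle\tau_\alpha,h\rangle\,d\mu_G(\alpha)$ on $\mathcal H\setminus\{0\}$, and to translate such positivity into a spanning statement (and back) through the partition $G=G_x\sqcup G_\tau$ attached to a vector $h$, namely $G_x=\{\alpha\in G:\langle h,x_\alpha\rangle=0\}$ and $G_\tau=\{\alpha\in G:\langle h,x_\alpha\rangle\ne0\}$.

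For the forward implication, assume $(\{x_\alpha\}_{\alpha\in G},\{\tau_\alpha\}_{\alpha\in G})$ is a continuous frame with lower bound $a>0$ and let $G_x,G_\tau$ be an arbitrary partition of $G$. If $\operatorname{span}\{x_\alpha,\tau_\beta:\alpha\in G_x,\ \beta\in G_\tau\}\ne\mathcal H$, pick $0\ne h\in\mathcal H$ orthogonal to all those vectors. Then for every $\alpha\in G$ the scalar $\langle h,x_\alpha\rangle\langle\tau_\alpha,h\rangle$ vanishes, because $\langle h,x_\alpha\rangle=0$ when $\alpha\in G_x$ and $\langle\tau_\alpha,h\rangle=0$ when $\alpha\in G_\tau$; hence $\langle S_{x,\tau}h,h\rangle=\int_{G}\langle h,x_\alpha\rangle\langle\tau_\alpha,h\rangle\,d\mu_G(\alpha)=0$, contradicting $\langle S_{x,\tau}h,h\rangle\ge a\|h\|^2>0$. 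Note that measurability of $G_x,G_\tau$ plays no role here, since the integrand is identically zero.

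For the converse, first use hypotheses (iii) and (iv), together with (i) and (ii), to verify that $\theta_x,\theta_\tau$ are well-defined bounded operators and that $S_{x,\tau}=\theta_\tau^*\theta_x$ is a well-defined bounded positive (self-adjoint) operator — that is, $(\{x_\alpha\},\{\tau_\alpha\})$ is at least continuous Bessel; this is a routine Cauchy--Schwarz computation along the lines of the characterization following Theorem \ref{OVFTOSEQUENCEANDVICEVERSATHEOREM} (condition (i) supplies exactly the symmetry $\int_G\langle h,x_\alpha\rangle\tau_\alpha\,d\mu_G=\int_G\langle h,\tau_\alpha\rangle x_\alpha\,d\mu_G$, and (ii) gives positivity). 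Since $\dim\mathcal H<\infty$, it then suffices to show $S_{x,\tau}$ is injective, i.e.\ $\langle S_{x,\tau}h,h\rangle>0$ for all $h\ne0$. Suppose not: there is $0\ne h$ with $\int_{G}\langle h,x_\alpha\rangle\langle\tau_\alpha,h\rangle\,d\mu_G(\alpha)=0$. By (ii) the integrand is nonnegative, so it vanishes $\mu_G$-a.e.; by (v) it is continuous, and since $\mu_G$ is a Haar measure its support is all of $G$, whence a continuous function vanishing $\mu_G$-a.e.\ vanishes at every point: $\langle h,x_\alpha\rangle\langle\tau_\alpha,h\rangle=0$ for all $\alpha\in G$. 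Taking the partition $G=G_x\sqcup G_\tau$ defined above, $h\perp x_\alpha$ for $\alpha\in G_x$ by definition, while for $\beta\in G_\tau$ we have $\langle h,x_\beta\rangle\ne0$, forcing $\langle\tau_\beta,h\rangle=0$; hence $h$ is orthogonal to $\operatorname{span}\{x_\alpha,\tau_\beta:\alpha\in G_x,\ \beta\in G_\tau\}=\mathcal H$, so $h=0$, a contradiction.

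The crux of the argument — and the reason the indexing set is a locally compact group rather than a general measure space — is the step from ``integrand zero $\mu_G$-a.e.'' to ``integrand zero at every point'': it uses the continuity hypothesis (v) together with the full-support property $\operatorname{supp}\mu_G=G$ of Haar measure. Without this, a $\mu_G$-null set on which the integrand is nonzero could not be absorbed into either block of the partition, and the equivalence would break. The remaining technical point, extracting boundedness and positivity of $S_{x,\tau}$ from (iv), is routine and parallels the estimates of the earlier sections.
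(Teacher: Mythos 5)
Your argument is correct and follows essentially the same route as the paper's: the forward direction is the identical orthogonal-complement argument, and the converse hinges on the same key step --- a continuous nonnegative integrand with vanishing Haar integral must vanish identically (full support of $\mu_G$) --- after which the partition $G_x=\{\alpha\in G:\langle h,x_\alpha\rangle=0\}$, $G_\tau=G\setminus G_x$ forces $h=0$. The only cosmetic difference is that you invoke finite-dimensionality through ``an injective bounded positive operator is bounded below'' where the paper minimizes $h\mapsto\langle S_{x,\tau}h,h\rangle$ over the compact unit sphere; these are interchangeable, and neither you nor the paper verifies the boundedness of $\theta_x$ and $\theta_\tau$ separately beyond a routine appeal to the integrability hypothesis.
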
 
\begin{proof} We can assume $\mathcal{H}\neq \{0\}$.  
	
$(\Leftarrow)$ There exists $\alpha \in G $  such that $x_\alpha \neq0\neq\tau_\alpha$ (else $ S_{x, \tau}$ is zero). Hence $\int_{G}\| x_\alpha\|\|\tau_\alpha\|\,d\mu_G(\alpha)>0$. Clearly $ S_{x, \tau}$ is self-adjoint and positive. Now 

\begin{align*}
\int_{G}\langle h, x_\alpha\rangle \langle\tau_\alpha, h\rangle\,d\mu_G(\alpha)\leq \int_{G}|\langle h, x_\alpha\rangle \langle\tau_\alpha, h\rangle|\,d\mu_G(\alpha)\leq \left(\int_{G}\| x_\alpha\|\|\tau_\alpha\|\,d\mu_G(\alpha)\right)\|h\|^2.
\end{align*}  
Hence the upper frame bound condition is satisfied. Define $ \phi :  \mathcal{H} \ni h \mapsto  \int_{G}\langle h, x_\alpha\rangle \langle\tau_\alpha, h\rangle\,d\mu(\alpha) \in \mathbb{R}.$ We argue that $ \phi$ is continuous. Let $h_n \to h $ in $\mathcal{H}$ as $n \to \infty.$ Then 

\begin{align*}
|\phi(h_n)-\phi(h)|&=\left|\int_{G}\langle h_n, x_\alpha\rangle \langle\tau_\alpha, h_n\rangle\,d\mu_G(\alpha)-\int_{G}\langle h, x_\alpha\rangle \langle\tau_\alpha, h\rangle\,d\mu_G(\alpha)\right|\\
&=\bigg|\int_{G}\langle h_n, x_\alpha\rangle \langle\tau_\alpha, h_n\rangle\,d\mu_G(\alpha)-\int_{G}\langle h, x_\alpha\rangle \langle\tau_\alpha, h_n\rangle\,d\mu_G(\alpha)\\
&\quad +\int_{G}\langle h, x_\alpha\rangle \langle\tau_\alpha, h_n\rangle\,d\mu_G(\alpha)-\int_{G}\langle h, x_\alpha\rangle \langle\tau_\alpha, h\rangle\,d\mu_G(\alpha)\bigg|\\
&=\left|\int_{G}\langle h_n-h, x_\alpha\rangle \langle\tau_\alpha, h_n\rangle\,d\mu_G(\alpha)+\int_{G}\langle h, x_\alpha\rangle \langle\tau_\alpha, h_n-h \rangle\,d\mu_G(\alpha)\right|\\
&\leq \int_{G}\|h_n-h\|\| x_\alpha\|\| \|\tau_\alpha\| \|h_n\|\,d\mu_G(\alpha)+\int_{G}\| h\|\|x_\alpha\| \|\tau_\alpha\| \|h_n-h \|\,d\mu_G(\alpha)\\
&\leq \left(\left(\sup_{n \in \mathbb{N}}\|h_n\|+\|h\|\right)\int_{G}\| x_\alpha\|\|\tau_\alpha\|\,d\mu_G(\alpha)\right)\|h_n-h\| \to 0 \text{ as }  n \to \infty.
\end{align*}
Compactness of  the unit sphere of  $\mathcal{H}$ gives the existence of   $ g \in  \mathcal{H}$ with $ \|g\|=1$ such that $ a\coloneqq\int_{G}\langle g, x_\alpha\rangle \langle\tau_\alpha, g\rangle\,d\mu_G(\alpha)=\inf\{\int_{G}\langle h, x_\alpha\rangle \langle\tau_\alpha, h\rangle\,d\mu_G(\alpha): h \in\mathcal{H}, \|h\|=1 \}.$ We claim that  $ a>0.$ If this  fails:  since $\langle g, x_\alpha\rangle \langle\tau_\alpha, g\rangle\geq0, \forall \alpha \in G  $, $G$ is a locally compact group and the map $G \ni \alpha \mapsto\langle  g, x_\alpha\rangle\langle  \tau_\alpha, g \rangle \in \mathbb{K}$ is continuous, from \cite{HEWITTROSS} we must have $ \langle g, x_\alpha\rangle \langle\tau_\alpha, g\rangle=0, \forall \alpha \in G.$ Define $G_x\coloneqq \{\alpha \in G:\langle  g, x_\alpha\rangle=0\}$  and $G_\tau\coloneqq \{\alpha \in G:\langle  \tau_\alpha, g \rangle=0\}\setminus G_x$. Now using  $\langle g, x_\alpha\rangle \langle\tau_\alpha, g\rangle=0, \forall \alpha \in G$ we get $G_x \cup G_\tau=G $. Clearly $G_x \cap G_\tau=\emptyset $. Then $ g \perp \operatorname{span}\{x_{\alpha},\tau_{\beta}:\alpha \in G_x ,  \beta \in  G_\tau\}= \mathcal{H}$  which implies $ g=0$ which is forbidden. We claim that  $ a$ is a lower frame bound. For all nonzero $ h \in \mathcal{H}$,

\begin{align*}
a\|h\|^2\leq\left(\int_{G}\left\langle \frac{h}{\|h\|}, x_\alpha\right\rangle \left\langle\tau_\alpha, \frac{h}{\|h\|}\right\rangle\,d\mu_G(\alpha)\right)\|h\|^2=\int_{G}\langle h, x_\alpha\rangle \langle\tau_\alpha, h\rangle\,d\mu_G(\alpha).
\end{align*} 
$ (\Rightarrow)$  We prove by contrapositive. Suppose  there are  subsets $G_x, G_\tau$  of $G$ satisfying $G_x\cap G_\tau=\emptyset$ and $G_x\cup G_\tau=G$ such that $ \operatorname{span}\{x_{\alpha},\tau_{\beta}:\alpha \in G_x ,  \beta \in G_\tau\} \subsetneq \mathcal{H}.$ Let $ h \in \mathcal{H}$ be nonzero such that $ h\perp \operatorname{span}\{x_{\alpha},\tau_{\beta}:\alpha \in G_x ,  \beta \in  G_\tau\}.$ Now because of  $G_x\cap G_\tau=\emptyset$ and $G_x\cup G_\tau=G$ we get $ \int_{G}\langle h, x_\alpha\rangle \langle\tau_\alpha, h\rangle\,d\mu_G(\alpha) =0 $ which says that the lower frame bound condition fails.
\end{proof} 
\begin{proposition}
Let $ (\{x_\alpha\}_{\alpha\in \Omega} , \{\tau_\alpha\}_{\alpha\in \Omega}) $ be  a continuous frame for $\mathcal{H}$ with a lower frame bound  $a $ and $ \langle h, x_\alpha\rangle \langle\tau_\alpha, h\rangle\geq0, \forall h \in  \mathcal{H}, \forall \alpha \in \Omega. $ If $ \Delta$ is any measurable subset of $\Omega$ such that $\Delta \ni \alpha \mapsto \|x_\alpha\| \|\tau_\alpha\|\in \mathbb{K}$ is measurable and  $\int_{\Delta}\|x_\alpha\| \|\tau_\alpha\|\,d\mu(\alpha)<a ,$  then $ (\{x_\alpha\}_{\alpha\in \Omega\setminus\Delta} , \{\tau_\alpha\}_{\alpha\in \Omega\setminus\Delta}) $ is a continuous  frame for $\mathcal{H} $ with lower frame bound $a-\int_{\Delta}\|x_\alpha\| \|\tau_\alpha\|\,d\mu(\alpha)$.
\end{proposition}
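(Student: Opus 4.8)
The plan is to verify, for the restricted family $(\{x_\alpha\}_{\alpha\in\Omega\setminus\Delta},\{\tau_\alpha\}_{\alpha\in\Omega\setminus\Delta})$, the four conditions of the characterization of Definition \ref{SEQUENTIAL2} stated just after Theorem \ref{OVFTOSEQUENCEANDVICEVERSATHEOREM}, reading off all bounds from the corresponding quantities of the given frame. Fix $a,b,c,d>0$ with
$$a\|h\|^2\le\int_{\Omega}\langle h,x_\alpha\rangle\langle\tau_\alpha,h\rangle\,d\mu(\alpha)\le b\|h\|^2,\quad\int_{\Omega}|\langle h,x_\alpha\rangle|^2\,d\mu(\alpha)\le c\|h\|^2,\quad\int_{\Omega}|\langle h,\tau_\alpha\rangle|^2\,d\mu(\alpha)\le d\|h\|^2$$
for all $h\in\mathcal{H}$. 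Condition (i), measurability, is inherited trivially since restrictions of measurable maps to the measurable set $\Omega\setminus\Delta$ are measurable. For the upper estimates, the integrands $\langle h,x_\alpha\rangle\langle\tau_\alpha,h\rangle$ (nonnegative by hypothesis), $|\langle h,x_\alpha\rangle|^2$ and $|\langle h,\tau_\alpha\rangle|^2$ are all nonnegative, so monotonicity of the integral immediately gives the same upper bounds $b,c,d$ over $\Omega\setminus\Delta$; this yields the upper half of condition (ii) and all of condition (iii).

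For the lower bound I would split the integral over $\Omega=\Delta\cup(\Omega\setminus\Delta)$. By Cauchy--Schwarz, $0\le\langle h,x_\alpha\rangle\langle\tau_\alpha,h\rangle\le\|x_\alpha\|\,\|\tau_\alpha\|\,\|h\|^2$, and since $\alpha\mapsto\|x_\alpha\|\|\tau_\alpha\|$ is integrable over $\Delta$ by hypothesis, the nonnegative function $\alpha\mapsto\langle h,x_\alpha\rangle\langle\tau_\alpha,h\rangle$ is integrable over $\Delta$ with $\int_\Delta\langle h,x_\alpha\rangle\langle\tau_\alpha,h\rangle\,d\mu(\alpha)\le\big(\int_\Delta\|x_\alpha\|\|\tau_\alpha\|\,d\mu(\alpha)\big)\|h\|^2$. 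Hence
$$\int_{\Omega\setminus\Delta}\langle h,x_\alpha\rangle\langle\tau_\alpha,h\rangle\,d\mu(\alpha)=\int_{\Omega}\langle h,x_\alpha\rangle\langle\tau_\alpha,h\rangle\,d\mu(\alpha)-\int_{\Delta}\langle h,x_\alpha\rangle\langle\tau_\alpha,h\rangle\,d\mu(\alpha)\ge\Big(a-\int_{\Delta}\|x_\alpha\|\|\tau_\alpha\|\,d\mu(\alpha)\Big)\|h\|^2,$$
and the constant on the right is strictly positive by the standing assumption $\int_\Delta\|x_\alpha\|\|\tau_\alpha\|\,d\mu(\alpha)<a$. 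This is the lower half of condition (ii), with the asserted lower frame bound.

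Finally I would dispose of condition (iv), $\int_{\Omega\setminus\Delta}\langle h,x_\alpha\rangle\tau_\alpha\,d\mu(\alpha)=\int_{\Omega\setminus\Delta}\langle h,\tau_\alpha\rangle x_\alpha\,d\mu(\alpha)$ for all $h$: by the cited characterization it is vacuous when $\mathbb{K}=\mathbb{C}$, and in the real case the pointwise positivity $\langle h,x_\alpha\rangle\langle\tau_\alpha,h\rangle\ge0$ for all $h$ forces, for each fixed $\alpha$, the rank-one operator $h\mapsto\langle h,x_\alpha\rangle\tau_\alpha$ to be self-adjoint (equivalently, $x_\alpha$ and $\tau_\alpha$ are nonnegatively proportional), so that $\langle h,x_\alpha\rangle\tau_\alpha=\langle h,\tau_\alpha\rangle x_\alpha$ identically; integrating this identity over $\Omega\setminus\Delta$, the weak convergence being guaranteed by condition (iii) and Cauchy--Schwarz, gives (iv). Applying the characterization of Definition \ref{SEQUENTIAL2} then finishes the proof. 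I expect the only mildly delicate point to be the Gelfand--Pettis bookkeeping: checking that the scalar- and vector-valued integrands are genuinely integrable over both $\Delta$ and $\Omega\setminus\Delta$, so that the additive splitting $\int_\Omega=\int_\Delta+\int_{\Omega\setminus\Delta}$ is legitimate, which the domination estimates above supply.
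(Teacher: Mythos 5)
Your proposal is correct and its central step --- splitting $\int_{\Omega}=\int_{\Delta}+\int_{\Omega\setminus\Delta}$ and dominating the $\Delta$-part by $\bigl(\int_{\Delta}\|x_\alpha\|\|\tau_\alpha\|\,d\mu(\alpha)\bigr)\|h\|^2$ via Cauchy--Schwarz --- is exactly the paper's proof, which consists of just that one two-line estimate. The only difference is that you additionally verify the Bessel, measurability and symmetry conditions for the restricted family (including the correct observation that nonnegativity of $\langle h,x_\alpha\rangle\langle\tau_\alpha,h\rangle$ forces $x_\alpha$ and $\tau_\alpha$ to be nonnegatively proportional in the real case), details the paper leaves implicit.
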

\begin{proof}
\begin{align*}
a\|h\|^2&\leq \int_{\Omega}\langle h,x_\alpha \rangle\langle \tau_\alpha, h\rangle\,d\mu(\alpha)=\int_{\Delta}\langle h,x_\alpha \rangle\langle \tau_\alpha, h\rangle\,d\mu(\alpha)+\int_{ \Omega\setminus\Delta}\langle h,x_\alpha \rangle\langle \tau_\alpha, h\rangle \,d\mu(\alpha)\\
&\leq\|h\|^2\int_{\Delta}\|x_\alpha\|\|\tau_\alpha\| \,d\mu(\alpha)+\int_{ \Omega\setminus\Delta}\langle h,x_\alpha \rangle\langle \tau_\alpha,h\rangle\,d\mu(\alpha), \quad\forall h \in \mathcal{H}.
\end{align*}
\end{proof}

\begin{theorem}\label{FINITEDIMENSIONALPASTINGTHEOREM}
Let   $ (\{x_\alpha\}_{\alpha\in \Omega} , \{\tau_\alpha\}_{\alpha\in \Omega}) $ be  a continuous frame for a finite dimensional complex Hilbert space $ \mathcal{H}$ of dimension $ m$. Then we have the following. 
\begin{enumerate}[\upshape(i)]
\item The optimal lower frame bound (resp. optimal upper frame bound) is the smallest (resp. largest) eigenvalue for $ S_{x, \tau}.$ 
\item If $ \{\lambda_j\}_{j=1}^m$ denotes the eigenvalues for $S_{x, \tau},$  each appears as many times as its algebraic multiplicity, then  
$$ \sum_{j=1}^m\lambda_j=\int_{\Omega}\langle x_\alpha,\tau_\alpha \rangle\,d\mu(\alpha) =\int_{\Omega}\langle \tau_\alpha, x_\alpha \rangle\,d\mu(\alpha).$$
\item Condition number for  $S_{x, \tau}$ is  equal to the ratio between the optimal upper frame bound and the optimal lower frame bound.
\item If the optimal upper frame bound is $b,$  then 
$$ b\leq\int_{\Omega}\langle x_\alpha,\tau_\alpha \rangle\,d\mu(\alpha) =\int_{\Omega}\langle \tau_\alpha, x_\alpha \rangle\,d\mu(\alpha)\leq mb. $$
\item \begin{align*}
&\operatorname{Trace}(S_{x,\tau})=\int_{\Omega}\langle x_\alpha,\tau_\alpha \rangle\,d\mu(\alpha)=\int_{\Omega}\langle \tau_\alpha,x_\alpha \rangle\,d\mu(\alpha);\\
 &\operatorname{Trace}(S^2_{x,\tau}) =\int_{\Omega}\int_{\Omega}\langle \tau_\alpha,x_\beta\rangle \langle \tau_\beta,x_\alpha\rangle\,d\mu(\beta)\,d\mu(\alpha)=\int_{\Omega}\int_{\Omega}\langle \tau_\alpha,\tau_\beta\rangle \langle x_\beta,x_\alpha\rangle \,d\mu(\beta)\,d\mu(\alpha).
\end{align*}
\item If the frame is tight, then the optimal frame bound $b=\frac{1}{m}\int_{\Omega}\langle x_\alpha,\tau_\alpha \rangle\,d\mu(\alpha)=\frac{1}{m}\int_{\Omega}\langle \tau_\alpha,x_\alpha \rangle\,d\mu(\alpha).$ In particular, if $\langle x_\alpha,\tau_\alpha\rangle=1, \forall \alpha\in\Omega,$ then $b={\mu(\Omega)}/m.$ Further, 

\begin{align*}
h&=\frac{1}{b}\int_{\Omega}\langle h,x_\alpha \rangle \tau_\alpha\,d\mu(\alpha)=\frac{1}{b}\int_{\Omega}\langle h,\tau_\alpha \rangle x_\alpha\,d\mu(\alpha), \quad\forall h \in \mathcal{H} ;\\
 \|h\|^2&=\frac{1}{b}\int_{\Omega}\langle h,x_\alpha \rangle \langle \tau_\alpha, h \rangle\,d\mu(\alpha)=\frac{1}{b}\int_{\Omega}\langle h,\tau_\alpha \rangle \langle x_\alpha, h \rangle\,d\mu(\alpha),\quad\forall h \in \mathcal{H}.
\end{align*}
\item If the frame is tight, then 

\begin{align*}
\text{(Extended variation formula)}\quad  &\int_{\Omega}\int_{\Omega}\langle \tau_\alpha,x_\beta\rangle \langle \tau_\beta,x_\alpha\rangle \,d\mu(\beta)\,d\mu(\alpha)=\frac{1}{\dim\mathcal{H}}\left(\int_{\Omega}\langle x_\alpha,\tau_\alpha \rangle\,d\mu(\alpha)\right)^2\\
&=\frac{1}{\dim\mathcal{H}}\left(\int_{\Omega}\langle \tau_\alpha,x_\alpha \rangle\,d\mu(\alpha)\right)^2 =\int_{\Omega}\int_{\Omega}\langle \tau_\alpha,\tau_\beta\rangle \langle x_\beta,x_\alpha\rangle\,d\mu(\beta)\,d\mu(\alpha) . 
\end{align*}
\item If the frame is Parseval, then 
$$\text{(Extended dimension formula)}\quad \quad\dim\mathcal{H}=\int_{\Omega}\langle x_\alpha,\tau_\alpha \rangle \,d\mu(\alpha)=\int_{\Omega}\langle \tau_\alpha, x_\alpha \rangle\,d\mu(\alpha). $$
\item If the frame is Parseval, then for every $T \in \mathcal{B}(\mathcal{H}),$
$$ \text{(Extended trace formula)}\quad \quad\operatorname{Trace}(T)=\int_{\Omega}\langle Tx_\alpha,\tau_\alpha \rangle \,d\mu(\alpha)=\int_{\Omega}\langle T\tau_\alpha, x_\alpha \rangle\,d\mu(\alpha).  $$
\end{enumerate}
\end{theorem}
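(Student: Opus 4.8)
The plan is to prove Theorem \ref{FINITEDIMENSIONALPASTINGTHEOREM} item by item, establishing the spectral facts first and then deriving the trace, variation, dimension and reconstruction formulas as consequences. The unifying idea is that in finite dimensions the frame operator $S_{x,\tau}=\theta_\tau^*\theta_x$ is a positive invertible operator on an $m$-dimensional space, hence diagonalizable with positive eigenvalues $\lambda_1,\dots,\lambda_m$, and the frame-bound inequality $a\|h\|^2\le\langle S_{x,\tau}h,h\rangle\le b\|h\|^2$ is exactly the statement that $aI\le S_{x,\tau}\le bI$; the optimal bounds are then $\lambda_{\min}$ and $\lambda_{\max}$ by the min–max (Rayleigh quotient) characterization of eigenvalues of a self-adjoint operator. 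This gives (i), and (iii) is immediate since the condition number of a positive operator is $\lambda_{\max}/\lambda_{\min}$.

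Next I would compute $\operatorname{Trace}(S_{x,\tau})$ in two ways. On one hand it equals $\sum_j\lambda_j$. On the other hand, using Proposition 2.2-style identities (here the $\mathcal{H}_0=\mathbb{K}$ analogue), $S_{x,\tau}h=\int_\Omega\langle h,x_\alpha\rangle\tau_\alpha\,d\mu(\alpha)$, and picking any orthonormal basis $\{e_k\}_{k=1}^m$ of $\mathcal{H}$,
\begin{align*}
\operatorname{Trace}(S_{x,\tau})=\sum_{k=1}^m\langle S_{x,\tau}e_k,e_k\rangle=\sum_{k=1}^m\int_\Omega\langle e_k,x_\alpha\rangle\langle\tau_\alpha,e_k\rangle\,d\mu(\alpha)=\int_\Omega\sum_{k=1}^m\langle \tau_\alpha,e_k\rangle\langle e_k,x_\alpha\rangle\,d\mu(\alpha)=\int_\Omega\langle\tau_\alpha,x_\alpha\rangle\,d\mu(\alpha),
\end{align*}
where the interchange of sum and integral is justified because the sum is finite; the conjugate form follows from $\langle\tau_\alpha,x_\alpha\rangle=\overline{\langle x_\alpha,\tau_\alpha\rangle}$ together with $S_{x,\tau}=S_{\tau,x}$, so the trace is real. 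This proves (ii) and the first half of (v). For $\operatorname{Trace}(S_{x,\tau}^2)$ I would write $S_{x,\tau}^2=\theta_\tau^*\theta_x\theta_\tau^*\theta_x$ and expand $\langle S_{x,\tau}^2 e_k,e_k\rangle$ as a double integral using $\theta_x e_k(\alpha)=\langle e_k,x_\alpha\rangle$ and the formula for $\theta_\tau^*$, then sum over $k$; Fubini (finite sum plus the $\mathcal{L}^2$ integrability from condition (iv) of the hypotheses and Cauchy–Schwarz) gives the stated double-integral expressions, with the second form coming from regrouping the four inner products. Item (iv) then follows from (ii): each $\lambda_j$ lies in $[a_{\mathrm{opt}},b]$, and since $S_{x,\tau}$ is invertible all $\lambda_j>0$, so $b\le\sum_j\lambda_j\le mb$ (using $\lambda_{\max}=b$ for at least one $j$ for the lower estimate, and $\lambda_j\le b$ for all $j$ for the upper).

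For the tight case (vi), tightness means $a=b$, so $S_{x,\tau}=bI$, whence $mb=\operatorname{Trace}(S_{x,\tau})=\int_\Omega\langle x_\alpha,\tau_\alpha\rangle\,d\mu(\alpha)$, giving $b=\frac1m\int_\Omega\langle x_\alpha,\tau_\alpha\rangle\,d\mu(\alpha)$; the special case $\langle x_\alpha,\tau_\alpha\rangle\equiv1$ gives $b=\mu(\Omega)/m$, and the reconstruction and norm identities come from applying $S_{x,\tau}^{-1}=\frac1bI$ to the representation $h=S_{x,\tau}^{-1}\int_\Omega\langle h,x_\alpha\rangle\tau_\alpha\,d\mu(\alpha)$ (and its $x\leftrightarrow\tau$ mirror), which is the $\mathcal{H}_0=\mathbb{K}$ case of the general reconstruction already recorded in the excerpt. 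The extended variation formula (vii) is obtained by substituting $S_{x,\tau}^2=b^2 I$, so $\operatorname{Trace}(S_{x,\tau}^2)=mb^2=\frac1m(mb)^2=\frac1m(\int_\Omega\langle x_\alpha,\tau_\alpha\rangle\,d\mu(\alpha))^2$, and then invoking the two double-integral expressions for $\operatorname{Trace}(S_{x,\tau}^2)$ from (v). For (viii), Parseval means $S_{x,\tau}=I$, so $\dim\mathcal{H}=m=\operatorname{Trace}(I)=\int_\Omega\langle x_\alpha,\tau_\alpha\rangle\,d\mu(\alpha)$. Finally (ix): for $T\in\mathcal{B}(\mathcal{H})$, $\operatorname{Trace}(T)=\operatorname{Trace}(TS_{x,\tau})=\operatorname{Trace}(T\theta_\tau^*\theta_x)=\operatorname{Trace}(\theta_x T\theta_\tau^*)$, and unwinding this against an orthonormal basis exactly as in the computation of $\operatorname{Trace}(S_{x,\tau})$ yields $\int_\Omega\langle Tx_\alpha,\tau_\alpha\rangle\,d\mu(\alpha)$; the mirror form follows from $S_{x,\tau}=S_{\tau,x}$. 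The main obstacle I anticipate is purely bookkeeping: justifying each interchange of the finite trace-sum with the weak integral and, for $\operatorname{Trace}(S_{x,\tau}^2)$, with the iterated integral — here one must lean on hypotheses (iv)–(v) of the theorem (the $\mathcal{L}^2$ bound on $\alpha\mapsto\|x_\alpha\|\|\tau_\alpha\|$ and continuity) plus Cauchy–Schwarz to invoke Fubini legitimately, everything else being routine spectral theory in finite dimensions.
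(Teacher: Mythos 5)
Your proposal is correct and follows essentially the same route as the paper: spectral decomposition of $S_{x,\tau}$ and the Rayleigh quotient for (i)--(iv), trace computations against an orthonormal basis for (ii), (v), (viii), (ix), and the specialization $S_{x,\tau}=bI$ (resp. $I_\mathcal{H}$) for (vi)--(ix). One small correction: the integrability needed to justify Fubini in (v) comes from the Bessel bounds built into the definition of a continuous frame (Cauchy--Schwarz applied to $\int_\Omega|\langle h,x_\alpha\rangle|^2\,d\mu(\alpha)$ and $\int_\Omega|\langle h,\tau_\alpha\rangle|^2\,d\mu(\alpha)$), not from ``hypotheses (iv)--(v) of the theorem'' --- those conditions belong to Theorem \ref{FINITEDIMENSIONALCHARATERIZATIONHILBERT}, and the present theorem does not assume them.
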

\begin{proof}
\begin{enumerate}[\upshape(i)]
\item Using spectral theorem,  $\mathcal{H}$ has an orthonormal basis $ \{e_j\}_{j=1}^m$ consisting of eigenvectors for $S_{x, \tau}.$ Let $\{\lambda_j\}_{j=1}^m $ denote the corresponding eigenvalues. Then $S_{x, \tau}h=\sum_{j=1}^m\langle h, e_j \rangle S_{x, \tau}e_j=\sum_{j=1}^m\lambda_j\langle h, e_j \rangle e_j, \forall h \in \mathcal{H}.$ Since  $S_{x, \tau}$ is positive invertible, $ \lambda_j>0, \forall j =1, ..., m.$ Therefore 

\begin{align*}
 \min\{\lambda_j\}_{j=1}^m \|h\|^2 &\leq \sum_{j=1}^m\lambda_j|\langle h, e_j \rangle|^2 =\langle S_{x, \tau}h,h \rangle \\
 &= \int_{\Omega}\langle h, x_\alpha \rangle \langle \tau_\alpha, h \rangle\,d\mu(\alpha)\leq \max\{\lambda_j\}_{j=1}^m\|h\|^2 ,\quad\forall h \in \mathcal{H}.
\end{align*}
To get optimal frame bounds we take eigenvectors corresponding to $\min\{\lambda_j\}_{j=1}^m $ and $\max\{\lambda_j\}_{j=1}^m.$
\item \begin{align*}
\sum_{j=1}^m\lambda_j&= \sum_{j=1}^m\lambda_j\|e_j\|^2 =\sum_{j=1}^m\langle S_{x, \tau}e_j,e_j \rangle =\sum_{j=1}^m \int_{\Omega}\langle e_j, x_\alpha \rangle \langle \tau_\alpha, e_j \rangle \,d\mu(\alpha)
\\
&=\int_{\Omega}(\sum_{j=1}^m \langle \tau_\alpha, e_j \rangle\langle e_j, x_\alpha \rangle)\,d\mu(\alpha) =\int_{\Omega}\langle \tau_\alpha, x_\alpha \rangle\,d\mu(\alpha) .
\end{align*}
Since $S_{x, \tau}=S_{\tau, x} $, we get  $\sum_{j=1}^m\lambda_j=\int_{\Omega}\langle x_\alpha, \tau_\alpha\rangle\,d\mu(\alpha).$
\item This follows from (i).
\item Let  $ \{e_j\}_{j=1}^m$ and $\{\lambda_j\}_{j=1}^m $ be as in (i). We may assume $ \lambda_1\geq \cdots \geq \lambda_m$. Then (i) gives $b=\lambda_1.$ Now  use  (ii): $ b=\lambda_1 \leq \sum_{j=1}^m\lambda_j=\int_{\Omega}\langle x_\alpha,\tau_\alpha \rangle\,d\mu(\alpha) =\int_{\Omega}\langle \tau_\alpha, x_\alpha \rangle \,d\mu(\alpha)\leq \lambda_1m=bm.$
\item Let $ \{f_j\}_{j=1}^m$ be an orthonormal basis for $\mathcal{H}$. Then  

\begin{align*}
\operatorname{Trace}(S_{x,\tau})&=\sum_{j=1}^m\langle S_{x,\tau}f_j,f_j \rangle=\sum_{j=1}^m\left\langle\int_{\Omega}\langle f_j, x_\alpha\rangle \tau_\alpha\,d\mu(\alpha) ,f_j \right\rangle\\
&=\int_{\Omega}\left(\sum_{j=1}^m\langle f_j, x_\alpha\rangle \langle \tau_\alpha ,f_j \rangle\right)\,d\mu(\alpha) =\int_{\Omega}\langle \tau_\alpha, x_\alpha\rangle\,d\mu(\alpha),  ~\text{and}
\end{align*}
\begin{align*}
\operatorname{Trace}(S^2_{x,\tau})&=\sum_{j=1}^m\langle S_{x,\tau}f_j,S_{x,\tau}f_j \rangle=\sum_{j=1}^m\left\langle \int_{\Omega}\langle f_j, x_\alpha\rangle \tau_\alpha\,d\mu(\alpha), \int_{\Omega}\langle f_j, \tau_\beta\rangle  x_\beta\,d\mu(\beta)\right \rangle\\ &=\int_{\Omega}\int_{\Omega}\langle \tau_\alpha,x_\beta \rangle \sum_{j=1}^m\langle f_j, x_\alpha\rangle\langle \tau_\beta,f_j\rangle\,d\mu(\beta)\,d\mu(\alpha)=\int_{\Omega}\int_{\Omega}\langle \tau_\alpha,x_\beta \rangle \langle \tau_\beta,x_\alpha \rangle\,d\mu(\beta)\,d\mu(\alpha),
\end{align*} 
\begin{align*}
\operatorname{Trace}(S^2_{x,\tau})&=\sum_{j=1}^m\langle S_{x,\tau}f_j,S_{x,\tau}f_j \rangle=\sum_{j=1}^m\left\langle \int_{\Omega}\langle f_j, x_\alpha\rangle \tau_\alpha\,d\mu(\alpha), \int_{\Omega}\langle f_j, x_\beta\rangle  \tau_\beta\,d\mu(\beta)\right \rangle\\ 
&=\int_{\Omega}\int_{\Omega}\langle \tau_\alpha,\tau_\beta \rangle \sum_{j=1}^m\langle f_j, x_\alpha\rangle\langle x_\beta,f_j\rangle\,d\mu(\beta)\,d\mu(\alpha)=\int_{\Omega}\int_{\Omega}\langle \tau_\alpha,\tau_\beta \rangle \langle x_\beta,x_\alpha \rangle\,d\mu(\beta)\,d\mu(\alpha).
\end{align*} 
\item Now $S_{x, \tau}=\lambda I_{\mathcal{H}}, $ for some  positive $ \lambda.$ This gives $ \lambda_1=\cdots=\lambda_m=\lambda=b.$ From (ii) we get the conclusions.
\item Let the  optimal frame bound be $b$. From (v) and (vi), 

\begin{align*}
\int_{\Omega}\int_{\Omega}\langle \tau_\alpha,\tau_\beta\rangle \langle x_\beta,x_\alpha\rangle\,d\mu(\beta)\,d\mu(\alpha)&=\int_{\Omega}\int_{\Omega}\langle \tau_\alpha,x_\beta\rangle \langle \tau_\beta,x_\alpha\rangle\,d\mu(\beta)\,d\mu(\alpha)\\
&= \operatorname{Trace}(S^2_{x,\tau})= \operatorname{Trace}(b^2I_\mathcal{H})=b^2m\\
&=\left(\frac{1}{m}\int_{\Omega}\langle x_\alpha, \tau_\alpha \rangle\,d\mu(\alpha)\right)^2m=\frac{1}{m}\left(\int_{\Omega}\langle x_\alpha, \tau_\alpha \rangle\,d\mu(\alpha)\right)^2 .
\end{align*}
\item Let $ \{f_j\}_{j=1}^m$ be as in (v). Then

\begin{align*}
\dim\mathcal{H}&=\sum_{j=1}^m\|f_j\|^2 =\sum_{j=1}^m\int_{\Omega}\langle f_j,  x_\alpha \rangle\langle  \tau_\alpha, f_j \rangle \,d\mu(\alpha)\\
&=\sum_{j=1}^m\int_{\Omega}\langle f_j,  \tau_\alpha \rangle\langle  x_\alpha, f_j \rangle \,d\mu(\alpha)=
\int_{\Omega}\sum_{j=1}^m\langle f_j,  x_\alpha \rangle\langle  \tau_\alpha, f_j \rangle\,d\mu(\alpha)\\
&=\int_{\Omega}\sum_{j=1}^m\langle f_j,  \tau_\alpha \rangle\langle  x_\alpha, f_j \rangle\,d\mu(\alpha)=\int_{\Omega}\langle x_\alpha,\tau_\alpha \rangle \,d\mu(\alpha)=\int_{\Omega}\langle \tau_\alpha, x_\alpha \rangle\,d\mu(\alpha).
\end{align*}
\item Let $ \{f_j\}_{j=1}^m$ be as in (v). Then

\begin{align*}
\operatorname{Trace}(T)&=\sum_{j=1}^m\langle Tf_j,f_j \rangle = \sum_{j=1}^m\left\langle \int_{\Omega}\langle Tf_j,x_\alpha \rangle \tau_\alpha \,d\mu(\alpha),f_j \right\rangle\\
&=\int_{\Omega}\sum_{j=1}^m\langle \tau_\alpha ,f_j \rangle\langle Tf_j,x_\alpha  \rangle\,d\mu(\alpha)
=\int_{\Omega}\sum_{j=1}^m\langle \tau_\alpha,f_j \rangle\langle f_j,T^*x_\alpha \rangle\,d\mu(\alpha)\\
&=\int_{\Omega}\langle \tau_\alpha,T^*x_\alpha \rangle\,d\mu(\alpha)=\int_{\Omega}\langle T\tau_\alpha,x_\alpha \rangle\,d\mu(\alpha).
\end{align*}
  Similarly by using $Tf_j=\int_{\Omega}\langle Tf_j,\tau_\alpha \rangle x_\alpha \,d\mu(\alpha)$, we get $\operatorname{Trace}(T) =\int_{\Omega}\langle Tx_\alpha, \tau_\alpha \rangle\,d\mu(\alpha). $
\end{enumerate}	
\end{proof}
\begin{theorem}\label{REALTOCOMPLEX}
If a continuous frame $(\{x_\alpha\}_{\alpha\in \Omega} , \{\tau_\alpha\}_{\alpha\in \Omega})$ for $\mathbb{R}^m $ is such that 
$$ \int_{\Omega}\langle h, x_\alpha \rangle \langle \tau_\alpha, g \rangle \,d\mu(\alpha)=\int_{\Omega}\langle g, x_\alpha \rangle \langle \tau_\alpha, h \rangle\,d\mu(\alpha), \quad\forall h, g \in \mathbb{R}^n,$$
then it is  also a continuous frame for $\mathbb{C}^m. $ Further, if $(\{x_\alpha\}_{\alpha\in \Omega} , \{\tau_\alpha\}_{\alpha\in \Omega}) $ is a tight (resp. Parseval) continuous frame for $\mathbb{R}^m $, then it is also  a tight (resp. Parseval) continuous frame for $\mathbb{C}^m. $
\end{theorem}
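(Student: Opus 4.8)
The plan is to regard the real vectors $x_\alpha,\tau_\alpha\in\mathbb{R}^m$ as elements of $\mathbb{C}^m$ in the obvious way, and to verify the characterization of Definition \ref{SEQUENTIAL2} over $\mathbb{C}$ given in the Proposition following Theorem \ref{OVFTOSEQUENCEANDVICEVERSATHEOREM}; recall that over $\mathbb{C}$ condition (iv) of that Proposition is automatic, so it suffices to check measurability (i), the two-sided inequality (ii), and the Bessel-type bounds (iii). The first step is to fix $z\in\mathbb{C}^m$, write $z=u+iv$ with $u,v\in\mathbb{R}^m$, and use that $x_\alpha$ has real coordinates to get $\langle z,x_\alpha\rangle=\langle u,x_\alpha\rangle+i\langle v,x_\alpha\rangle$ and $\langle\tau_\alpha,z\rangle=\langle\tau_\alpha,u\rangle-i\langle\tau_\alpha,v\rangle$, where all the inner products on the right are the real ones. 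Measurability of $\alpha\mapsto\langle z,x_\alpha\rangle$ and $\alpha\mapsto\langle z,\tau_\alpha\rangle$ is then immediate from hypothesis (iii) of Definition \ref{SEQUENTIAL2} for the real frame, since these maps are complex linear combinations of the measurable real functions $\alpha\mapsto\langle u,x_\alpha\rangle$, $\alpha\mapsto\langle v,x_\alpha\rangle$, etc.

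The second, main computational step is to expand the quadratic form:
$$\langle z,x_\alpha\rangle\langle\tau_\alpha,z\rangle=\big(\langle u,x_\alpha\rangle\langle\tau_\alpha,u\rangle+\langle v,x_\alpha\rangle\langle\tau_\alpha,v\rangle\big)+i\big(\langle v,x_\alpha\rangle\langle\tau_\alpha,u\rangle-\langle u,x_\alpha\rangle\langle\tau_\alpha,v\rangle\big).$$
Integrating over $\Omega$, the imaginary part contributes $\int_\Omega\langle v,x_\alpha\rangle\langle\tau_\alpha,u\rangle\,d\mu(\alpha)-\int_\Omega\langle u,x_\alpha\rangle\langle\tau_\alpha,v\rangle\,d\mu(\alpha)$, which vanishes by the hypothesis of the theorem applied with $h=v$, $g=u$. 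Hence $\int_\Omega\langle z,x_\alpha\rangle\langle\tau_\alpha,z\rangle\,d\mu(\alpha)$ is real and equals $\int_\Omega\langle u,x_\alpha\rangle\langle\tau_\alpha,u\rangle\,d\mu(\alpha)+\int_\Omega\langle v,x_\alpha\rangle\langle\tau_\alpha,v\rangle\,d\mu(\alpha)$. If $a,b$ are frame bounds for the real frame, this lies between $a(\|u\|^2+\|v\|^2)=a\|z\|^2$ and $b(\|u\|^2+\|v\|^2)=b\|z\|^2$, giving condition (ii). For condition (iii), a similar and easier splitting yields $\int_\Omega|\langle z,x_\alpha\rangle|^2\,d\mu(\alpha)=\int_\Omega|\langle u,x_\alpha\rangle|^2\,d\mu(\alpha)+\int_\Omega|\langle v,x_\alpha\rangle|^2\,d\mu(\alpha)\le c\|z\|^2$, and likewise for $\tau_\alpha$. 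By the cited Proposition, $(\{x_\alpha\}_{\alpha\in\Omega},\{\tau_\alpha\}_{\alpha\in\Omega})$ is therefore a continuous frame for $\mathbb{C}^m$ with the same bounds $a,b$.

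The tight/Parseval assertions follow from the same identity $\int_\Omega\langle z,x_\alpha\rangle\langle\tau_\alpha,z\rangle\,d\mu(\alpha)=\int_\Omega\langle u,x_\alpha\rangle\langle\tau_\alpha,u\rangle\,d\mu(\alpha)+\int_\Omega\langle v,x_\alpha\rangle\langle\tau_\alpha,v\rangle\,d\mu(\alpha)$: if the real frame is tight with bound $b$, the right-hand side equals $b\|u\|^2+b\|v\|^2=b\|z\|^2$ for all $z$, so the complex frame is tight with the same bound; taking $b=1$ gives the Parseval case (equivalently, $S_{x,\tau}=I$ holds on $\mathbb{C}^m$ since the quadratic form, and hence by polarization over $\mathbb{C}$ the operator, is determined). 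I do not expect a serious obstacle here; the only point requiring care is the bookkeeping with the two inner-product conventions and, in particular, the verification that the hypothesis of the theorem is exactly what is needed to cancel the imaginary cross-term — this is where the symmetry condition $\int_\Omega\langle h,x_\alpha\rangle\langle\tau_\alpha,g\rangle\,d\mu(\alpha)=\int_\Omega\langle g,x_\alpha\rangle\langle\tau_\alpha,h\rangle\,d\mu(\alpha)$ is used and cannot be dropped.
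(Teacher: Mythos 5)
Your proposal is correct and follows essentially the same route as the paper: the paper's proof is exactly the chain of identities showing $\int_\Omega\langle z,x_\alpha\rangle\langle\tau_\alpha,z\rangle\,d\mu(\alpha)=\int_\Omega\langle \operatorname{Re}z,x_\alpha\rangle\langle\tau_\alpha,\operatorname{Re}z\rangle\,d\mu(\alpha)+\int_\Omega\langle \operatorname{Im}z,x_\alpha\rangle\langle\tau_\alpha,\operatorname{Im}z\rangle\,d\mu(\alpha)$, with the symmetry hypothesis used precisely to cancel the imaginary cross-terms, followed by the two-sided bound $a\|z\|^2\leq\cdots\leq b\|z\|^2$. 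Your write-up is marginally more complete in that it also records the measurability and Bessel-type conditions explicitly, but the key computation and the use of the hypothesis are identical.
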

\begin{proof}
Let $a, b$  be lower and upper frame bounds, in order. For $ z\in \mathbb{C}^m$ we write $ z=\operatorname{Re}z+i\operatorname{Im}z,\operatorname{Re}z,\operatorname{Im}z $ $ \in \mathbb{R}^m.$ Then

\begin{align*}
a\|z\|^2&=a\|\operatorname{Re}z\|^2+a\|\operatorname{Im}z\|^2\leq \int_{\Omega}\langle \operatorname{Re}z, x_\alpha\rangle\langle \tau_\alpha, \operatorname{Re}z\rangle\,d\mu(\alpha)+\int_{\Omega}\langle \operatorname{Im}z, x_\alpha\rangle\langle \tau_\alpha, \operatorname{Im}z\rangle \,d\mu(\alpha)\\
&=\left(\int_{\Omega}\langle \operatorname{Re}z, x_\alpha\rangle\langle \tau_\alpha, \operatorname{Re}z\rangle\,d\mu(\alpha)+i\int_{\Omega}\langle \operatorname{Im}z, x_\alpha\rangle\langle \tau_\alpha, \operatorname{Re}z\rangle\,d\mu(\alpha)\right) \\
&\quad-i\left( \int_{\Omega}\langle \operatorname{Re}z, x_\alpha\rangle\langle \tau_\alpha, \operatorname{Im}z\rangle \,d\mu(\alpha)+i\int_{\Omega}\langle \operatorname{Im}z, x_\alpha\rangle\langle \tau_\alpha, \operatorname{Im}z\rangle\,d\mu(\alpha)\right)\\
&=\int_{\Omega}\langle \operatorname{Re}z+i\operatorname{Im}z, x_\alpha\rangle\langle \tau_\alpha, \operatorname{Re}z\rangle\,d\mu(\alpha)-i \int_{\Omega}\langle \operatorname{Re}z+i\operatorname{Im}z, x_\alpha\rangle\langle \tau_\alpha, \operatorname{Im}z\rangle\,d\mu(\alpha)\\
&=\int_{\Omega}\langle z,x_\alpha\rangle \langle \tau_\alpha, \operatorname{Re}z\rangle \,d\mu(\alpha)+\int_{\Omega}\langle z,x_\alpha\rangle \langle \tau_\alpha, i\operatorname{Im}z\rangle \,d\mu(\alpha)=\int_{\Omega}\langle z,x_\alpha\rangle \langle \tau_\alpha, z\rangle \,d\mu(\alpha)\\
&\leq b\|\operatorname{Re}z\|^2+b\|\operatorname{Im}z\|^2=b\|z\|^2.
\end{align*}

\end{proof}
\begin{theorem}\label{COMPLEXTOREAL}
If  $(\{x_\alpha\}_{\alpha \in \Omega} , \{\tau_\alpha\}_{\alpha\in \Omega})$ is a continuous frame  for  $\mathbb{C}^m $  such that 
$$ \int_{\Omega}\langle h, \operatorname{Re}x_\alpha \rangle \langle \operatorname{Im}\tau_\alpha, h \rangle \,d\mu(\alpha)=\int_{\Omega}\langle h, \operatorname{Im}x_\alpha \rangle \langle \operatorname{Re}\tau_\alpha, h \rangle\,d\mu(\alpha), \quad\forall h \in \mathbb{C}^m,$$  
then  $(\{\operatorname{Re}x_\alpha\}_{\alpha \in \Omega}\cup\{\operatorname{Im}x_\alpha\}_{\alpha \in \Omega} , \{\operatorname{Re}\tau_\alpha\}_{\alpha \in \Omega}\cup\{\operatorname{Im}\tau_\alpha\}_{\alpha \in \Omega})$ is   a continuous frame for $\mathbb{R}^m. $ Further, if $(\{x_\alpha\}_{\alpha \in \Omega} , \{\tau_\alpha\}_{\alpha \in \Omega}) $ is a tight (resp. Parseval) continuous frame  for $\mathbb{C}^m $, then  $(\{\operatorname{Re}x_\alpha\}_{\alpha \in \Omega}\cup\{\operatorname{Im}x_\alpha\}_{\alpha \in \Omega} $, $ \{\operatorname{Re}\tau_\alpha\}_{\alpha \in \Omega}\cup\{\operatorname{Im}\tau_\alpha\}_{\alpha \in \Omega})$ is  a tight (resp. Parseval) continuous frame for $\mathbb{R}^m. $
\end{theorem}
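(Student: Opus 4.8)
The plan is to verify, for the family indexed by the disjoint union $\Omega\sqcup\Omega$ (two copies of $\Omega$, each carrying the measure $\mu$), the four conditions of the Proposition following Definition \ref{SEQUENTIAL2}, and then to read off the tight/Parseval addendum. Throughout I write $x_\alpha=\operatorname{Re}x_\alpha+i\operatorname{Im}x_\alpha$ and $\tau_\alpha=\operatorname{Re}\tau_\alpha+i\operatorname{Im}\tau_\alpha$, and I use that for a real vector $h\in\mathbb{R}^m\subseteq\mathbb{C}^m$ one has $\langle h,x_\alpha\rangle=\langle h,\operatorname{Re}x_\alpha\rangle-i\langle h,\operatorname{Im}x_\alpha\rangle$ and $\langle\tau_\alpha,h\rangle=\langle\operatorname{Re}\tau_\alpha,h\rangle+i\langle\operatorname{Im}\tau_\alpha,h\rangle$, where the four scalars on the right-hand sides are the (real) inner products in $\mathbb{R}^m$; also $\|h\|$ is the same whether $h$ is regarded in $\mathbb{R}^m$ or $\mathbb{C}^m$. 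Let $a,b$ be frame bounds and $c,d$ upper analysis bounds for the complex frame $(\{x_\alpha\},\{\tau_\alpha\})$.

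First I would dispose of measurability and the analysis-operator bounds. For fixed $h\in\mathbb{R}^m$ the maps $\alpha\mapsto\langle h,\operatorname{Re}x_\alpha\rangle=\operatorname{Re}\langle h,x_\alpha\rangle$ and $\alpha\mapsto\langle h,\operatorname{Im}x_\alpha\rangle=-\operatorname{Im}\langle h,x_\alpha\rangle$ are measurable since $\alpha\mapsto\langle h,x_\alpha\rangle$ is, and likewise for $\tau$; this is condition (i) for the union family. Next, $|\langle h,x_\alpha\rangle|^2=|\langle h,\operatorname{Re}x_\alpha\rangle|^2+|\langle h,\operatorname{Im}x_\alpha\rangle|^2$, so integrating gives $\int_\Omega|\langle h,\operatorname{Re}x_\alpha\rangle|^2\,d\mu(\alpha)+\int_\Omega|\langle h,\operatorname{Im}x_\alpha\rangle|^2\,d\mu(\alpha)=\int_\Omega|\langle h,x_\alpha\rangle|^2\,d\mu(\alpha)\le c\|h\|^2$, and the same computation with $\tau$ yields the bound $d$; this is condition (iii).

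The heart of the argument is condition (ii). For $h\in\mathbb{R}^m$, multiplying out $\langle h,x_\alpha\rangle\langle\tau_\alpha,h\rangle$ with the four real scalars above and integrating gives
\begin{align*}
\int_\Omega \langle h, x_\alpha\rangle\langle\tau_\alpha, h\rangle\,d\mu(\alpha) &= \int_\Omega\big(\langle h,\operatorname{Re}x_\alpha\rangle\langle\operatorname{Re}\tau_\alpha,h\rangle + \langle h,\operatorname{Im}x_\alpha\rangle\langle\operatorname{Im}\tau_\alpha,h\rangle\big)\,d\mu(\alpha)\\
&\quad + i\int_\Omega\big(\langle h,\operatorname{Re}x_\alpha\rangle\langle\operatorname{Im}\tau_\alpha,h\rangle - \langle h,\operatorname{Im}x_\alpha\rangle\langle\operatorname{Re}\tau_\alpha,h\rangle\big)\,d\mu(\alpha).
\end{align*}
The hypothesis forces the second integral to vanish, so the left side is real and equals exactly the frame-operator quadratic form of the union family; since $a\|h\|^2\le\int_\Omega\langle h,x_\alpha\rangle\langle\tau_\alpha,h\rangle\,d\mu(\alpha)\le b\|h\|^2$ by the complex frame bounds, condition (ii) holds with the same $a,b$. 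For condition (iv) I would take componentwise real parts in $\int_\Omega\langle h,x_\alpha\rangle\tau_\alpha\,d\mu(\alpha)=\int_\Omega\langle h,\tau_\alpha\rangle x_\alpha\,d\mu(\alpha)$ (valid for the complex frame since $S_{x,\tau}=S_{\tau,x}$), evaluated at $h\in\mathbb{R}^m$: this gives $\int_\Omega\langle h,\operatorname{Re}x_\alpha\rangle\operatorname{Re}\tau_\alpha\,d\mu(\alpha)+\int_\Omega\langle h,\operatorname{Im}x_\alpha\rangle\operatorname{Im}\tau_\alpha\,d\mu(\alpha)=\int_\Omega\langle h,\operatorname{Re}\tau_\alpha\rangle\operatorname{Re}x_\alpha\,d\mu(\alpha)+\int_\Omega\langle h,\operatorname{Im}\tau_\alpha\rangle\operatorname{Im}x_\alpha\,d\mu(\alpha)$, which is precisely condition (iv) for the union family. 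Hence, by that Proposition, $(\{\operatorname{Re}x_\alpha\}\cup\{\operatorname{Im}x_\alpha\},\{\operatorname{Re}\tau_\alpha\}\cup\{\operatorname{Im}\tau_\alpha\})$ is a continuous frame for $\mathbb{R}^m$ with bounds $a$ and $b$.

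For the addendum: if the complex frame is tight with bound $b$, then $\int_\Omega\langle h,x_\alpha\rangle\langle\tau_\alpha,h\rangle\,d\mu(\alpha)=b\|h\|^2$ for all $h$, in particular for $h\in\mathbb{R}^m$, so by the displayed identity the frame operator of the union family is $b$ times the identity on $\mathbb{R}^m$, i.e.\ the real frame is tight with bound $b$; the Parseval case is $b=1$. I expect no genuine obstacle here: the only care needed is the bookkeeping of real and imaginary parts of the complex inner products and keeping the index set $\Omega\sqcup\Omega$ straight, and the argument is essentially the computation in the proof of Theorem \ref{REALTOCOMPLEX} run in reverse.
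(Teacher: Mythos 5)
Your proof is correct and follows essentially the same route as the paper: the key step in both is expanding $\langle h,x_\alpha\rangle\langle\tau_\alpha,h\rangle$ for $h\in\mathbb{R}^m$ into the four real inner products and using the hypothesis to cancel the two imaginary cross-terms, which yields the frame inequality for the union family with the same bounds $a,b$. You go further than the paper's proof by also verifying measurability, the Bessel (analysis-operator) bounds via $|\langle h,x_\alpha\rangle|^2=|\langle h,\operatorname{Re}x_\alpha\rangle|^2+|\langle h,\operatorname{Im}x_\alpha\rangle|^2$, and the symmetry condition for the frame operator, all of which the paper leaves implicit.
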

\begin{proof}
Consider 

\begin{align*}
\int_{\Omega}\langle h,x_\alpha\rangle \langle \tau_\alpha, h\rangle\,d\mu(\alpha)&=\int_{\Omega}\langle h,\operatorname{Re}x_\alpha+i\operatorname{Im}x_\alpha\rangle \langle \operatorname{Re}\tau_\alpha+i\operatorname{Im}\tau_\alpha, h\rangle\,d\mu(\alpha)\\
&=\int_{\Omega}\langle h,\operatorname{Re}x_\alpha\rangle \langle \operatorname{Re}\tau_\alpha, h\rangle\,d\mu(\alpha)+i\int_{\Omega}\langle h,\operatorname{Re}x_\alpha\rangle \langle \operatorname{Im}\tau_\alpha, h\rangle\,d\mu(\alpha)
\\&\quad-i\int_{\Omega}\langle h,\operatorname{Im}x_\alpha\rangle \langle \operatorname{Re}\tau_\alpha, h\rangle\,d\mu(\alpha)+\int_{\Omega}\langle h,\operatorname{Im}x_\alpha\rangle \langle \operatorname{Im}\tau_\alpha, h\rangle\,d\mu(\alpha)\\
&=\int_{\Omega}\langle h,\operatorname{Re}x_\alpha\rangle \langle \operatorname{Re}\tau_\alpha, h\rangle\,d\mu(\alpha)+\int_{\Omega}\langle h,\operatorname{Im}x_\alpha\rangle \langle \operatorname{Im}\tau_\alpha, h\rangle\,d\mu(\alpha), \quad\forall h \in \mathbb{R}^m.
\end{align*}
Therefore, if $a, b$  are  lower and upper frame bounds, respectively, then $ a\|h\|^2\leq \int_{\Omega}\langle h,\operatorname{Re}x_\alpha\rangle \langle \operatorname{Re}\tau_\alpha, h\rangle\,d\mu(\alpha)$ $+\int_{\Omega}\langle h,\operatorname{Im}x_\alpha\rangle \langle \operatorname{Im}\tau_\alpha, h\rangle\,d\mu(\alpha)\leq b\|h\|^2, \forall h \in \mathbb{R}^m.$
\end{proof}

\begin{proposition}
Let $(\{x_\alpha\}_{\alpha\in \Omega} , \{\tau_\alpha\}_{\alpha\in \Omega})$ be a  continuous frame for $\mathcal{H}.$  Then $\mathcal{H} $ is finite dimensional if and only if 
$$ \int_{\Omega}\langle x_\alpha,\tau_\alpha \rangle\,d\mu(\alpha) =\int_{\Omega}\langle \tau_\alpha, x_\alpha \rangle\,d\mu(\alpha)<\infty.$$
In particular, if dimension of $\mathcal{H} $ is finite, $\langle x_\alpha,\tau_\alpha \rangle>0,\forall \alpha\in \Omega $ and $ \inf_{\alpha\in\Omega}\langle x_\alpha,\tau_\alpha \rangle$ is  positive, then $ \mu(\Omega)<\infty.$
\end{proposition}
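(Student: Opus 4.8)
The plan is to split the equivalence into its two implications and then read off the ``in particular'' clause. For the forward implication, suppose $\dim\mathcal{H}=m<\infty$. Then Theorem \ref{FINITEDIMENSIONALPASTINGTHEOREM}(v) already supplies the identity $\int_{\Omega}\langle x_\alpha,\tau_\alpha\rangle\,d\mu(\alpha)=\int_{\Omega}\langle\tau_\alpha,x_\alpha\rangle\,d\mu(\alpha)=\operatorname{Trace}(S_{x,\tau})$, and the right-hand side is a finite sum of the (finitely many) eigenvalues of $S_{x,\tau}$; with $b$ an upper frame bound it is in fact at most $mb$. Hence the integral is finite. There is essentially nothing to do here beyond quoting the finite-dimensional trace formula.

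For the converse I would argue by contraposition: if $\dim\mathcal{H}=\infty$, then $\alpha\mapsto\langle x_\alpha,\tau_\alpha\rangle$ is not $\mu$-integrable. Fix an orthonormal basis $\{e_j\}_{j\in\mathbb{J}}$ of $\mathcal{H}$ and a lower frame bound $a>0$. Using $S_{x,\tau}=\theta_\tau^{*}\theta_x$ one has, for every $j$, $\langle S_{x,\tau}e_j,e_j\rangle=\int_{\Omega}\langle e_j,x_\alpha\rangle\langle\tau_\alpha,e_j\rangle\,d\mu(\alpha)\geq a$. Summing over a finite subset $F\subseteq\mathbb{J}$ with $|F|=n$ and letting $P_F$ be the orthogonal projection onto $\operatorname{span}\{e_j:j\in F\}$ gives $\int_{\Omega}\langle\tau_\alpha,P_Fx_\alpha\rangle\,d\mu(\alpha)=\sum_{j\in F}\langle S_{x,\tau}e_j,e_j\rangle\geq na$. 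In the natural setting of this section, where for each $\alpha$ the rank-one operator $h\mapsto\langle h,x_\alpha\rangle\tau_\alpha$ is positive, one has $\tau_\alpha=c_\alpha x_\alpha$ with $c_\alpha\geq 0$, so $\langle\tau_\alpha,P_Fx_\alpha\rangle=c_\alpha\|P_Fx_\alpha\|^{2}$ is nonnegative and increases to $\langle\tau_\alpha,x_\alpha\rangle=\|x_\alpha\|\,\|\tau_\alpha\|$ as $F\uparrow\mathbb{J}$; the monotone convergence theorem then forces $\int_{\Omega}\langle x_\alpha,\tau_\alpha\rangle\,d\mu(\alpha)=\lim_{F}\int_{\Omega}\langle\tau_\alpha,P_Fx_\alpha\rangle\,d\mu(\alpha)=+\infty$, contradicting integrability.

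I expect the delicate point to be exactly this last limiting step: one must pin down the precise sense in which the integral fails to be finite when $\dim\mathcal{H}=\infty$, and justify passing the limit over the increasing family $\{P_F\}$ through the integral. If the rank-one positivity is not assumed, the integrand $\langle\tau_\alpha,P_Fx_\alpha\rangle$ need be neither monotone nor nonnegative, and one must instead dominate it by $\|x_\alpha\|\,\|\tau_\alpha\|$ and invoke dominated convergence; so the cleanest reading of the statement carries the positivity (equivalently, the identity $\langle x_\alpha,\tau_\alpha\rangle=\|x_\alpha\|\,\|\tau_\alpha\|$) as a standing hypothesis, under which the monotone-convergence argument above is immediate. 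Everything else is routine manipulation of the analysis, synthesis and frame operators.

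Finally, for the ``in particular'' clause, assume $\dim\mathcal{H}<\infty$, $\langle x_\alpha,\tau_\alpha\rangle>0$ for all $\alpha$, and $\delta:=\inf_{\alpha\in\Omega}\langle x_\alpha,\tau_\alpha\rangle>0$. By the equivalence just established, $\int_{\Omega}\langle x_\alpha,\tau_\alpha\rangle\,d\mu(\alpha)<\infty$; since $\langle x_\alpha,\tau_\alpha\rangle\geq\delta$ pointwise, this integral is at least $\delta\,\mu(\Omega)$, hence $\delta\,\mu(\Omega)<\infty$, and therefore $\mu(\Omega)<\infty$.
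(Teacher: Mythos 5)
Your forward direction and the final ``in particular'' step match the paper: the paper expands $\langle x_\alpha,\tau_\alpha\rangle=\sum_k\langle x_\alpha,e_k\rangle\langle e_k,\tau_\alpha\rangle$ over a finite orthonormal basis, swaps the finite sum with the integral and bounds each term using the upper frame bound, which is exactly the computation behind the trace formula of Theorem \ref{FINITEDIMENSIONALPASTINGTHEOREM}(v) that you cite; and the conclusion $\mu(\Omega)<\infty$ is obtained in both cases from $\inf_{\alpha\in\Omega}\langle x_\alpha,\tau_\alpha\rangle\,\mu(\Omega)\le\int_\Omega\langle x_\alpha,\tau_\alpha\rangle\,d\mu(\alpha)$.

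The genuine gap is in your converse. The proposition carries no positivity hypothesis: it is nowhere assumed that $h\mapsto\langle h,x_\alpha\rangle\tau_\alpha$ is a positive operator, i.e.\ that $\tau_\alpha=c_\alpha x_\alpha$ with $c_\alpha\ge 0$. By promoting that to a ``standing hypothesis'' you have changed the statement (under it $\langle x_\alpha,\tau_\alpha\rangle=\|x_\alpha\|\,\|\tau_\alpha\|$, which collapses the two-family setup of Section \ref{SEQUENTIAL} to an essentially one-family one), and without it your own diagnosis applies: $\langle\tau_\alpha,P_Fx_\alpha\rangle$ need be neither nonnegative nor monotone in $F$, so monotone convergence is unavailable, and dominated convergence would need $\int_\Omega\|x_\alpha\|\,\|\tau_\alpha\|\,d\mu(\alpha)<\infty$, which is also not a hypothesis. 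So as written you have proved the converse only for a restricted class of frames. The paper's own route avoids the finite-rank projections entirely: it writes $\dim\mathcal{H}=\sum_k\|e_k\|^2\le(1/a)\sum_k\langle S_{x,\tau}e_k,e_k\rangle$ using the lower frame bound, notes that each summand equals $\int_\Omega\langle e_k,x_\alpha\rangle\langle\tau_\alpha,e_k\rangle\,d\mu(\alpha)\ge 0$, and then interchanges the (possibly infinite) sum with the integral to identify $\sum_k\langle S_{x,\tau}e_k,e_k\rangle$ with $\int_\Omega\langle x_\alpha,\tau_\alpha\rangle\,d\mu(\alpha)$, obtaining $\dim\mathcal{H}\le(1/a)\int_\Omega\langle x_\alpha,\tau_\alpha\rangle\,d\mu(\alpha)<\infty$. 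That interchange is exactly the obstruction you ran into (the paper performs it without further comment), so the honest fix is to justify that single step, not to alter the hypotheses of the proposition.
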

\begin{proof}
Let $\{e_k\}_{k\in \mathbb{L}}$ be an orthonormal basis for $ \mathcal{H}$, and  $a, b$ be frame bounds for $(\{x_\alpha\}_{\alpha\in \Omega} , \{\tau_\alpha\}_{\alpha\in \Omega})$.

$(\Rightarrow)$ Now $ \mathbb{L}$  is finite. Then 

\begin{align*}
\int_{\Omega}\langle x_\alpha,\tau_\alpha \rangle \,d\mu(\alpha)&=\int_{\Omega}\sum_{k\in\mathbb{L}}\langle x_\alpha,e_k \rangle\langle e_k,\tau_\alpha \rangle\,d\mu(\alpha)=\sum_{k\in\mathbb{L}}\int_{\Omega}\langle x_\alpha,e_k \rangle\langle e_k,\tau_\alpha \rangle\,d\mu(\alpha) \\
&\leq b\sum_{k\in\mathbb{L}}\|e_k\|^2=b\operatorname{Card}(\mathbb{L})<\infty.
\end{align*}
Since $(\{x_\alpha\}_{\alpha\in \Omega} , \{\tau_\alpha\}_{\alpha\in \Omega})$ is a continuous frame, we must have 
$$\int_{\Omega}\langle x_\alpha,e_k \rangle\langle e_k,\tau_\alpha \rangle\,d\mu(\alpha)=\overline{\int_{\Omega}\langle x_\alpha,e_k \rangle\langle e_k,\tau_\alpha \rangle} \,d\mu(\alpha).$$
 Therefore $ \int_{\Omega}\langle x_\alpha,\tau_\alpha \rangle \,d\mu(\alpha)=\int_{\Omega}\langle \tau_\alpha, x_\alpha \rangle\,d\mu(\alpha)<\infty.$

 $(\Leftarrow)$	$ \operatorname{dim}\mathcal{H}=\sum_{k\in\mathbb{L}}\|e_k\|^2\leq (1/a)\sum_{k\in\mathbb{L}}\int_{\Omega}\langle e_k,x_\alpha \rangle\langle \tau_\alpha, e_k \rangle\,d\mu(\alpha)=(1/a)\int_{\Omega}\sum_{k\in\mathbb{L}}\langle e_k, x_\alpha\rangle\langle \tau_\alpha, e_k \rangle\,d\mu(\alpha)=(1/a)\int_{\Omega}\langle x_\alpha,\tau_\alpha\rangle\,d\mu(\alpha)<\infty.$
 
 For the second, $ 0<\inf_{\alpha\in \Omega}\langle x_\alpha,\tau_\alpha \rangle\mu(\Omega)=\int _{\Omega}\inf_{\alpha\in \Omega}\langle x_\alpha,\tau_\alpha \rangle\,d\mu(\alpha)\leq \int _{\Omega}\langle x_\alpha,\tau_\alpha \rangle\,d\mu(\alpha)<\infty. $
\end{proof}
\begin{proposition}\label{INTERESTINGEXAMPLEPROPOSITION}
Let $I\subseteq \mathbb{R}$ be a bounded interval. Let  $ a_\alpha, b_\alpha \geq 0,\theta_\alpha, \phi_\alpha \in \mathbb{R}, \forall \alpha \in I$ and $I \ni \alpha \mapsto \theta_\alpha \in \mathbb{R}$, $I \ni \alpha \mapsto \phi_\alpha \in \mathbb{R}$ be continuous. Then $\left\{ x_\alpha\coloneqq\begin{bmatrix}
a_\alpha\cos\theta_\alpha \\
a_\alpha\sin\theta_\alpha
\end{bmatrix}\right\}_{\alpha \in I}$ is a tight continuous frame for $ \mathbb{R}^2$ w.r.t.  $\left\{ \tau_\alpha\coloneqq \begin{bmatrix}
b_\alpha\cos\phi_\alpha \\
b_\alpha\sin\phi_\alpha
\end{bmatrix}\right\}_{\alpha \in I}$ if and only if 
$ \int_{I}\begin{bmatrix}
a_\alpha b_\alpha\cos(\theta_\alpha+\phi_\alpha) \\
a_\alpha b_\alpha\sin(\theta_\alpha+\phi_\alpha)\\
a_\alpha b_\alpha\sin(\theta_\alpha-\phi_\alpha)
\end{bmatrix}\,d\alpha=\begin{bmatrix}
0 \\
0\\
0
\end{bmatrix}.$
\end{proposition}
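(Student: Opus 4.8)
The plan is to reduce the statement to a single explicit computation of the frame operator $S_{x,\tau}$, which on the two-dimensional space $\mathbb{R}^2$ is just a $2\times 2$ matrix. For each $\alpha$ we have $\langle h,x_\alpha\rangle\tau_\alpha=(\tau_\alpha x_\alpha^{T})h$, so by Definition \ref{SEQUENTIAL2} (a weak integral into a finite-dimensional space may be evaluated entrywise) the only candidate for $S_{x,\tau}$ is the matrix $\int_{I}\tau_\alpha x_\alpha^{T}\,d\alpha$, whose $(i,j)$ entry is $\int_I(\tau_\alpha)_i(x_\alpha)_j\,d\alpha$. First I would write these four entries as $\int_I a_\alpha b_\alpha\cos\theta_\alpha\cos\phi_\alpha\,d\alpha$, $\int_I a_\alpha b_\alpha\sin\theta_\alpha\cos\phi_\alpha\,d\alpha$, $\int_I a_\alpha b_\alpha\cos\theta_\alpha\sin\phi_\alpha\,d\alpha$, $\int_I a_\alpha b_\alpha\sin\theta_\alpha\sin\phi_\alpha\,d\alpha$, and then apply the product-to-sum identities $2\cos\theta\cos\phi=\cos(\theta-\phi)+\cos(\theta+\phi)$, $2\sin\theta\cos\phi=\sin(\theta+\phi)+\sin(\theta-\phi)$, $2\cos\theta\sin\phi=\sin(\theta+\phi)-\sin(\theta-\phi)$, $2\sin\theta\sin\phi=\cos(\theta-\phi)-\cos(\theta+\phi)$. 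Setting $P\coloneqq\int_I a_\alpha b_\alpha\cos(\theta_\alpha+\phi_\alpha)\,d\alpha$, $Q\coloneqq\int_I a_\alpha b_\alpha\sin(\theta_\alpha+\phi_\alpha)\,d\alpha$, $R\coloneqq\int_I a_\alpha b_\alpha\sin(\theta_\alpha-\phi_\alpha)\,d\alpha$ and $T\coloneqq\int_I a_\alpha b_\alpha\cos(\theta_\alpha-\phi_\alpha)\,d\alpha$, this gives
$$ S_{x,\tau}=\frac12\begin{bmatrix} T+P & Q+R\\ Q-R & T-P \end{bmatrix}. $$

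Next I would use the definition of tightness: $(\{x_\alpha\}_{\alpha\in I},\{\tau_\alpha\}_{\alpha\in I})$ is a tight continuous frame for $\mathbb{R}^2$ exactly when $S_{x,\tau}=c\,I_{\mathbb{R}^2}$ for some $c>0$; note that then the real-case compatibility condition $S_{x,\tau}=S_{\tau,x}$ (cf. the Proposition following Theorem \ref{OVFTOSEQUENCEANDVICEVERSATHEOREM}) holds automatically because a scalar matrix is symmetric. Comparing with the matrix above, $S_{x,\tau}=cI_{\mathbb{R}^2}$ forces: equality of the two diagonal entries, hence $P=0$; vanishing of the two off-diagonal entries, hence $Q+R=0$ and $Q-R=0$, i.e. $Q=R=0$; and then $c=T/2$. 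Conversely, if $P=Q=R=0$ then $S_{x,\tau}=\tfrac{T}{2}I_{\mathbb{R}^2}$, a positive multiple of the identity, so $(\{x_\alpha\},\{\tau_\alpha\})$ is a tight continuous frame with bound $T/2$. Since the three scalar equations $P=0$, $Q=0$, $R=0$ are precisely the three coordinates of the displayed vector equation, both implications follow.

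The trigonometric computation is mechanical; the points that really need attention are the measurability/Bessel clauses of Definition \ref{SEQUENTIAL2} and the positivity of the tight bound. Boundedness of $\theta_x,\theta_\tau$ and convergence of the weak integral defining $S_{x,\tau}$ are not automatic from the hypotheses alone, but are harmless here: $I$ is bounded, the trigonometric weights are bounded, and once one also assumes $\int_I a_\alpha^2\,d\alpha$ and $\int_I b_\alpha^2\,d\alpha$ finite (equivalently, $\{x_\alpha\}$ and $\{\tau_\alpha\}$ Bessel — the implicit standing assumption that also guarantees the displayed integrals converge), everything is dominated by $\int_I a_\alpha b_\alpha\,d\alpha<\infty$ and the entrywise identity for $S_{x,\tau}$ is justified. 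The one genuine subtlety, and what I expect to be the main obstacle, is the ``if'' direction: from $P=Q=R=0$ one only obtains $S_{x,\tau}=\tfrac{T}{2}I_{\mathbb{R}^2}$, which is positive invertible iff $T=\int_I\langle x_\alpha,\tau_\alpha\rangle\,d\alpha>0$; this positivity is exactly the part of ``tight continuous frame'' not captured by the linear equation $P=Q=R=0$, so the equivalence should be read with the understanding that $T>0$ (visibly the quantity that must be positive, being $\operatorname{Trace}(S_{x,\tau})$, cf. Theorem \ref{FINITEDIMENSIONALPASTINGTHEOREM}(v)). Granting this, the remaining work is the elementary matrix bookkeeping above.
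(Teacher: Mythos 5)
Your proof is correct and follows essentially the same route as the paper's: the paper simply writes down the $2\times 2$ matrix of $S_{x,\tau}$ (the same four entries you computed) and asserts that $S_{x,\tau}=aI_{\mathbb{R}^2}$ for some $a>0$ is equivalent to the vanishing of the three displayed integrals. Your additional remarks --- that the Bessel/integrability hypotheses must be read as standing assumptions, and that the ``if'' direction genuinely requires $T=\int_I a_\alpha b_\alpha\cos(\theta_\alpha-\phi_\alpha)\,d\alpha>0$, since $P=Q=R=0$ alone only yields $S_{x,\tau}=\tfrac{T}{2}I_{\mathbb{R}^2}$ --- are points the paper's one-line proof glosses over, and the positivity caveat in particular identifies a real (if minor) gap in the statement as printed.
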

\begin{proof}
Matrix of  $ S_{x,\tau}$  is 
\begin{align*}
\begin{bmatrix}
\int_{I} a_\alpha b_\alpha\cos\theta_\alpha\cos\phi_\alpha\,d\alpha & \int_{I} a_\alpha b_\alpha\sin\theta_\alpha\cos\phi_\alpha \,d\alpha\\
\int_{I} a_\alpha b_\alpha\cos\theta_\alpha\sin\phi_\alpha\,d\alpha &\int_{I} a_\alpha b_\alpha\sin\theta_\alpha\sin\phi_\alpha\,d\alpha	\\
\end{bmatrix}.
\end{align*}
We next observe that  $ S_{x,\tau}=aI_{\mathbb{R}^2}$ for some $ a>0$ if and only if 

\begin{align*}
 \int_{I}\begin{bmatrix}
 a_\alpha b_\alpha \cos(\theta_\alpha +\phi_\alpha ) \\
 a_\alpha b_\alpha \sin(\theta_\alpha +\phi_\alpha )\\
 a_\alpha b_\alpha \sin(\theta_\alpha -\phi_\alpha )
 \end{bmatrix}\,d\alpha=\begin{bmatrix}
 0 \\
 0\\
 0
 \end{bmatrix}.
\end{align*}
\end{proof}

\section{Further extension}\label{FURTHEREXTENSION}
\begin{definition}
A collection $ \{A_\alpha\}_{\alpha \in \Omega} $  in $ \mathcal{B}(\mathcal{H}, \mathcal{H}_0)$ is said to be a weak  \textit{ continuous operator-valued frame} (we write weak continuous (ovf)) in $ \mathcal{B}(\mathcal{H}, \mathcal{H}_0) $  with respect to a collection  $ \{\Psi_\alpha\}_{\alpha \in \Omega}  $ in $ \mathcal{B}(\mathcal{H}, \mathcal{H}_0) $ if 
\begin{enumerate}[\upshape(i)]
\item for each  $h \in \mathcal{H}$, both  maps   $\Omega \ni \alpha \mapsto A_\alpha h\in \mathcal{H}_0$ and $\Omega \ni\alpha \mapsto \Psi_\alpha h\in \mathcal{H}_0$ are measurable,
\item the map (we call as frame operator) $S_{A,\Psi} : \mathcal{H} \ni h \mapsto \int_{\Omega}\Psi_\alpha^*A_\alpha h  \,d\mu(\alpha)\in \mathcal{H}$  is a well-defined bounded positive invertible operator.
\end{enumerate}	
 Notions of frame bounds, optimal bounds, tight frame, Parseval frame, Bessel are in same fashion  as in Definition \ref{1}.
 
 For fixed $ \Omega$, $\mathcal{H}, \mathcal{H}_0, $ and $ \{\Psi_\alpha \}_{\alpha \in \Omega}$, the set of all weak  continuous operator-valued  frames in $ \mathcal{B}(\mathcal{H}, \mathcal{H}_0) $ with respect to collection  $ \{\Psi_\alpha \}_{\alpha \in \Omega}$ is denoted by $ \mathscr{F}^\text{w}_\Psi.$
\end{definition}

 \begin{proposition}
 A collection $ \{A_\alpha\}_{\alpha \in \Omega} $   in $ \mathcal{B}(\mathcal{H}, \mathcal{H}_0)$ is  a weak continuous  (ovf)  w.r.t. $ \{\Psi_\alpha\}_{\alpha \in \Omega}  $ in $ \mathcal{B}(\mathcal{H}, \mathcal{H}_0) $ if and only if there exist $ a, b, r >0$ such that 
 \begin{enumerate}[\upshape(i)]
 \item for each  $h \in \mathcal{H}$, both  maps   $\Omega \ni \alpha \mapsto A_\alpha h\in \mathcal{H}_0$, $\Omega \ni\alpha \mapsto \Psi_\alpha h\in \mathcal{H}_0$ are measurable,
 \item $\|\int_{\Omega}\Psi_\alpha^*A_\alpha h\,d\mu(\alpha) \|\leq r\|h\|, \forall h \in \mathcal{H},$
\item $a\|h\|^2\leq\int_{\Omega}\langle A_\alpha h, \Psi_\alpha h\rangle \,d\mu(\alpha) \leq b\|h\|^2, \forall h \in \mathcal{H},$
\item $\int_{\Omega}\Psi_\alpha^*A_\alpha h\,d\mu(\alpha)=\int_{\Omega}A_\alpha^*\Psi_\alpha h\,d\mu(\alpha), \forall h \in \mathcal{H}$.
\end{enumerate}
If the Hilbert space is complex, then  condition \text{\upshape(iv)} can be dropped.
 \end{proposition}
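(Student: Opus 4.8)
The plan is to observe that, by the very definition of $\mathscr{F}^\text{w}_\Psi$, the statement "$\{A_\alpha\}_{\alpha\in\Omega}$ is a weak continuous (ovf) w.r.t. $\{\Psi_\alpha\}_{\alpha\in\Omega}$" means precisely that (i) holds and that $S_{A,\Psi}\colon h\mapsto\int_\Omega\Psi_\alpha^*A_\alpha h\,d\mu(\alpha)$ is a well-defined bounded positive invertible operator; so the whole task is to convert "bounded positive invertible" into the scalar conditions (ii)--(iv). Throughout I will use only the defining property of the weak (Gelfand--Pettis) integral, namely $\langle\int_\Omega\Psi_\alpha^*A_\alpha h\,d\mu(\alpha),g\rangle=\int_\Omega\langle A_\alpha h,\Psi_\alpha g\rangle\,d\mu(\alpha)$ for all $g\in\mathcal{H}$, since no Bessel estimate of the kind used just before Definition \ref{1} is available in this weaker setting.

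For the forward implication I would assume $S_{A,\Psi}$ is well-defined, bounded, positive and invertible. Then (i) is part of the hypothesis; taking $r=\|S_{A,\Psi}\|$ yields (ii); positivity and invertibility provide $a,b>0$ with $a\|h\|^2\le\langle S_{A,\Psi}h,h\rangle\le b\|h\|^2$, and unwinding the weak integral gives $\langle S_{A,\Psi}h,h\rangle=\int_\Omega\langle A_\alpha h,\Psi_\alpha h\rangle\,d\mu(\alpha)$, which is (iii). For (iv) I would use that a positive operator is self-adjoint, so $\langle S_{A,\Psi}h,g\rangle=\overline{\langle S_{A,\Psi}g,h\rangle}$; expanding both sides as weak integrals and using $\overline{\langle A_\alpha g,\Psi_\alpha h\rangle}=\langle\Psi_\alpha h,A_\alpha g\rangle=\langle A_\alpha^*\Psi_\alpha h,g\rangle$ shows $\langle S_{A,\Psi}h,g\rangle=\int_\Omega\langle A_\alpha^*\Psi_\alpha h,g\rangle\,d\mu(\alpha)$ for every $g$, so the weak integral $\int_\Omega A_\alpha^*\Psi_\alpha h\,d\mu(\alpha)$ exists and equals $\int_\Omega\Psi_\alpha^*A_\alpha h\,d\mu(\alpha)=S_{A,\Psi}h$, giving (iv).

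For the converse I would assume (i)--(iv). Condition (ii) already presupposes that $\int_\Omega\Psi_\alpha^*A_\alpha h\,d\mu(\alpha)$ exists for each $h$, and linearity in $h$ is routine from the defining property of the integral; thus $S_{A,\Psi}$ is a well-defined linear map with $\|S_{A,\Psi}\|\le r$ by (ii). Next, $\langle S_{A,\Psi}h,h\rangle=\int_\Omega\langle A_\alpha h,\Psi_\alpha h\rangle\,d\mu(\alpha)\in[a\|h\|^2,b\|h\|^2]$ by (iii). Over $\mathbb{C}$ this forces $\langle S_{A,\Psi}h,h\rangle\ge 0$ for all $h$, hence $S_{A,\Psi}$ is self-adjoint and positive and (iv) is superfluous; over $\mathbb{R}$ one first invokes (iv) together with the identity $\langle S_{A,\Psi}h,g\rangle=\int_\Omega\langle A_\alpha h,\Psi_\alpha g\rangle\,d\mu(\alpha)=\langle S_{\Psi,A}g,h\rangle$ to conclude $S_{A,\Psi}=S_{\Psi,A}=S_{A,\Psi}^\ast$, and then (iii) gives positivity. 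In both cases $\langle S_{A,\Psi}h,h\rangle\ge a\|h\|^2$ shows $S_{A,\Psi}$ is bounded below, and a self-adjoint operator that is bounded below is invertible with $\|S_{A,\Psi}^{-1}\|\le 1/a$; hence $\{A_\alpha\}_{\alpha\in\Omega}\in\mathscr{F}^\text{w}_\Psi$.

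The computations here are entirely elementary; the only point requiring care is the bookkeeping around (iv) and the real/complex dichotomy --- namely verifying that over $\mathbb{R}$ self-adjointness of $S_{A,\Psi}$ is genuinely equivalent to the symmetry relation $\int_\Omega\Psi_\alpha^*A_\alpha h\,d\mu(\alpha)=\int_\Omega A_\alpha^*\Psi_\alpha h\,d\mu(\alpha)$, while over $\mathbb{C}$ it is automatic from the positivity of the quadratic form supplied by (iii), so that (iv) may indeed be dropped. One should also note in passing that the integral $\int_\Omega A_\alpha^*\Psi_\alpha h\,d\mu(\alpha)$ occurring in (iv) is a priori meaningful (it is, since (iv) is asserted only under the standing hypotheses that already make its right-hand side a Gelfand--Pettis integral), but this presents no real obstacle.
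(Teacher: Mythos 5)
The paper states this proposition without any proof (as it does for the analogous proposition in Section 2), so there is no argument of the authors' to compare against; your proposal supplies the omitted verification and it is correct. You correctly reduce everything to translating ``bounded positive invertible'' into (ii)--(iv), and the delicate points are all handled: the polarization argument showing that over $\mathbb{C}$ the real-valued, nonnegative quadratic form from (iii) already forces self-adjointness (so (iv) is redundant), the use of (iv) over $\mathbb{R}$ to recover $S_{A,\Psi}=S_{A,\Psi}^{*}$, and the standard fact that a self-adjoint operator bounded below by $a>0$ is invertible with $\|S_{A,\Psi}^{-1}\|\leq 1/a$. Your observation that the well-definedness of the weak integral is implicitly packaged into condition (ii) is the right reading of the statement and matches how the paper treats the corresponding proposition for (non-weak) continuous operator-valued frames.
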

\begin{proposition}
Let $(\{A_\alpha\}_{\alpha\in \Omega}, \{\Psi_\alpha\}_{\alpha\in \Omega}) $ be  a weak continuous (ovf)   in $ \mathcal{B}(\mathcal{H}, \mathcal{H}_0)$  with an upper frame  bound $b$. If $\{\alpha\}$ is measurable and $\Psi_\alpha^*A_\alpha\geq 0, \forall \alpha \in \Omega ,$ then $ \mu(\{\alpha\})\|\Psi_\alpha^*A_\alpha\|\leq b, \forall \alpha \in \Omega.$
\end{proposition}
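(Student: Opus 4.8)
The statement is the exact analogue, in the ``weak continuous (ovf)'' setting, of the first Proposition following Definition \ref{1}, and the plan is to reuse that argument essentially verbatim, since only property (ii) of the definition of a weak continuous (ovf)—namely that $S_{A,\Psi}=\int_\Omega \Psi_\alpha^*A_\alpha h\,d\mu(\alpha)$ is a bounded positive operator with upper bound $b$—is needed. So the plan is: fix $\alpha\in\Omega$ and $h\in\mathcal{H}$, and test the frame operator against $h$, splitting the integral over $\{\alpha\}$ and $\Omega\setminus\{\alpha\}$.

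First I would record that for fixed $h$ the map $\beta\mapsto\langle\Psi_\beta^*A_\beta h,h\rangle=\langle A_\beta h,\Psi_\beta h\rangle$ is measurable (by the measurability in (i) together with continuity of the inner product and polarization, exactly as in the discussion preceding Definition \ref{FIRSTDEFINITION}), so all the integrals below make sense; since $\{\alpha\}$ is assumed measurable, so is $\Omega\setminus\{\alpha\}$. Next I would observe that the integrand is constant on the singleton $\{\alpha\}$, so $\int_{\{\alpha\}}\langle\Psi_\beta^*A_\beta h,h\rangle\,d\mu(\beta)=\mu(\{\alpha\})\langle\Psi_\alpha^*A_\alpha h,h\rangle$. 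Because $\Psi_\beta^*A_\beta\geq0$ for every $\beta$, the integrand is nonnegative everywhere, so dropping the integral over $\Omega\setminus\{\alpha\}$ can only decrease the value:
\begin{align*}
\mu(\{\alpha\})\langle\Psi_\alpha^*A_\alpha h,h\rangle
&=\int_{\{\alpha\}}\langle\Psi_\beta^*A_\beta h,h\rangle\,d\mu(\beta)\\
&\leq\int_{\{\alpha\}}\langle\Psi_\beta^*A_\beta h,h\rangle\,d\mu(\beta)+\int_{\Omega\setminus\{\alpha\}}\langle\Psi_\beta^*A_\beta h,h\rangle\,d\mu(\beta)\\
&=\int_{\Omega}\langle\Psi_\beta^*A_\beta h,h\rangle\,d\mu(\beta)=\langle S_{A,\Psi}h,h\rangle\leq b\|h\|^2.
\end{align*}
Finally I would invoke that $\Psi_\alpha^*A_\alpha$ is a bounded positive operator, hence $\|\Psi_\alpha^*A_\alpha\|=\sup_{\|h\|=1}\langle\Psi_\alpha^*A_\alpha h,h\rangle$, and multiplying the displayed inequality (restricted to $\|h\|=1$) by $\mu(\{\alpha\})$ and taking the supremum over the unit sphere yields $\mu(\{\alpha\})\|\Psi_\alpha^*A_\alpha\|\leq b$ for every $\alpha\in\Omega$.

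There is no real obstacle here: the only points that require a word of care are the measurability of $\Omega\setminus\{\alpha\}$ (granted by hypothesis) and the evaluation of a weak integral over a singleton, which reduces to the constant-integrand case; the positivity hypothesis $\Psi_\alpha^*A_\alpha\geq0$ is what makes the discarded tail integral harmless, and it is also what lets us pass from the quadratic-form bound to the operator-norm bound.
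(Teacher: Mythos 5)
Your proof is correct and follows exactly the argument the paper uses for the corresponding proposition in the non-weak setting (the one right after Definition \ref{1}): split the quadratic form of $S_{A,\Psi}$ over $\{\alpha\}$ and $\Omega\setminus\{\alpha\}$, discard the nonnegative tail, and pass to the operator norm via positivity of $\Psi_\alpha^*A_\alpha$. You correctly identify that only the boundedness and positivity of $S_{A,\Psi}$ with upper bound $b$ are needed, so the argument transfers verbatim to the weak case.
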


\begin{definition}
A weak continuous (ovf)  $(\{B_\alpha\}_{\alpha\in \Omega}, \{\Phi_\alpha\}_{\alpha\in \Omega})$  in $\mathcal{B}(\mathcal{H}, \mathcal{H}_0)$ is said to be dual of a weak continuous (ovf) $ ( \{A_\alpha\}_{\alpha\in \Omega}, \{\Psi_\alpha\}_{\alpha\in \Omega})$ in $\mathcal{B}(\mathcal{H}, \mathcal{H}_0)$  if  $ \int_{\Omega }B_\alpha^*\Psi_\alpha h\,d\mu(\alpha)= \int_{\Omega}\Phi^*_\alpha A_\alpha h\,d\mu(\alpha)=h,\forall h \in \mathcal{H}$. The `weak continuous (ovf)' $( \{\widetilde{A}_\alpha\coloneqq A_\alpha S_{A,\Psi}^{-1}\}_{\alpha\in \Omega},\{\widetilde{\Psi}_\alpha\coloneqq\Psi_\alpha S_{A,\Psi}^{-1}\}_{\alpha \in \Omega})$, which is a `dual' of $ (\{A_\alpha\}_{\alpha\in \Omega}, \{\Psi_\alpha\}_{\alpha\in \Omega})$ is called the canonical dual of $ (\{A_\alpha\}_{\alpha\in \Omega}, \{\Psi_\alpha\}_{\alpha\in \Omega})$. 
\end{definition}  
\begin{proposition}
Let $( \{A_\alpha\}_{\alpha\in \Omega}, \{\Psi_\alpha\}_{\alpha\in \Omega} )$ be a weak continuous (ovf) in $ \mathcal{B}(\mathcal{H}, \mathcal{H}_0).$  If $ h \in \mathcal{H}$ has representation  $ h=\int_{\Omega}A_\alpha^*f(\alpha) \,d \mu (\alpha)= \int_{\Omega}\Psi_\alpha^*g(\alpha) \,d \mu (\alpha), $ for some measurable  $ f,g : \Omega \rightarrow \mathcal{H}_0$, then 
$$ \int_{\Omega}\langle f(\alpha),g(\alpha)\rangle\, d \mu (\alpha)=\int_{\Omega}\langle \widetilde{\Psi}_\alpha h,\widetilde{A}_\alpha h\rangle \,d \mu (\alpha)+\int_{\Omega}\langle f(\alpha)-\widetilde{\Psi}_\alpha h,g(\alpha)-\widetilde{A}_\alpha h\rangle \,d \mu (\alpha). $$
\end{proposition}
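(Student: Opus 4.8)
The plan is to mimic the proof of the analogous identity for (non-weak) continuous operator-valued frames given earlier in the paper: expand the right-hand side and reduce every term to the frame operator by means of the two representation hypotheses. First I would record the two ingredients that survive the passage to the weak setting. By definition $\widetilde{A}_\alpha = A_\alpha S_{A,\Psi}^{-1}$ and $\widetilde{\Psi}_\alpha = \Psi_\alpha S_{A,\Psi}^{-1}$, and, exactly as in the discussion following Definition \ref{FIRSTDEFINITION}, the defining property of the Gelfand--Pettis integral gives $\langle S_{A,\Psi}h, g\rangle = \int_\Omega \langle A_\alpha h, \Psi_\alpha g\rangle\,d\mu(\alpha)$ for all $h,g \in \mathcal{H}$, whence $S_{\Psi,A} = S_{A,\Psi}^*$; since $S_{A,\Psi}$ is positive it is self-adjoint, so $S_{A,\Psi} = S_{\Psi,A} = S_{A,\Psi}^*$ and $S_{A,\Psi}^{-1}$ is self-adjoint as well.

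Next I would expand the last integrand on the right by bilinearity of the inner product, so that the right-hand side becomes
$$2\int_\Omega \langle \widetilde{\Psi}_\alpha h, \widetilde{A}_\alpha h\rangle\,d\mu(\alpha) + \int_\Omega \langle f(\alpha), g(\alpha)\rangle\,d\mu(\alpha) - \int_\Omega\langle f(\alpha), \widetilde{A}_\alpha h\rangle\,d\mu(\alpha) - \int_\Omega\langle \widetilde{\Psi}_\alpha h, g(\alpha)\rangle\,d\mu(\alpha),$$
and then evaluate the three non-trivial integrals one at a time. For the first, $\langle \widetilde{\Psi}_\alpha h, \widetilde{A}_\alpha h\rangle = \langle \Psi_\alpha(S_{A,\Psi}^{-1}h), A_\alpha(S_{A,\Psi}^{-1}h)\rangle$, and integrating gives $\langle S_{\Psi,A}(S_{A,\Psi}^{-1}h), S_{A,\Psi}^{-1}h\rangle = \langle h, S_{A,\Psi}^{-1}h\rangle$. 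For the third, $\langle f(\alpha), \widetilde{A}_\alpha h\rangle = \langle f(\alpha), A_\alpha(S_{A,\Psi}^{-1}h)\rangle = \langle A_\alpha^* f(\alpha), S_{A,\Psi}^{-1}h\rangle$; pulling the fixed vector $S_{A,\Psi}^{-1}h$ out of the weak integral and using $h = \int_\Omega A_\alpha^* f(\alpha)\,d\mu(\alpha)$ yields $\langle h, S_{A,\Psi}^{-1}h\rangle$. Symmetrically, the fourth integral equals $\langle S_{A,\Psi}^{-1}h, h\rangle$ by $h = \int_\Omega \Psi_\alpha^* g(\alpha)\,d\mu(\alpha)$.

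Substituting, the right-hand side collapses to $2\langle h, S_{A,\Psi}^{-1}h\rangle + \int_\Omega\langle f(\alpha), g(\alpha)\rangle\,d\mu(\alpha) - \langle h, S_{A,\Psi}^{-1}h\rangle - \langle S_{A,\Psi}^{-1}h, h\rangle$, and self-adjointness of $S_{A,\Psi}^{-1}$ cancels the three scalar terms, leaving $\int_\Omega\langle f(\alpha), g(\alpha)\rangle\,d\mu(\alpha)$, which is the left-hand side. There is no real obstacle here; the only point requiring care is that each manipulation --- pulling the bounded operators $A_\alpha^*$, $\Psi_\alpha^*$, $S_{A,\Psi}^{-1}$ through the weak integrals and rewriting $\int_\Omega\langle A_\alpha x, \Psi_\alpha x\rangle\,d\mu(\alpha)$ as $\langle S_{\Psi,A}x, x\rangle$ --- remains legitimate in the weak setting, where the analysis and synthesis operators need not be bounded; each such step follows directly from boundedness of the operators involved together with the defining property of the Gelfand--Pettis integral, so no appeal to $\theta_A$ or $\theta_\Psi$ is needed. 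I would also note in passing that measurability and integrability of the scalar integrands on the right are implicit in the hypothesis that the displayed identity is meaningful.
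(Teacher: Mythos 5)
Your argument is correct and is essentially the computation the paper gives for the non-weak analogue of this proposition in Section \ref{MK} (the paper leaves the weak version unproved): expand the second integral sesquilinearly, reduce the three non-trivial terms to $\langle h, S_{A,\Psi}^{-1}h\rangle$ and $\langle S_{A,\Psi}^{-1}h, h\rangle$ via the two representations of $h$ and the frame operator, and cancel using self-adjointness of $S_{A,\Psi}^{-1}$. Your explicit remark that every step uses only the defining property of the Gelfand--Pettis integral and never the (possibly nonexistent) bounded operators $\theta_A$, $\theta_\Psi$ is exactly the point that justifies transporting the proof to the weak setting.
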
  
  
\begin{theorem}\label{CANONICALDUALFRAMEPROPERTYOPERATORVERSIONWEAK}
Let $ (\{A_\alpha\}_{\alpha\in \Omega},\{\Psi_\alpha\}_{\alpha\in \Omega}) $ be a weak continuous (ovf) with frame bounds $ a$ and $ b.$ Then the following statements are true.
\begin{enumerate}[\upshape(i)]
\item The canonical dual weak continuous (ovf) of the canonical dual weak continuous (ovf)  of $ (\{A_\alpha\}_{\alpha\in \Omega} $, $\{\Psi_\alpha\}_{\alpha\in \Omega} )$ is itself.
\item$ \frac{1}{b}, \frac{1}{a}$ are frame bounds for the canonical dual of $ (\{A_\alpha\}_{\alpha\in \Omega},\{\Psi_\alpha\}_{\alpha\in \Omega}).$
\item If $ a, b $ are optimal frame bounds for $( \{A_\alpha\}_{\alpha\in \Omega} , \{\Psi_\alpha\}_{\alpha\in \Omega}),$ then $ \frac{1}{b}, \frac{1}{a}$ are optimal  frame bounds for its canonical dual.
\end{enumerate} 
\end{theorem}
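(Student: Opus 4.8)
The plan is to follow the pattern of the proof of Theorem~\ref{CANONICALDUALFRAMEPROPERTYOPERATORVERSION}: a weak continuous (ovf) requires one condition less than a continuous (ovf) (no boundedness of analysis/synthesis operators), so only the frame-operator computation is needed. The starting point is to check that the canonical dual $(\{\widetilde A_\alpha\}_{\alpha\in\Omega},\{\widetilde\Psi_\alpha\}_{\alpha\in\Omega})$, with $\widetilde A_\alpha=A_\alpha S_{A,\Psi}^{-1}$ and $\widetilde\Psi_\alpha=\Psi_\alpha S_{A,\Psi}^{-1}$, is genuinely a weak continuous (ovf), and to identify its frame operator. Measurability of $\alpha\mapsto\widetilde A_\alpha h=A_\alpha(S_{A,\Psi}^{-1}h)$ and of $\alpha\mapsto\widetilde\Psi_\alpha h$ is immediate. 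For the frame operator, using that $S_{A,\Psi}$ is self-adjoint, for all $h,g\in\mathcal H$ one computes
\begin{align*}
\left\langle\int_\Omega\widetilde\Psi_\alpha^*\widetilde A_\alpha h\,d\mu(\alpha),g\right\rangle
&=\int_\Omega\langle A_\alpha(S_{A,\Psi}^{-1}h),\Psi_\alpha(S_{A,\Psi}^{-1}g)\rangle\,d\mu(\alpha)\\
&=\langle S_{A,\Psi}(S_{A,\Psi}^{-1}h),S_{A,\Psi}^{-1}g\rangle=\langle S_{A,\Psi}^{-1}h,g\rangle ,
\end{align*}
where the existence of the Gelfand--Pettis integral on the left is obtained by applying the integrability built into the definition of the weak continuous (ovf) $(\{A_\alpha\}_{\alpha\in\Omega},\{\Psi_\alpha\}_{\alpha\in\Omega})$ to the vectors $S_{A,\Psi}^{-1}h$ and $S_{A,\Psi}^{-1}g$. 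Hence $S_{\widetilde A,\widetilde\Psi}=S_{A,\Psi}^{-1}$, which is bounded, positive and invertible, so $(\{\widetilde A_\alpha\}_{\alpha\in\Omega},\{\widetilde\Psi_\alpha\}_{\alpha\in\Omega})\in\mathscr F^{\text{w}}_{\widetilde\Psi}$.

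For (i), I would substitute directly into the defining formula of the canonical dual: the canonical dual of $(\{\widetilde A_\alpha\}_{\alpha\in\Omega},\{\widetilde\Psi_\alpha\}_{\alpha\in\Omega})$ has entries $\widetilde A_\alpha S_{\widetilde A,\widetilde\Psi}^{-1}=A_\alpha S_{A,\Psi}^{-1}S_{A,\Psi}=A_\alpha$ and $\widetilde\Psi_\alpha S_{\widetilde A,\widetilde\Psi}^{-1}=\Psi_\alpha$, i.e. it is $(\{A_\alpha\}_{\alpha\in\Omega},\{\Psi_\alpha\}_{\alpha\in\Omega})$ itself.

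For (ii) and (iii), I would translate ``frame bounds'' into operator inequalities. By the characterization proposition for weak continuous (ovf), together with $\int_\Omega\langle A_\alpha h,\Psi_\alpha h\rangle\,d\mu(\alpha)=\langle S_{A,\Psi}h,h\rangle$, the scalars $a,b$ are frame bounds for $(\{A_\alpha\}_{\alpha\in\Omega},\{\Psi_\alpha\}_{\alpha\in\Omega})$ exactly when $aI_{\mathcal H}\le S_{A,\Psi}\le bI_{\mathcal H}$, and likewise for the canonical dual with $S_{\widetilde A,\widetilde\Psi}=S_{A,\Psi}^{-1}$. Operator antitonicity of inversion on positive invertible operators yields $\tfrac1bI_{\mathcal H}\le S_{A,\Psi}^{-1}\le\tfrac1aI_{\mathcal H}$, which is (ii). For (iii), optimality of $a,b$ means $a=\inf\sigma(S_{A,\Psi})$ and $b=\sup\sigma(S_{A,\Psi})$; since $\sigma(S_{A,\Psi}^{-1})=\{\lambda^{-1}:\lambda\in\sigma(S_{A,\Psi})\}$ and $\sigma(S_{A,\Psi})\subseteq[a,b]\subseteq(0,\infty)$, we get $\inf\sigma(S_{A,\Psi}^{-1})=1/b$ and $\sup\sigma(S_{A,\Psi}^{-1})=1/a$, so $1/b,1/a$ are the optimal frame bounds of the canonical dual.

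The only point deserving attention is the very first step, namely the well-definedness of the weak integral defining $S_{\widetilde A,\widetilde\Psi}$; this is dispatched by feeding $S_{A,\Psi}^{-1}h$ and $S_{A,\Psi}^{-1}g$ into the Gelfand--Pettis integrability already granted for $(\{A_\alpha\}_{\alpha\in\Omega},\{\Psi_\alpha\}_{\alpha\in\Omega})$. Everything else is essentially a transcription of the argument for Theorem~\ref{CANONICALDUALFRAMEPROPERTYOPERATORVERSION}; in particular no analysis/synthesis operators enter the proof, so the weak setting introduces no extra difficulty.
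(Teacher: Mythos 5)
Your proof is correct and follows exactly the route the paper takes for the non-weak analogue, Theorem \ref{CANONICALDUALFRAMEPROPERTYOPERATORVERSION} (the paper states the weak version without proof): the key step is the same frame-operator computation giving $S_{\widetilde A,\widetilde \Psi}=S_{A,\Psi}^{-1}$, and your observation that well-definedness of the weak integral comes for free by feeding $S_{A,\Psi}^{-1}h$, $S_{A,\Psi}^{-1}g$ into the integrability already assumed for $(\{A_\alpha\}_{\alpha\in\Omega},\{\Psi_\alpha\}_{\alpha\in\Omega})$ is precisely the point that makes the weak setting harmless. Note also that your conclusion $S_{\widetilde A,\widetilde\Psi}=S_{A,\Psi}^{-1}$ is the correct one, consistent with the computation and with the sequential version (Theorem \ref{CANONICALDUALFRAMEPROPERTYSEQUENTIALVERSION}), whereas the printed proof of Theorem \ref{CANONICALDUALFRAMEPROPERTYOPERATORVERSION} misstates it as $S_{A,\Psi}$.
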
 
 \begin{definition}
A weak continuous (ovf)  $(\{B_\alpha\}_{\alpha\in \Omega},  \{\Phi_\alpha\}_{\alpha\in \Omega})$  in $\mathcal{B}(\mathcal{H}, \mathcal{H}_0)$ is said to be orthogonal to a weak continuous (ovf)  $( \{A_\alpha\}_{\alpha\in \Omega}, \{\Psi_\alpha\}_{\alpha\in \Omega})$ in $\mathcal{B}(\mathcal{H}, \mathcal{H}_0)$ if $ \int_{\Omega}B_\alpha^*\Psi_\alpha h\,d\mu(\alpha)= \int_{\Omega}\Phi^*_\alpha A_\alpha h\,d\mu(\alpha)=0,\forall h \in \mathcal{H}$.
\end{definition}  
\begin{proposition}
Two orthogonal weak continuous operator-valued frames  have common dual weak continuous (ovf).	
\end{proposition}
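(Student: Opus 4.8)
The plan is to produce an explicit common dual obtained by ``adding'' the two canonical duals. Write $S_1\coloneqq S_{A,\Psi}$ and $S_2\coloneqq S_{B,\Phi}$ for the frame operators of $(\{A_\alpha\}_{\alpha\in\Omega},\{\Psi_\alpha\}_{\alpha\in\Omega})$ and $(\{B_\alpha\}_{\alpha\in\Omega},\{\Phi_\alpha\}_{\alpha\in\Omega})$; each is bounded, positive and invertible, hence self-adjoint, and so are $S_1^{-1},S_2^{-1}$. First I would record two preliminary facts. \emph{(a) Symmetry of orthogonality.} From $\int_\Omega \Phi_\alpha^*A_\alpha h\,d\mu(\alpha)=0$ and $\int_\Omega B_\alpha^*\Psi_\alpha h\,d\mu(\alpha)=0$ for all $h\in\mathcal{H}$, pairing with an arbitrary $g\in\mathcal{H}$ and conjugating gives $\int_\Omega A_\alpha^*\Phi_\alpha h\,d\mu(\alpha)=0=\int_\Omega \Psi_\alpha^*B_\alpha h\,d\mu(\alpha)$ for all $h$. \emph{(b) Self-duality of the frame operator in the weak setting.} Since $S_1$ is self-adjoint and $\langle S_1 g,h\rangle=\int_\Omega\langle A_\alpha g,\Psi_\alpha h\rangle\,d\mu(\alpha)$, the conjugate-linear functional $g\mapsto\langle S_1 h,g\rangle$ is represented in the weak sense by $\int_\Omega A_\alpha^*\Psi_\alpha h\,d\mu(\alpha)$; hence this weak integral exists and equals $S_1 h$, i.e.\ $S_{\Psi,A}=S_1$, and likewise $S_{\Phi,B}=S_2$. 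Finally, bounded operators commute with the Gelfand--Pettis integral, so $\int_\Omega\Psi_\alpha^*A_\alpha(S_1^{-1}h)\,d\mu(\alpha)=S_1 S_1^{-1}h=h$, and similarly for all the analogous expressions appearing below.

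Next I would set
\[
C_\alpha\coloneqq A_\alpha S_1^{-1}+B_\alpha S_2^{-1},\qquad \Xi_\alpha\coloneqq \Psi_\alpha S_1^{-1}+\Phi_\alpha S_2^{-1}\qquad(\alpha\in\Omega),
\]
the termwise sum of the canonical duals $(\{A_\alpha S_1^{-1}\},\{\Psi_\alpha S_1^{-1}\})$ and $(\{B_\alpha S_2^{-1}\},\{\Phi_\alpha S_2^{-1}\})$. Measurability of $\alpha\mapsto C_\alpha h$ and $\alpha\mapsto\Xi_\alpha h$ follows at once from that of $\alpha\mapsto A_\alpha g$, $\alpha\mapsto B_\alpha g$ applied to $g=S_1^{-1}h$ and $g=S_2^{-1}h$. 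Expanding $S_{C,\Xi}h=\int_\Omega\Xi_\alpha^*C_\alpha h\,d\mu(\alpha)$ into four terms: the two diagonal terms equal $S_1^{-1}h$ and $S_2^{-1}h$ by fact (b), and the two cross terms $S_1^{-1}\int_\Omega\Psi_\alpha^*B_\alpha(S_2^{-1}h)\,d\mu(\alpha)$ and $S_2^{-1}\int_\Omega\Phi_\alpha^*A_\alpha(S_1^{-1}h)\,d\mu(\alpha)$ vanish by fact (a). Thus $S_{C,\Xi}=S_1^{-1}+S_2^{-1}$, which is bounded and bounded below by a positive multiple of the identity (since $S_1^{-1}$ already is), hence positive invertible; therefore $(\{C_\alpha\}_{\alpha\in\Omega},\{\Xi_\alpha\}_{\alpha\in\Omega})$ is a weak continuous (ovf).

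It remains to verify the two duality identities and their symmetric counterparts. Using (a), (b) and the commutation of bounded operators with the integrals,
\[
\int_\Omega \Xi_\alpha^*A_\alpha h\,d\mu(\alpha)=S_1^{-1}\int_\Omega\Psi_\alpha^*A_\alpha h\,d\mu(\alpha)+S_2^{-1}\int_\Omega\Phi_\alpha^*A_\alpha h\,d\mu(\alpha)=S_1^{-1}S_1 h+0=h,
\]
and similarly $\int_\Omega C_\alpha^*\Psi_\alpha h\,d\mu(\alpha)=S_1^{-1}S_{\Psi,A}h+S_2^{-1}\cdot 0=h$, so $(\{C_\alpha\},\{\Xi_\alpha\})$ is a dual of $(\{A_\alpha\},\{\Psi_\alpha\})$. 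Running exactly the same computation with the roles of the two frames interchanged gives $\int_\Omega\Xi_\alpha^*B_\alpha h\,d\mu(\alpha)=\int_\Omega C_\alpha^*\Phi_\alpha h\,d\mu(\alpha)=h$, so it is also a dual of $(\{B_\alpha\},\{\Phi_\alpha\})$, completing the proof. The only steps requiring genuine care are fact (b) and the bookkeeping of the cross terms: in the weak setting there are no bounded analysis or synthesis operators to lean on, so one must argue directly with the weak (Gelfand--Pettis) integrals and use that the constant operators $S_1^{-1},S_2^{-1}$ may be pulled out of them; everything else is routine.
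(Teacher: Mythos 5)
Your proposal is correct and follows essentially the same route as the paper: both take the termwise sum of the two canonical duals, $C_\alpha=A_\alpha S_{A,\Psi}^{-1}+B_\alpha S_{B,\Phi}^{-1}$, $\Xi_\alpha=\Psi_\alpha S_{A,\Psi}^{-1}+\Phi_\alpha S_{B,\Phi}^{-1}$, show $S_{C,\Xi}=S_{A,\Psi}^{-1}+S_{B,\Phi}^{-1}$ is positive invertible, and check the four duality identities using orthogonality to kill the cross terms. Your explicit preliminary facts (the symmetry of the orthogonality relations and $S_{\Psi,A}=S_{A,\Psi}$) are left implicit in the paper's computation but add nothing beyond bookkeeping.
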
 
\begin{proof}
Let  $ (\{A_\alpha\}_{\alpha\in \Omega}, \{\Psi_\alpha\}_{\alpha\in \Omega}) $ and $ (\{B_\alpha\}_{\alpha\in \Omega}, \{\Phi_\alpha\}_{\alpha\in \Omega}) $ be  two orthogonal weak  continuous operator-valued frames in $\mathcal{B}(\mathcal{H}, \mathcal{H}_0)$. Define $ C_\alpha\coloneqq A_\alpha S_{A,\Psi}^{-1}+B_\alpha S_{B,\Phi}^{-1},\Xi_\alpha\coloneqq \Psi _\alpha S_{A,\Psi}^{-1}+\Phi_\alpha S_{B,\Phi}^{-1}, \forall \alpha \in \Omega$. Then  for all $h, g \in \mathcal{H}$,

\begin{align*}
\langle S_{C,\Xi}h, g\rangle &=\int_{\Omega}\langle C_\alpha h, \Xi_\alpha g\rangle \,d\mu(\alpha)\\
&=\int_{\Omega}\langle (A_\alpha S_{A,\Psi}^{-1}+B_\alpha S_{B,\Phi}^{-1})h, (\Psi _\alpha S_{A,\Psi}^{-1}+\Phi_\alpha S_{B,\Phi}^{-1}) g\rangle \,d\mu(\alpha)\\
&=\int_{\Omega}\langle A_\alpha S_{A,\Psi}^{-1}h, \Psi _\alpha S_{A,\Psi}^{-1} g\rangle \,d\mu(\alpha)+\int_{\Omega}\langle A_\alpha S_{A,\Psi}^{-1}h, \Phi_\alpha S_{B,\Phi}^{-1} g\rangle \,d\mu(\alpha)\\
&\quad +\int_{\Omega}\langle B_\alpha S_{B,\Phi}^{-1}h, \Psi _\alpha S_{A,\Psi}^{-1} g\rangle \,d\mu(\alpha)+\int_{\Omega}\langle B_\alpha S_{B,\Phi}^{-1}h, \Phi_\alpha S_{B,\Phi}^{-1} g\rangle \,d\mu(\alpha)\\
&=\langle  S_{A,\Psi}(S_{A,\Psi}^{-1}h),  S_{A,\Psi}^{-1} g\rangle+\langle 0,  S_{B,\Phi}^{-1} g\rangle+\langle 0,  S_{A,\Psi}^{-1} g\rangle+\langle S_{B,\Phi}(S_{B,\Phi}^{-1}h),  S_{B,\Phi}^{-1} g\rangle
\\
&=\langle (S_{A,\Psi}^{-1}+S_{B,\Phi}^{-1})h, g\rangle.
\end{align*}

Hence $S_{C,\Xi}=S_{A,\Psi}^{-1}+S_{B,\Phi}^{-1}$ which is positive  invertible. Therefore $(\{C_\alpha\}_{\alpha\in \Omega}, \{\Xi_\alpha\}_{\alpha\in \Omega})$ is a weak continuous (ovf) in $\mathcal{B}(\mathcal{H}, \mathcal{H}_0)$. Further, for all $h, g \in \mathcal{H}$,

\begin{align*}
\int_{\Omega}\langle \Psi_\alpha^*C_\alpha h, g\rangle \,d\mu(\alpha)&=\int_{\Omega}\langle (A_\alpha S_{A,\Psi}^{-1}+B_\alpha S_{B,\Phi}^{-1}) h, \Psi_\alpha g\rangle \,d\mu(\alpha)\\
&=\int_{\Omega}\langle A_\alpha S_{A,\Psi}^{-1}h, \Psi_\alpha g\rangle \,d\mu(\alpha)+\int_{\Omega}\langle B_\alpha S_{B,\Phi}^{-1} h, \Psi_\alpha g\rangle \,d\mu(\alpha)=\langle h, g \rangle +\langle 0, g \rangle ,
\end{align*}

\begin{align*}
\int_{\Omega}\langle A_\alpha^*\Xi_\alpha h, g\rangle \,d\mu(\alpha)&=\int_{\Omega}\langle (\Psi _\alpha S_{A,\Psi}^{-1}+\Phi_\alpha S_{B,\Phi}^{-1}) h,  A_\alpha g\rangle \,d\mu(\alpha)\\
&=\int_{\Omega}\langle \Psi_\alpha S_{A,\Psi}^{-1}h, A_\alpha g\rangle \,d\mu(\alpha)+\int_{\Omega}\langle \Phi_\alpha S_{B,\Phi}^{-1} h, A_\alpha g\rangle \,d\mu(\alpha)=\langle h, g \rangle +\langle 0, g \rangle ,
\end{align*}
and 

\begin{align*}
\int_{\Omega}\langle \Phi_\alpha^*C_\alpha h, g\rangle \,d\mu(\alpha)&=\int_{\Omega}\langle (A_\alpha S_{A,\Psi}^{-1}+B_\alpha S_{B,\Phi}^{-1}) h, \Phi_\alpha g\rangle \,d\mu(\alpha)\\
&=\int_{\Omega}\langle A_\alpha S_{A,\Psi}^{-1}h, \Phi_\alpha g\rangle \,d\mu(\alpha)+\int_{\Omega}\langle B_\alpha S_{B,\Phi}^{-1} h, \Phi_\alpha g\rangle \,d\mu(\alpha)=\langle 0, g \rangle +\langle h, g \rangle ,
\end{align*}

\begin{align*}
\int_{\Omega}\langle B_\alpha^*\Xi_\alpha h, g\rangle \,d\mu(\alpha)&=\int_{\Omega}\langle (\Psi _\alpha S_{A,\Psi}^{-1}+\Phi_\alpha S_{B,\Phi}^{-1}) h,  B_\alpha g\rangle \,d\mu(\alpha)\\
&=\int_{\Omega}\langle \Psi_\alpha S_{A,\Psi}^{-1}h, B_\alpha g\rangle \,d\mu(\alpha)+\int_{\Omega}\langle \Phi_\alpha S_{B,\Phi}^{-1} h, B_\alpha g\rangle \,d\mu(\alpha)=\langle 0, g \rangle +\langle h, g \rangle .
\end{align*}
Thus $(\{C_\alpha\}_{\alpha\in \Omega}, \{\Xi_\alpha\}_{\alpha\in \Omega})$ is a common dual of  $ (\{A_\alpha\}_{\alpha\in \Omega}, \{\Psi_\alpha\}_{\alpha\in \Omega}) $ and $ (\{B_\alpha\}_{\alpha\in \Omega}, \{\Phi_\alpha\}_{\alpha\in \Omega}).$	
\end{proof}
\begin{proposition}
Let $ (\{A_\alpha\}_{\alpha\in \Omega}, \{\Psi_\alpha\}_{\alpha\in \Omega}) $ and $ (\{B_\alpha\}_{\alpha\in \Omega}, \{\Phi_\alpha\}_{\alpha\in \Omega}) $ be  two Parseval weak continuous operator-valued frames in   $\mathcal{B}(\mathcal{H}, \mathcal{H}_0)$ which are  orthogonal. If $C,D,E,F \in \mathcal{B}(\mathcal{H})$ are such that $ C^*E+D^*F=I_\mathcal{H}$, then  $ (\{A_\alpha C+B_\alpha D\}_{\alpha\in\Omega}, \{\Psi_\alpha E+\Phi_\alpha F\}_{\alpha\in \Omega}) $ is a  Parseval weak continuous (ovf) in  $\mathcal{B}(\mathcal{H}, \mathcal{H}_0)$. In particular,  if scalars $ c,d,e,f$ satisfy $\bar{c}e+\bar{d}f =1$, then $ (\{cA_\alpha+dB_\alpha\}_{\alpha\in \Omega}, \{e\Psi_\alpha+f\Phi_\alpha\}_{\alpha\in \Omega}) $ is   a Parseval weak continuous (ovf).
\end{proposition}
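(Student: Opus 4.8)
The plan is to verify directly that the pair $(\{A_\alpha C+B_\alpha D\}_{\alpha\in\Omega},\{\Psi_\alpha E+\Phi_\alpha F\}_{\alpha\in\Omega})$ satisfies the two defining conditions of a Parseval weak continuous (ovf): measurability of the relevant maps, and that its frame operator equals $I_\mathcal{H}$ (which in particular is bounded, positive and invertible). Measurability is immediate, since for each $h\in\mathcal{H}$ we have $(A_\alpha C+B_\alpha D)h=A_\alpha(Ch)+B_\alpha(Dh)$ and $(\Psi_\alpha E+\Phi_\alpha F)h=\Psi_\alpha(Eh)+\Phi_\alpha(Fh)$, each a sum of two measurable $\mathcal{H}_0$-valued maps. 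So the whole content is the computation of the frame operator.

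For the frame operator, I would work through the defining sesquilinear identity: for $h,g\in\mathcal{H}$,
\[
\langle S_{AC+BD,\,\Psi E+\Phi F}h,g\rangle=\int_\Omega\langle (A_\alpha C+B_\alpha D)h,\,(\Psi_\alpha E+\Phi_\alpha F)g\rangle\,d\mu(\alpha),
\]
and expand the integrand by bilinearity into four pieces. The two ``diagonal'' pieces are $\int_\Omega\langle A_\alpha(Ch),\Psi_\alpha(Eg)\rangle\,d\mu(\alpha)=\langle S_{A,\Psi}Ch,Eg\rangle=\langle Ch,Eg\rangle=\langle E^*Ch,g\rangle$ and, similarly, $\int_\Omega\langle B_\alpha(Dh),\Phi_\alpha(Fg)\rangle\,d\mu(\alpha)=\langle S_{B,\Phi}Dh,Fg\rangle=\langle F^*Dh,g\rangle$, using that both frames are Parseval, i.e.\ $S_{A,\Psi}=S_{B,\Phi}=I_\mathcal{H}$. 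The two ``cross'' pieces are $\int_\Omega\langle A_\alpha(Ch),\Phi_\alpha(Fg)\rangle\,d\mu(\alpha)$ and $\int_\Omega\langle B_\alpha(Dh),\Psi_\alpha(Eg)\rangle\,d\mu(\alpha)$; each vanishes by the orthogonality hypothesis. Indeed, $\int_\Omega\Phi_\alpha^*A_\alpha x\,d\mu(\alpha)=0$ forces $\int_\Omega\langle A_\alpha x,\Phi_\alpha y\rangle\,d\mu(\alpha)=0$ for all $x,y$, and $\int_\Omega B_\alpha^*\Psi_\alpha y\,d\mu(\alpha)=0$ forces, after pairing with $x$ and conjugating, $\int_\Omega\langle B_\alpha x,\Psi_\alpha y\rangle\,d\mu(\alpha)=0$ for all $x,y$. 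Summing the four pieces gives $\langle S_{AC+BD,\,\Psi E+\Phi F}h,g\rangle=\langle(E^*C+F^*D)h,g\rangle$ for all $h,g$; but $E^*C+F^*D=(C^*E+D^*F)^*=I_\mathcal{H}^*=I_\mathcal{H}$, hence $S_{AC+BD,\,\Psi E+\Phi F}=I_\mathcal{H}$.

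The one point that needs care — and the nearest thing to an obstacle — is the legitimacy of splitting the weak integral into those four summands: the two diagonal integrals are well-defined because $(\{A_\alpha\}_{\alpha\in\Omega},\{\Psi_\alpha\}_{\alpha\in\Omega})$ and $(\{B_\alpha\}_{\alpha\in\Omega},\{\Phi_\alpha\}_{\alpha\in\Omega})$ are weak continuous (ovf)s (so $S_{A,\Psi}$ and $S_{B,\Phi}$ exist), and the two cross integrals are well-defined precisely because the orthogonality hypothesis presupposes $\int_\Omega\Phi_\alpha^*A_\alpha x\,d\mu(\alpha)$ and $\int_\Omega B_\alpha^*\Psi_\alpha x\,d\mu(\alpha)$, hence the associated scalar integrals, to be meaningful. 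Once this is noted, the computation above is valid and shows $(\{A_\alpha C+B_\alpha D\}_{\alpha\in\Omega},\{\Psi_\alpha E+\Phi_\alpha F\}_{\alpha\in\Omega})$ is a Parseval weak continuous (ovf) in $\mathcal{B}(\mathcal{H},\mathcal{H}_0)$; beyond this bookkeeping I do not expect any genuine difficulty.

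For the final assertion I would simply specialize: take $C=cI_\mathcal{H},\ D=dI_\mathcal{H},\ E=eI_\mathcal{H},\ F=fI_\mathcal{H}$. Then $C^*E+D^*F=(\bar{c}e+\bar{d}f)I_\mathcal{H}=I_\mathcal{H}$, while $A_\alpha C=cA_\alpha$, $B_\alpha D=dB_\alpha$, $\Psi_\alpha E=e\Psi_\alpha$, $\Phi_\alpha F=f\Phi_\alpha$, so the first part applies verbatim and yields that $(\{cA_\alpha+dB_\alpha\}_{\alpha\in\Omega},\{e\Psi_\alpha+f\Phi_\alpha\}_{\alpha\in\Omega})$ is a Parseval weak continuous (ovf).
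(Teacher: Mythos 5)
Your proposal is correct and follows essentially the same route as the paper: expand $\langle S_{AC+BD,\,\Psi E+\Phi F}h,g\rangle$ into four integrals, kill the cross terms by orthogonality, reduce the diagonal terms to $\langle E^*Ch,g\rangle+\langle F^*Dh,g\rangle$ by Parsevalness, and conclude from $E^*C+F^*D=(C^*E+D^*F)^*=I_\mathcal{H}$. Your extra remarks on measurability, on why the four weak integrals are individually well defined, and on the scalar specialization are sound additions that the paper leaves implicit.
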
 
\begin{proof}
For all $h, g \in \mathcal{H}$,	

\begin{align*}
\langle  S_{AC+BD,\Psi E+\Phi F}h, g\rangle &=\int_{\Omega}\langle (A_\alpha C+B_\alpha D)h, (\Psi_\alpha E+\Phi_\alpha F)g\rangle \,d\mu(\alpha)\\
&=\int_{\Omega}\langle A_\alpha( Ch), \Psi_\alpha (Eg)\rangle \,d\mu(\alpha)+\int_{\Omega}\langle A_\alpha (Ch), \Phi_\alpha (Fg)\rangle \,d\mu(\alpha)\\
&\quad +\int_{\Omega}\langle B_\alpha (Dh), \Psi_\alpha (Eg)\rangle \,d\mu(\alpha)+\int_{\Omega}\langle B_\alpha (Dh), \Phi_\alpha (Fg)\rangle \,d\mu(\alpha)\\
&=\langle Ch, Eg\rangle +\langle 0, Fg\rangle+\langle 0, Eg \rangle +\langle Dh, Fg\rangle\\
&=\langle E^*Ch, g\rangle  +\langle F^*Dh, g\rangle =\langle (E^*C+ F^*D)h, g\rangle=\langle h, g\rangle .
\end{align*}
\end{proof}
\begin{definition}
Two weak continuous operator-valued frames $(\{A_\alpha\}_{\alpha\in \Omega},\{\Psi_\alpha\}_{\alpha\in \Omega} )$  and $ (\{B_\alpha\}_{\alpha\in \Omega}$, $ \{\Phi_\alpha\}_{\alpha\in \Omega} )$   in $ \mathcal{B}(\mathcal{H}, \mathcal{H}_0)$  are called 
disjoint if $(\{A_\alpha\oplus B_\alpha\}_{\alpha \in \Omega}, \{\Psi_\alpha\oplus \Phi_\alpha\}_{\alpha \in \Omega})$ is a weak continuous (ovf) in $ \mathcal{B}(\mathcal{H}\oplus \mathcal{H}, \mathcal{H}_0).$   
\end{definition}
\begin{proposition}
If $(\{A_\alpha\}_{\alpha\in \Omega},\{\Psi_\alpha\}_{\alpha\in \Omega} )$  and $ (\{B_\alpha\}_{\alpha\in \Omega}, \{\Phi_\alpha\}_{\alpha\in \Omega} )$  are  orthogonal weak continuous operator-valued frames  in $ \mathcal{B}(\mathcal{H}, \mathcal{H}_0)$, then  they  are disjoint. Further, if both $(\{A_\alpha\}_{\alpha\in \Omega},\{\Psi_\alpha\}_{\alpha\in \Omega} )$  and $ (\{B_\alpha\}_{\alpha\in \Omega}, \{\Phi_\alpha\}_{\alpha\in \Omega} )$ are  Parseval weak, then $(\{A_\alpha\oplus B_\alpha\}_{\alpha \in \Omega},\{\Psi_\alpha\oplus \Phi_\alpha\}_{\alpha \in \Omega})$ is Parseval weak.
\end{proposition}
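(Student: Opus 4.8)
The plan is to compute the frame operator $S_{A\oplus B,\Psi\oplus\Phi}$ of the direct sum on $\mathcal{H}\oplus\mathcal{H}$ directly from its defining sesquilinear form and to show that it coincides with $S_{A,\Psi}\oplus S_{B,\Phi}$, which — since $(\{A_\alpha\}_{\alpha\in\Omega},\{\Psi_\alpha\}_{\alpha\in\Omega})$ and $(\{B_\alpha\}_{\alpha\in\Omega},\{\Phi_\alpha\}_{\alpha\in\Omega})$ are weak continuous (ovf)'s — is automatically bounded, positive and invertible, with inverse $S_{A,\Psi}^{-1}\oplus S_{B,\Phi}^{-1}$. First I would record that measurability condition (i) of the definition of a weak continuous (ovf) holds for the direct sum: for $h\oplus g\in\mathcal{H}\oplus\mathcal{H}$ we have $(A_\alpha\oplus B_\alpha)(h\oplus g)=A_\alpha h+B_\alpha g$, a sum of two measurable $\mathcal{H}_0$-valued maps, hence measurable, and similarly for $\Psi_\alpha\oplus\Phi_\alpha$.

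Next, for $h_1\oplus g_1,\ h_2\oplus g_2\in\mathcal{H}\oplus\mathcal{H}$ I would expand
\begin{align*}
\int_{\Omega}\langle (A_\alpha\oplus B_\alpha)(h_1\oplus g_1),(\Psi_\alpha\oplus\Phi_\alpha)(h_2\oplus g_2)\rangle\,d\mu(\alpha)&=\int_{\Omega}\langle A_\alpha h_1+B_\alpha g_1,\Psi_\alpha h_2+\Phi_\alpha g_2\rangle\,d\mu(\alpha)
\end{align*}
into four integrals. The two diagonal ones yield $\langle S_{A,\Psi}h_1,h_2\rangle$ and $\langle S_{B,\Phi}g_1,g_2\rangle$. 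The cross term $\int_{\Omega}\langle A_\alpha h_1,\Phi_\alpha g_2\rangle\,d\mu(\alpha)=\langle\int_{\Omega}\Phi_\alpha^*A_\alpha h_1\,d\mu(\alpha),g_2\rangle=0$ by orthogonality, and the remaining cross term $\int_{\Omega}\langle B_\alpha g_1,\Psi_\alpha h_2\rangle\,d\mu(\alpha)=\overline{\langle\int_{\Omega}B_\alpha^*\Psi_\alpha h_2\,d\mu(\alpha),g_1\rangle}=0$ for the same reason. Hence the weak integral defining $S_{A\oplus B,\Psi\oplus\Phi}$ exists, is a bounded operator, and $\langle S_{A\oplus B,\Psi\oplus\Phi}(h_1\oplus g_1),h_2\oplus g_2\rangle=\langle(S_{A,\Psi}\oplus S_{B,\Phi})(h_1\oplus g_1),h_2\oplus g_2\rangle$ for all choices, i.e. $S_{A\oplus B,\Psi\oplus\Phi}=S_{A,\Psi}\oplus S_{B,\Phi}$; positivity and invertibility pass to the direct sum, so $(\{A_\alpha\oplus B_\alpha\}_{\alpha\in\Omega},\{\Psi_\alpha\oplus\Phi_\alpha\}_{\alpha\in\Omega})$ is a weak continuous (ovf), which is exactly disjointness. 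For the final assertion, if both frames are Parseval weak then $S_{A,\Psi}=I_{\mathcal{H}}=S_{B,\Phi}$, whence $S_{A\oplus B,\Psi\oplus\Phi}=I_{\mathcal{H}\oplus\mathcal{H}}$, i.e. Parseval weak.

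The only delicate point — the ``main obstacle,'' such as it is — is the bookkeeping of the two orthogonality identities $\int_{\Omega}\Phi_\alpha^*A_\alpha h\,d\mu(\alpha)=0=\int_{\Omega}B_\alpha^*\Psi_\alpha h\,d\mu(\alpha)$: one must pair each of the two cross terms with the correct one of these, taking a complex conjugate (equivalently, moving an adjoint across the inner product) in the second, and one must be sure the weak integral defining $S_{A\oplus B,\Psi\oplus\Phi}$ is genuinely well defined, which holds because it is the sum of the already-existing weak integrals giving $S_{A,\Psi}$ and $S_{B,\Phi}$ together with two integrals that are identically zero. Everything else is a routine verification paralleling the proof of the corresponding statement for (non-weak) continuous operator-valued frames given earlier.
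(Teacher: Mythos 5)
Your proposal is correct and follows essentially the same route as the paper: expand $\langle S_{A\oplus B,\Psi\oplus\Phi}(h\oplus g),u\oplus v\rangle$ into four integrals, identify the diagonal terms with $\langle S_{A,\Psi}h,u\rangle$ and $\langle S_{B,\Phi}g,v\rangle$, kill the cross terms using the two orthogonality identities, and conclude $S_{A\oplus B,\Psi\oplus\Phi}=S_{A,\Psi}\oplus S_{B,\Phi}$, with the Parseval case following immediately. Your handling of the second cross term via the conjugate of $\bigl\langle \int_{\Omega}B_\alpha^*\Psi_\alpha h\,d\mu(\alpha),g\bigr\rangle$ is the correct bookkeeping, which the paper glosses over.
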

\begin{proof}
Let $ h \oplus g , u \oplus v\in \mathcal{H}\oplus \mathcal{H}$. Then 

\begin{align*}
&\langle S_{A\oplus B, \Psi\oplus \Phi}(h\oplus g),  u \oplus v\rangle =\int_{\Omega}\langle (A_\alpha\oplus B_\alpha)(h\oplus g), (\Psi_\alpha\oplus \Phi_\alpha)(u \oplus v)\rangle \,d\mu(\alpha)\\
&=\int_{\Omega}\langle A_\alpha h+ B_\alpha g, \Psi_\alpha u+ \Phi_\alpha v\rangle \,d\mu(\alpha)=\int_{\Omega}\langle A_\alpha h, \Psi_\alpha u\rangle \,d\mu(\alpha)+\int_{\Omega}\langle A_\alpha h,  \Phi_\alpha v\rangle \,d\mu(\alpha)\\
&\quad+\int_{\Omega}\langle B_\alpha g, \Psi_\alpha u\rangle \,d\mu(\alpha)+\int_{\Omega}\langle  B_\alpha g, \Phi_\alpha v\rangle \,d\mu(\alpha)\\
&=\langle S_{A,\Psi}h, u\rangle +\langle 0, u\rangle+\langle 0, v\rangle+\langle S_{B,\Phi}g, v\rangle=\langle S_{A,\Psi}h\oplus S_{B,\Phi}g,u\oplus v\rangle \\
&=\langle (S_{A,\Psi}\oplus S_{B,\Phi})(h \oplus g ),u\oplus v\rangle \quad \implies S_{A\oplus B, \Psi\oplus \Phi}=S_{A,\Psi}\oplus S_{B,\Phi}.
\end{align*}
\end{proof}

\textbf{Characterization}
\begin{theorem}\label{WEAKSEQUENTIALCHARACTERIZATION}
Let $ \{A_\alpha\}_{\alpha\in\Omega}, \{\Psi_\alpha\}_{\alpha\in\Omega}$ be in $ \mathcal{B}(\mathcal{H},\mathcal{H}_0).$ Suppose $ \{e_{\alpha, \beta}\}_{\beta\in\Omega_\alpha }$ is an   orthonormal basis for $ \mathcal{H}_0,$ for each $\alpha \in \Omega.$ Let  $ u_{\alpha,\beta}=A_\alpha^*e_{\alpha,\beta}, v_{\alpha,\beta}=\Psi_\alpha^*e_{\alpha,\beta}, \forall \beta \in  \Omega_\alpha, \forall \alpha\in \Omega.$ Then $(\{A_\alpha\}_{\alpha\in\Omega}, \{\Psi_\alpha\}_{\alpha\in\Omega})$ is   a weak continuous	
\begin{enumerate}[\upshape(i)]
\item (ovf)  in $ \mathcal{B}(\mathcal{H},\mathcal{H}_0)$  with bounds $a $ and $ b$  if and only if  for each  $h \in \mathcal{H}$, both  maps   $\Omega \ni \alpha \mapsto  \sum_{\beta \in\Omega_\alpha}\langle  h,u_{\alpha ,\beta}\rangle e_{\alpha ,\beta}\in \mathcal{H}_0$, $\Omega \ni\alpha \mapsto \sum_{\beta \in\Omega_\alpha}\langle  h,v_{\alpha ,\beta}\rangle e_{\alpha ,\beta}\in \mathcal{H}_0$ are measurable and  the map 
$$ T: \mathcal{H} \ni h \mapsto\int_{\Omega}\sum_{\beta \in \Omega_\alpha}\langle h, u_{\alpha,\beta}\rangle v_{\alpha,\beta} \,d\mu(\alpha)\in  \mathcal{H} $$
is a well-defined bounded positive invertible operator such that $ a\|h\|^2 \leq \langle Th,h \rangle \leq b\|h\|^2, \forall h \in \mathcal{H}. $
\item    Bessel   in $ \mathcal{B}(\mathcal{H},\mathcal{H}_0)$  with bound  $ b$  if and only if  for each  $h \in \mathcal{H}$, both  maps   $\Omega \ni \alpha \mapsto  \sum_{\beta \in\Omega_\alpha}\langle  h,u_{\alpha ,\beta}\rangle e_{\alpha ,\beta}$ $\in \mathcal{H}_0$, $\Omega \ni\alpha \mapsto \sum_{\beta \in\Omega_\alpha}\langle  h,v_{\alpha ,\beta}\rangle e_{\alpha ,\beta}\in \mathcal{H}_0$ are measurable and the map 
$$ T: \mathcal{H} \ni h \mapsto\int_{\Omega}\sum_{\beta \in \Omega_\alpha}\langle h, u_{\alpha,\beta}\rangle v_{\alpha,\beta} \,d\mu(\alpha)\in  \mathcal{H} $$
is a well-defined bounded positive  operator such that $ 0 \leq \langle Th,h \rangle \leq b\|h\|^2, \forall h \in \mathcal{H}. $
\item   (ovf)  in $ \mathcal{B}(\mathcal{H},\mathcal{H}_0)$  with bounds $a $ and $ b$  if and only if for each  $h \in \mathcal{H}$, both  maps   $\Omega \ni \alpha \mapsto  \sum_{\beta \in\Omega_\alpha}\langle  h,u_{\alpha ,\beta}\rangle e_{\alpha ,\beta}\in \mathcal{H}_0$, $\Omega \ni\alpha \mapsto \sum_{\beta \in\Omega_\alpha}\langle  h,v_{\alpha ,\beta}\rangle e_{\alpha ,\beta}\in \mathcal{H}_0$ are measurable  and there exists $ r >0$ such that 
$$\left \|\int_{\Omega}\sum_{\beta \in\Omega_\alpha}\langle h, u_{\alpha,\beta}\rangle v_{\alpha,\beta}\,d\mu(\alpha)\right\|\leq r\|h\|,~\forall h \in \mathcal{H}   ;$$ 
$$\int_{\Omega}\sum_{\beta \in\Omega_\alpha}\langle h, u_{\alpha,\beta}\rangle v_{\alpha,\beta} \,d\mu(\alpha)=\int_{\Omega}\sum_{\beta \in\Omega_\alpha}\langle h, v_{\alpha,\beta}\rangle u_{\alpha,\beta}\,d\mu(\alpha) ,~\forall h \in \mathcal{H} ;$$
$$a\|h\|^2\leq \int_{\Omega}\sum_{\beta \in\Omega_\alpha}\langle h, u_{\alpha,\beta}\rangle \langle  v_{\alpha,\beta} , h\rangle \,d\mu(\alpha) \leq b\|h\|^2 ,~ \forall h \in \mathcal{H}.$$
\item  Bessel 	  in $ \mathcal{B}(\mathcal{H},\mathcal{H}_0)$  with bound  $ b$  if and only if for each  $h \in \mathcal{H}$, both  maps   $\Omega \ni \alpha \mapsto  \sum_{\beta \in\Omega_\alpha}\langle  h,u_{\alpha ,\beta}\rangle e_{\alpha ,\beta}$ $\in \mathcal{H}_0$, $\Omega \ni\alpha \mapsto \sum_{\beta \in\Omega_\alpha}\langle  h,v_{\alpha ,\beta}\rangle e_{\alpha ,\beta}\in \mathcal{H}_0$ are measurable  and there exists $ r >0$ such that 
 $$\left \|\int_{\Omega}\sum_{\beta \in\Omega_\alpha}\langle h, u_{\alpha,\beta}\rangle v_{\alpha,\beta}\,d\mu(\alpha)\right\|\leq r\|h\|,~\forall h \in \mathcal{H}   ;$$ 
 $$\int_{\Omega}\sum_{\beta \in\Omega_\alpha}\langle h, u_{\alpha,\beta}\rangle v_{\alpha,\beta} \,d\mu(\alpha)=\int_{\Omega}\sum_{\beta \in\Omega_\alpha}\langle h, v_{\alpha,\beta}\rangle u_{\alpha,\beta}\,d\mu(\alpha) ,~\forall h \in \mathcal{H} ;$$
 $$0\leq \int_{\Omega}\sum_{\beta \in\Omega_\alpha}\langle h, u_{\alpha,\beta}\rangle \langle  v_{\alpha,\beta} , h\rangle \,d\mu(\alpha) \leq b\|h\|^2 ,~ \forall h \in \mathcal{H}.$$
 \end{enumerate}	
 \end{theorem}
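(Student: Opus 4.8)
The plan is to reduce everything to the two identities that already appear in the proof of Theorem~\ref{SEQUENTIAL CHARACTERIZATION}, and then invoke the elementary operator-theoretic characterization of positive invertible operators. First I would record, for every $h\in\mathcal{H}$ and $\alpha\in\Omega$, the expansions $A_\alpha h=\sum_{\beta\in\Omega_\alpha}\langle h,u_{\alpha,\beta}\rangle e_{\alpha,\beta}$ and $\Psi_\alpha h=\sum_{\beta\in\Omega_\alpha}\langle h,v_{\alpha,\beta}\rangle e_{\alpha,\beta}$, which hold because $u_{\alpha,\beta}=A_\alpha^*e_{\alpha,\beta}$, $v_{\alpha,\beta}=\Psi_\alpha^*e_{\alpha,\beta}$ and $\{e_{\alpha,\beta}\}_{\beta\in\Omega_\alpha}$ is an orthonormal basis for $\mathcal{H}_0$. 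Pulling the bounded operator $\Psi_\alpha^*$ through the sum gives
\[
\int_{\Omega}\sum_{\beta\in\Omega_\alpha}\langle h,u_{\alpha,\beta}\rangle v_{\alpha,\beta}\,d\mu(\alpha)=\int_{\Omega}\Psi_\alpha^*\Bigl(\sum_{\beta\in\Omega_\alpha}\langle A_\alpha h,e_{\alpha,\beta}\rangle e_{\alpha,\beta}\Bigr)d\mu(\alpha)=\int_{\Omega}\Psi_\alpha^*A_\alpha h\,d\mu(\alpha)=S_{A,\Psi}h,
\]
and expanding both $A_\alpha h$ and $\Psi_\alpha h$ yields
\[
\int_{\Omega}\sum_{\beta\in\Omega_\alpha}\langle h,u_{\alpha,\beta}\rangle\langle v_{\alpha,\beta},h\rangle\,d\mu(\alpha)=\int_{\Omega}\langle A_\alpha h,\Psi_\alpha h\rangle\,d\mu(\alpha)=\langle S_{A,\Psi}h,h\rangle.
\]
Thus the operator $T$ appearing in the statement is literally $S_{A,\Psi}$, and the measurability of $\alpha\mapsto\sum_\beta\langle h,u_{\alpha,\beta}\rangle e_{\alpha,\beta}$ (resp.\ $v$) is precisely the measurability of $\alpha\mapsto A_\alpha h$ (resp.\ $\Psi_\alpha h$).

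For parts (i) and (ii) I would then simply read off the definition of weak continuous (ovf)/Bessel: apart from measurability, the defining requirement is that $S_{A,\Psi}=T$ be a well-defined bounded positive invertible (resp.\ bounded positive) operator, and the frame-bound inequalities are exactly $a\|h\|^2\le\langle S_{A,\Psi}h,h\rangle\le b\|h\|^2$ (resp.\ $0\le\langle S_{A,\Psi}h,h\rangle\le b\|h\|^2$); by the two displayed identities these become verbatim the stated conditions on $T$.

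For parts (iii) and (iv) I would replace the single clause ``$T$ is a well-defined bounded positive invertible operator'' by the three clauses actually listed, arguing exactly as in Theorem~\ref{SEQUENTIAL CHARACTERIZATION}(iii). The norm bound $\|Th\|\le r\|h\|$ says $T$ is well-defined and bounded, hence has an adjoint; the symmetry identity $\int_\Omega\sum_\beta\langle h,u_{\alpha,\beta}\rangle v_{\alpha,\beta}\,d\mu(\alpha)=\int_\Omega\sum_\beta\langle h,v_{\alpha,\beta}\rangle u_{\alpha,\beta}\,d\mu(\alpha)$ is precisely $S_{A,\Psi}=S_{\Psi,A}=S_{A,\Psi}^*$, i.e.\ self-adjointness of $T$ (needed only over $\mathbb{R}$; over $\mathbb{C}$ it is automatic since $\langle Th,h\rangle$ is already real); and then $a\|h\|^2\le\langle Th,h\rangle\le b\|h\|^2$ forces $T$ positive invertible with those bounds. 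Part (iv) is identical with the lower bound $a$ replaced by $0$ and ``invertible'' dropped, giving weak continuous Bessel.

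The whole argument is bookkeeping; the only point needing a little care is the interchange of $\int_\Omega$ with the (possibly infinite) sum over $\Omega_\alpha$ when deriving the two identities, which is handled exactly as in the non-weak Theorem~\ref{SEQUENTIAL CHARACTERIZATION}: convergence of $\sum_\beta\langle h,u_{\alpha,\beta}\rangle e_{\alpha,\beta}$ to $A_\alpha h$ in $\mathcal{H}_0$ together with boundedness of $\Psi_\alpha^*$, and then integrability supplied by boundedness of $S_{A,\Psi}$ in (i)/(ii) or by the explicit norm hypothesis in (iii)/(iv). I expect no genuine obstacle beyond this routine verification.
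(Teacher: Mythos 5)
Your proposal is correct and follows exactly the route the paper intends: the paper gives no explicit proof of Theorem \ref{WEAKSEQUENTIALCHARACTERIZATION}, leaving it as the analogue of Theorem \ref{SEQUENTIAL CHARACTERIZATION}, and your argument is precisely that proof with the analysis-operator conditions (the $c$, $d$ bounds) dropped, as the weak definition requires only that $S_{A,\Psi}=T$ be well-defined, bounded, positive and (in the frame case) invertible. The two identities you derive, together with the observation that the symmetry clause in (iii)/(iv) encodes self-adjointness of $T$, are exactly what the paper's proof of the non-weak version establishes.
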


\textbf{Similarity  of weak continuous operator-valued  frames}
 \begin{definition}
A weak continuous (ovf)  $(\{B_\alpha\}_{\alpha\in \Omega} ,\{\Phi_\alpha\}_{\alpha\in \Omega})$ in $ \mathcal{B}(\mathcal{H}, \mathcal{H}_0)$  is said to be right-similar  to a weak continuous (ovf) $(\{A_\alpha\}_{\alpha\in \Omega},\{\Psi_\alpha\}_{\alpha\in \Omega})$ in $ \mathcal{B}(\mathcal{H}, \mathcal{H}_0)$ if there exist invertible  $ R_{A,B}, R_{\Psi, \Phi} \in \mathcal{B}(\mathcal{H})$  such that $B_\alpha=A_\alpha R_{A,B} , \Phi_\alpha=\Psi_\alpha R_{\Psi, \Phi}, \forall \alpha \in \Omega. $
 \end{definition}
 \begin{proposition}
 Let $ \{A_\alpha\}_{\alpha\in \Omega}\in \mathscr{F}^\text{w}_\Psi$  with frame bounds $a, b,$  let $R_{A,B}, R_{\Psi, \Phi} \in \mathcal{B}(\mathcal{H})$ be positive, invertible, commute with each other, commute with $ S_{A, \Psi}$, and let $B_\alpha=A_\alpha R_{A,B} , \Phi_\alpha=\Psi_\alpha R_{\Psi, \Phi},  \forall \alpha \in \Omega.$ Then 
 $ \{B_\alpha\}_{\alpha\in \Omega}\in \mathscr{F}^\text{w}_\Phi,$  $ S_{B,\Phi}=R_{\Psi,\Phi}S_{A, \Psi}R_{A,B} $, and    $ \frac{a}{\|R_{A,B}^{-1}\|\|R_{\Psi,\Phi}^{-1}\|}\leq S_{B, \Phi} \leq b\|R_{A,B}R_{\Psi,\Phi}\|.$ Assuming that $( \{A_\alpha\}_{\alpha\in \Omega}, \{\Psi_\alpha\}_{\alpha\in \Omega})$ is a Parseval weak continuous (ovf), then $ (\{B_\alpha\}_{\alpha\in \Omega},\{\Phi_\alpha\}_{\alpha\in \Omega} ) $ is a Parseval  weak continuous (ovf) if and only if   $ R_{\Psi, \Phi}R_{A,B}=I_\mathcal{H}.$  
 \end{proposition}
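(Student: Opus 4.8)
The plan is to follow the template of Proposition~\ref{RIGHTSIMILARITYPROPOSITIONOPERATORVERSION}, adjusted to the weak setting in which only the frame operator, and not the analysis operators, is at our disposal. First I would dispatch the measurability clause in the definition of weak continuous (ovf): for fixed $h\in\mathcal{H}$ one has $B_\alpha h=A_\alpha(R_{A,B}h)$ and $\Phi_\alpha h=\Psi_\alpha(R_{\Psi,\Phi}h)$ for every $\alpha\in\Omega$, so $\alpha\mapsto B_\alpha h$ and $\alpha\mapsto\Phi_\alpha h$ are measurable because $\alpha\mapsto A_\alpha(R_{A,B}h)$ and $\alpha\mapsto\Psi_\alpha(R_{\Psi,\Phi}h)$ are, by the hypothesis on $(\{A_\alpha\}_{\alpha\in\Omega},\{\Psi_\alpha\}_{\alpha\in\Omega})$ applied to the vectors $R_{A,B}h$ and $R_{\Psi,\Phi}h$.

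The heart of the argument is the identity $S_{B,\Phi}=R_{\Psi,\Phi}S_{A,\Psi}R_{A,B}$, which I would establish weakly: for $h,g\in\mathcal{H}$,
\begin{align*}
\int_{\Omega}\langle \Phi_\alpha^*B_\alpha h,g\rangle\,d\mu(\alpha)&=\int_{\Omega}\langle A_\alpha(R_{A,B}h),\Psi_\alpha(R_{\Psi,\Phi}g)\rangle\,d\mu(\alpha)\\
&=\langle S_{A,\Psi}(R_{A,B}h),R_{\Psi,\Phi}g\rangle=\langle R_{\Psi,\Phi}S_{A,\Psi}R_{A,B}h,g\rangle,
\end{align*}
where the middle equality is exactly the defining relation $\langle S_{A,\Psi}x,y\rangle=\int_\Omega\langle A_\alpha x,\Psi_\alpha y\rangle\,d\mu(\alpha)$ of the weak continuous (ovf) $(\{A_\alpha\},\{\Psi_\alpha\})$ at $x=R_{A,B}h$, $y=R_{\Psi,\Phi}g$ (so in particular the weak integral $\int_\Omega\Phi_\alpha^*B_\alpha h\,d\mu(\alpha)$ exists and equals $R_{\Psi,\Phi}S_{A,\Psi}R_{A,B}h$), and the last equality uses $R_{\Psi,\Phi}^*=R_{\Psi,\Phi}$. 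Hence $S_{B,\Phi}=R_{\Psi,\Phi}S_{A,\Psi}R_{A,B}$ as a bounded operator.

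From here the remaining assertions are functional-calculus bookkeeping. Since $R_{A,B}$, $R_{\Psi,\Phi}$, $S_{A,\Psi}$ are positive and pairwise commuting, $C\coloneqq(R_{A,B}R_{\Psi,\Phi})^{1/2}$ is positive, invertible, and commutes with $S_{A,\Psi}$, and $S_{B,\Phi}=C S_{A,\Psi}C=C^*S_{A,\Psi}C$; this exhibits $S_{B,\Phi}$ as bounded, self-adjoint, positive and invertible, so $\{B_\alpha\}_{\alpha\in\Omega}\in\mathscr{F}^{\mathrm{w}}_\Phi$. Conjugating $aI_\mathcal{H}\le S_{A,\Psi}\le bI_\mathcal{H}$ by $C$ gives $\langle S_{B,\Phi}h,h\rangle=\langle S_{A,\Psi}Ch,Ch\rangle$, whence $\langle S_{B,\Phi}h,h\rangle\le b\|C\|^2\|h\|^2=b\|R_{A,B}R_{\Psi,\Phi}\|\|h\|^2$, while $\|Ch\|^2=\langle R_{A,B}R_{\Psi,\Phi}h,h\rangle\ge\|(R_{A,B}R_{\Psi,\Phi})^{-1}\|^{-1}\|h\|^2\ge(\|R_{A,B}^{-1}\|\|R_{\Psi,\Phi}^{-1}\|)^{-1}\|h\|^2$ gives $\langle S_{B,\Phi}h,h\rangle\ge\tfrac{a}{\|R_{A,B}^{-1}\|\|R_{\Psi,\Phi}^{-1}\|}\|h\|^2$. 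Finally, if $(\{A_\alpha\},\{\Psi_\alpha\})$ is Parseval then $S_{A,\Psi}=I_\mathcal{H}$, so $S_{B,\Phi}=R_{\Psi,\Phi}R_{A,B}$, and $(\{B_\alpha\},\{\Phi_\alpha\})$ is Parseval precisely when $S_{B,\Phi}=I_\mathcal{H}$, i.e.\ when $R_{\Psi,\Phi}R_{A,B}=I_\mathcal{H}$.

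I do not anticipate a genuine obstacle. The only point demanding care is that in the weak framework $\theta_B$ need not exist, so the weak integral defining $S_{B,\Phi}$ must be produced by hand; this is exactly what the computation above does, by recognizing its integrand as $\alpha\mapsto\langle A_\alpha x,\Psi_\alpha y\rangle$ for appropriate $x,y$ and invoking the existence of $S_{A,\Psi}$. The rest is the routine manipulation of commuting positive operators through the continuous functional calculus.
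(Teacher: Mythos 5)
Your proof is correct, and its core step is exactly the computation the paper uses for the corresponding results: the paper leaves this particular proposition unproved, but its proofs of the neighbouring similarity propositions establish $S_{B,\Phi}=R_{\Psi,\Phi}^*S_{A,\Psi}R_{A,B}$ by the same weak identity $\int_\Omega\langle A_\alpha R_{A,B}h,\Psi_\alpha R_{\Psi,\Phi}g\rangle\,d\mu(\alpha)=\langle S_{A,\Psi}R_{A,B}h,R_{\Psi,\Phi}g\rangle$ that you use, with $R_{\Psi,\Phi}^*=R_{\Psi,\Phi}$ by positivity. Your functional-calculus derivation of the frame bounds via $C=(R_{A,B}R_{\Psi,\Phi})^{1/2}$ and the Parseval equivalence are both sound and consistent with how the paper handles the analogous non-weak statement.
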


 \begin{proposition}
 Let $ \{A_\alpha\}_{\alpha\in \Omega}\in \mathscr{F}^\text{w}_\Psi,$ $ \{B_\alpha\}_{\alpha\in \Omega}\in \mathscr{F}^\text{w}_\Phi$ and   $B_\alpha=A_\alpha R_{A,B} , \Phi_\alpha=\Psi_\alpha R_{\Psi, \Phi},  \forall \alpha \in \Omega$, for some invertible $ R_{A,B} ,R_{\Psi, \Phi} \in \mathcal{B}(\mathcal{H}).$ Then $  S_{B,\Phi}=R_{\Psi,\Phi}^*S_{A, \Psi}R_{A,B}.$ Assuming that $ (\{A_\alpha\}_{\alpha\in \Omega},\{\Psi_\alpha\}_{\alpha\in \Omega})$ is a  Parseval weak continuous (ovf), then $(\{B_\alpha\}_{\alpha\in \Omega},  \{\Phi_\alpha\}_{\alpha\in \Omega})$ is a Parseval  weak continuous (ovf) if and only if   $ R_{\Psi, \Phi}^*R_{A,B}=I_\mathcal{H}.$
 \end{proposition}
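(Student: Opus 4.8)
The plan is to imitate the $S_{B,\Phi}$-part of Lemma \ref{SIM}, but to carry it out entirely through the sesquilinear form $(x,y)\mapsto\langle S_{A,\Psi}x,y\rangle$ rather than through analysis/synthesis operators: in the weak setting the operators $\theta_A,\theta_B,\theta_\Psi,\theta_\Phi$ need not be bounded (condition (iii) of Definition \ref{1} has been dropped), so they are simply not available and the argument of Lemma \ref{SIM} cannot be copied verbatim.

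First I would record the elementary identity that is built into the definition of the weak frame operator: if $\{A_\alpha\}_{\alpha\in\Omega}\in\mathscr{F}^{\text{w}}_\Psi$, then for all $x,y\in\mathcal{H}$,
\[
\langle S_{A,\Psi}x,y\rangle=\left\langle\int_{\Omega}\Psi_\alpha^*A_\alpha x\,d\mu(\alpha),y\right\rangle=\int_{\Omega}\langle\Psi_\alpha^*A_\alpha x,y\rangle\,d\mu(\alpha)=\int_{\Omega}\langle A_\alpha x,\Psi_\alpha y\rangle\,d\mu(\alpha),
\]
the middle equality being exactly what it means for the weak integral $\int_{\Omega}\Psi_\alpha^*A_\alpha x\,d\mu(\alpha)$ to exist (the integrand is measurable by hypothesis (i) and integrable against every $y$). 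The same holds for $(\{B_\alpha\},\{\Phi_\alpha\})$, since $\{B_\alpha\}_{\alpha\in\Omega}\in\mathscr{F}^{\text{w}}_\Phi$.

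Next, for fixed $h,g\in\mathcal{H}$, I would substitute $B_\alpha h=A_\alpha(R_{A,B}h)$ and $\Phi_\alpha g=\Psi_\alpha(R_{\Psi,\Phi}g)$ into this identity:
\begin{align*}
\langle S_{B,\Phi}h,g\rangle
&=\int_{\Omega}\langle B_\alpha h,\Phi_\alpha g\rangle\,d\mu(\alpha)
=\int_{\Omega}\langle A_\alpha(R_{A,B}h),\Psi_\alpha(R_{\Psi,\Phi}g)\rangle\,d\mu(\alpha)\\
&=\langle S_{A,\Psi}R_{A,B}h,R_{\Psi,\Phi}g\rangle
=\langle R_{\Psi,\Phi}^*S_{A,\Psi}R_{A,B}h,g\rangle.
\end{align*}
Since $h,g$ are arbitrary, this gives $S_{B,\Phi}=R_{\Psi,\Phi}^*S_{A,\Psi}R_{A,B}$; note that no separate well-definedness check for $S_{B,\Phi}$ is needed because its existence as a bounded positive invertible operator is part of the hypothesis $\{B_\alpha\}_{\alpha\in\Omega}\in\mathscr{F}^{\text{w}}_\Phi$.

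Finally, for the Parseval characterization, assuming $(\{A_\alpha\},\{\Psi_\alpha\})$ is Parseval means $S_{A,\Psi}=I_{\mathcal{H}}$, so the formula just obtained collapses to $S_{B,\Phi}=R_{\Psi,\Phi}^*R_{A,B}$; hence $(\{B_\alpha\},\{\Phi_\alpha\})$ is Parseval, i.e.\ $S_{B,\Phi}=I_{\mathcal{H}}$, if and only if $R_{\Psi,\Phi}^*R_{A,B}=I_{\mathcal{H}}$. There is essentially no genuine obstacle in this argument — it is the same one-line substitution as in Proposition \ref{RIGHTSIMILARITYPROPOSITIONOPERATORVERSION} — and the only point that requires a moment of attention is the structural one flagged at the outset: one must phrase everything in terms of $\langle S_{A,\Psi}\cdot,\cdot\rangle$ and weak integrals, never invoking boundedness of $\theta_A$ or $\theta_B$.
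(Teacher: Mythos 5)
Your proof is correct and is essentially identical to the paper's: both compute $\langle S_{B,\Phi}h,g\rangle=\int_{\Omega}\langle B_\alpha h,\Phi_\alpha g\rangle\,d\mu(\alpha)$, substitute $B_\alpha=A_\alpha R_{A,B}$ and $\Phi_\alpha=\Psi_\alpha R_{\Psi,\Phi}$, and read off $\langle R_{\Psi,\Phi}^*S_{A,\Psi}R_{A,B}h,g\rangle$, with the Parseval equivalence following immediately from the resulting formula. Your remark that the $\theta$-operator argument of the non-weak case is unavailable here is a correct observation, but it does not change the substance of the argument.
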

\begin{proof}
For all $h, g \in \mathcal{H}$, 

\begin{align*}
\langle S_{B,\Phi}h, g\rangle &=\int_{\Omega}\langle B_\alpha h, \Phi_\alpha g\rangle \,d\mu(\alpha)=\int_{\Omega}\langle A_\alpha (R_{A,B}h), \Psi_\alpha (R_{\Psi, \Phi}g)\rangle \,d\mu(\alpha)\\
&=\langle S_{A,\Psi}(R_{A,B}), R_{\Psi, \Phi}g\rangle=\langle R_{\Psi,\Phi}^*S_{A, \Psi}R_{A,B}h, g\rangle .
\end{align*}
\end{proof}
\begin{remark}
For every weak continuous (ovf) $(\{A_\alpha\}_{\alpha \in \Omega},\{\Psi_\alpha\}_{\alpha \in \Omega})$, each  of `weak continuous operator-valued frames'  $( \{A_\alpha S_{A, \Psi}^{-1}\}_{\alpha \in \Omega}, \{\Psi_\alpha\}_{\alpha \in \Omega}),$   $( \{A_\alpha S_{A, \Psi}^{-1/2}\}_{\alpha \in \Omega}, \{\Psi_\alpha S_{A,\Psi}^{-1/2}\}_{\alpha \in \Omega}),$ and  $ (\{A_\alpha \}_{\alpha \in \Omega}, \{\Psi_\alpha S_{A,\Psi}^{-1}\}_{\alpha \in \Omega})$ is  a Parseval weak  continuous (ovf)  which is right-similar to  $ (\{A_\alpha\}_{\alpha \in \Omega} , \{\Psi_\alpha\}_{\alpha \in \Omega}).$  
\end{remark}

 \textbf{The case $\mathcal{H}_0=\mathbb{K}$ of  weak continuous operator-valued frames}
 \begin{definition}\label{WEAKFRAMEDEFINITION}
 	
 A set of vectors   $ \{x_\alpha\}_{\alpha\in \Omega}$  in a Hilbert space  $\mathcal{H}$ is said to be a weak continuous  frame    w.r.t.  a set $ \{\tau_\alpha\}_{\alpha\in \Omega}$ in $\mathcal{H}$  if 
 \begin{enumerate}[\upshape(i)]
 \item for each  $h \in \mathcal{H}$, both    maps   $\Omega \ni \alpha \mapsto\langle  h, x_\alpha\rangle \in \mathbb{K}$ and $\Omega \ni\alpha \mapsto \langle  h, \tau_\alpha \rangle \in\mathbb{K}$ are measurable,
 \item the map (we call as frame operator) $S_{x,\tau} : \mathcal{H} \ni h \mapsto \int_{\Omega}\langle  h, x_\alpha\rangle\tau_\alpha  \,d\mu(\alpha)\in \mathcal{H} $  is a well-defined bounded positive invertible operator.
 \end{enumerate}	
 Notions of frame bounds, optimal bounds, tight frame, Parseval frame, Bessel are similar  to the same  in Definition \ref{SEQUENTIAL2}.
 
 For fixed $ \Omega, \mathcal{H},$  and $ \{\tau_\alpha\}_{\alpha\in \Omega}$,   the set of all weak continuous  frames for $ \mathcal{H}$  w.r.t.  $ \{\tau_\alpha\}_{\alpha\in \Omega}$ is denoted by $ \mathscr{F}^w_\tau.$
 \end{definition}
 \begin{proposition}
 A set of vectors   $ \{x_\alpha\}_{\alpha\in \Omega}$  in   $\mathcal{H}$ is  a weak continuous  frame    w.r.t.  a set $ \{\tau_\alpha\}_{\alpha\in \Omega}$ in $\mathcal{H}$   if and only if there are  $ a, b, r  >0$ such that
 \begin{enumerate}[\upshape(i)]
 \item for each  $h \in \mathcal{H}$, both    maps   $\Omega \ni \alpha \mapsto\langle  h, x_\alpha\rangle \in \mathbb{K}$, $\Omega \ni\alpha \mapsto \langle  h, \tau_\alpha \rangle \in\mathbb{K}$ are measurable, 
 \item $ \|\int_{\Omega}\langle h,x_\alpha\rangle \tau_\alpha \,d\mu(\alpha)\|\leq r\|h\| , \forall h \in \mathcal{H},$
 \item $ a\|h\|^2\leq \int_{\Omega}\langle h,x_\alpha\rangle\langle \tau_\alpha, h \rangle \,d\mu(\alpha)\leq b\|h\|^2 , \forall h \in \mathcal{H},$
 \item $ \int_{\Omega}\langle h,x_\alpha\rangle \tau_\alpha\,d\mu(\alpha)=\int_{\Omega}\langle h,\tau_\alpha\rangle x_\alpha\,d\mu(\alpha),  \forall h \in \mathcal{H}.$
 \end{enumerate}
 If the space is over $ \mathbb{C},$ then          \text{\upshape{(iv)}} can be omitted.
 \end{proposition}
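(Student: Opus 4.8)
The plan is to reduce everything to the single requirement in Definition~\ref{WEAKFRAMEDEFINITION}, namely that $S_{x,\tau}\colon h\mapsto\int_{\Omega}\langle h,x_\alpha\rangle\tau_\alpha\,d\mu(\alpha)$ be a well-defined bounded positive invertible operator; since (i) is common to both sides, the content is the equivalence of (ii)--(iv) with that requirement. For the forward implication, assume $(\{x_\alpha\}_{\alpha\in\Omega},\{\tau_\alpha\}_{\alpha\in\Omega})\in\mathscr{F}^w_\tau$. Then (i) is immediate, and taking $r=\|S_{x,\tau}\|$ gives (ii) since $\|S_{x,\tau}h\|\le\|S_{x,\tau}\|\,\|h\|$. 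Pairing the weak integral defining $S_{x,\tau}h$ against $h$ shows that the scalar integral $\int_{\Omega}\langle h,x_\alpha\rangle\langle\tau_\alpha,h\rangle\,d\mu(\alpha)$ exists and equals $\langle S_{x,\tau}h,h\rangle$, so boundedness and positive invertibility of $S_{x,\tau}$ yield (iii), exactly as in the remark following Definition~\ref{SEQUENTIAL2}. For (iv) I would use that a positive operator is self-adjoint: for every $g\in\mathcal{H}$,
\[
\langle S_{x,\tau}h,g\rangle=\overline{\langle S_{x,\tau}g,h\rangle}=\overline{\int_{\Omega}\langle g,x_\alpha\rangle\langle\tau_\alpha,h\rangle\,d\mu(\alpha)}=\int_{\Omega}\langle h,\tau_\alpha\rangle\langle x_\alpha,g\rangle\,d\mu(\alpha),
\]
which is precisely the statement that the weak integral $\int_{\Omega}\langle h,\tau_\alpha\rangle x_\alpha\,d\mu(\alpha)$ exists and coincides with $S_{x,\tau}h=\int_{\Omega}\langle h,x_\alpha\rangle\tau_\alpha\,d\mu(\alpha)$, i.e.\ (iv).

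For the converse, suppose (i)--(iv) hold, reading (ii) as the assertion that the Gelfand--Pettis integral $\int_{\Omega}\langle h,x_\alpha\rangle\tau_\alpha\,d\mu(\alpha)$ exists for each $h$ with norm at most $r\|h\|$; together with linearity of the weak integral this makes $S_{x,\tau}$ a well-defined bounded linear operator with $\|S_{x,\tau}\|\le r$. Condition (iv) gives self-adjointness: it lets one write $\langle S_{x,\tau}h,g\rangle=\int_{\Omega}\langle h,\tau_\alpha\rangle\langle x_\alpha,g\rangle\,d\mu(\alpha)$, and comparing this with $\overline{\langle S_{x,\tau}g,h\rangle}$ gives $\langle S_{x,\tau}h,g\rangle=\langle h,S_{x,\tau}g\rangle$ for all $h,g$. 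Then (iii) reads $a\|h\|^2\le\langle S_{x,\tau}h,h\rangle\le b\|h\|^2$, so $S_{x,\tau}\ge aI_{\mathcal{H}}$ is positive with spectrum in $[a,\|S_{x,\tau}\|]$, hence boundedly invertible; thus $(\{x_\alpha\}_{\alpha\in\Omega},\{\tau_\alpha\}_{\alpha\in\Omega})$ is a weak continuous frame w.r.t.\ $\{\tau_\alpha\}_{\alpha\in\Omega}$.

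When $\mathbb{K}=\mathbb{C}$ one may drop (iv): (iii) forces $\langle S_{x,\tau}h,h\rangle\in\mathbb{R}$ for every $h$, and a bounded operator on a complex Hilbert space with $\langle Th,h\rangle$ real for all $h$ is self-adjoint by the polarization identity; self-adjointness then recovers (iv) by the computation displayed above, so (i)--(iii) already imply the frame property. I expect no real difficulty — the argument parallels the earlier characterizations (the proposition following Theorem~\ref{OVFTOSEQUENCEANDVICEVERSATHEOREM} and its weak continuous (ovf) analogue) — the one point needing care is the bookkeeping with weak integrals: one must treat the existence of the Gelfand--Pettis integral in (ii) as part of the hypothesis and then \emph{deduce} the existence of the companion weak integral appearing in (iv) from self-adjointness, rather than assume it separately.
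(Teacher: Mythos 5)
Your argument is correct and complete: reducing everything to showing that $S_{x,\tau}$ is bounded (from (ii)), self-adjoint (from (iv), or from the reality of $\langle S_{x,\tau}h,h\rangle$ via polarization when $\mathbb{K}=\mathbb{C}$), and bounded below (from (iii)), hence positive and invertible, is exactly the intended route; the paper itself states this proposition without proof, as it does for the parallel characterization following Definition~\ref{SEQUENTIAL2}. Your care in deducing the existence of the companion weak integral in (iv) from self-adjointness, rather than assuming it, is the right touch.
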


\begin{theorem}
Let $\{x_\alpha\}_{\alpha\in \Omega}, \{\tau_\alpha\}_{\alpha\in \Omega}$ be in $\mathcal{H}$. Define $A_\alpha: \mathcal{H} \ni h \mapsto \langle h, x_\alpha \rangle \in \mathbb{K} $, $\Psi_\alpha: \mathcal{H} \ni h \mapsto \langle h, \tau_\alpha \rangle \in \mathbb{K}, \forall \alpha \in \Omega $. Then   $(\{x_\alpha\}_{\alpha\in \Omega}, \{\tau_\alpha\}_{\alpha\in \Omega})$ is a weak continuous frame for  $\mathcal{H}$ if and only if  $(\{A_\alpha\}_{\alpha\in \Omega}, \{\Psi_\alpha\}_{\alpha\in \Omega})$ is a  weak continuous  operator-valued frame  in $\mathcal{B}(\mathcal{H},\mathbb{K})$.
\end{theorem}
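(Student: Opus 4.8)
The plan is to imitate the proof of Theorem \ref{OVFTOSEQUENCEANDVICEVERSATHEOREM}. The only difference between a weak continuous (ovf) and a continuous (ovf) is that the hypotheses on the analysis and synthesis operators are dropped, and similarly the weak continuous frame of Definition \ref{WEAKFRAMEDEFINITION} differs from the continuous frame of Definition \ref{SEQUENTIAL2} by dropping condition (iii). So it suffices to match, one by one, conditions (i) and (ii) of the two ``weak'' definitions under the correspondence $x_\alpha \leftrightarrow A_\alpha$, $\tau_\alpha \leftrightarrow \Psi_\alpha$.

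First I would record the routine facts: each $A_\alpha$ and $\Psi_\alpha$ is a bounded linear operator in $\mathcal{B}(\mathcal{H},\mathbb{K})$ (with $\|A_\alpha\|=\|x_\alpha\|$, $\|\Psi_\alpha\|=\|\tau_\alpha\|$), and since $\mathcal{H}_0=\mathbb{K}$ the map $\Omega\ni\alpha\mapsto A_\alpha h\in\mathcal{H}_0$ is literally the map $\alpha\mapsto\langle h,x_\alpha\rangle\in\mathbb{K}$, and likewise for $\Psi_\alpha$ and $\tau_\alpha$. Hence condition (i) in the definition of weak continuous (ovf) holds for $(\{A_\alpha\}_{\alpha\in\Omega},\{\Psi_\alpha\}_{\alpha\in\Omega})$ if and only if condition (i) of Definition \ref{WEAKFRAMEDEFINITION} holds for $(\{x_\alpha\}_{\alpha\in\Omega},\{\tau_\alpha\}_{\alpha\in\Omega})$.

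For condition (ii) I would carry out the identification of the frame operators: for arbitrary $h,g\in\mathcal{H}$,
\[
\int_{\Omega}\langle \Psi_\alpha^* A_\alpha h, g\rangle\,d\mu(\alpha)=\int_{\Omega}\langle A_\alpha h,\Psi_\alpha g\rangle\,d\mu(\alpha)=\int_{\Omega}\langle h,x_\alpha\rangle\langle\tau_\alpha,g\rangle\,d\mu(\alpha)=\Big\langle\int_{\Omega}\langle h,x_\alpha\rangle\tau_\alpha\,d\mu(\alpha),\,g\Big\rangle,
\]
the first and last integrals being understood in the weak (Gelfand--Pettis) sense. This equality of conjugate-linear functionals of $g$ shows that for each fixed $h$ the weak integral $\int_{\Omega}\Psi_\alpha^* A_\alpha h\,d\mu(\alpha)$ exists in $\mathcal{H}$ if and only if the weak integral $\int_{\Omega}\langle h,x_\alpha\rangle\tau_\alpha\,d\mu(\alpha)$ exists in $\mathcal{H}$, and that when they exist they are equal; thus $S_{A,\Psi}$ and $S_{x,\tau}$ are one and the same (a priori partially defined) map on $\mathcal{H}$. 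Consequently $S_{A,\Psi}$ is a well-defined bounded positive invertible operator exactly when $S_{x,\tau}$ is. Combining this with the equivalence of the measurability conditions established above yields both implications of the theorem.

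I do not anticipate a genuine obstacle; the one point deserving a line of care is that ``well-defined'' in both definitions silently includes the existence of the relevant weak integral, so I must make explicit that the displayed inner-product identity transfers this existence in \emph{both} directions — which it does, since equality of the two functionals on all of $\mathcal{H}$ is a symmetric statement. I would also add, paralleling Theorem \ref{OVFTOSEQUENCEANDVICEVERSATHEOREM}, the remark that because $S_{A,\Psi}=S_{x,\tau}$ the same argument shows $(\{x_\alpha\}_{\alpha\in\Omega},\{\tau_\alpha\}_{\alpha\in\Omega})$ is a Parseval (resp.\ tight) weak continuous frame if and only if $(\{A_\alpha\}_{\alpha\in\Omega},\{\Psi_\alpha\}_{\alpha\in\Omega})$ is a Parseval (resp.\ tight) weak continuous (ovf).
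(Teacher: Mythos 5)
Your proposal is correct and follows essentially the same route the paper takes: the paper leaves this weak version unproved precisely because it reduces to the single inner-product identity $\langle\int_{\Omega}\Psi_\alpha^*A_\alpha h\,d\mu(\alpha),g\rangle=\langle\int_{\Omega}\langle h,x_\alpha\rangle\tau_\alpha\,d\mu(\alpha),g\rangle$ already displayed in the proof of Theorem \ref{OVFTOSEQUENCEANDVICEVERSATHEOREM}, which identifies $S_{A,\Psi}$ with $S_{x,\tau}$. Your added remarks — that the measurability conditions coincide verbatim since $\mathcal{H}_0=\mathbb{K}$, and that the identity transfers existence of the weak integral in both directions — are exactly the points needed and are handled correctly.
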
 
\begin{proposition}
If $(\{x_\alpha\}_{\alpha\in \Omega}, \{\tau_\alpha\}_{\alpha\in \Omega})$ is a weak continuous frame for  $\mathcal{H}$, then  every $ h \in \mathcal{H}$ can be written as 
$$h =\int_{\Omega}\langle h, S^{-1}_{x, \tau}\tau_\alpha\rangle  x_\alpha\,d\mu(\alpha)=\int_{\Omega}\langle h,\tau_\alpha \rangle S^{-1}_{x, \tau}  x_\alpha \,d\mu(\alpha)=\int_{\Omega}\langle h, S^{-1}_{x, \tau}x_\alpha\rangle  \tau_\alpha\,d\mu(\alpha)=\int_{\Omega}\langle h, x_\alpha\rangle S^{-1}_{x, \tau}  \tau_\alpha\,d\mu(\alpha).$$
\end{proposition}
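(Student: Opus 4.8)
The plan is to rewrite each of the four integrals as either $S_{x,\tau}$ or $S_{x,\tau}^{-1}$ applied to a suitable vector, and then to use the definition of the frame operator together with the elementary mapping properties of the weak integral. First I would record, from Definition \ref{WEAKFRAMEDEFINITION}, that $S_{x,\tau}$ is a bounded positive invertible operator; being positive it is self-adjoint, hence $S_{x,\tau}^{-1}$ exists, lies in $\mathcal{B}(\mathcal{H})$, and is self-adjoint as well. I would also use the symmetry $S_{x,\tau}=S_{\tau,x}$, that is, $\int_{\Omega}\langle h,x_\alpha\rangle\tau_\alpha\,d\mu(\alpha)=\int_{\Omega}\langle h,\tau_\alpha\rangle x_\alpha\,d\mu(\alpha)$ for all $h\in\mathcal{H}$, which is precisely condition (iv) in the characterization of weak continuous frames proved just above (and which holds automatically when $\mathcal{H}$ is complex).

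For the two ``outer'' forms I would pull the bounded operator $S_{x,\tau}^{-1}$ through the Gelfand--Pettis integral:
$$\int_{\Omega}\langle h, x_\alpha\rangle S^{-1}_{x, \tau}\tau_\alpha\,d\mu(\alpha)=S_{x,\tau}^{-1}\!\left(\int_{\Omega}\langle h, x_\alpha\rangle\tau_\alpha\,d\mu(\alpha)\right)=S_{x,\tau}^{-1}S_{x,\tau}h=h,$$
and analogously $\int_{\Omega}\langle h,\tau_\alpha\rangle S^{-1}_{x, \tau}x_\alpha\,d\mu(\alpha)=S_{x,\tau}^{-1}S_{\tau,x}h=S_{x,\tau}^{-1}S_{x,\tau}h=h$. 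For the two ``inner'' forms I would instead substitute the fixed vector $S_{x,\tau}^{-1}h$ in place of $h$ in the defining identity of the frame operator; the integrands $\alpha\mapsto\langle S_{x,\tau}^{-1}h,x_\alpha\rangle$ and $\alpha\mapsto\langle S_{x,\tau}^{-1}h,\tau_\alpha\rangle$ are measurable because $S_{x,\tau}^{-1}h$ is a single element of $\mathcal{H}$, and self-adjointness of $S_{x,\tau}^{-1}$ turns $\langle S_{x,\tau}^{-1}h,x_\alpha\rangle$ into $\langle h,S_{x,\tau}^{-1}x_\alpha\rangle$. This yields $\int_{\Omega}\langle h, S^{-1}_{x, \tau}x_\alpha\rangle\tau_\alpha\,d\mu(\alpha)=S_{x,\tau}(S_{x,\tau}^{-1}h)=h$ and $\int_{\Omega}\langle h, S^{-1}_{x, \tau}\tau_\alpha\rangle x_\alpha\,d\mu(\alpha)=S_{\tau,x}(S_{x,\tau}^{-1}h)=S_{x,\tau}(S_{x,\tau}^{-1}h)=h$.

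I do not expect any genuine obstacle: the only points that require a word of care are that a bounded linear operator commutes with the weak (Gelfand--Pettis) integral and that each newly formed scalar integrand is measurable, both of which are standard and are already invoked implicitly throughout the paper. In fact the whole statement is the verbatim analogue, in the weak setting, of item (iii) of the corresponding proposition in Section \ref{SEQUENTIAL} (the four representations of $h$ for a continuous frame with $\mathcal{H}_0=\mathbb{K}$), so one may simply repeat that argument, the only difference being that here there are no analysis operators $\theta_x,\theta_\tau$ to appeal to and one works directly with $S_{x,\tau}$.
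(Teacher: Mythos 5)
Your proposal is correct and is essentially the paper's own argument: the paper likewise verifies the identities by pairing against an arbitrary $g\in\mathcal{H}$, using the weak-integral definition of $S_{x,\tau}$, the self-adjointness of $S_{x,\tau}^{-1}$ to move it across the inner product, and the symmetry $S_{x,\tau}=S_{\tau,x}$; your "pull $S_{x,\tau}^{-1}$ through the Gelfand--Pettis integral" step is exactly that pairing computation packaged as a lemma. The only cosmetic difference is that you treat all four representations explicitly while the paper writes out two and leaves the symmetric ones implicit.
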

\begin{proof}
For all $h,g  \in \mathcal{H}$, $\langle h, g\rangle =\langle S_{x, \tau}h, S^{-1}_{x, \tau}g\rangle=\int_{\Omega}\langle h, x_\alpha\rangle \langle \tau_\alpha,  S^{-1}_{x, \tau}g\rangle \,d\mu(\alpha)=\int_{\Omega}\langle h, x_\alpha\rangle \langle S^{-1}_{x, \tau}\tau_\alpha,  g\rangle \,d\mu(\alpha)=\langle\int_{\Omega}\langle h,x_\alpha \rangle S^{-1}_{x, \tau}\tau_\alpha  \,d\mu(\alpha), g\rangle$, $\langle h, g\rangle =\langle S_{x, \tau} S_{x, \tau}^{-1}h, g\rangle=\int_{\Omega}\langle  S^{-1}_{x, \tau}h, \tau_\alpha\rangle \langle x_\alpha, g \rangle \,d\mu(\alpha)=\int_{\Omega}\langle  h, S^{-1}_{x, \tau} \tau_\alpha\rangle \langle x_\alpha, g \rangle \,d\mu(\alpha)$ $=\langle \int_{\Omega}\langle h, S^{-1}_{x, \tau}\tau_\alpha\rangle  x_\alpha\,d\mu(\alpha), g\rangle $.
 \end{proof}
\begin{proposition}
Let $(\{x_\alpha\}_{\alpha\in \Omega},\{\tau_\alpha\}_{\alpha \in \Omega} )$ be a weak continuous frame for  $ \mathcal{H}$  with upper frame bound $b$.  If for some $ \alpha \in \Omega $ we have $\{\alpha\}$ is measurable and  $  \langle x_\alpha, x_\beta \rangle\langle \tau_\beta, x_\alpha \rangle \geq0, \forall \beta  \in \Omega$,  then $ \mu(\{\alpha\})\langle x_\alpha, \tau_\alpha \rangle\leq b$ for that $\alpha. $
\end{proposition}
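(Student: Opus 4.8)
The plan is to feed the single vector $h=x_\alpha$ into the upper frame inequality and then use the pointwise sign hypothesis to retain only the contribution of the atom $\{\alpha\}$.

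First I would recall that, by the definition of the frame operator and the weak-integral identity $\langle S_{x,\tau}h,g\rangle=\int_\Omega\langle h,x_\beta\rangle\langle\tau_\beta,g\rangle\,d\mu(\beta)$, condition (ii) in Definition \ref{WEAKFRAMEDEFINITION} (with frame bounds as in Definition \ref{SEQUENTIAL2}) yields
\[
\langle S_{x,\tau}h,h\rangle=\int_\Omega \langle h,x_\beta\rangle\langle \tau_\beta,h\rangle\,d\mu(\beta)\le b\|h\|^2,\qquad \forall h\in\mathcal H .
\]
Taking $h=x_\alpha$, and noting that by condition (i) the map $\beta\mapsto\langle x_\alpha,x_\beta\rangle\langle\tau_\beta,x_\alpha\rangle$ is measurable (and, by hypothesis, everywhere nonnegative, hence integrable since its integral is dominated by $b\|x_\alpha\|^2$), I obtain
\[
\int_\Omega \langle x_\alpha,x_\beta\rangle\langle\tau_\beta,x_\alpha\rangle\,d\mu(\beta)\le b\|x_\alpha\|^2 .
\]

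Next, since $\{\alpha\}$ is measurable and the integrand is pointwise $\ge 0$, I would split $\Omega=\{\alpha\}\cup(\Omega\setminus\{\alpha\})$ and discard the nonnegative tail, which gives
\[
\mu(\{\alpha\})\,\|x_\alpha\|^2\,\langle\tau_\alpha,x_\alpha\rangle
=\int_{\{\alpha\}}\langle x_\alpha,x_\beta\rangle\langle\tau_\beta,x_\alpha\rangle\,d\mu(\beta)
\le \int_{\Omega}\langle x_\alpha,x_\beta\rangle\langle\tau_\beta,x_\alpha\rangle\,d\mu(\beta)
\le b\|x_\alpha\|^2 .
\]
To finish, taking $\beta=\alpha$ in the hypothesis shows $\|x_\alpha\|^2\langle\tau_\alpha,x_\alpha\rangle\ge 0$, so $\langle\tau_\alpha,x_\alpha\rangle$ is a nonnegative real and therefore equals $\langle x_\alpha,\tau_\alpha\rangle$; if $x_\alpha\neq 0$ I divide the last display by $\|x_\alpha\|^2$ to get $\mu(\{\alpha\})\langle x_\alpha,\tau_\alpha\rangle\le b$, while if $x_\alpha=0$ the assertion reads $0\le b$, which holds since $b>0$. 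There is essentially no genuine obstacle here: the argument mirrors the proof of the analogous statement for (non-weak) continuous frames given earlier, and the only two points needing a moment's attention are that the pointwise sign assumption is precisely what legitimizes dropping $\Omega\setminus\{\alpha\}$ from the integral, and the harmless degenerate case $x_\alpha=0$, which must be isolated before dividing by $\|x_\alpha\|^2$.
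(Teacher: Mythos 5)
Your argument is correct and is essentially the paper's own: the analogous proposition for (non-weak) continuous frames is proved in Section \ref{MK}/\ref{SEQUENTIAL} by exactly this device — evaluate the upper frame inequality at $h=x_\alpha$, split the integral over $\{\alpha\}$ and $\Omega\setminus\{\alpha\}$, and drop the nonnegative tail — and the weak version is stated with the same proof implicitly intended. Your additional care about the degenerate case $x_\alpha=0$ and the identification $\langle\tau_\alpha,x_\alpha\rangle=\langle x_\alpha,\tau_\alpha\rangle$ from nonnegativity is a harmless refinement of what the paper leaves implicit.
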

\begin{definition}\label{WEAKDUALDEFINITIONSEQUENTIAL}
A weak continuous frame   $(\{y_\alpha\}_{\alpha\in\Omega}, \{\omega_\alpha\}_{\alpha\in \Omega})$  for  $\mathcal{H}$ is said to be a dual of weak continuous frame  $ ( \{x_\alpha\}_{\alpha\in \Omega}, \{\tau_\alpha\}_{\alpha\in \Omega})$ for  $\mathcal{H}$  if $ \int_{\Omega}\langle h, x_\alpha\rangle \omega_\alpha\,d\mu(\alpha)=\int_{\Omega}\langle h, \tau_\alpha\rangle y_\alpha\,d\mu(\alpha)=h, \forall h \in  \mathcal{H}$. The `weak continuous frame' $(\{\widetilde{x}_\alpha\coloneqq S_{x,\tau}^{-1}x_\alpha\}_{\alpha\in \Omega},\{\widetilde{\tau}_\alpha\coloneqq S_{x,\tau}^{-1}\tau_\alpha\}_{\alpha \in \Omega} )$, which is a `dual' of $ (\{x_\alpha\}_{\alpha\in \Omega}, \{\tau_\alpha\}_{\alpha\in \Omega})$ is called the canonical dual of $ (\{x_\alpha\}_{\alpha\in \Omega}, \{\tau_\alpha\}_{\alpha\in \Omega})$.
 \end{definition}
\begin{proposition}
Let $( \{x_\alpha\}_{\alpha\in \Omega},\{\tau_\alpha\}_{\alpha\in \Omega} )$ be a weak continuous frame for  $\mathcal{H}.$ If $ h \in \mathcal{H}$ has representation  $ h=\int_{\Omega}f(\alpha)x_\alpha\,d\mu(\alpha)= \int_{\Omega}g(\alpha)\tau_\alpha\,d\mu(\alpha), $ for some  measurable  $ f,g : \Omega \rightarrow \mathbb{K}$,  then 
$$ \int_{\Omega}f(\alpha)\overline{g(\alpha)} \,d\mu(\alpha)=\int_{\Omega}\langle h, \widetilde{\tau}_\alpha\rangle\langle \widetilde{x}_\alpha , h \rangle\,d\mu(\alpha)+\int_{\Omega}( f(\alpha)-\langle h, \widetilde{\tau}_\alpha\rangle)(\overline{g(\alpha)}-\langle \widetilde{x}_\alpha, h\rangle)\,d\mu(\alpha). $$
\end{proposition}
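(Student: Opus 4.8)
The plan is to establish the identity by expanding the right-hand side and collapsing it with the help of the defining relations for the canonical dual together with the two given representations of $h$. Throughout, write $\widetilde{x}_\alpha = S_{x,\tau}^{-1}x_\alpha$ and $\widetilde{\tau}_\alpha = S_{x,\tau}^{-1}\tau_\alpha$, and use that $S_{x,\tau}$ is positive (so $S_{x,\tau}^{-1}$ is self-adjoint) and that $S_{x,\tau}=S_{\tau,x}$ (the latter holding over $\mathbb{C}$ automatically, and over $\mathbb{R}$ via condition (iv) in the characterisation of weak continuous frames). Indeed, the statement and its proof are the verbatim weak-continuous analogue of the corresponding proposition established earlier in the paper for continuous frames, so the computation transfers with no essential change.

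First I would expand the second integrand, $(f(\alpha)-\langle h,\widetilde{\tau}_\alpha\rangle)(\overline{g(\alpha)}-\langle \widetilde{x}_\alpha,h\rangle)$, into four summands and add the first integrand $\langle h,\widetilde{\tau}_\alpha\rangle\langle \widetilde{x}_\alpha,h\rangle$; the right-hand side then becomes
\begin{align*}
2\int_{\Omega}\langle h,\widetilde{\tau}_\alpha\rangle\langle \widetilde{x}_\alpha,h\rangle\,d\mu(\alpha)+\int_{\Omega}f(\alpha)\overline{g(\alpha)}\,d\mu(\alpha)-\int_{\Omega}f(\alpha)\langle \widetilde{x}_\alpha,h\rangle\,d\mu(\alpha)-\int_{\Omega}\overline{g(\alpha)}\langle h,\widetilde{\tau}_\alpha\rangle\,d\mu(\alpha).
\end{align*}
Next I would evaluate the three auxiliary integrals. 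Using $\langle \widetilde{x}_\alpha,h\rangle=\langle x_\alpha,S_{x,\tau}^{-1}h\rangle$ and pulling the weak integral through the inner product with $h=\int_{\Omega}f(\alpha)x_\alpha\,d\mu(\alpha)$ gives $\int_{\Omega}f(\alpha)\langle \widetilde{x}_\alpha,h\rangle\,d\mu(\alpha)=\langle h,S_{x,\tau}^{-1}h\rangle$; symmetrically, $\langle h,\widetilde{\tau}_\alpha\rangle=\langle S_{x,\tau}^{-1}h,\tau_\alpha\rangle$ together with $h=\int_{\Omega}g(\alpha)\tau_\alpha\,d\mu(\alpha)$ gives $\int_{\Omega}\overline{g(\alpha)}\langle h,\widetilde{\tau}_\alpha\rangle\,d\mu(\alpha)=\langle S_{x,\tau}^{-1}h,h\rangle$; and the first integral equals $\int_{\Omega}\langle S_{x,\tau}^{-1}h,\tau_\alpha\rangle\langle x_\alpha,S_{x,\tau}^{-1}h\rangle\,d\mu(\alpha)=\langle S_{\tau,x}S_{x,\tau}^{-1}h,S_{x,\tau}^{-1}h\rangle=\langle h,S_{x,\tau}^{-1}h\rangle$, using $S_{\tau,x}=S_{x,\tau}$. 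Substituting and noting $\langle h,S_{x,\tau}^{-1}h\rangle=\langle S_{x,\tau}^{-1}h,h\rangle$, the three $\langle h,S_{x,\tau}^{-1}h\rangle$-type contributions cancel and what remains is $\int_{\Omega}f(\alpha)\overline{g(\alpha)}\,d\mu(\alpha)$, the left-hand side.

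The one point I would flag as requiring care --- rather than a genuine obstacle --- is integrability and the legitimacy of moving weak integrals in and out of inner products: in the weak-continuous setting the analysis maps $\theta_x,\theta_\tau$ need not be bounded, so one cannot invoke their $\mathcal{L}^2$-boundedness. However, the hypothesis that $h$ admits the two stated representations means precisely that $\int_{\Omega}f(\alpha)x_\alpha\,d\mu(\alpha)$ and $\int_{\Omega}g(\alpha)\tau_\alpha\,d\mu(\alpha)$ are bona fide Gelfand--Pettis integrals, while $S_{x,\tau}^{-1}$ is bounded; that is exactly what is needed to justify each pairing above, so the identity is to be read (as in the earlier proposition) as an assertion valid whenever the scalar integrals occurring in it exist.
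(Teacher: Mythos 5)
Your proof is correct and follows essentially the same route the paper takes for the parallel (non-weak) proposition in the section on the case $\mathcal{H}_0=\mathbb{K}$: expand the second integrand, identify each cross term with $\langle h, S_{x,\tau}^{-1}h\rangle$ via the two weak-integral representations of $h$ and the self-adjointness of $S_{x,\tau}^{-1}$ together with $S_{x,\tau}=S_{\tau,x}$, and cancel. The paper states the weak version without proof, evidently intending exactly this transfer, and your remark that the argument needs only the boundedness of $S_{x,\tau}^{-1}$ and the existence of the two Gelfand--Pettis integrals (not the boundedness of $\theta_x,\theta_\tau$) is a correct and worthwhile clarification.
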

\begin{theorem}
Let $( \{x_\alpha\}_{\alpha\in \Omega},\{\tau_\alpha\}_{\alpha\in \Omega} )$ be a weak continuous frame for $ \mathcal{H}$ with frame bounds $ a$ and $ b.$ Then
\begin{enumerate}[\upshape(i)]
\item The canonical dual weak continuous frame of the canonical dual weak continuous frame  of $ (\{x_\alpha\}_{\alpha\in \Omega} $, $\{\tau_\alpha\}_{\alpha\in \Omega} )$ is itself.
\item$ \frac{1}{b}, \frac{1}{a}$ are frame bounds for the canonical dual of $ (\{x_\alpha\}_{\alpha\in \Omega},\{\tau_\alpha\}_{\alpha\in\Omega}).$
\item If $ a, b $ are optimal frame bounds for $( \{x_\alpha\}_{\alpha\in \Omega} , \{\tau_\alpha\}_{\alpha\in \Omega}),$ then $ \frac{1}{b}, \frac{1}{a}$ are optimal  frame bounds for its canonical dual.
\end{enumerate} 
\end{theorem}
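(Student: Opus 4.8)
The plan is to mimic the argument used for the `continuous operator-valued frame' case in Theorem \ref{CANONICALDUALFRAMEPROPERTYOPERATORVERSIONWEAK} (which itself parallels Theorem \ref{CANONICALDUALFRAMEPROPERTYOPERATORVERSION} and Theorem \ref{CANONICALDUALFRAMEPROPERTYSEQUENTIALVERSION}), now in the weak setting where we only have the frame operator $S_{x,\tau}$ at our disposal and no analysis/synthesis operators $\theta_x,\theta_\tau$. The central computational step is to identify the frame operator of the canonical dual $(\{\widetilde x_\alpha\}_{\alpha\in\Omega},\{\widetilde\tau_\alpha\}_{\alpha\in\Omega})=(\{S_{x,\tau}^{-1}x_\alpha\}_{\alpha\in\Omega},\{S_{x,\tau}^{-1}\tau_\alpha\}_{\alpha\in\Omega})$.

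First I would compute, for all $h,g\in\mathcal{H}$,
\begin{align*}
\left\langle \int_{\Omega}\langle h,\widetilde x_\alpha\rangle \widetilde\tau_\alpha\,d\mu(\alpha), g\right\rangle
&=\int_{\Omega}\langle h, S_{x,\tau}^{-1}x_\alpha\rangle\langle S_{x,\tau}^{-1}\tau_\alpha, g\rangle\,d\mu(\alpha)\\
&=\int_{\Omega}\langle S_{x,\tau}^{-1}h, x_\alpha\rangle\langle \tau_\alpha, S_{x,\tau}^{-1}g\rangle\,d\mu(\alpha)
=\langle S_{x,\tau}(S_{x,\tau}^{-1}h), S_{x,\tau}^{-1}g\rangle
=\langle S_{x,\tau}^{-1}h, g\rangle,
\end{align*}
using self-adjointness of $S_{x,\tau}^{-1}$ and the definition of the frame operator of $(\{x_\alpha\},\{\tau_\alpha\})$. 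Hence $S_{\widetilde x,\widetilde\tau}=S_{x,\tau}^{-1}$, which is bounded positive invertible, so $(\{\widetilde x_\alpha\},\{\widetilde\tau_\alpha\})$ is indeed a weak continuous frame. For (i), its canonical dual has frame operator $(S_{x,\tau}^{-1})^{-1}=S_{x,\tau}$, and its members are $S_{x,\tau}\widetilde x_\alpha=S_{x,\tau}S_{x,\tau}^{-1}x_\alpha=x_\alpha$ and likewise $S_{x,\tau}\widetilde\tau_\alpha=\tau_\alpha$, giving back $(\{x_\alpha\},\{\tau_\alpha\})$.

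For (ii), if $a,b$ are frame bounds for $(\{x_\alpha\},\{\tau_\alpha\})$ then $aI_\mathcal{H}\le S_{x,\tau}\le bI_\mathcal{H}$, so by operator monotonicity of inversion on positive operators $\tfrac1b I_\mathcal{H}\le S_{x,\tau}^{-1}\le \tfrac1a I_\mathcal{H}$; since $S_{\widetilde x,\widetilde\tau}=S_{x,\tau}^{-1}$ and $\langle S_{\widetilde x,\widetilde\tau}h,h\rangle=\int_\Omega\langle h,\widetilde x_\alpha\rangle\langle\widetilde\tau_\alpha,h\rangle\,d\mu(\alpha)$, we get $\tfrac1b,\tfrac1a$ as frame bounds for the canonical dual. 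For (iii), if $a,b$ are \emph{optimal}, i.e. $a=\inf\operatorname{spec}(S_{x,\tau})$ and $b=\sup\operatorname{spec}(S_{x,\tau})$, then by the spectral mapping theorem $\operatorname{spec}(S_{x,\tau}^{-1})=\{\lambda^{-1}:\lambda\in\operatorname{spec}(S_{x,\tau})\}$, so $\inf\operatorname{spec}(S_{x,\tau}^{-1})=\tfrac1b$ and $\sup\operatorname{spec}(S_{x,\tau}^{-1})=\tfrac1a$, which are therefore the optimal frame bounds for the canonical dual. I anticipate no real obstacle here: the only subtlety is keeping track of which $S$ plays which role, and noting that in the weak setting the frame-bound inequalities are read off directly from the quadratic form of $S_{x,\tau}$ rather than from $\|\theta_x\|,\|\theta_\tau\|$, so the argument is actually cleaner than in the non-weak case; the whole thing reduces to the identity $S_{\widetilde x,\widetilde\tau}=S_{x,\tau}^{-1}$ plus elementary spectral theory.
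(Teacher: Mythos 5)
Your proposal is correct and follows essentially the route the paper intends: the paper leaves this theorem unproved, but its proof of the parallel Theorem \ref{CANONICALDUALFRAMEPROPERTYSEQUENTIALVERSION} is exactly your central computation $S_{\widetilde{x},\widetilde{\tau}}=S_{x,\tau}^{-1}$, from which (i)--(iii) follow as you describe. The only point worth noting is that in the weak setting this identity alone already certifies that the canonical dual is a weak continuous frame (measurability of $\alpha\mapsto\langle S_{x,\tau}^{-1}h,x_\alpha\rangle$ being immediate), so nothing further is needed.
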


\begin{definition}\label{WEAKORTHOGONALDEFINITIONSEQUENTIAL}
A weak continuous frame   $(\{y_\alpha\}_{\alpha\in \Omega},  \{\omega_\alpha\}_{\alpha\in \Omega})$  for  $\mathcal{H}$ is said to be orthogonal to a weak continuous frame   $( \{x_\alpha\}_{\alpha\in \Omega}, \{\tau_\alpha\}_{\alpha\in \Omega})$ for $\mathcal{H}$ if $\int_{\Omega}\langle h, x_\alpha\rangle \omega_\alpha\,d\mu(\alpha)= \int_{\Omega}\langle h, \tau_\alpha\rangle y_\alpha\,d\mu(\alpha)=0, \forall h \in  \mathcal{H}.$
\end{definition}

\begin{proposition}
Two orthogonal weak continuous frames  have a common dual weak continuous frame.	
\end{proposition}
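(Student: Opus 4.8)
The plan is to mimic the construction the paper already carried out for weak continuous operator-valued frames and produce an explicit common dual as a ``sum'' of the two canonical duals. Let $(\{x_\alpha\}_{\alpha\in\Omega},\{\tau_\alpha\}_{\alpha\in\Omega})$ and $(\{y_\alpha\}_{\alpha\in\Omega},\{\omega_\alpha\}_{\alpha\in\Omega})$ be the two orthogonal weak continuous frames, with frame operators $S_{x,\tau}$ and $S_{y,\omega}$ (both positive, invertible, hence self-adjoint). I would define
$$ z_\alpha\coloneqq S_{x,\tau}^{-1}x_\alpha+S_{y,\omega}^{-1}y_\alpha,\qquad \rho_\alpha\coloneqq S_{x,\tau}^{-1}\tau_\alpha+S_{y,\omega}^{-1}\omega_\alpha,\qquad\forall\alpha\in\Omega. $$
Measurability of $\alpha\mapsto\langle h,z_\alpha\rangle$ and $\alpha\mapsto\langle h,\rho_\alpha\rangle$ is immediate, since $\langle h,z_\alpha\rangle=\langle S_{x,\tau}^{-1}h,x_\alpha\rangle+\langle S_{y,\omega}^{-1}h,y_\alpha\rangle$ is a sum of two measurable scalar functions, and similarly for $\rho_\alpha$.

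The first substantive step is to compute $S_{z,\rho}$. Expanding $\langle h,z_\alpha\rangle\langle\rho_\alpha,g\rangle$ into four summands, the two ``diagonal'' ones integrate, by the defining property of $S_{x,\tau}$ and $S_{y,\omega}$, to $\langle S_{x,\tau}(S_{x,\tau}^{-1}h),S_{x,\tau}^{-1}g\rangle=\langle h,S_{x,\tau}^{-1}g\rangle$ and $\langle h,S_{y,\omega}^{-1}g\rangle$. The two ``cross'' summands vanish: one is $\langle\int_\Omega\langle S_{x,\tau}^{-1}h,x_\alpha\rangle\omega_\alpha\,d\mu(\alpha),S_{y,\omega}^{-1}g\rangle=0$ directly by Definition \ref{WEAKORTHOGONALDEFINITIONSEQUENTIAL}, and the other, $\int_\Omega\langle S_{y,\omega}^{-1}h,y_\alpha\rangle\langle\tau_\alpha,S_{x,\tau}^{-1}g\rangle\,d\mu(\alpha)$, I would reduce by passing to complex conjugates to $\overline{\langle\int_\Omega\langle S_{x,\tau}^{-1}g,\tau_\alpha\rangle y_\alpha\,d\mu(\alpha),S_{y,\omega}^{-1}h\rangle}=0$, again by orthogonality. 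Thus $S_{z,\rho}=S_{x,\tau}^{-1}+S_{y,\omega}^{-1}$, a sum of positive invertible operators, hence positive invertible; in particular the weak integral defining $S_{z,\rho}$ is a well-defined bounded operator, so $(\{z_\alpha\}_{\alpha\in\Omega},\{\rho_\alpha\}_{\alpha\in\Omega})$ is a weak continuous frame for $\mathcal{H}$.

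Finally I would check the four duality identities of Definition \ref{WEAKDUALDEFINITIONSEQUENTIAL}, namely $\int_\Omega\langle h,x_\alpha\rangle\rho_\alpha\,d\mu(\alpha)=\int_\Omega\langle h,\tau_\alpha\rangle z_\alpha\,d\mu(\alpha)=h$ and $\int_\Omega\langle h,y_\alpha\rangle\rho_\alpha\,d\mu(\alpha)=\int_\Omega\langle h,\omega_\alpha\rangle z_\alpha\,d\mu(\alpha)=h$ for all $h\in\mathcal{H}$. Each splits into two pieces: one collapses to $S_{x,\tau}S_{x,\tau}^{-1}h=h$ or $S_{y,\omega}S_{y,\omega}^{-1}h=h$ (using the symmetry $S_{x,\tau}h=\int_\Omega\langle h,x_\alpha\rangle\tau_\alpha\,d\mu(\alpha)=\int_\Omega\langle h,\tau_\alpha\rangle x_\alpha\,d\mu(\alpha)$, and likewise for $y,\omega$), and the other is a cross term that vanishes by the orthogonality relations, again possibly after conjugation, exactly as above. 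This exhibits $(\{z_\alpha\}_{\alpha\in\Omega},\{\rho_\alpha\}_{\alpha\in\Omega})$ as a dual of both given frames. The only point needing care, hence the main obstacle, is the bookkeeping with the vector-valued weak integrals: one has to recognize each cross term as precisely a weak integral occurring in the orthogonality hypothesis, so that the hypothesis may be invoked, and, where necessary, use the conjugated form $\int_\Omega\langle v,y_\alpha\rangle\tau_\alpha\,d\mu(\alpha)=0$, which is not literally one of the two stated orthogonality equations but follows from one of them by conjugation; all remaining manipulations are the same as in the operator-valued version already proved in the paper.
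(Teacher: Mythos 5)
Your proposal is correct and follows essentially the same route as the paper's own proof: the paper also takes $z_\alpha=S_{x,\tau}^{-1}x_\alpha+S_{y,\omega}^{-1}y_\alpha$, $\rho_\alpha=S_{x,\tau}^{-1}\tau_\alpha+S_{y,\omega}^{-1}\omega_\alpha$, computes $S_{z,\rho}=S_{x,\tau}^{-1}+S_{y,\omega}^{-1}$ by killing the cross terms with orthogonality, and then verifies the four duality identities in the same way. Your explicit remark that one cross term requires the conjugated form $\int_\Omega\langle v,y_\alpha\rangle\tau_\alpha\,d\mu(\alpha)=0$ rather than a literal orthogonality equation is a detail the paper passes over silently, and you handle it correctly.
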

\begin{proof}
Let  $ (\{x_\alpha\}_{\alpha\in \Omega}, \{\tau_\alpha\}_{\alpha\in \Omega}) $ and $ (\{y_\alpha\}_{\alpha\in \Omega}, \{\omega_\alpha\}_{\alpha\in \Omega}) $ be   orthogonal weak continuous frames for  $\mathcal{H}$. Define $ z_\alpha\coloneqq S_{x,\tau}^{-1}x_\alpha+S_{y,\omega}^{-1}y_\alpha,\rho_\alpha\coloneqq S_{x,\tau}^{-1}\tau_\alpha+S_{y,\omega}^{-1}\omega_\alpha, \forall \alpha \in \Omega$. For $h,g  \in \mathcal{H}$,  

\begin{align*}
\langle S_{z,\rho}h, g\rangle &=\int_{\Omega}\langle h, z_\alpha\rangle \langle \rho_\alpha , g\rangle\,d\mu(\alpha)\\
&=\int_{\Omega}\langle h, S_{x,\tau}^{-1}x_\alpha+S_{y,\omega}^{-1}y_\alpha\rangle \langle  S_{x,\tau}^{-1}\tau_\alpha+S_{y,\omega}^{-1}\omega_\alpha, g\rangle\,d\mu(\alpha)\\
&=\int_{\Omega}\langle S_{x,\tau}^{-1} h,x_\alpha \rangle \langle  \tau_\alpha,  S_{x,\tau}^{-1}g\rangle\,d\mu(\alpha)+\int_{\Omega}\langle  S_{x,\tau}^{-1}h, x_\alpha\rangle \langle  \omega_\alpha, S_{y,\omega}^{-1}g\rangle\,d\mu(\alpha)\\
&\quad +\int_{\Omega}\langle  S_{y,\omega}^{-1}h, y_\alpha\rangle \langle  \tau_\alpha, S_{x,\tau}^{-1}g\rangle\,d\mu(\alpha)+\int_{\Omega}\langle  S_{y,\omega}^{-1}h, y_\alpha\rangle \langle  \omega_\alpha, S_{y,\omega}^{-1}g\rangle\,d\mu(\alpha)\\
&=\langle S_{x,\tau}S_{x,\tau}^{-1}h, S_{x,\tau}^{-1}g\rangle +\left\langle\int_{\Omega}\langle  S_{x,\tau}^{-1}h, x_\alpha\rangle   \omega_\alpha\,d\mu(\alpha), S_{y,\omega}^{-1}g \right \rangle \\
&\quad +\left\langle \int_{\Omega}\langle  S_{y,\omega}^{-1}h, y_\alpha\rangle   \tau_\alpha\,d\mu(\alpha), S_{x,\tau}^{-1}g \right \rangle+\langle S_{y,\omega}S_{y,\omega}^{-1} h, S_{y,\omega}^{-1}g\rangle\\
&=\langle S_{x,\tau}^{-1}h, g\rangle+\langle 0, S_{y,\omega}^{-1}g\rangle +\langle 0, S_{x,\tau}^{-1}g\rangle+\langle S_{y,\omega}^{-1} h, g\rangle=\langle S_{x,\tau}^{-1}h+S_{y,\omega}^{-1}h, g\rangle.
\end{align*}
Therefore $S_{z,\rho}=S_{x,\tau}^{-1}+S_{y,\omega}^{-1}$ which tells that $ (\{z_\alpha\}_{\alpha\in \Omega}, \{\rho_\alpha\}_{\alpha\in \Omega}) $ is a  weak continuous frame for  $\mathcal{H}$. For duality, let  $h, g \in \mathcal{H}$. Then

\begin{align*}
\left\langle \int_{\Omega}\langle h, x_\alpha\rangle \rho_\alpha \,d\mu(\alpha), g\right\rangle &=\left\langle \int_{\Omega}\langle h, x_\alpha\rangle( S_{x,\tau}^{-1}\tau_\alpha+S_{y,\omega}^{-1}\omega_\alpha) \,d\mu(\alpha), g\right\rangle\\
&= \int_{\Omega}\langle h, x_\alpha\rangle \langle \tau_\alpha , S_{x,\tau}^{-1}g\rangle\,d\mu(\alpha)+\int_{\Omega}\langle h, x_\alpha\rangle \langle \omega_\alpha , S_{y, \omega}^{-1}g\rangle\,d\mu(\alpha)=\langle h, g\rangle +\langle 0, S_{y, \omega}^{-1}g\rangle,
\end{align*}

\begin{align*}
\left\langle \int_{\Omega}\langle h, \tau_\alpha\rangle z_\alpha \,d\mu(\alpha), g\right\rangle &=\left\langle \int_{\Omega}\langle h, \tau_\alpha\rangle(S_{x,\tau}^{-1}x_\alpha+S_{y,\omega}^{-1}y_\alpha) \,d\mu(\alpha), g\right\rangle\\
&= \int_{\Omega}\langle h, \tau_\alpha\rangle \langle x_\alpha , S_{x,\tau}^{-1}g\rangle\,d\mu(\alpha)+\int_{\Omega}\langle h, \tau_\alpha\rangle \langle y_\alpha , S_{y, \omega}^{-1}g\rangle\,d\mu(\alpha)=\langle h, g\rangle +\langle 0, S_{y, \omega}^{-1}g\rangle,
\end{align*}
and 

\begin{align*}
\left\langle \int_{\Omega}\langle h, y_\alpha\rangle \rho_\alpha \,d\mu(\alpha), g\right\rangle &=\left\langle \int_{\Omega}\langle h, y_\alpha\rangle( S_{x,\tau}^{-1}\tau_\alpha+S_{y,\omega}^{-1}\omega_\alpha) \,d\mu(\alpha), g\right\rangle\\
&= \int_{\Omega}\langle h, y_\alpha\rangle \langle \tau_\alpha , S_{x,\tau}^{-1}g\rangle\,d\mu(\alpha)+\int_{\Omega}\langle h, y_\alpha\rangle \langle \omega_\alpha , S_{y, \omega}^{-1}g\rangle\,d\mu(\alpha)=\langle 0, S_{y, \omega}^{-1}g\rangle+\langle h, g\rangle,
\end{align*}

\begin{align*}
\left\langle \int_{\Omega}\langle h, \omega_\alpha\rangle z_\alpha \,d\mu(\alpha), g\right\rangle &=\left\langle \int_{\Omega}\langle h, \omega_\alpha\rangle(S_{x,\tau}^{-1}x_\alpha+S_{y,\omega}^{-1}y_\alpha) \,d\mu(\alpha), g\right\rangle\\
&= \int_{\Omega}\langle h, \omega_\alpha\rangle \langle x_\alpha , S_{x,\tau}^{-1}g\rangle\,d\mu(\alpha)+\int_{\Omega}\langle h, \omega_\alpha\rangle \langle y_\alpha , S_{y, \omega}^{-1}g\rangle\,d\mu(\alpha)=\langle 0, S_{x, \tau}^{-1}g\rangle+\langle h, g\rangle.
\end{align*}
\end{proof}
\begin{proposition}
Let $ (\{x_\alpha\}_{\alpha\in \Omega}, \{\tau_\alpha\}_{\alpha\in \Omega}) $ and $ (\{y_\alpha\}_{\alpha\in \Omega}, \{\omega_\alpha\}_{\alpha\in \Omega}) $ be  two Parseval weak continuous frames for  $\mathcal{H}$ which are  orthogonal. If $A,B,C,D \in \mathcal{B}(\mathcal{H})$ are such that $ AC^*+BD^*=I_\mathcal{H}$, then  $ (\{Ax_\alpha+By_\alpha\}_{\alpha\in \Omega}, \{C\tau_\alpha+D\omega_\alpha\}_{\alpha\in \Omega}) $ is a  Parseval weak continuous frame for  $\mathcal{H}$. In particular,  if scalars $ a,b,c,d$ satisfy $a\bar{c}+b\bar{d} =1$, then $ (\{ax_\alpha+by_\alpha\}_{\alpha\in \Omega}, \{c\tau_\alpha+d\omega_\alpha\}_{\alpha\in \Omega}) $ is a  Parseval weak continuous frame for  $\mathcal{H}$.
\end{proposition}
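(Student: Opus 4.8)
The plan is to reduce the statement to a direct computation of the frame operator of the pair $(\{Ax_\alpha+By_\alpha\}_{\alpha\in\Omega},\{C\tau_\alpha+D\omega_\alpha\}_{\alpha\in\Omega})$, exactly in the spirit of the earlier orthogonality-plus-mixing propositions (the operator-valued version and the Parseval/orthogonal mixing results already proven in this paper). First I would observe that $\{x_\alpha\}$ and $\{y_\alpha\}$ being Bessel (as parts of continuous frames) makes $\{Ax_\alpha+By_\alpha\}$ and $\{C\tau_\alpha+D\omega_\alpha\}$ Bessel as well, and the measurability conditions are inherited from $\{x_\alpha\},\{y_\alpha\},\{\tau_\alpha\},\{\omega_\alpha\}$ since $A,B,C,D$ are bounded. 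Hence it only remains to identify the frame operator $S_{Ax+By,\,C\tau+D\omega}$ and check it equals $I_\mathcal{H}$, because a weak continuous frame whose frame operator is the identity is automatically Parseval.

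Next I would carry out the key computation: for $h,g\in\mathcal{H}$,
\begin{align*}
\langle S_{Ax+By,\,C\tau+D\omega}h,g\rangle
&=\int_{\Omega}\langle h,\,Ax_\alpha+By_\alpha\rangle\langle C\tau_\alpha+D\omega_\alpha,\,g\rangle\,d\mu(\alpha)\\
&=\int_{\Omega}\langle A^*h,x_\alpha\rangle\langle \tau_\alpha,C^*g\rangle\,d\mu(\alpha)
+\int_{\Omega}\langle A^*h,x_\alpha\rangle\langle \omega_\alpha,D^*g\rangle\,d\mu(\alpha)\\
&\quad+\int_{\Omega}\langle B^*h,y_\alpha\rangle\langle \tau_\alpha,C^*g\rangle\,d\mu(\alpha)
+\int_{\Omega}\langle B^*h,y_\alpha\rangle\langle \omega_\alpha,D^*g\rangle\,d\mu(\alpha).
\end{align*}
The first term is $\langle S_{x,\tau}A^*h,C^*g\rangle=\langle A^*h,C^*g\rangle$ since $(\{x_\alpha\},\{\tau_\alpha\})$ is Parseval; the last term is $\langle B^*h,D^*g\rangle$ for the same reason applied to $(\{y_\alpha\},\{\omega_\alpha\})$; and the two cross terms vanish because orthogonality of the two weak continuous frames (Definition \ref{WEAKORTHOGONALDEFINITIONSEQUENTIAL}) gives $\int_{\Omega}\langle u,x_\alpha\rangle\omega_\alpha\,d\mu(\alpha)=0$ and $\int_{\Omega}\langle u,y_\alpha\rangle\tau_\alpha\,d\mu(\alpha)=0$ for all $u$. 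Therefore $\langle S_{Ax+By,\,C\tau+D\omega}h,g\rangle=\langle A^*h,C^*g\rangle+\langle B^*h,D^*g\rangle=\langle(CA^*+DB^*)h,g\rangle$. Using the hypothesis $AC^*+BD^*=I_\mathcal{H}$, taking adjoints gives $CA^*+DB^*=I_\mathcal{H}$, so $S_{Ax+By,\,C\tau+D\omega}=I_\mathcal{H}$, which is bounded, positive and invertible; hence the pair is a Parseval weak continuous frame. The scalar case follows by taking $A=aI,B=bI,C=cI,D=dI$, for which $AC^*+BD^*=(a\bar c+b\bar d)I=I$.

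I do not anticipate a serious obstacle here; the only mild subtlety is making sure the interchange of the bounded operators $A,B,C,D$ with the weak integral is legitimate, i.e. that $\int_{\Omega}\langle h,Au_\alpha\rangle v_\alpha\,d\mu(\alpha)$ genuinely equals the action of $S$ composed with $A^*$ — but this is exactly the kind of manipulation justified in the preliminary material on weak integrals and already used repeatedly in the analogous propositions for (non-weak) continuous frames, so I would simply invoke that. The proof is therefore essentially the displayed computation together with the remark that the frame-operator identity forces Parsevalness.
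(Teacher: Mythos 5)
Your proposal is correct and follows essentially the same route as the paper: expand $\langle S_{Ax+By,\,C\tau+D\omega}h,g\rangle$ into four integrals, use Parsevalness for the two diagonal terms, orthogonality for the two cross terms, and conclude $S_{Ax+By,\,C\tau+D\omega}=(AC^*+BD^*)^*=I_\mathcal{H}$. Your explicit remark that $CA^*+DB^*=I_\mathcal{H}$ follows by taking adjoints of the hypothesis is a small clarification the paper leaves implicit, but the argument is otherwise identical.
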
 
\begin{proof}
For all $h , g \in \mathcal{H}$,

\begin{align*}
&\langle S_{Ax+By,C\tau+D\omega}h, g\rangle =\int_{\Omega}\langle h,Ax_\alpha+By_\alpha\rangle\langle C\tau_\alpha+D\omega_\alpha, g\rangle \,d\mu(\alpha)\\
&=\int_{\Omega}\langle A^*h,x_\alpha\rangle\langle \tau_\alpha, C^*g\rangle \,d\mu(\alpha)+\int_{\Omega}\langle A^*h,x_\alpha\rangle\langle \omega_\alpha, D^*g\rangle \,d\mu(\alpha)\\
&\quad+\int_{\Omega}\langle B^*h,y_\alpha\rangle\langle \tau_\alpha, C^*g\rangle \,d\mu(\alpha)+\int_{\Omega}\langle B^*h,y_\alpha\rangle\langle \omega_\alpha, D^*g\rangle \,d\mu(\alpha)\\
&=\langle A^*h, C^*g\rangle+\langle 0, D^*g\rangle+\langle 0, C^*g\rangle+\langle B^*h, D^*g\rangle\\
&=\langle CA^*h, g\rangle+\langle DB^*h, g\rangle=\langle (CA^*+DB^*)h, g\rangle=\langle h, g\rangle.
\end{align*}
\end{proof}
\begin{definition}
Two weak continuous frames  $(\{x_\alpha\}_{\alpha\in \Omega},  \{\tau_\alpha\}_{\alpha\in \Omega}) $ and $(\{y_\alpha\}_{\alpha\in \Omega},\{\omega_\alpha\}_{\alpha\in \Omega})$  for $ \mathcal{H}$ are called disjoint if $(\{x_\alpha\oplus y_\alpha\}_{\alpha\in \Omega},\{\tau_\alpha\oplus\omega_\alpha\}_{\alpha\in \Omega})$ is a weak continuous frame for $\mathcal{H}\oplus\mathcal{H} $.
\end{definition} 
\begin{proposition}
If $(\{x_\alpha\}_{\alpha\in \Omega},\{\tau_\alpha\}_{\alpha\in \Omega} )$  and $ (\{y_\alpha\}_{\alpha\in \Omega}, \{\omega_\alpha\}_{\alpha\in \Omega} )$  are  orthogonal weak continuous  frames  for $\mathcal{H}$, then  they  are disjoint. Further, if both $(\{x_\alpha\}_{\alpha\in \Omega},\{\tau_\alpha\}_{\alpha\in \Omega} )$  and $ (\{y_\alpha\}_{\alpha\in \Omega}, \{\omega_\alpha\}_{\alpha\in \Omega} )$ are  Parseval weak, then $(\{x_\alpha\oplus y_\alpha\}_{\alpha \in \Omega},\{\tau_\alpha\oplus \omega_\alpha\}_{ \alpha\in \Omega})$ is Parseval weak.
\end{proposition}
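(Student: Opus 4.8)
The plan is to follow the template of Proposition \ref{SEQUENTIALDISJOINTFRAMEPROPOSITION}, adapted to the weak setting, where the only structural object at our disposal is the frame operator (there are no bounded analysis/synthesis operators $\theta_x,\theta_\tau$ in the weak theory). First I would check that the pair $(\{x_\alpha\oplus y_\alpha\}_{\alpha\in\Omega},\{\tau_\alpha\oplus\omega_\alpha\}_{\alpha\in\Omega})$ satisfies the measurability hypothesis of Definition \ref{WEAKFRAMEDEFINITION}: for fixed $h\oplus g\in\mathcal{H}\oplus\mathcal{H}$ we have $\langle h\oplus g,x_\alpha\oplus y_\alpha\rangle=\langle h,x_\alpha\rangle+\langle g,y_\alpha\rangle$ and likewise with $\tau,\omega$, so measurability is inherited from the two given weak continuous frames.

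The core step is to compute the frame operator $S_{x\oplus y,\tau\oplus\omega}$. Testing against an arbitrary $u\oplus v$, I would expand
$$\langle S_{x\oplus y,\tau\oplus\omega}(h\oplus g),u\oplus v\rangle=\int_{\Omega}\big(\langle h,x_\alpha\rangle+\langle g,y_\alpha\rangle\big)\big(\langle \tau_\alpha,u\rangle+\langle\omega_\alpha,v\rangle\big)\,d\mu(\alpha)$$
into four integrals. The two diagonal integrals give $\langle S_{x,\tau}h,u\rangle$ and $\langle S_{y,\omega}g,v\rangle$. The cross term $\int_{\Omega}\langle h,x_\alpha\rangle\langle\omega_\alpha,v\rangle\,d\mu(\alpha)=\langle\int_{\Omega}\langle h,x_\alpha\rangle\omega_\alpha\,d\mu(\alpha),v\rangle$ vanishes directly by Definition \ref{WEAKORTHOGONALDEFINITIONSEQUENTIAL}. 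The remaining cross term $\int_{\Omega}\langle g,y_\alpha\rangle\langle\tau_\alpha,u\rangle\,d\mu(\alpha)$ is the complex conjugate of $\int_{\Omega}\langle u,\tau_\alpha\rangle\langle y_\alpha,g\rangle\,d\mu(\alpha)=\langle\int_{\Omega}\langle u,\tau_\alpha\rangle y_\alpha\,d\mu(\alpha),g\rangle$, which is again $0$ by the orthogonality relation $\int_{\Omega}\langle h,\tau_\alpha\rangle y_\alpha\,d\mu(\alpha)=0$ taken with $h=u$. Hence $S_{x\oplus y,\tau\oplus\omega}=S_{x,\tau}\oplus S_{y,\omega}$, which is bounded, positive and invertible with inverse $S_{x,\tau}^{-1}\oplus S_{y,\omega}^{-1}$; this is precisely disjointness. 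For the second assertion, if both frames are Parseval weak then $S_{x,\tau}=I_\mathcal{H}=S_{y,\omega}$, so the identity just obtained yields $S_{x\oplus y,\tau\oplus\omega}=I_\mathcal{H}\oplus I_\mathcal{H}=I_{\mathcal{H}\oplus\mathcal{H}}$, i.e. the direct sum is Parseval weak.

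The only delicate point — the ``main obstacle'', modest as it is — is the second cross term: because weak orthogonality in Definition \ref{WEAKORTHOGONALDEFINITIONSEQUENTIAL} is stated with the test vector sitting only in the first argument of the integrand, one cannot kill $\int_{\Omega}\langle g,y_\alpha\rangle\langle\tau_\alpha,u\rangle\,d\mu(\alpha)$ on sight; I would either pass to conjugates as above, or first record the symmetric consequence $\int_{\Omega}\langle h,y_\alpha\rangle\tau_\alpha\,d\mu(\alpha)=\int_{\Omega}\langle h,\omega_\alpha\rangle x_\alpha\,d\mu(\alpha)=0$ and then apply it. Everything else is a routine unravelling of the inner product on $\mathcal{H}\oplus\mathcal{H}$, exactly as in the proof of Proposition \ref{SEQUENTIALDISJOINTFRAMEPROPOSITION}.
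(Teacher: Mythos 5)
Your argument is correct and is essentially the paper's own proof: expand $\langle S_{x\oplus y,\tau\oplus\omega}(h\oplus g),u\oplus v\rangle$ into four integrals, identify the diagonal terms with $\langle S_{x,\tau}h,u\rangle$ and $\langle S_{y,\omega}g,v\rangle$, kill the cross terms by orthogonality, and conclude $S_{x\oplus y,\tau\oplus\omega}=S_{x,\tau}\oplus S_{y,\omega}$, from which disjointness and the Parseval claim follow. Your explicit conjugation step for the second cross term is a detail the paper leaves implicit, but it is the same computation.
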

\begin{proof}
Let $ h \oplus g , u \oplus v\in \mathcal{H}\oplus \mathcal{H}$. Then 
\begin{align*}
&\langle S_{x\oplus y, \tau\oplus \omega}(h\oplus g),  u \oplus v\rangle =\int_{\Omega}\langle h\oplus g, x_\alpha \oplus y_\alpha \rangle \langle \tau_\alpha \oplus \omega_\alpha, u \oplus v\rangle \,d\mu(\alpha)\\
&=\int_{\Omega}(\langle h,x_\alpha\rangle + \langle g,y_\alpha\rangle)(\langle \tau_\alpha,u\rangle + \langle \omega_\alpha,v\rangle) \,d\mu(\alpha)=\int_{\Omega}\langle h,x_\alpha\rangle \langle \tau_\alpha,u\rangle  \,d\mu(\alpha)\\
&\quad +\int_{\Omega}\langle h,x_\alpha\rangle  \langle \omega_\alpha,v\rangle \,d\mu(\alpha)+\int_{\Omega}\langle g,y_\alpha\rangle\langle \tau_\alpha,u\rangle  \,d\mu(\alpha)+\int_{\Omega} \langle g,y_\alpha\rangle \langle \omega_\alpha,v\rangle \,d\mu(\alpha)\\
&=\langle S_{x,\tau}h, u\rangle +\langle 0, v\rangle+\langle 0, u\rangle+\langle S_{y,\omega}g, v\rangle=\langle S_{x,\tau}h\oplus S_{y,\omega}g,u\oplus v\rangle \\
&=\langle (S_{x,\tau}\oplus S_{y,\omega})(h \oplus g ),u\oplus v\rangle \quad \implies S_{x\oplus y, \tau\oplus \omega}=S_{x,\tau}\oplus S_{y,\omega}.
\end{align*}
\end{proof}  

\textbf{Similarity}
\begin{definition}
A weak continuous frame  $ (\{y_\alpha\}_{\alpha\in \Omega},\{\omega_\alpha\}_{\alpha\in \Omega})$ for $ \mathcal{H}$ is said to be  similar to  a weak continuous frame $ (\{x_\alpha\}_{\alpha\in \Omega},\{\tau_\alpha\}_{\alpha\in \Omega})$ for $ \mathcal{H}$ if there are invertible  $ T_{x,y}, T_{\tau,\omega} \in \mathcal{B}(\mathcal{H})$ such that $ y_\alpha=T_{x,y}x_\alpha, \omega_\alpha=T_{\tau,\omega}\tau_\alpha,   \forall \alpha \in \Omega.$
\end{definition} 
\begin{proposition}
Let $ \{x_\alpha\}_{\alpha \in \Omega}\in \mathscr{F}^w_\tau$  with frame bounds $a, b,$  let $T_{x,y} , T_{\tau,\omega}\in \mathcal{B}(\mathcal{H})$ be positive, invertible, commute with each other, commute with $ S_{x, \tau}$, and let $y_\alpha=T_{x,y}x_\alpha , \omega_\alpha=T_{\tau,\omega}\tau_\alpha,  \forall \alpha \in \Omega.$ Then $ \{y_\alpha\}_{\alpha\in \Omega}\in \mathscr{F}^w_\tau$, $S_{y,\omega}=T_{\tau,\omega}S_{x, \tau}T_{x,y}$, and $ \frac{a}{\|T_{x,y}^{-1}\|\|T_{\tau,\omega}^{-1}\|}\leq S_{y, \omega} \leq b\|T_{x,y}T_{\tau,\omega}\|$. Assuming that $ (\{x_\alpha\}_{\alpha\in\Omega},\{\tau_\alpha\}_{\alpha\in \Omega})$ is Parseval weak continuous, then $(\{y_\alpha\}_{\alpha\in \Omega},  \{\omega_\alpha\}_{\alpha\in\Omega})$ is Parseval weak continuous if and only if   $ T_{\tau, \omega}T_{x,y}=I_\mathcal{H}.$  
\end{proposition}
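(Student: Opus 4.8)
The plan is to reduce everything to the single operator identity $S_{y,\omega}=T_{\tau,\omega}S_{x,\tau}T_{x,y}$ and then read off each of the remaining assertions from it, using the commutativity hypotheses together with elementary operator-inequality manipulations. Since $(\{y_\alpha\},\{\omega_\alpha\})$ is to be a weak continuous frame, by Definition \ref{WEAKFRAMEDEFINITION} I only need to verify (i) the measurability condition and (ii) that its frame operator is well-defined, bounded, positive and invertible; there is no analysis-operator boundedness to check in the weak setting.

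First I would verify (i): because $T_{x,y},T_{\tau,\omega}$ are positive (hence self-adjoint), $\langle h,y_\alpha\rangle=\langle T_{x,y}h,x_\alpha\rangle$ and $\langle h,\omega_\alpha\rangle=\langle T_{\tau,\omega}h,\tau_\alpha\rangle$, so measurability of $\alpha\mapsto\langle h,y_\alpha\rangle$ and $\alpha\mapsto\langle h,\omega_\alpha\rangle$ is inherited from that of $\alpha\mapsto\langle g,x_\alpha\rangle$, $\alpha\mapsto\langle g,\tau_\alpha\rangle$. Next, for fixed $h,g\in\mathcal{H}$ the scalar function $\alpha\mapsto\langle h,y_\alpha\rangle\langle\omega_\alpha,g\rangle=\langle T_{x,y}h,x_\alpha\rangle\langle\tau_\alpha,T_{\tau,\omega}g\rangle$ is $\mu$-integrable (this is exactly the integrability built into $(\{x_\alpha\},\{\tau_\alpha\})$ being a weak continuous frame), and
\begin{align*}
\int_{\Omega}\langle h,y_\alpha\rangle\langle\omega_\alpha,g\rangle\,d\mu(\alpha) &= \int_{\Omega}\langle T_{x,y}h,x_\alpha\rangle\langle\tau_\alpha,T_{\tau,\omega}g\rangle\,d\mu(\alpha)\\
&= \langle S_{x,\tau}T_{x,y}h,\, T_{\tau,\omega}g\rangle = \langle T_{\tau,\omega}S_{x,\tau}T_{x,y}h,\, g\rangle ,
\end{align*}
so by the Riesz representation theorem the weak integral $\int_{\Omega}\langle h,y_\alpha\rangle\omega_\alpha\,d\mu(\alpha)$ exists and equals $T_{\tau,\omega}S_{x,\tau}T_{x,y}h$; that is, $S_{y,\omega}=T_{\tau,\omega}S_{x,\tau}T_{x,y}$ as a bounded operator (the interchange of $T_{\tau,\omega}$ with the Gelfand–Pettis integral being justified by the same test against arbitrary $g$).

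Then I would exploit commutativity: with $P:=T_{x,y}T_{\tau,\omega}\ge 0$, the pairwise commutativity gives $S_{y,\omega}=S_{x,\tau}P=P S_{x,\tau}$. Positivity (and invertibility) of $S_{y,\omega}$ follows from the standard fact that a product of pairwise-commuting positive operators is positive — e.g. $S_{x,\tau}P=S_{x,\tau}^{1/2}PS_{x,\tau}^{1/2}\ge 0$ since $S_{x,\tau}^{1/2}$ commutes with $P$ — together with invertibility of each factor. For the bounds I would combine $aI_\mathcal{H}\le S_{x,\tau}\le bI_\mathcal{H}$ with $\frac{1}{\|P^{-1}\|}I_\mathcal{H}\le P\le\|P\|I_\mathcal{H}$ (valid since $P$ is positive invertible), multiplying these through by the commuting positive factors using the elementary rule ``$0\le A\le B$ and $0\le C$ commuting with $A,B$ imply $AC\le BC$'' to get $\frac{a}{\|P^{-1}\|}I_\mathcal{H}\le S_{y,\omega}\le b\|P\|I_\mathcal{H}$; finally $\|P\|=\|T_{x,y}T_{\tau,\omega}\|$ and $\|P^{-1}\|=\|T_{\tau,\omega}^{-1}T_{x,y}^{-1}\|\le\|T_{x,y}^{-1}\|\|T_{\tau,\omega}^{-1}\|$ yield the stated inequalities. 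For the last assertion, if $(\{x_\alpha\},\{\tau_\alpha\})$ is Parseval then $S_{x,\tau}=I_\mathcal{H}$, so $S_{y,\omega}=T_{\tau,\omega}T_{x,y}$, whence $(\{y_\alpha\},\{\omega_\alpha\})$ is Parseval iff $S_{y,\omega}=I_\mathcal{H}$ iff $T_{\tau,\omega}T_{x,y}=I_\mathcal{H}$.

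The argument is essentially routine; the only places demanding genuine care are (a) the bookkeeping that the new frame operator is well-defined — i.e. that the scalar integrands are $L^1$ and that $T_{\tau,\omega}$ may be pulled out of the weak integral — which I would dispatch by testing against arbitrary $g$ and invoking self-adjointness of the $T$'s, and (b) the operator-inequality manipulations, where the commutativity hypotheses are precisely what licenses multiplying two inequalities together and concluding positivity of the products. Alternatively, one could deduce the statement from the $\mathcal{H}_0=\mathbb{K}$ specialization of the operator-valued similarity proposition via the correspondence $A_\alpha=\langle\cdot,x_\alpha\rangle$, $\Psi_\alpha=\langle\cdot,\tau_\alpha\rangle$ between weak continuous frames for $\mathcal{H}$ and weak continuous (ovf) in $\mathcal{B}(\mathcal{H},\mathbb{K})$, but the two-line direct computation above is cleaner and self-contained.
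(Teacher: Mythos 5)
Your proof is correct and follows essentially the same route the paper takes (the paper leaves this weak version unproved, but its proof of the parallel statement for $\mathscr{F}_\tau$ and of the neighbouring lemma is exactly your core step: test $\int_{\Omega}\langle h,y_\alpha\rangle\langle\omega_\alpha,g\rangle\,d\mu(\alpha)$ against arbitrary $g$ and use self-adjointness of $T_{x,y},T_{\tau,\omega}$ to identify $S_{y,\omega}=T_{\tau,\omega}S_{x,\tau}T_{x,y}$). Your additional bookkeeping --- positivity and invertibility of the commuting product via $S_{x,\tau}^{1/2}PS_{x,\tau}^{1/2}$, the bound chain through $\|P^{-1}\|\leq\|T_{x,y}^{-1}\|\|T_{\tau,\omega}^{-1}\|$, and the Parseval equivalence --- correctly fills in the details the paper omits.
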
     
 \begin{proposition}
 Let $ \{x_\alpha\}_{\alpha\in \Omega}\in \mathscr{F}^w_\tau,$ $ \{y_\alpha\}_{\alpha\in \Omega}\in \mathscr{F}^w_\omega$ and   $y_\alpha=T_{x, y}x_\alpha , \omega_\alpha=T_{\tau,\omega}\tau_\alpha,  \forall \alpha \in \Omega$, for some invertible $T_{x,y}, T_{\tau,\omega}\in \mathcal{B}(\mathcal{H}).$ Then 
 $  S_{y,\omega}=T_{\tau,\omega}S_{x, \tau}T_{x,y}^*.$ Assuming that $ (\{x_\alpha\}_{\alpha\in \Omega},\{\tau_\alpha\}_{\alpha\in \Omega})$ is  Parseval weak continuous frame, then $ (\{y_\alpha\}_{\alpha\in \Omega},\{\omega_\alpha\}_{\alpha\in \Omega})$ is  Parseval weak continuous frame if and only if $T_{\tau,\omega}T_{x,y}^*=I_\mathcal{H}.$
 \end{proposition}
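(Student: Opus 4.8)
The plan is to verify the identity $S_{y,\omega}=T_{\tau,\omega}S_{x,\tau}T_{x,y}^{*}$ by a direct weak-form computation, exactly paralleling the proof of the analogous operator-valued statement in the excerpt (the proposition establishing $S_{B,\Phi}=R_{\Psi,\Phi}^{*}S_{A,\Psi}R_{A,B}$), and then to read off the Parseval characterization as an immediate corollary. First I would fix $h,g\in\mathcal H$ and expand
\begin{align*}
\langle S_{y,\omega}h,g\rangle
&=\int_{\Omega}\langle h,y_\alpha\rangle\langle\omega_\alpha,g\rangle\,d\mu(\alpha)
=\int_{\Omega}\langle h,T_{x,y}x_\alpha\rangle\langle T_{\tau,\omega}\tau_\alpha,g\rangle\,d\mu(\alpha)\\
&=\int_{\Omega}\langle T_{x,y}^{*}h,x_\alpha\rangle\langle\tau_\alpha,T_{\tau,\omega}^{*}g\rangle\,d\mu(\alpha)
=\langle S_{x,\tau}(T_{x,y}^{*}h),T_{\tau,\omega}^{*}g\rangle
=\langle T_{\tau,\omega}S_{x,\tau}T_{x,y}^{*}h,g\rangle,
\end{align*}
where the second equality uses the similarity hypotheses $y_\alpha=T_{x,y}x_\alpha$, $\omega_\alpha=T_{\tau,\omega}\tau_\alpha$, the third moves the bounded operators across the inner product, and the fourth is the definition of $S_{x,\tau}$ in the weak sense (Definition \ref{WEAKFRAMEDEFINITION}). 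Since $h,g$ are arbitrary, $S_{y,\omega}=T_{\tau,\omega}S_{x,\tau}T_{x,y}^{*}$.

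For the Parseval statement, assume $(\{x_\alpha\}_{\alpha\in\Omega},\{\tau_\alpha\}_{\alpha\in\Omega})$ is a Parseval weak continuous frame, i.e. $S_{x,\tau}=I_{\mathcal H}$. Then the identity just proved gives $S_{y,\omega}=T_{\tau,\omega}T_{x,y}^{*}$. Hence $(\{y_\alpha\}_{\alpha\in\Omega},\{\omega_\alpha\}_{\alpha\in\Omega})$ is Parseval (that is, $S_{y,\omega}=I_{\mathcal H}$) if and only if $T_{\tau,\omega}T_{x,y}^{*}=I_{\mathcal H}$, which is exactly the claimed equivalence.

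There is essentially no obstacle here: the only points requiring a word of care are that $\{y_\alpha\}_{\alpha\in\Omega}\in\mathscr F^{w}_\omega$ is assumed, so $S_{y,\omega}$ is already known to be a well-defined bounded positive invertible operator and the integral defining $\langle S_{y,\omega}h,g\rangle$ converges; and that $T_{x,y},T_{\tau,\omega}$ being bounded (indeed invertible) legitimizes pulling them out of the weak integral, since $\langle\,\cdot\,,T^{*}g\rangle$ is a bounded functional. No measurability issue arises beyond what is built into membership in $\mathscr F^{w}_\tau$ and $\mathscr F^{w}_\omega$. The proof is therefore a two-line display together with the trivial specialization $S_{x,\tau}=I_{\mathcal H}$, mirroring the weak operator-valued case verbatim, and I would simply write ``Similar to the proof of the preceding proposition'' or include the short computation above.
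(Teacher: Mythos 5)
Your computation is correct and coincides essentially verbatim with the paper's own proof, which establishes $\langle S_{y,\omega}h,g\rangle=\langle T_{\tau,\omega}S_{x,\tau}T_{x,y}^{*}h,g\rangle$ by the same chain of equalities moving $T_{x,y}$ and $T_{\tau,\omega}$ across the inner products and invoking the weak definition of $S_{x,\tau}$. The Parseval equivalence then follows exactly as you state, by setting $S_{x,\tau}=I_{\mathcal H}$ in the identity.
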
    
 \begin{proof}
 For all $ h,g \in \mathcal{H}$, 
 
 \begin{align*}
 \langle S_{y,\omega}h, g\rangle &=\int_{\Omega}\langle h,y_\alpha \rangle \langle \omega_\alpha , g \rangle \,d\mu(\alpha)=\int_{\Omega}\langle h, T_{x,y}x_\alpha \rangle \langle T_{\tau,\omega}\tau_\alpha , g \rangle \,d\mu(\alpha)\\
 &=\int_{\Omega}\langle  T_{x,y}^*h, x_\alpha \rangle \langle\tau_\alpha, T_{\tau,\omega}^* g \rangle \,d\mu(\alpha)  =\langle S_{x,\tau}(T^*_{x,y}h), T^*_{\tau,\omega}g\rangle=\langle T_{\tau,\omega}S_{x,\tau}T^*_{x,y}h, g\rangle.
 \end{align*}
 \end{proof}
 
\begin{remark}
For every weak continuous frame  $(\{x_\alpha\}_{\alpha \in \Omega}, \{\tau_\alpha\}_{\alpha \in \Omega}),$ each  of `weak continuous frames'  $( \{S_{x, \tau}^{-1}x_\alpha\}_{\alpha \in \Omega}, \{\tau_\alpha\}_{\alpha \in \Omega})$,    $( \{S_{x, \tau}^{-1/2}x_\alpha\}_{\alpha \in \Omega}, \{S_{x,\tau}^{-1/2}\tau_\alpha\}_{\alpha \in \Omega}),$ and  $ (\{x_\alpha \}_{\alpha \in \Omega}, \{S_{x,\tau}^{-1}\tau_\alpha\}_{\alpha \in \Omega})$ is a Parseval  weak continuous frame which is similar to  $ (\{x_\alpha\}_{\alpha \in \Omega} , \{\tau_\alpha\}_{\alpha \in \Omega} ).$  
\end{remark}

\section{Acknowledgments}
 The first author thanks the National Institute of Technology Karnataka (NITK), Surathkal for giving financial support and the present work of the second author was partially supported by National Board for Higher Mathematics (NBHM), Ministry of Atomic Energy, Government of India (Reference No.2/48(16)/2012/NBHM(R.P.)/R\&D 11/9133).
 \bibliographystyle{plain}
 \bibliography{reference.bib}

\begin{thebibliography}{10}

\bibitem{ABDOLLAHPOURFAROUGHI}
M.~R. Abdollahpour and M.~H. Faroughi.
\newblock Continuous {$G$}-frames in {H}ilbert spaces.
\newblock {\em Southeast Asian Bull. Math.}, 32(1):1--19, 2008.

\bibitem{ALI1}
S.~Twareque Ali, J.-P. Antoine, and J.-P. Gazeau.
\newblock Continuous frames in {H}ilbert space.
\newblock {\em Ann. Physics}, 222(1):1--37, 1993.

\bibitem{ALIBOOK}
Syed~Twareque Ali, Jean-Pierre Antoine, and Jean-Pierre Gazeau.
\newblock {\em Coherent states, wavelets, and their generalizations}.
\newblock Theoretical and Mathematical Physics. Springer, New York, second
  edition, 2014.

\bibitem{AMBROSE}
W.~Ambrose.
\newblock Measures on locally compact topological groups.
\newblock {\em Trans. Amer. Math. Soc.}, 61:106--121, 1947.

\bibitem{BALANEQUI}
Radu Balan.
\newblock Equivalence relations and distances between {H}ilbert frames.
\newblock {\em Proc. Amer. Math. Soc.}, 127(8):2353--2366, 1999.

\bibitem{CASAZZAART}
Peter~G. Casazza.
\newblock The art of frame theory.
\newblock {\em Taiwanese J. Math.}, 4(2):129--201, 2000.

\bibitem{CASAZZAKALTON}
Peter~G. Casazza and Nigel~J. Kalton.
\newblock Generalizing the {P}aley-{W}iener perturbation theory for {B}anach
  spaces.
\newblock {\em Proc. Amer. Math. Soc.}, 127(2):519--527, 1999.

\bibitem{CASAZZAKUTYNIOK}
Peter~G. Casazza and Gitta Kutyniok, editors.
\newblock {\em Finite frames. {T}heory and applications}.
\newblock Applied and Numerical Harmonic Analysis. Birkh\"{a}user/Springer, New
  York, 2013.

\bibitem{OLECAZASSA}
Peter~G. Cazassa and Ole Christensen.
\newblock Perturbation of operators and applications to frame theory.
\newblock {\em J. Fourier Anal. Appl.}, 3(5):543--557, 1997.

\bibitem{OLE1}
Ole Christensen.
\newblock {\em An introduction to frames and {R}iesz bases}.
\newblock Applied and Numerical Harmonic Analysis. Birkh\"auser/Springer,
  [Cham], second edition, 2016.

\bibitem{CZAJA}
Wojciech Czaja.
\newblock Remarks on {N}aimark's duality.
\newblock {\em Proc. Amer. Math. Soc.}, 136(3):867--871, 2008.

\bibitem{DAISUN}
Xin-Rong Dai and Qiyu Sun.
\newblock The {$abc$}-problem for {G}abor systems.
\newblock {\em Mem. Amer. Math. Soc.}, 244(1152):ix+99, 2016.

\bibitem{DIESTELSPALSBURY}
Joe Diestel and Angela Spalsbury.
\newblock {\em The joys of {H}aar measure}, volume 150 of {\em Graduate Studies
  in Mathematics}.
\newblock American Mathematical Society, Providence, RI, 2014.

\bibitem{DUFFIN1}
R.~J. Duffin and A.~C. Schaeffer.
\newblock A class of nonharmonic {F}ourier series.
\newblock {\em Trans. Amer. Math. Soc.}, 72:341--366, 1952.

\bibitem{EDGAR}
G.~A. Edgar.
\newblock Measurability in a {B}anach space. {II}.
\newblock {\em Indiana Univ. Math. J.}, 28(4):559--579, 1979.

\bibitem{FEICHTINGERLUEF}
Hans~G. Feichtinger, Franz Luef, and Tobias Werther.
\newblock A guided tour from linear algebra to the foundations of {G}abor
  analysis.
\newblock In {\em Gabor and wavelet frames}, volume~10 of {\em Lect. Notes Ser.
  Inst. Math. Sci. Natl. Univ. Singap.}, pages 1--49. World Sci. Publ.,
  Hackensack, NJ, 2007.

\bibitem{FOLLAND}
Gerald~B. Folland.
\newblock {\em A course in abstract harmonic analysis}.
\newblock Studies in Advanced Mathematics. CRC Press, Boca Raton, FL, 1995.

\bibitem{FORNASIERRAUHUT}
Massimo Fornasier and Holger Rauhut.
\newblock Continuous frames, function spaces, and the discretization problem.
\newblock {\em J. Fourier Anal. Appl.}, 11(3):245--287, 2005.

\bibitem{FRANKPAULSENTIBALLI}
Michael Frank, Vern~I. Paulsen, and Terry~R. Tiballi.
\newblock Symmetric approximation of frames and bases in {H}ilbert spaces.
\newblock {\em Trans. Amer. Math. Soc.}, 354(2):777--793, 2002.

\bibitem{FREEMANSPEEGLE}
Daniel Freeman and Darrin Speegle.
\newblock The discretization problem for continuous frames.
\newblock {\em Adv. Math.}, 345:784--813, 2019.

\bibitem{GABARDOHANADV}
Jean-Pierre Gabardo and Deguang Han.
\newblock Frames associated with measurable spaces.
\newblock {\em Adv. Comput. Math.}, 18(2-4):127--147, 2003.

\bibitem{GROHS}
Philipp Grohs.
\newblock Continuous shearlet tight frames.
\newblock {\em J. Fourier Anal. Appl.}, 17(3):506--518, 2011.

\bibitem{HANKORNELSONLARSONWEBER}
Deguang Han, Keri Kornelson, David Larson, and Eric Weber.
\newblock {\em Frames for undergraduates}, volume~40 of {\em Student
  Mathematical Library}.
\newblock American Mathematical Society, Providence, RI, 2007.

\bibitem{HANLARSON}
Deguang Han and David~R. Larson.
\newblock Frames, bases and group representations.
\newblock {\em Mem. Amer. Math. Soc.}, 147(697):x+94, 2000.

\bibitem{HANLARSONLIULIU}
Deguang Han, David~R. Larson, Bei Liu, and Rui Liu.
\newblock Operator-valued measures, dilations, and the theory of frames.
\newblock {\em Mem. Amer. Math. Soc.}, 229(1075):viii+84, 2014.

\bibitem{HANLIMENG}
DeGuang Han, PengTong Li, Bin Meng, and WaiShing Tang.
\newblock Operator valued frames and structured quantum channels.
\newblock {\em Sci. China Math.}, 54(11):2361--2372, 2011.

\bibitem{HEIL}
Christopher Heil.
\newblock {\em A basis theory primer}.
\newblock Applied and Numerical Harmonic Analysis. Birkh\"{a}user/Springer, New
  York, 2011.

\bibitem{HEWITTROSS}
Edwin Hewitt and Kenneth~A. Ross.
\newblock {\em Abstract harmonic analysis. {V}ol. {I}: {S}tructure of
  topological groups. {I}ntegration theory, group representations}.
\newblock Academic Press, Inc., Publishers, New York, 1963.

\bibitem{HILDEBRANDT}
T.~H. Hildebrandt.
\newblock Integration in abstract spaces.
\newblock {\em Bull. Amer. Math. Soc.}, 59:111--139, 1953.

\bibitem{HILDING}
Sven~H. Hilding.
\newblock Note on completeness theorems of {P}aley-{W}iener type.
\newblock {\em Ann. of Math. (2)}, 49:953--955, 1948.

\bibitem{IVERSON2}
Joseph~W. Iverson.
\newblock Subspaces of {$L^2(G)$} invariant under translation by an abelian
  subgroup.
\newblock {\em J. Funct. Anal.}, 269(3):865--913, 2015.

\bibitem{IVERSON1}
Joseph~W. Iverson.
\newblock Frames generated by compact group actions.
\newblock {\em Trans. Amer. Math. Soc.}, 370(1):509--551, 2018.

\bibitem{KAFTALLARSONZHANG1}
Victor Kaftal, David~R. Larson, and Shuang Zhang.
\newblock Operator-valued frames.
\newblock {\em Trans. Amer. Math. Soc.}, 361(12):6349--6385, 2009.

\bibitem{KAISER1}
Gerald Kaiser.
\newblock {\em A friendly guide to wavelets}.
\newblock Birkh\"{a}user Boston, Inc., Boston, MA, 1994.

\bibitem{KASHINKULIKOVA}
B.~S. Kashin and T.~Yu. Kulikova.
\newblock A note on the description of frames of general form.
\newblock {\em Mathematical Notes}, 72:863--867, 2002.

\bibitem{MAHESHKRISHNASAMJOHNSON}
K.~Mahesh Krishna and P.~Sam Johnson.
\newblock Extension of frames and bases - {I}.
\newblock arXiv.org/1810.01629v1 [math.OA], 3 October 2018.

\bibitem{KRIVOSHEIN}
Aleksandr Krivoshein, Vladimir Protasov, and Maria Skopina.
\newblock {\em Multivariate wavelet frames}.
\newblock Industrial and Applied Mathematics. Springer, Singapore, 2016.

\bibitem{MASANI}
P.~Masani.
\newblock Measurability and {P}ettis integration in {H}ilbert spaces.
\newblock {\em J. Reine Angew. Math.}, 297:92--135, 1978.

\bibitem{MASANINIEMI}
P.~R. Masani and H.~Niemi.
\newblock The integration theory of {B}anach space valued measures and the
  {T}onelli-{F}ubini theorems. {II}. {P}ettis integration.
\newblock {\em Adv. Math.}, 75(2):121--167, 1989.

\bibitem{MENG}
Bin Meng.
\newblock Operator-valued frame generators for group-like unitary systems.
\newblock {\em Oper. Matrices}, 7(2):441--464, 2013.

\bibitem{MUSIAL1}
Kazimierz Musial.
\newblock Topics in the theory of {P}ettis integration.
\newblock volume~23, pages 177--262 (1993). 1991.
\newblock School on Measure Theory and Real Analysis (Grado, 1991).

\bibitem{MUSIAL}
Kazimierz Musial.
\newblock Pettis integral.
\newblock In {\em Handbook of measure theory, {V}ol. {I}, {II}}, pages
  531--586. North-Holland, Amsterdam, 2002.

\bibitem{NACHBIN}
Leopoldo Nachbin.
\newblock {\em The {H}aar integral}.
\newblock D. Van Nostrand Co., Inc., Princeton, N.J.-Toronto-London, 1965.

\bibitem{CERDA}
Joaquim Ortega-Cerda and Kristian Seip.
\newblock Fourier frames.
\newblock {\em Ann. of Math. (2)}, 155(3):789--806, 2002.

\bibitem{PETTIS}
B.~J. Pettis.
\newblock On integration in vector spaces.
\newblock {\em Trans. Amer. Math. Soc.}, 44(2):277--304, 1938.

\bibitem{RAHIMINAJATIDEHGHAN}
A.~Rahimi, A.~Najati, and Y.~N. Dehghan.
\newblock Continuous frames in {H}ilbert spaces.
\newblock {\em Methods Funct. Anal. Topology}, 12(2):170--182, 2006.

\bibitem{SUN1}
Wenchang Sun.
\newblock {$G$}-frames and {$g$}-{R}iesz bases.
\newblock {\em J. Math. Anal. Appl.}, 322(1):437--452, 2006.

\bibitem{SUNSTAB}
Wenchang Sun.
\newblock Stability of {$g$}-frames.
\newblock {\em J. Math. Anal. Appl.}, 326(2):858--868, 2007.

\bibitem{TAKESAKI2}
M.~Takesaki.
\newblock {\em Theory of operator algebras. {II}}, volume 125 of {\em
  Encyclopaedia of Mathematical Sciences}.
\newblock Springer-Verlag, Berlin, 2003.

\bibitem{TALAGRAND}
Michel Talagrand.
\newblock Pettis integral and measure theory.
\newblock {\em Mem. Amer. Math. Soc.}, 51(307):ix+224, 1984.

\bibitem{YOUNG}
Robert~M. Young.
\newblock {\em An introduction to nonharmonic {F}ourier series}, volume~93 of
  {\em Pure and Applied Mathematics}.
\newblock Academic Press, Inc., New York-London, 1980.

\end{thebibliography}
 
 \end{document}